\documentclass[11pt]{amsart}

\usepackage{parskip}
\allowdisplaybreaks

\usepackage{geometry} 
\usepackage{framed} 

\usepackage{tikz}
\tikzset{every picture/.style={line width=1pt}} 

\usepackage{setspace}
\setdisplayskipstretch{2.5}

\usepackage{amsthm} 
\usepackage{amsmath}
\usepackage{amssymb}
\usepackage{mathrsfs}

\usepackage{hyperref}

\usepackage{amsfonts} 

\newcommand\FB{\mathfrak{B}}

\newcommand\CC{\mathcal{C}}
\newcommand\FC{\mathfrak{C}}
\newcommand\BC{\mathbf{C}}

\newcommand\BD{\mathbf{D}}
\newcommand\Dbf{\mathbf{D}}
\newcommand\D{\mathcal{D}}

\newcommand\hfrak{\mathfrak{h}}
\renewcommand\H{\mathcal{H}}

\def\P{\mathcal{P}}
\newcommand\pfrak{\mathfrak{p}}

\newcommand\R{\mathbb{R}}

\renewcommand\S{\mathcal{S}}
\newcommand\sfrak{\mathfrak{s}}

\newcommand\V{\mathcal{V}}

\newcommand\Z{\mathbb{Z}}

\newcommand\w{\omega}
\newcommand\vphi{\varphi}
\renewcommand\phi{\vphi}
\newcommand\eps{\varepsilon}

\usepackage{stmaryrd} 

\newcommand\id{\textnormal{id}}
\newcommand\sing{\textnormal{sing}}
\newcommand\reg{\textnormal{reg}}
\newcommand\spt{\textnormal{spt}}

\newcommand\dist{\textnormal{dist}}
\newcommand\graph{\textnormal{graph}}
\newcommand\ext{\mathrm{d}}
\newcommand\del{\partial}

\newcommand{\res}{\mathbin{\hspace{0.1em}\vrule height 1.3ex depth 0pt width 0.13ex\vrule height 0.13ex depth 0pt width 1.0ex}} 

\newcommand{\weakly}{\rightharpoonup}

\makeatletter
\def\@tocline#1#2#3#4#5#6#7{\relax
  \ifnum #1>\c@tocdepth 
  \else
    \par \addpenalty\@secpenalty\addvspace{#2}%
    \begingroup \hyphenpenalty\@M
    \@ifempty{#4}{%
      \@tempdima\csname r@tocindent\number#1\endcsname\relax
    }{%
      \@tempdima#4\relax
    }%
    \parindent\z@ \leftskip#3\relax \advance\leftskip\@tempdima\relax
    \rightskip\@pnumwidth plus4em \parfillskip-\@pnumwidth
    #5\leavevmode\hskip-\@tempdima
      \ifcase #1
       \or\or \hskip 1em \or \hskip 2em \else \hskip 3em \fi%
      #6\nobreak\relax
    \dotfill\hbox to\@pnumwidth{\@tocpagenum{#7}}\par
    \nobreak
    \endgroup
  \fi}
\makeatother

\newtheoremstyle{newtheoremstyle}
{3pt}
{3pt}
{\itshape}
{\parindent}
{\bfseries}
{.}
{0.5em}
{} 

\newtheoremstyle{newtheoremstyledefn}
{3pt}
{3pt}
{}
{\parindent}
{\bfseries}
{.}
{0.5em}
{}

\theoremstyle{newtheoremstyle}
\newtheorem{theorem}{Theorem}
\newtheorem*{theorem*}{Theorem}
\newtheorem{lemma}[theorem]{Lemma}

\newtheorem{corollary}[theorem]{Corollary}

\newtheorem{thmx}{Theorem}

\theoremstyle{newtheoremstyledefn}
\newtheorem{defn}[theorem]{Definition}

\newtheorem{remark}[theorem]{Remark}

\numberwithin{equation}{section} 
\numberwithin{theorem}{section}

\usepackage{fancyhdr}
\begin{document}

\title{An optimal regularity theory for immersed stable minimal hypersurfaces with small singular set}

\author{
	Paul Minter
        \and 
        Zhengyi Xiao
}

\address{\textnormal{Department of Pure Mathematics and Mathematical Statistics, University of Cambridge}}
\email{pdtwm2@cam.ac.uk, zx273@cam.ac.uk}

\begin{abstract}
	We show that if $M^n$ is a properly immersed, two-sided, stable minimal hypersurface in $B^{n+1}_1(0)\setminus S$, where $S$ is closed with $\H^{n-2}(S)=0$, then $\dim_\H\sing(M)\leq n-7$, namely $\overline{M}\cap B^{n+1}_1(0)$ is represented by a smooth minimal immersion outside a closed set of generally unavoidable singularities which has Hausdorff dimension at most $n-7$. This provides the optimal a priori size assumption on the non-immersed singular set in order to guarantee optimal regularity. Consequently, such objects form a compact class under mass upper bounds.
\end{abstract}

\maketitle

\tableofcontents

\part{Introduction and Main Theorems}

The regularity theory of stable minimal hypersurfaces has been extensively studied over the last 50 years. To appropriately frame our results we briefly summarise the developments on this problem, starting with the case of \emph{embedded} stable minimal hypersurfaces.

The renowned regularity theory of Schoen--Simon \cite{SS81} shows that if an $n$-dimensional stable minimal hypersurface is smoothly embedded outside a closed $\H^{n-2}$-null set, then the hypersurface can in fact be extended to one which is smoothly embedded away from a closed set of generally unavoidable singularities which has Hausdorff dimension at most $n-7$ (this dimension bound is optimal in light of standard singularity models). Furthermore, this class is compact in the varifold topology under uniform mass bounds. In fact, the arguments in \cite{SS81} readily extend to the case where one assumes embeddedness away from a set of finite $\H^{n-2}$-measure or, more generally, a set of vanishing $2$-capacity. Around 30 years later, Wickramasekera \cite{Wic14} showed that the results of Schoen--Simon remain true under the significantly weaker (and optimal) assumption that the stable minimal hypersurface is smoothly embedded outside a $\H^{n-1}$-null set. In fact, Wickramasekera shows that this assumption is equivalent to a geometric structural condition (which is a priori much weaker), namely that the associated varifold has no so-called \emph{classical singularities}; these include, for instance, immersed singular points. It is in this way that one can view Wickramasekera's work as fully resolving the regularity and compactness question for \emph{embedded} stable minimal hypersurfaces.

The corresponding questions for properly \emph{immersed} two-sided stable minimal hypersurfaces remained very much open until recent years. Initially, the work of Schoen--Simon--Yau \cite{SSY75} provided a priori curvature estimates for stable minimal hypersurfaces smoothly immersed in Riemannian manifolds of dimension at most $6$. More recently, Bellettini \cite{Bel23} extended the work of Schoen--Simon--Yau to Euclidean space of dimension at most $7$ (the optimal dimension range for such estimates), and also extended the key sheeting theorem in the aforementioned work of Schoen--Simon to stable minimal hypersurfaces which are smoothly \emph{immersed} outside a closed set of vanishing $2$-capacity. Consequently, Bellettini is able to \emph{rule out} branch point singularities occurring if the hypersurface is known to be smoothly immersed outside a $\H^{n-2}$-null set. Shortly after, the work of Hong--Li--Wang \cite{HLW24} adapted Bellettini's methods to provide a regularity and compactness theorem for stable minimal hypersurfaces which are smoothly immersed outside a closed set of Hausdorff dimension \emph{strictly smaller than} $n-4+\frac{4}{n}$ (in particular, branch points are ruled out by Bellettini's work), in particular showing that such objects must be smoothly immersed outside a closed set of Hausdorff dimension at most $n-7$. We also mention that earlier work of Wickramasekera \cite{Wic08} already established these results in the multiplicity $2$ setting (see also \cite{MW24}).

In the present article, we complete part of this theory by establishing that an $n$-dimensional stable minimal hypersurface which is two-sided and properly immersed outside a closed $\H^{n-2}$-null set necessarily has a singular set of Hausdorff dimension $\leq n-7$ (which is again optimal). This provides the generalisation of the Schoen--Simon regularity theory from the embedded case to the immersed (non-branched) setting. We stress that, unlike in the embedded setting, in the immersed setting this a priori size assumption on the singular set is optimal for the conclusion. Indeed, if instead $\H^{n-2}(\sing(M))>0$ then the result need not hold as, for instance, $\overline{M}$ can then contain branch points (cf.~\cite{SW07, FMM26}).

Our main theorem is thus the following regularity and compactness theorem. Here, if $\iota: M\to B^{n+1}_2(0)$ is a smooth, proper, minimal immersion, we identify $M$ with the stationary integral varifold associated to the pushforward $\iota_\#M$ in the standard fashion, namely $V = \mathbf{v}(\iota(M),\theta)$ there $\theta(x):= \#\{y:\iota(y) = x\}$.

\begin{thmx}\label{thm:main-compactness}
    Let $n\geq 2$. Suppose that $(M_j)_{j=1}^\infty$ is a sequence of two-sided, stable minimal hypersurfaces smoothly properly immersed in $B^{n+1}_1(0)$ with:
    \begin{itemize}
        \item $0\in \overline{M}_j$ for all $j\geq 1$;
        \item $\sup_j \H^n(M_j\cap B^{n+1}_1(0))<\infty$;
        \item $\H^{n-2}(\sing(M_j))=0$, where $\sing(M_j):= B^{n+1}_1(0)\cap (\overline{M}_j\setminus M_j).$
    \end{itemize}
    Then, there exists a subsequence $\{j^\prime\}\subset\{j\}$, a stationary integral varifold $V$ in $B^{n+1}_1(0)$, and a relatively closed subset $S\subset \spt\|V\|\cap B^{n+1}_{1}(0)$ where $\dim_\H(S)\leq n-7$ such that:
    \begin{enumerate}
        \item [\textnormal{(i)}] $V\res (B^{n+1}_1(0)\setminus S)$ is represented by a proper, two-sided, stable minimal immersion $\iota:M\to B^{n+1}_1(0)$;
        \item [\textnormal{(ii)}] $M_{j^\prime}$ converges to $V$ locally in $B^{n+1}_1(0)$ in the varifold topology;
        \item [\textnormal{(iii)}] $M_{j^\prime}$ converges to $M$ locally smoothly in $B^{n+1}_1(0)\setminus S$.
    \end{enumerate}
    In particular, if $M$ is a two-sided stable minimal hypersurface which is properly smoothly immersed in $B^{n+1}_1(0)$ with $\H^{n-2}(\sing(M))=0$, then in fact $\overline{M}\cap B^{n+1}_1(0)$ is represented by a proper, two-sided, smooth, stable minimal immersion $M^\prime$ with $\dim_\H(\sing(M^\prime))\leq n-7$.
\end{thmx}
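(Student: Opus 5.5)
The plan is to follow the architecture of Schoen--Simon \cite{SS81} and its immersed adaptation by Bellettini \cite{Bel23} and Hong--Li--Wang \cite{HLW24}, with a dimension-reduction induction. The main ingredient will be a sheeting theorem: if $M$ is as in the statement, with $\H^{n-2}(\sing(M))=0$ and mass ratio bounds, and $\overline{M}$ is weakly close (in $L^2$ height excess) to a multiplicity-$q$ hyperplane in $B_1$, then in $B_{1/2}$ the hypersurface $\overline{M}$ is a union of $q$ smooth minimal graphs over the plane. These graphs may cross, so we obtain an immersion rather than an embedding, and we get uniform $C^{1,\alpha}$ and hence $C^k$ estimates.

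Granting the sheeting theorem, the rest follows the standard route.
\begin{enumerate}
\item[(a)] \emph{Tangent cones.} Take a varifold limit $V$ along a subsequence using Allard compactness. Stability and the $\H^{n-2}$-null hypothesis pass to tangent cones via the Schoen--Simon cutoff argument. This uses that $\H^{n-2}$-null sets have zero $2$-capacity, so log-cutoffs exist, and it is where the hypothesis enters.
\item[(b)] \emph{Excluding codimension-one singular cones.} Rule out tangent cones supported on a union of half-hyperplanes meeting along an $(n-1)$-dimensional axis. Classical cones other than transverse pairs of full hyperplanes, which are immersed points, cannot arise as limits of hypersurfaces that are immersed away from an $\H^{n-2}$-null set. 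Branch-type cones (a multiplicity hyperplane) are handled by the sheeting theorem.
\item[(c)] \emph{Dimension reduction.} Run Federer--Almgren dimension reduction, together with Simons' classification of stable cones in dimensions $\le 6$, to obtain $\dim_\H S\le n-7$. Here $S$ is the set of points where no tangent cone is a union of multiplicity hyperplanes.
\item[(d)] \emph{Smooth convergence.} Away from $S$, every tangent cone is a finite union of hyperplanes. Points with a single plane give sheeting. Points with several transverse planes decompose, after a local analysis, into several groups of nearly parallel sheets, each of which sheets separately. This yields smooth local convergence of the $M_{j'}$, and the local sheets patch together into a proper two-sided stable immersion $\iota:M\to B_1\setminus S$.
\end{enumerate}
The final sentence of the theorem follows by applying the result to the constant sequence $M_j=M$.

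The main obstacle is the sheeting theorem, since sheets of the immersion may cross. The plan is to adapt Bellettini's approach. First, use the stability inequality with test functions built from $|A|$ (Simons' inequality, valid on the immersion) and capacity cutoffs near $\sing(M)$ to obtain Schoen--Simon--Yau $L^p$ curvature estimates for $p<4+\sqrt{8/n}$. Next, since there are no branch points away from an $\H^{n-2}$-null set, use Bellettini's argument to pass from these estimates to a decomposition into graphs. Previously one needed $\dim\sing<n-4+4/n$ so that the integrability exponent of $|A|$ beats the size of the singular set (as in \cite{HLW24}). I would instead avoid needing $|A|\in L^p$ near $S$. The idea is to run a Lipschitz approximation over the plane and show that the set of "bad" points has small measure by a covering argument using only capacity-zero cutoffs. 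Then an induction on $q$, via a minimum-distance-type argument in the spirit of Wickramasekera, separates the sheets; here two-sidedness supplies a global unit normal and hence the orderability of graphs locally. I expect this induction step, controlling crossings of sheets near points of $\sing(M)$ using only $\H^{n-2}(\sing M)=0$, to be the crux.
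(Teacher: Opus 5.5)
There is a genuine gap, and it is not where you put the crux. The sheeting theorem near a multiplicity-$q$ hyperplane is already available under exactly your hypothesis. Bellettini's theorem (Theorem \ref{thm:bellettini}) only needs the non-immersed singular set to have zero $2$-capacity, so nothing needs reproving there. Your sketch for it is also off: two-sidedness gives a unit normal but no ordering of the sheets of an immersion, which may cross; the ordering in Schoen--Simon and Wickramasekera comes from embeddedness. The $n-4+\frac{4}{n}$ threshold of \cite{HLW24} does not come from sheeting near one plane. It comes from their $\eps$-regularity near \emph{unions} of (half-)hyperplanes, where identities involving powers of $|A|$ must be extended across the singular set.

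That union-of-planes regularity is precisely what your step (d) passes over with ``after a local analysis''. Suppose $x$ has tangent cone $\BC=\sum_i q_i|P_i|$ with at least two distinct $P_i$. You can split $V$ into groups of nearly parallel sheets only away from $S(\BC)$, where the density is $<Q=\Theta_{\BC}(0)$. Near the spine all the sheets come together, and two things can happen:
\begin{itemize}
\item points of density $Q$ may accumulate there, and the relevant nearby cones at smaller scales may be finer than $\BC$ (multiplicity splitting, smaller spine, hyperplanes splitting into half-hyperplanes);
\item such points may be absent on large regions (density gaps, e.g.\ three planes through a common line perturbed to meet only at a point).
\end{itemize}
Neither single-plane sheeting nor a minimum distance theorem with $\eps=\eps(\BC)$ controls this. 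This $\eps$-regularity theorem (Theorem \ref{thm:main-reg-basic}, reduced to Theorem \ref{thm:main-reg}) is the main content of the paper. It is proved by induction on density, so that in $\{\Theta_V<Q\}$ the non-immersed singular set has dimension $\leq n-7$ and \cite{HLW24} applies in density gaps. The argument then uses $L^2$ estimates and fine blow-ups relative to the partially ordered family of cones in Figure \ref{fig:0}, a classification of homogeneous degree one blow-ups, and an excess decay iteration.

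Without this theorem, steps (a) and (c) do not close either. The $\H^{n-2}$-null hypothesis does not pass to varifold limits or tangent cones; the singular set of the limit is exactly what must be controlled. Stability as an immersion descends to a tangent cone only once you know the convergence is smooth away from its vertex. To apply Simons' classification (Theorem \ref{thm:main-Simons}) in the dimension reduction, you need $\BC\setminus\{0\}$ to be a two-sided stable immersion. That means every point of $\BC\setminus\{0\}$ whose tangent cone is a sum of hyperplanes must be an immersed point with smooth convergence nearby. To conclude that $V$ is immersed off your set $S$, you need the same statement at points of $V$. Both are instances of the missing $\eps$-regularity near $\sum_i q_i|P_i|$.

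Your step (b) is fine in its main claim: excluding classical cones that are not sums of hyperplanes is Theorem \ref{thm:minimum-distance}, which follows from the Schoen--Simon slicing argument since $\H^{n-1}(\sing(M))=0$. However, your parenthetical assertion there that transverse pairs of full hyperplanes give immersed points is again the missing theorem.
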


We will not distinguish between $M$ and its extension $M^\prime$, and will simply write $M$ for both.

\begin{remark}
As usual, if $2\leq n\leq 7$ and we write $\dim_\H(A)\leq n-7$ for a set $A$ we mean that $A=\emptyset$ if $2\leq n\leq 6$ and $A$ is discrete if $n=7$.
\end{remark}
\begin{remark}
	In Theorem \ref{thm:main-compactness} we will in fact show that $S\subset \S_{n-7}$, where $\S_{n-7}$ is the $(n-7)$-stratum of the singular set of $V$. Hence, by Naber--Valtorta \cite{NV20}, $S$ (and hence $\sing(M)$) is countably $(n-7)$-rectifiable. As we have an $\eps$-regularity theorem (cf.~Theorem \ref{thm:main-reg-basic}), one should also get uniform $(n-7)$-dimensional Minkowski estimates on $\sing(M)$.
\end{remark}

\medskip

Theorem \ref{thm:main-compactness} improves on the work of Hong--Li--Wang \cite{HLW24} which, as mentioned, proved the same result assuming the stronger assumption $\dim_\H(\sing(M)) < n-4+\frac{4}{n}$ (for $n=2$, we take this to mean $\sing(M)=\emptyset$) in place of $\H^{n-2}(\sing(M))=0$. It also answers the regularity and compactness question of Bellettini \cite{Bel23} in the \emph{non-branched} setting.

The corresponding statement to Theorem \ref{thm:main-compactness} when $\H^{n-2}(\sing(M))>0$, namely when \emph{allowing} branch points, remains an interesting open problem.

Nonetheless, our proof of Theorem \ref{thm:main-compactness} \emph{relies} on both the regularity results of \cite{Bel23, HLW24}. Indeed, as mentioned in \cite[Part II]{Bel23}, after establishing the sheeting theorem in \cite[Theorem 5]{Bel23} the missing ingredient is an $\eps$-regularity theorem for $M$ as in Theorem \ref{thm:main-compactness} near cones formed of \emph{unions} of half-hyperplanes (which includes unions of full hyperplanes). The main technical work of the present paper is proving this $\eps$-regularity theorem when $\H^{n-2}(\sing(M))=0$.

To describe the main difficulties in proving such an $\eps$-regularity theorem, the first point to note is that, unlike in \cite{Bel23, HLW24}, our method of proof is \emph{not} based on intrinsic PDE analysis, but instead on methods within geometric measure theory. This difference in approach is reminiscent of how the results of Wickramasekera \cite{Wic14} are, at present, only known to be provable using ideas from geometric measure theory. Indeed, the core analytical difficulty in both cases with a PDE approach is that the a priori size assumption on the singular set is too large to allow one to extend key integral identities (typically involving powers of the norm of the second fundamental form) across the singular set\footnote{In the sheeting theorems (near hyperplanes) of Schoen--Simon and Bellettini, the singular set having $2$-capacity zero is sufficient for this. In \cite{HLW24}, for the $\eps$-regularity near unions of (half-)hyperplanes, one needs to extend suitable identities which involve powers of the norm of the second fundamental form of the hypersurface, which requires a smaller size assumption on the singular set, which is the reason their result requires the singular set to have Hausdorff dimension $<n-4+\frac{4}{n}$.}. In our approach, like in \cite{Wic14}, we work inductively on the density, using regularity information obtained at singular points of lower density to understand those of higher density. It is within this inductive bootstrapping of regularity where we utilise the regularity theories of \cite{Bel23, HLW24} to understand regions with sufficient regularity. This should be compared to how the Schoen--Simon theory is used in \cite{Wic14}.

However, compared to \cite{Wic14} (and the original work of Simon \cite{Sim93}) the immersed case presents significant new difficulties in that it allows the possibility of so-called \emph{density gaps}, i.e.~regions in a varifold $V$ which is close to a cone $\BC$ where there are no points in $V$ which have density at least as large as the density of the vertex in $\BC$. An illustrative example is when $\BC$ is a union of 3 planes in $\R^3$ which all intersect along a common line, yet $V$ is a nearby union of $3$ planes which only intersect at $0$.

To resolve this issue, we draw inspiration from the work of Becker-Kahn \cite{BK17} and the recent work \cite{BKMW25}, where regularity theorems were established in multiplicity $2$ situations where density gaps could occur. It is in understanding density gaps that we utilise the regularity theory of \cite{HLW24}. However, we must also extend these ideas to higher multiplicity settings, which itself presents further difficulties. We overcome these by taking inspiration from the works \cite{MW24, Min21a}, where one inductively establishes $\eps$-regularity theorems of successive ``fineness'' in order to prove the final regularity theorem, which is done via an excess decay argument once one inductively understands the regularity of so-called ``blow-ups'' (which describe the linearised problem). Thus, our work uses and further develops the ideas in the works \cite{Sim93, Wic14, BK17, MW24, Min21a, BKMW25}, and ultimately presents a unified framework of these regularity techniques. We will describe this emerging philosophy more precisely in Section \ref{sec:prelim}.

We stress that our proof is therefore \emph{not} a reduction to a direct application of \cite{HLW24}; Theorem \ref{thm:main-compactness} and the associated regularity theorem (cf.~Theorem \ref{thm:main-reg-basic}) are proved, in full, using different methods. We utilise \cite{HLW24} only in order to understand the structure of the hypersurface in density gaps, with the applicability of \cite{HLW24} being possible via our inductive scheme.

In a follow-up work, we plan to address a further generalisation of the results in the present paper in which the a priori size assumption on the singular set is replaced by (weaker) structural assumptions of a similar nature to those in \cite{Wic14}. Looking to the future with this as a motivation, we will often work with more general classes of varifolds one strictly needs for Theorem \ref{thm:main-compactness}.

\textbf{Acknowledgements.} This research was conducted during the period PM was a Clay Research Fellow. The authors would like to thank Neshan Wickramasekera for many helpful discussions.

\section{Preliminaries and Reductions}\label{sec:prelim}

The overarching structure of the proof of Theorem \ref{thm:main-compactness} follows the same blueprint as that in \cite{Wic14} (and in turn the simpler \cite{SS81}): one first passes to a weak limit in the varifold topology, and then analyses the possible tangent cones at singular points in the limit as well as the convergence. As in \cite{Wic14}, we do this by working inductively on the density of the singular point, establishing quantitative $\eps$-regularity theorems near flat cones (namely, those supported on unions of half-hyperplanes) as we go. The inductive nature of the argument allows one to leverage better regularity properties in regions of lower density than that of the cone which one is close to. Furthermore, these $\eps$-regularity theorems ultimately allow us to improve the convergence of the sequence away from the (non-immersed) singular set of the limit.

Hence, there are several key ingredients which we need to prove Theorem \ref{thm:main-compactness}. These are:
\begin{enumerate}
	\item [(i)] \emph{An $\eps$-regularity theorem near hyperplanes with multiplicity.} In the present context, this follows from the work of Bellettini \cite{Bel23}. We recall this in Theorem \ref{thm:bellettini}.
	\item [(ii)] \emph{An $\eps$-regularity theorem near cones formed of sums of half-hyperplanes} (with the sum not a single hyperplane with multiplicity). Ultimately, we will show that the only such cones which are possible tangent cones in the context of Theorem \ref{thm:main-compactness} are sums of \emph{full} hyperplanes rather than just half-hyperplanes, but in the proof we need to analyse this more general case. This is the main technical result of the present work (cf.~Theorem \ref{thm:main-reg}).
	\item [(iii)] \emph{A classification theorem for low-dimensional hypercones $\BC$ in $\R^{n+1}$ which are represented by two-sided smooth immersions which are stable (as an immersion) away from the origin.} We will show that, if $n\in\{2,3,4,5,6\}$, then such $\BC$ must be the union of (full) hyperplanes. This is the natural analogue in the immersed setting of Simons' classification for stable minimal hypercones which are \emph{embedded} away from $0$ (and indeed the proof is the same, see Theorem \ref{thm:main-Simons}).
\end{enumerate}
As previously mentioned, at present we do not know an alternative approach for proving the $\eps$-regularity theorem mentioned in (ii) above only under the $\H^{n-2}$-null assumption on the singular set, and indeed our proof will utilise tools from geometric measure theory to overcome the apparent difficulties with an intrinsic PDE approach. Again, if one assumes a priori that the size of the non-immersed singular set is smaller, namely having Hausdorff dimension \emph{strictly smaller} than $n-4+\frac{4}{n}$, then Hong--Li--Wang \cite{HLW24} have succeeded in using an intrinsic approach to proving this $\eps$-regularity theorem. In our inductive approach, we are able to gain leverage from the inductive information to apply the result of \cite{HLW24} in `density gaps'; this is done by ruling out (non-immersed) singular points in the high-dimensional strata of the singular set. If one does not work inductively, it is unclear whether the singular set could still be too large in such regions.

We begin by stating versions of our main results for $M$ as in Theorem \ref{thm:main-compactness}. We again denote by $V$ the natural varifold associated to the image of $M$, and often we will not distinguish between $M$ and $V$ (and sometimes not even with $\iota(M)$). Furthermore, for any cone $\BC$, we define the (\emph{one-sided}) $L^2$\emph{-height excess of $V$ relative to $\BC$} by:
$$E_{V,\BC}:=\left(\int_{B^{n+1}_1(0)}\dist^2(x,\BC)\, \ext\|V\|(x)\right)^{1/2}.$$
Here, $\BC$ is an integral varifold, and we have used the shorthand $\dist(x,\BC)\equiv \dist(x,\spt\|\BC\|)$.
\begin{thmx}[$\eps$-Regularity Theorem]\label{thm:main-reg-basic}
	Fix $\Lambda\in (0,\infty)$, and let $\BC = \bigcup_{i=1}^N P_i$, where $P_i$ are distinct hyperplanes in $\R^{n+1}$ passing through $0$. Then, there exists $\eps = \eps(\Lambda,\BC)\in (0,1)$ and $Q = Q(n,\Lambda)\in \{1,2,3,\dotsc\}$ such that the following holds. Suppose that $M$ is a two-sided properly immersed stable minimal hypersurface in $B^{n+1}_1(0)$ with $\H^n(M)\leq\Lambda$ and $\H^{n-2}(\sing(M))=0$. Then, if
	$$\dist_\H(M\cap B^{n+1}_1(0), \BC\cap B^{n+1}_1(0)) < \eps,$$
	then there exists $q\in \{1,2,\dotsc,Q\}$ and integers $i_1,\dotsc,i_q\in \{1,\dotsc,N\}$ such that for each $j\in\{1,\dotsc,q\}$ there is a $C^{2}$ function $u_j:B^{n+1}_{1/2}(0)\cap P_{i_j}\to P_{i_j}^\perp$ such that
	$$\overline{M}\cap B^{n+1}_{1/2}(0) = \bigcup^q_{j=1}\graph(u_j)\cap B^{n+1}_{1/2}(0),$$
	with $\|u_j\|_{C^{2}}\leq CE_{V,\BC}$. Furthermore:
	\begin{enumerate}
		\item [\textnormal{(i)}] each $u_j$ solves the minimal surface equation on their domain (hence, each $u_j$ is smooth);
		\item [\textnormal{(ii)}] for each $\ell=1,\dotsc,N$ we have $\{j:i_j = \ell\} \neq\emptyset$.
	\end{enumerate}	
	In particular, $\overline{M}\cap B^{n+1}_{1/2}(0)$ is a smoothly immersed minimal submanifold. Here, $C = C(n,\Lambda)\in (0,\infty)$.
\end{thmx}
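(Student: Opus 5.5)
We prove Theorem \ref{thm:main-reg-basic} by contradiction and compactness, combined with an excess-decay (Simon-type) argument, working by induction on the density $\Theta_\BC(0)=\sum_i 1\cdot$(multiplicity) of the cone. The induction hypothesis is that the $\eps$-regularity conclusion already holds near every cone of the same type with strictly smaller vertex density. Such cones arise, for instance, at points of the axis-free region of $\BC$: away from the common intersection $L=\bigcap_i P_i$, the cone locally splits into fewer hyperplanes through a lower-dimensional axis. It also holds trivially near a single multiplicity-one or higher-multiplicity hyperplane, by Bellettini's sheeting theorem (Theorem \ref{thm:bellettini}).

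First, the Hausdorff closeness and the mass bound $\Lambda$ give, via the monotonicity formula and the constancy theorem, varifold closeness of $V$ to $\BC$ with some multiplicities $q_\ell$ on each $P_\ell$. Here $\sum q_\ell\le Q(n,\Lambda)$, and every $q_\ell\ge1$, which will give (ii). Away from a tubular neighbourhood of the axis of $\BC$, the cone is locally a sum of fewer hyperplanes. The inductive hypothesis together with Theorem \ref{thm:bellettini} then represents $M$ there as $q_\ell$ ordered graphs over each $P_\ell$ with $C^{1,\alpha}$ bounds controlled by $E_{V,\BC}$. This region-by-region graphical description is the "fineness" step.

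Next, we derive non-concentration estimates near the axis: Simon's $L^2$ estimates bound $\int \dist^2(x,\BC)/|x^{\perp_L}|^{2-\gamma}$ by $E^2$. The difficulty is that these require points of density $\ge\Theta_\BC(0)$ close to the axis, and these may be absent because of density gaps. To handle this we split into two cases. If there are high-density points throughout, the standard argument applies. If there is a gap region $B_\rho(z)$ containing no point of density $\ge\Theta_\BC(0)$, then by induction every singular point there has lower density and is thus regular or immersed. In that case $\sing(M)\cap B_\rho(z)$ lies in the stratum where \cite{HLW24} applies, so we can invoke their regularity to conclude that $M$ is an immersion of graphs there, in the style of \cite{BK17,BKMW25}. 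With both cases in hand, we blow up a sequence with $\eps_j\to0$, normalising by $E_{V_j,\BC}$. The limits are multivalued harmonic-type functions over each $P_\ell$. Using stability, the two-sidedness, and the graphical structure away from the axis, we show these blow-ups are classified: each is a sum of $q_\ell$ harmonic functions on the whole of $P_\ell\cap B_{1/2}$, with no branching across $L$. This yields excess decay $E_{V,\BC'}(\theta)\le \tfrac12 E_{V,\BC}$ for a rotated cone $\BC'$ of the same type.

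Iterating the excess decay at every base point near the axis gives a unique tangent cone at each point, with Hölder control, and hence a $C^{1,\alpha}$ decomposition into $q$ graphs $u_j$ over the $P_{i_j}$. Since these graphs are immersed minimal pieces, elliptic regularity upgrades them to smooth solutions of the minimal surface equation with $\|u_j\|_{C^2}\le CE_{V,\BC}$. The main obstacle is the blow-up classification combined with the density-gap case. One must rule out blow-ups in which sheets over different $P_\ell$ "reconnect" across $L$ as half-hyperplanes. I expect to handle this by an inductive sub-scheme of successively finer $\eps$-regularity theorems near half-hyperplane cones, as in \cite{MW24,Min21a}. This sub-scheme should show that such cones cannot arise and that the blow-up pieces genuinely extend across $L$.
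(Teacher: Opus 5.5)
Your outer framework matches the paper's: contradiction with Allard compactness, induction on density, Theorem \ref{thm:bellettini} near hyperplanes, and \cite{HLW24} in density gaps. One correction there: induction does not make lower-density singular points ``regular or immersed''. What it gives, together with the slicing minimum-distance theorem and Theorem \ref{thm:main-Simons}, is $\sing_*\cap\{\Theta<Q\}\subset\S_{n-7}$, which is what \cite{HLW24} needs.

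The real gap is the step that carries all the weight. You blow up relative to the fixed cone $\BC$ and claim decay $E_{V,\BC'}(\theta)\le\frac12 E_{V,\BC}$ for a rotated cone $\BC'$ of the same type. This fails when some $P_\ell$ has multiplicity $q_\ell\ge 2$. The blow-up over $P_\ell$ can then consist of several distinct linear functions, because $V$ may be close to a finer cone in which $P_\ell$ has split into several planes (possibly with a smaller spine) or into half-hyperplanes along $L$. A cone of the same class absorbs only one linear function per plane, so there is no decay. If you instead let $\BC'$ split the multiplicity, the next iteration needs, for a cone whose planes are only $O(E)$ apart:
\begin{itemize}
\item graphical representation over $\BC'$;
\item Simon-type $L^2$ estimates and non-concentration;
\item accumulation of density-$\geq Q$ points near $S(\BC')$.
\end{itemize}
None of these holds with uniform constants without a fine hypothesis such as $F_{V,\BC}<\beta\mathcal{F}_{V,\BC}$ (Hypothesis $(\dagger)$). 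Your inductive hypothesis, with $\eps=\eps(\Lambda,\BC)$ depending on the cone, cannot supply this uniformity.

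This is the missing idea. The paper places all relevant cones in the partial order of Figure \ref{fig:0}. It proves the $L^2$ estimates (Theorem \ref{thm:main-L^2-estimate}) by induction from coarse to fine, using the replacement lemma to blow up off good coarser representatives. It then proves the fine $\eps$-regularity theorem (Theorem \ref{thm:fine-reg}) by induction from fine to coarse. A blow-up close to one generated by finer cones is then handled by the already-proved finer theorem (Lemma \ref{lemma:blow-up-eps-reg}).

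Your classification step also lacks a mechanism: ``stability, two-sidedness and graphical structure away from the axis'' does not give ``no branching across $L$''. Your \cite{HLW24} argument does give harmonicity across $L$ away from the limiting good-density set $\D$. But $\D$ can be a proper closed subset of $L$ with $\H^{n-2}(\D)=\infty$, and there removability fails. The paper handles this in Theorem \ref{thm:classification} with a reverse Hardt--Simon dichotomy and downward induction on $\dim S(v)$, again fed by the inductive fine $\eps$-regularity.

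Finally, half-hyperplane cones cannot be ruled out on the side. They arise as perturbations of type (II-b) cones inside the excess decay even though $M$ has no classical singularities. This is why the paper works in $\V_Q$ (Cases B and C of Theorem \ref{thm:main-reg}) and then obtains Theorem \ref{thm:main-reg-basic} from Theorem \ref{thm:main-cone} by a short compactness argument:
\begin{itemize}
\item the limit is a cone $\widetilde{\BC}$ with $\widetilde{\BC}\res B_1=V$, so alternative (b) is forced;
\item the absence of non-immersed classical singularities makes $\widetilde{\BC}$ a sum of full hyperplanes;
\item the bound on the $u_j$ follows from $L^2$ and doubling estimates.
\end{itemize}
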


\begin{remark}
	One could also phrase the conclusion of Theorem \ref{thm:main-reg-basic} as a bound on the size of the second fundamental form of $M$, similarly to that seen in the related works \cite{Bel23, HLW24}.
\end{remark}

\begin{remark}\label{remark:dependence}
	One should be able to remove the dependence of $\eps$ on $\BC$ in Theorem \ref{thm:main-reg-basic}, so that in fact $\eps$ only depends on $n,\Lambda$. The difficulty in doing this is ensuring that the estimate on the $u_j$ still holds, as the planes in $\BC$ might be very close to one another. We will not need this for Theorem \ref{thm:main-compactness}, so we do not pursue it.
\end{remark}

\medskip

For the second theorem, it is convenient to first fix a choice of metric $d$ which metrises the $n$-varifold topology on $B^{n+1}_{1}(0)$ (induced by its Fréchet structure); this can be chosen to depend only on $n$. We fix such a $d$ throughout the paper.

\begin{thmx}[Minimal Distance Theorem]\label{thm:minimum-distance}
	Let $\BC$ be a $n$-dimensional stationary cone in $\R^{n+1}$ such that $\BC$ can be written as a finite sum of $n$-dimensional half-hyperplanes all with the same $(n-1)$-dimensional boundary, yet $\BC$ cannot be written as a sum of hyperplanes. Then, there exists $\eps = \eps(\BC)\in (0,1)$ such that the following holds. Suppose $M$ is a two-sided stable minimal hypersurface smoothly properly immersed in $B^{n+1}_1(0)$ with $\H^{n-2}(\sing(M))=0$, and let $V$ be the natural varifold associated to $M$. Then:
	$$d(V\res B^{n+1}_1(0),\BC\res B^{n+1}_1(0))\geq\eps.$$
\end{thmx}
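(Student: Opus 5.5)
Our plan is to argue by contradiction, inside an induction on the density of the cone, in the spirit of the minimum distance theorems of \cite{Sim93, Wic14} and their multiplicity-two analogues \cite{BK17, BKMW25}. Suppose the statement fails for some $\BC$. Then there is a sequence $M_k$ of two-sided, stable, properly immersed minimal hypersurfaces with $\H^{n-2}(\sing(M_k))=0$ and associated varifolds $V_k$ satisfying $d(V_k,\BC)\to 0$. Among all such counterexample cones we choose $\BC$ of smallest density $\Theta(\|\BC\|,0)$. This is legitimate because the densities of sums of half-hyperplanes lie in $\frac12\Z$. By this choice, the Minimal Distance Theorem holds for every cone of the same type with strictly smaller density. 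Theorem \ref{thm:main-reg-basic} and Theorem \ref{thm:bellettini} are available in the inductive form in which the paper proves them, i.e.\ near cones of density at most $\Theta(\|\BC\|,0)$ (or strictly smaller, as the induction permits).

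The first step is to establish regularity away from the axis $L:=\del P$ of $\BC$, which is common to all the half-hyperplanes. Near any point of $\BC\setminus L$, the cone is locally a single hyperplane with integer multiplicity, so Theorem \ref{thm:bellettini} applies. Consequently, on compact subsets of $B_1\setminus L$, $V_k$ is a union of smooth minimal graphs over the half-hyperplanes of $\BC$ whose $C^2$ norms tend to zero. The second step is to rule out singular behaviour at intermediate points near $L$. If $V_k$ has a point $z_k$ of density at least $\Theta(\|\BC\|,0)$ close to $L$, we recentre there and rescale. We then compare with the excess relative to $\BC$, and run the standard excess-decay scheme of \cite{Sim93, Wic14, Min21a}. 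This uses the regularity of blow-ups to get decay of $E_{V_k,\BC'}$ toward nearby cones $\BC'$ in the same class, and it yields that $V_k$ is a $C^{1,\alpha}$ perturbation of a rotated copy of $\BC$ near $L$. That conclusion contradicts the structure of $M_k$. A smoothly immersed two-sided hypersurface cannot have a genuine boundary edge: near a point of $\sing$ modelled on a transverse crossing of half-planes, the half-sheets must pair up into full immersed sheets. Since $\BC$ is not a sum of hyperplanes, some half-hyperplane cannot be completed through $L$, and this produces either $\H^{n-2}$-positive singular set or a violation of stationarity or immersion. If instead no such high-density points exist, we are in a \emph{density gap}. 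There we use the inductive hypothesis, which shows that points of lower density are regular or lie in the $(n-7)$-stratum. This puts the singular set of $V_k$ near $L$ below dimension $n-4+\frac4n$, so \cite{HLW24} applies and gives that $V_k$ is smoothly immersed with curvature estimates in a neighbourhood of $L$.

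The final step derives the contradiction. By Allard-type compactness for immersions with bounded second fundamental form, the smooth convergence then passes to the limit $\BC$ across $L$ with multiplicity. This would make $\BC\res(B_1\setminus\{0\})$ a smoothly immersed hypersurface near $L$. A smoothly immersed cone whose support is a union of half-hyperplanes with common boundary must pair the half-hyperplanes into full hyperplanes, since each immersed sheet is smooth across $L$. This contradicts the hypothesis on $\BC$.

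We expect the main obstacle to be the density-gap regime: showing that the inductive information genuinely rules out non-immersed singularities of $V_k$ in the high-dimensional strata near $L$, so that the hypotheses of \cite{HLW24} are verified. Equally delicate is obtaining uniform curvature estimates that survive the limit in higher multiplicity. For this we would follow \cite{BK17, BKMW25}: first classify the possible coarse blow-ups relative to $\BC$ (which are averaged harmonic-type functions on the half-hyperplanes), then show they are smooth across $L$ with matching conditions forcing pairing. The contradiction is then transferred back to $V_k$ via the excess-decay lemma.
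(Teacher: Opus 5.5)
Your outline is essentially the paper's argument with the conical $\eps$-regularity theorem unpacked. The paper applies Theorem \ref{thm:main-cone} to the contradiction sequence to see that $V_j$ is a $C^{1,\alpha}$ perturbation of $\BC$, whose $(n-1)$-dimensional set of non-immersed classical singularities contradicts $\H^{n-2}(\sing(M_j))=0$; for cones of type (II-a) that theorem is obtained (Remark \ref{remark:cone-2}(3)) in the way you propose, by excluding density gaps via Lemma \ref{lemma:gaps} and Theorem \ref{thm:HLW} and then running the excess decay of \cite[Section 16]{Wic14} with Theorem \ref{thm:bellettini} as the sheeting input.

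In filling in the details, three corrections:

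\begin{itemize}
\item The gap alternative has to be applied on every ball $B_\delta(y)$ with $y\in L$, and at every scale of the iteration. A single point of density $\geq\Theta_{\BC}(0)$ does not suffice to run the decay.
\item When $\BC$ carries multiplicity, the decay must be allowed to pass to finer cones in which coincident sheets separate (as in \cite{Wic14}), rather than to cones of the same class.
\item No blow-up ``pairing'' across $L$ is needed. In a gap, Theorem \ref{thm:HLW} already forces $\BC$ to be a sum of hyperplanes; in the no-gap regime the contradiction comes from the classical singular set.
\end{itemize}
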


\begin{remark}
	Often when a constant depends on $n$, $Q:=\Theta_{\BC}(0)$, and $\BC$, as a shorthand we will simply write that it depends on $\BC$, as the information of $n$ and $Q$ is encoded within $\BC$. In fact, similar to Remark \ref{remark:dependence}, when a constant depends on $\BC$ it should in fact only depend on $n,Q$, and the value $\inf_P \dist_\H(\spt\|\BC\|\cap B_1, P\cap B_1)$, where the infimum is taken over all hyperplanes $P$.
\end{remark}

\medskip

The reader should compare Theorem \ref{thm:main-reg-basic} and Theorem \ref{thm:minimum-distance} with \cite[Theorems 3.3 \& 3.4]{Wic14}. The differences are that Theorem \ref{thm:main-reg-basic} applies to the cone being a \emph{sum} of hyperplanes rather than a single hyperplane, and the minimum distance theorem now only applies to \emph{non-immersed} unions of half-hyperplanes (accounting for multiplicity). We stress that in Theorem \ref{thm:minimum-distance} it is \emph{not} necessarily true that $\dist_\H(\spt\|V\|\cap B^{n+1}_1(0),\spt\|\BC\|\cap B^{n+1}_1(0))\geq \eps$, as it is possible for $\spt\|\BC\|$ to be the union of hyperplanes whilst $\BC$ is not the sum of hyperplanes\footnote{For instance, take $\BC = \BC_0 + |P_1|+|P_2|+|P_3|$, where $\BC_0$ is a triple junction cone formed of half-planes $H_1,H_2,H_3$ with $P_i$ being the hyperplane formed by extending $H_i$ for each $i\in \{1,2,3\}$.}. This is why the conclusion of Theorem \ref{thm:minimum-distance} must be stated in a way which accounts for multiplicity, which $d$ does.

\begin{remark}\label{remark:slicing}
    One should note that in \cite{SS81}, namely when the non-embedded singular set is $\H^{n-2}$-null, one does not need an $\eps$-regularity theorem near \emph{unions} of (half-)hyperplanes as one wishes to prove a minimum distance theorem near any such union (which is a consequence of stability and a simple slicing argument, as shown in \cite{SS81}). In fact, the slicing argument in \cite{SS81} can still be used to prove Theorem \ref{thm:minimum-distance}, as the smallness assumption on the (non-immersed) singular set of $M$ is sufficient for the argument (in fact, knowing $\H^{n-1}(\sing(M))=0$ would suffice). The reason this simple argument is not available to prove the Minimal Distance Theorem in \cite{Wic14} is due to there being no a priori size assumption on the singular set, only structural conditions.
\end{remark}

\medskip

The proof of Theorem \ref{thm:main-compactness} follows from Theorem \ref{thm:main-reg-basic} and Theorem \ref{thm:minimum-distance} when combined with the following extension of Simons' classification \cite{Sim68} of low-dimensional minimal cones with isolated singularities and which are two-sided and stable as immersions.

\begin{thmx}[Simons' Classification]\label{thm:main-Simons}
	Suppose $n\in \{2,3,4,5,6\}$ and $\BC$ is an $n$-dimensional minimal cone smoothly immersed in $\R^{n+1}\setminus\{0\}$ which is represented by a two-sided and stable immersion (with $\Theta_{\BC}(0)<\infty$). Then, $\BC$ is a finite sum of hyperplanes.
\end{thmx}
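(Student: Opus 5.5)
The plan is to run Simons' original argument on the immersed link, using the fact that the relevant analysis is intrinsic to the immersion. Write $\BC$ as the cone over its link: there is a closed, smooth, possibly disconnected $(n-1)$-manifold $\Sigma$ and a minimal immersion $\phi:\Sigma\to S^n$ such that $\BC$ is represented by $\iota:(0,\infty)\times\Sigma\to\R^{n+1}$, $\iota(r,\w)=r\phi(\w)$.

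First, $\Sigma$ is compact. The immersion is proper in $\R^{n+1}\setminus\{0\}$, and $\Theta_\BC(0)<\infty$ bounds $\H^{n-1}$ of the link counted with multiplicity. Together these show that $\iota^{-1}(\{|x|=1\})$ is a compact smooth $(n-1)$-manifold. Two-sidedness of $\iota$ gives a global unit normal $\nu$ along $\Sigma$, which is homogeneous of degree zero. Stability as an immersion means that for every $\zeta\in C^1_c((0,\infty)\times\Sigma)$,
$$\int|A|^2\zeta^2\leq\int|\nabla\zeta|^2,$$
where both integrals are taken on the abstract manifold $(0,\infty)\times\Sigma$ with the pulled-back metric.

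Next, I would follow Simons' computation. Simons' identity $\tfrac12\Delta|A|^2=|\nabla A|^2-|A|^4$ is a local computation on the immersed manifold, so it holds on $(0,\infty)\times\Sigma$. It combines with the cone structure: $|A|^2(r,\w)=r^{-2}|A_\Sigma|^2(\w)$, and there is an improved Kato inequality $|\nabla A|^2\geq(1+\tfrac2n)|\nabla|A||^2$ together with the radial contribution. The output is the standard link inequality on $\Sigma$,
$$\int_\Sigma|\nabla_\Sigma f|^2-(|A_\Sigma|^2-(n-2))f^2\geq 0\quad\text{fails for } f=|A_\Sigma|$$
unless $A\equiv0$, once we test stability with $\zeta=r^{-\alpha}|A_\Sigma|\eta(r)$ and a suitable radial cutoff $\eta$. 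The choice is exactly as in Simons (equivalently, Schoen--Simon--Yau or Simon's lecture notes), and it yields the contradiction precisely when $n\leq6$. To avoid regularity issues at zeros of $|A|$, use $(|A|^2+\delta)^{1/2}$ and let $\delta\to0$. All integrals run over the compact abstract manifold $\Sigma$, so self-intersections of the image play no role. This step is the main point to check, but it transfers verbatim, because nowhere in Simons' argument is embeddedness used, only two-sidedness, needed for the global Jacobi operator, and compactness of the link.

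Hence $A\equiv0$ on each component of $(0,\infty)\times\Sigma$. Each component is then totally geodesic, so its image lies in a hyperplane $P$ through $0$. By compactness and properness, the link component is a covering of the equator $P\cap S^n$. For $n\geq3$ that equator is simply connected, so the covering is a diffeomorphism. For $n=2$ the link is a closed geodesic on $S^2$ that may cover a great circle $k$ times, and then the image is $P$ with multiplicity $k$, which is still a sum of hyperplanes. Since $\Sigma$ is compact, there are finitely many components, and summing them gives $\BC=\sum_i m_i|P_i|$, a finite sum of hyperplanes. The density bound is consistent with this, since it forces $\sum_i m_i=\Theta_\BC(0)<\infty$.
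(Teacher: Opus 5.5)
Your proposal is correct and follows the same route as the paper, which simply asserts that Simons' argument (e.g.\ as in \cite[Appendix B]{Sim83}) carries over verbatim to two-sided immersions via the immersed stability inequality. Your intrinsic set-up on $(0,\infty)\times\Sigma$ and the final covering argument for the totally geodesic link make explicit what the paper leaves implicit. One minor point: testing with $\zeta=r^{-(n-2)/2}\eta(r)f(\omega)$ actually yields the link inequality with the Hardy constant $\frac{(n-2)^2}{4}$ in place of $n-2$. Since $\frac{(n-2)^2}{4}\leq n-2$ precisely when $2\leq n\leq 6$, your displayed inequality is still valid in that range, and the contradiction with Simons' identity on $\Sigma$ goes through as you describe.
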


\begin{proof}
	The proof is identical to the embedded case by Simons \cite{Sim68}, instead using the stability inequality for immersions. The reader can consult \cite[Appendix B]{Sim83} for a proof in the embedded case which directly extends to the two-sided immersed case.
\end{proof}

Let us now detail how Theorem \ref{thm:main-compactness} follows from Theorem \ref{thm:main-reg-basic}, Theorem \ref{thm:minimum-distance}, and Theorem \ref{thm:main-Simons}. Here, for a stationary integral varifold $V$ we write $\reg_*(V)$ for the set of smoothly immersed points in $V$, i.e.~points in $\spt\|V\|$ locally about which $V$ is represented by a smooth proper immersion (or, equivalently, a finite sum of embedded minimal submanifolds). We also write $\sing_*(V):= \spt\|V\|\setminus \reg_*(V)$ for the set of non-immersed singular points. Notice that the usual regular set $\reg(V)$ of $V$, which consists of points in $\spt\|V\|$ locally about which $V$ is represented by a smooth embedding, satisfies $\reg(V)\subseteq \reg_*(V)$. Thus, the usual singular set $\sing(V):= \spt\|V\|\setminus \reg(V)$ satisfies $\sing_*(V)\subseteq \sing(V)$.

\begin{proof}[Proof of Theorem \ref{thm:main-compactness}]
	Let $V_j$ denote the natural stationary integral varifold associated to the immersion $M_j$. By Allard's compactness theorem for stationary integral varifolds with a mass upper bound \cite{All72}, we can find a subsequence $\{j^\prime\}\subset\{j\}$ with $V_{j^\prime}\to V$ as varifolds in $B^{n+1}_2(0)$, where $V$ is a stationary integral $n$-varifold in $B^{n+1}_1(0)$. Let $\BC$ be a tangent cone at a point $x\in \spt\|V\|$. If $\dim(S(\BC))=n$, i.e.~$\BC$ is a plane with some (integer) multiplicity, we can apply Theorem \ref{thm:main-reg-basic} to $V_{j^\prime}\res B_\rho(x)$ (for sufficiently small $\rho$ and all sufficiently large $j^\prime$) to see that $V_{j^\prime}\res B^{n+1}_{\rho/2}(x)$ is the sum of $\Theta_V(x)$-many minimal graphs, with estimates. Thus, the convergence of $V_{j^\prime}$ to $V$ in $B^{n+1}_{\rho/2}(x)$ is smooth, implying that $x\in \reg_*(V)$, with smooth convergence in $B_{\rho/2}(x)$.
	
	Now suppose $\dim(S(\BC))=n-1$. If $\BC$ was the sum of half-hyperplanes but not the sum of full hyperplanes, then for some sufficiently small $\rho>0$ and for all sufficiently large $j^\prime$ we can apply Theorem \ref{thm:minimum-distance} to $(\eta_{x,\rho})_\#V_{j^\prime}$ to get a contradiction (here, $\eta_{x,\rho}:\R^{n+1}\to \R^{n+1}$ is the map $\eta_{x,\rho}(y):= \rho^{-1}(y-x)$). Thus, $\BC$ must be a sum of full hyperplanes. But then we can apply Theorem \ref{thm:main-reg-basic} to conclude that, for $\rho>0$ sufficiently small, $V_{j^\prime}\res B_\rho(x)$ is the sum of $\Theta_V(x)$-many minimal graphs (not necessarily over the same planes) and thus we can again conclude that $V$ is smoothly immersed about $x$, so that $x\in \reg_*(V)$, and that the convergence of $V_{j^\prime}$ to $V$ is smooth in $B_{\rho}(x)$.
	
	Next consider the case $\dim(S(\BC))=n-2$. We claim that $\BC$ must consist of a sum of hyperplanes; once we have this, we can again apply Theorem \ref{thm:main-reg-basic} to see that $V$ is smoothly immersed about $x$ and that the convergence is locally smooth on some neighbourhood of $x$. Indeed, suppose $y\in \sing(\BC)\setminus\{0\}$. Taking a tangent cone to $\BC$ at $y$, from what we have just shown above (using Theorem \ref{thm:main-reg-basic} and Theorem \ref{thm:minimum-distance}) we see that $y$ must be a smoothly immersed point of $\BC$. Thus, we see that $\BC$ is smoothly immersed away from $0$. Moreover, $\BC$ must be two-sided\footnote{We stress here that this means that there is a two-sided minimal immersion whose associated varifold is $\BC$. This is not the same as saying that the support, $\spt\|\BC\|$, is two-sided as a (possibly immersed) submanifold of $\R^{n+1}$; multiplicity of the image of the immersion is important here. For instance, just as varifolds, one can have a sequence of tori immersed in $\R^3$ which converge to a Klein bottle with multiplicity $2$. In particular, if a regular piece has multiplicity $2$, stability as an immersion allows the two multiplicity one pieces forming the multiplicity $2$ piece to be deformed independently of one another.}
    and stable as an immersion away from $0$ as $\BC$ is a smooth limit of two-sided stable (minimal) hypersurfaces away from $0$, and thus the stability inequality descends to $\BC$. Hence, we can apply Theorem \ref{thm:main-Simons} to conclude that $\BC$ is the sum of finitely many hyperplanes\footnote{Alternatively, in this case one can argue without Theorem \ref{thm:main-Simons}, as one can look at the link of the cross-section of $\BC$ and see that it must be a sum of great circles in $S^2$, which implies that $\BC$ is a sum of finitely many hyperplanes.}. Hence, Theorem \ref{thm:main-reg-basic} once again applies to show $x\in \reg_*(V)$.
	
	We now repeat the above argument inductively when $\dim(S(\BC))=n-\ell$ for $\ell\in \{3,4,5,6\}$. At each stage of the induction, we consider a singular point $y\in \sing(\BC)\setminus\{0\}$ and take a tangent cone there, using the inductive information to conclude that $\BC$ must be smoothly immersed about $y$ and furthermore must be two-sided and stable as an immersion away from $0$. Then we can apply Theorem \ref{thm:main-Simons} to conclude that $\BC$ is a sum of hyperplanes, from which Theorem \ref{thm:main-reg-basic} gives the regularity of $V$ about $x$ and that the convergence is smooth on some neighbourhood of $x$.
	
	Thus, we see that any tangent cone to $V$ at a non-immersed point of $V$ must have $\dim(S(\BC))\leq n-7$, i.e.~the non-immersed singular set $\sing_*(V)$ must be contained within the $(n-7)$-stratum $\S_{n-7}$, which has Hausdorff dimension $\leq n-7$. Hence, $V$ is smoothly immersed outside a closed set $\sing_*(V)$ of Hausdorff dimension at most $n-7$ (it is closed by Theorem \ref{thm:main-reg-basic}). Moreover, the convergence to $V$ is locally smooth outside of $\sing_*(V)$ by Theorem \ref{thm:main-reg-basic}. This completes the proof of Theorem \ref{thm:main-compactness}.
\end{proof}

Thus, we are left with proving Theorem \ref{thm:main-reg-basic} and Theorem \ref{thm:minimum-distance}. In fact, from \cite[Theorem 5]{Bel23}, we already know the validity of Theorem \ref{thm:main-reg-basic} when $\BC$ is a single hyperplane with multiplicity (cf.~Theorem \ref{thm:bellettini}), and so we only need to focus on the case where the cone $\BC$ is not supported on a single hyperplane (in fact, \cite[Theorem 5]{Bel23} also holds when all the hyperplanes in the cone $\BC$ are sufficiently close to a single hyperplane in a specific quantified sense, although we won't use this). 

For the remainder of the paper we will move to a more general set-up, proving $\eps$-regularity theorems there, from which the desired results above follow. These theorems will allow for non-immersed classical singularities of ``top density''. This involves an inductive set-up which we detail in the following section. This more general setup will make the mechanisms of our proofs more transparent.

\subsection{Inductive set-up and main $\eps$-regularity theorem}

Fix $Q\in \{\frac{3}{2},2,\frac{5}{2},\dotsc\}$. Our language and notation will be standard; we refer the reader to \cite{MW24} for more details where necessary.

For the rest of the paper we work with a class $\mathcal{V}_Q$ of integral $n$-varifolds $V$ in $B^{n+1}_2(0)$ obeying:
\begin{enumerate}
	\item [$(\S1)$\ \ ] $V$ is stationary in $B^{n+1}_2(0)$;
	\item [$(\S2)_Q$] Suppose that $\Omega\subset B^{n+1}_2(0)$ is an open set with $\Omega\subseteq \{\Theta_V<Q\}$ and such that $\Omega\cap \sing_*(V)$ is a relatively closed subset of $\Omega$ of Hausdorff dimension $\leq n-7$. Then, $V\res (\Omega\setminus\sing_*(V))$ is represented by a proper, two-sided, smooth, stable minimal immersion.
	\item [$(\S3)_Q$] $V$ obeys the following two structural hypotheses:
		\begin{enumerate}
			\item [$(\S3\text{a})$] If $x\in \spt\|V\|$ is a classical singularity of $V$ with $\Theta_V(x)<Q$, then $V$ is smoothly immersed about $x$ (i.e.~$x\not\in \sing_*(V)$);
			\item [$(\S3\text{b})$] There exists a constant $\eps_*>0$ for which the following holds. If on a ball $B_\rho(x)\subset B^{n+1}_2(0)$ we have
			\begin{enumerate}
				\item [(i)] $(\w_n \rho^n)^{-1}\|V\|(B_\rho(x)) \leq Q+\frac{1}{4}$;
				\item [(ii)] there is a hyperplane $P$ for which $E_{V,P}<\eps_*$;
			\end{enumerate}
			then $V\res B_{\rho/2}(x)$ is equal to the sum of at most $Q$ smooth minimal graphs (over $P$).
		\end{enumerate}
\end{enumerate}
Recall that $\reg_*(V)$ denotes the immersed regular set of $V$, i.e.~the set of points in $\spt\|V\|$ about which $V$ is the sum of a finite number of smoothly embedded submanifolds, and $\sing_*(V)$ is its complement in $\spt\|V\|$. We stress that the constant $\eps_*$ in $(\S3\text{b})$ does not depend on the specific $V\in \V_Q$, but only on the class $\V_Q$. The reader should also note that $\V_Q$ is a compact class under mass bounds, which is one advantage of working with it.

\begin{remark}\label{remark:bellettini}
If $M$ is a two-sided stable minimal hypersurface which is smoothly immersed in $B^{n+1}_2(0)$ with $\H^{n-2}(\sing(M))=0$, then the natural varifold $V$ associated to $M$ obeys $V\in \V_Q$ for all $Q$, for suitable $\eps_* = \eps_*(n,Q)$ therein. As such, any theorems will can prove for $\mathcal{V}_Q$ will also apply to the $V$ associated to such $M$. Indeed, $(\S1)$ is clear. Property $(\S2)_Q$ follows as the singular set is small enough to extend both the two-sidedness and stability to $V\res (\Omega\setminus\sing_*(V))$ in such $\Omega$. Property $(\S3\text{a})$ is also clear, and $(\S3\text{b})$ follows from \cite[Theorem 5]{Bel23}, which we now recall. Here, recall that the $L^2$\emph{-height excess} $E_V$ of a rectifiable $n$-varifold $V$ in $B^{n+1}_2(0)$ relative to the plane $P_0 := \{0\}\times\R^n$ is defined by:
$$E_V:=  \left(\int_{\R^\times B^n_1(0)}|x^1|^2\, \ext\|V\|(x)\right)^{1/2}$$
where we note $|x^1| \equiv \dist(x,P_0)$. The only difference between $E_V$ and $E_{V,P_0}$ is the domain of integration.
\begin{theorem}[{\cite[Theorem 6]{Bel23}}]\label{thm:bellettini}
	Let $n\geq 2$. Then, there exists $\eps = \eps(n,\Lambda)\in (0,1)$ such that if $V$ is a stationary integral $n$-varifold in $B^{n+1}_2(0)$ obeying $0\in \spt\|V\|$, $\|V\|(B^{n+1}_2(0))\leq\Lambda$, and:
	\begin{itemize}
		\item $\H^{n-2}(\sing_*(V))=0$;
		\item $V\res\reg_*(V)$ is represented by a two-sided and stable immersion;
		\item $E_V<\eps$;
	\end{itemize}
	then we have
	$$V\res \left((-\tfrac{3}{4},\tfrac{3}{4})\times B^n_{1/2}(0)\right) = \sum^q_{i=1}\mathbf{v}(u_j)$$
	where $u_j:B^n_{1/2}(0)\to \R$ are smooth (single-valued) functions solving the minimal surface equation, for some $q\in \{1,2,\dotsc\}$. Moreover, for each $j\in \{1,\dotsc,q\}$ we have $\|u_j\|_{C^4}\leq CE_V$, where $C = C(n)\in (0,\infty)$, and $q\leq q_*$ for some $q_* = q_*(n,\Lambda)\in \Z_{\geq 1}$.
\end{theorem}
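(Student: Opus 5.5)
The plan is to follow the Schoen--Simon strategy. First prove an a priori curvature estimate for two-sided stable immersions that are $L^2$-close to $P_0$. Then show that, once curvature is small, the immersion splits into single-valued graphs.

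\textbf{Step 1: extend the stability inequality.} Since $\H^{n-2}(\sing_*(V))=0$, the singular set has vanishing $2$-capacity. We can therefore choose cut-offs $\chi_k$ with $\chi_k\equiv 1$ away from shrinking neighbourhoods of $\sing_*(V)$ and $\int|\nabla\chi_k|^2\to 0$. Inserting $\chi_k\zeta$ into the stability inequality of the immersion $\iota\colon M\to B_2$ and letting $k\to\infty$ extends it to all $\zeta\in C^1_c(B_2)$:
$$\int_M|A|^2\zeta^2\leq\int_M|\nabla\zeta|^2 .$$
Combined with the Simons identity, the same cut-off argument gives the Schoen--Simon--Yau type inequality
$$\int|A|^{4+2p}\zeta^{4+2p}\leq C\int|A|^{2p}|\nabla\zeta|^{4}\qquad\text{for small }p>0.$$
For higher powers capacity zero is not enough, and this is the only place the size of $\sing_*$ is really felt. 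Testing with $\zeta$ equal to a cut-off times $|x^1|$ gives $\int_{B_{7/8}}|A|^2\leq CE_V^2$.

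\textbf{Step 2: Lipschitz approximation and excess decay.} On the good set $G$ of points where the maximal function of $|A|^2$ is below a small threshold $\delta$, the immersion is locally a union of $C^{1,\alpha}$ graphs over $P_0$. The complement of $G$ has measure at most $C\delta^{-1}E_V^2$. This yields $q$-valued Lipschitz approximations, where $q$ is the density of $V$ near $0$, bounded by $q_*(n,\Lambda)$.

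The novelty compared with \cite{SS81} is that sheets may cross, so there is no ordering of the sheets. Instead of $q$-valued functions coming from an ordered stack, I would work on the abstract manifold $M$: the portion $\iota^{-1}(G)$ is a disjoint union of graphical patches, and because $M$ is two-sided, each patch carries a well-defined unit normal close to $e_1$. I would then run Simon's excess-decay argument with $q$-valued blow-ups. The extended stability inequality gives $W^{1,2}$ control of the normal, and hence harmonicity of each blow-up sheet, with no cancellation issue. Iterating the decay gives $E_{V,r}\leq C r^\alpha E_V$ at every scale and centre, so tilt-excess decays and the tangent planes of $M$ are Hölder-close to $P_0$ everywhere on $\iota(M)$.

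\textbf{Step 3: sheeting.} Hölder control of the normal shows that each connected component of $\iota^{-1}((-\tfrac34,\tfrac34)\times B^n_{1/2})$ projects onto $B^n_{1/2}$ as a covering map. Such a map must be a homeomorphism, since $B^n_{1/2}$ is simply connected and the cover has bounded degree. Each component is then the graph of a $C^{1,\alpha}$ function $u_j$.

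The remaining issue is that components are defined only away from $\iota^{-1}(\sing_*)$. I would rule out $\sing_*(V)\cap(-\tfrac34,\tfrac34)\times B^n_{1/2}$ by combining the Hölder normal estimate near any such point with Allard's theorem applied to each graphical piece. Because $\H^{n-1}(\sing_*)=0$, the graphs extend across it as weak and hence classical solutions of the minimal surface equation. Elliptic estimates then give $\|u_j\|_{C^4}\leq CE_V$.

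The main obstacle I expect is Step 2: proving that the blow-ups of the non-ordered, possibly crossing sheets are harmonic and that excess decays without a priori regularity at crossings. This is exactly where the immersion's two-sidedness and the extended stability inequality have to replace the ordering used in \cite{SS81}. I would first try to run the argument intrinsically on $M$ patch by patch, as described above.
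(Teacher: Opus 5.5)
\textbf{Overall.} The paper does not prove this statement; it quotes it from \cite{Bel23}. As the introduction indicates, that proof is intrinsic PDE analysis extending Schoen--Simon--Yau: the curvature of the stable immersion is controlled directly, with no ordering of sheets, Lipschitz approximation or multivalued blow-up. Measured against this, your proposal has a genuine gap, and it sits where you expect it.

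\textbf{Step~2 is only asserted.} It carries the entire difficulty. First, concluding that $V$ is locally a union of $C^{1,\alpha}$ graphs over $P_0$ near points of your good set requires a pointwise ($\eps$-regularity) curvature estimate for stable immersions. That estimate is exactly the missing input. Second, blow-ups of crossing sheets are a priori multivalued functions that can be branched, of $\mathrm{Re}(z^{3/2})\times\R^{n-2}$ type. Two-sidedness together with $W^{1,2}$ control of the normal does not turn them into collections of single-valued harmonic functions. So neither the excess decay nor the H\"older control of the normal is established.

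\textbf{Error in Step~1.} Putting $\zeta x^1$ into the stability inequality only gives the weighted bound $\int|A|^2|x^1|^2\zeta^2\lesssim E_V^2$, which says nothing where $|x^1|$ is small. The unweighted bound $\int_{B_{7/8}}|A|^2\leq CE_V^2$ needs two ingredients. The first is the Schoen--Simon inequality $\int|A|^2\zeta^2\leq C\int(1-(\nu\cdot e_1)^2)|\nabla\zeta|^2$, which follows from stability combined with the Jacobi equation $\Delta(\nu\cdot e_1)+|A|^2\,\nu\cdot e_1=0$. All test functions involved there are bounded, so zero capacity does suffice to extend it across $\sing_*(V)$. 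The second is the tilt--height Caccioppoli inequality. By contrast, your SSY-type inequality cannot be extended by capacity cut-offs even for small $p$, since $|A|^{1+p}\zeta$ is not known to be bounded near $\sing_*(V)$. You never use that inequality, though.

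\textbf{The singular-set hypothesis is used too weakly.} Even granting Steps~1--2, the argument as written cannot be correct. It uses the hypothesis on $\sing_*(V)$ only through vanishing $2$-capacity and $\H^{n-1}(\sing_*(V))=0$. Both also hold for $\R^{n-2}$ times a rescaled two-valued minimal graph in $\R^3$ with a true branch point (cf.\ the branched examples of \cite{SW07} cited in the introduction). Such an example is two-sided and stable as an immersion, since $\nu\cdot e_1>0$ is a positive Jacobi field. It has small $E_V$ and H\"older continuous normal, yet it is not a sum of single-valued graphs.

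\textbf{Where Step~3 goes wrong.} Let $\pi$ denote the projection onto $P_0$. A component of the regular part projects as a covering of $B^n_{1/2}\setminus\pi(\sing_*(V))$, not of $B^n_{1/2}$. In the branched example this covering is a connected double cover, so it is non-trivial. What you must use is that $\pi(\sing_*(V))$ is a closed $\H^{n-2}$-null set. Its complement in $B^n_{1/2}$ is then connected and simply connected, so each component is a single-valued graph over it. This is where $\H^{n-2}(\sing_*(V))=0$, as opposed to zero capacity, has to enter. With that in place, your extension of the graphs across $\sing_*(V)$ by removability for the minimal surface equation goes through.
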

We will often just refer to Theorem \ref{thm:bellettini} when we are invoking $(\S3\text{b})$.
\end{remark}

\begin{remark}\label{remark:generalised}
	Assumption $(\S3\text{b})$ is actually stronger than what one needs to prove the result: it suffices to only assume such a sheeting theorem near hyperplanes of multiplicity $<Q$; let us write $\widetilde{\V}_Q$ for this slightly modified class, which clearly obeys $\V_Q\subset\widetilde{\V}_Q$. We then immediately have $\S_Q\subset\widetilde{\V}_Q$, where the class $\S_Q$ is as in \cite{MW24}. Thus, results for $\widetilde{\V}_Q$ generalise those for $\S_Q$, which is one motivation for looking at this more general setup. However, knowing a sheeting theorem also holds near hyperplanes of multiplicity $\leq Q$, as we do in the context of Theorem \ref{thm:main-compactness} by Theorem \ref{thm:bellettini}, will simplify some of our later arguments. Thus, for simplicity we assume this stronger version, and work with $\V_Q$.
\end{remark}

\begin{remark}\label{remark:inclusion}
	For $Q_1<Q_2$, up to reducing the value of $\eps_*$ in $(\S3\text{b})$ for $\V_{Q_1}$, we have $\mathcal{V}_{Q_2}\subset\mathcal{V}_{Q_1}$. Thus, when studying a given $\V_Q$, we may assume that the classes $\{\mathcal{V}_{Q^\prime}\}_{Q^\prime\leq Q}$ are naturally decreasing with respect to set inclusion, and hence any theorems we have proved for $\mathcal{V}_{Q_1}$ will also be applicable to $\mathcal{V}_{Q_2}$.
\end{remark}

\begin{remark}
	The reader may wonder why do we work with the more general class $\V_Q$ rather than simply the varifolds associated to $M$ as in Theorem \ref{thm:main-compactness}. As mentioned in Remark \ref{remark:generalised}, one reason is that it is a natural generalisation of those results proved in the proceeding works \cite{Wic14, MW24}. Another is that by working with this more general class, the mechanisms behind our proofs are somehow more clear. This we hope will pave the way for other future works, including follow-up work we have planned for the present paper.
\end{remark}

\medskip

Our main result for the class $\mathcal{V}_Q$, from which Theorem \ref{thm:main-reg-basic} and Theorem \ref{thm:minimum-distance} follow as corollaries, is the following. It concerns cones $\BC$ which are the sum of classical cones. Recall that a \emph{classical cone} $\BC^\prime$ is a stationary cone which is \emph{not} supported on a single hyperplane but still takes the form $\BC^\prime = \sum^N_{i=1}|H_i|$, where the $H_i$ are half-hyperplanes which all have the same set-theoretic boundary (in particular, one needs $N\geq 3$); note that the $H_i$ need not be distinct.

\begin{remark}
	Notice that the density of a classical cone is always in $\{\frac{3}{2},2,\frac{5}{2},3,\dotsc\}$.
\end{remark}

We give an initial, imprecise statement of this regularity theorem; we will make it precise momentarily after making some inductive observations (see Theorem \ref{thm:main-reg}).

\begin{thmx}[Conical $\eps$-Regularity Theorem]\label{thm:main-cone}
	Fix $Q\in\{\frac{3}{2},2,\frac{5}{2},\dotsc\}$. Fix also a (stationary) cone $\BC$ which is the sum of classical cones (at least one) and hyperplanes for which $\Theta_{\BC}(0) = Q$. Then, there exists $\eps = \eps(\BC)\in (0,1)$ such that the following is true. If $V\in \mathcal{V}_Q$ satisfies:
	\begin{enumerate}
		\item [\textnormal{(i)}] $(\w_n 2^n)^{-1}\|V\|(B^{n+1}_2(0))<Q+\frac{1}{2}$;
		\item [\textnormal{(ii)}] $Q-\frac{1}{2}\leq \w_n^{-1}\|V\|(B^{n+1}_1(0))\leq Q+\frac{1}{2}$;
	\end{enumerate}
	then one of the following conclusions must hold:
	\begin{enumerate}
		\item [(a)] $\dist_\H(\spt\|V\|\cap B^{n+1}_1(0),\spt\|\BC\|\cap B^{n+1}_1(0))\geq \eps$;
		\item [(b)] $V\res B^{n+1}_{1/2}(0)$ is a suitable $C^{1,\alpha}$ perturbation of $\BC$, with estimates. Here, $\alpha = \alpha(\BC)\in (0,1)$.
	\end{enumerate}
\end{thmx}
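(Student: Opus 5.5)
The proof will go by induction on $Q\in\{\frac32,2,\frac52,\dotsc\}$ combined with an excess-decay argument in the spirit of Simon \cite{Sim93}, Wickramasekera \cite{Wic14} and \cite{MW24, Min21a}. Assume the theorem holds for all classes $\V_{Q'}$ with $Q'<Q$; by Remark \ref{remark:inclusion} these results then apply to $V\in\V_Q$. Suppose conclusion (a) fails, so that $\spt\|V\|\cap B_1$ is $\eps$-close to $\spt\|\BC\|\cap B_1$. Together with the mass bounds (i), (ii), this gives varifold closeness to $\BC$ and smallness of the two-sided excess $\hat E_{V,\BC}$, once we establish a reverse $L^2$ estimate showing that $V$ has no mass far from $\BC$. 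We then aim for an excess decay lemma. There should be $\theta=\theta(\BC)\in(0,1/4)$ such that, if $V$ is sufficiently close to $\BC$ and there is no density gap (so that $\{\Theta_V\geq Q\}$ is $\eps$-dense near the spine $S(\BC)$ in $B_{1/2}$), then there is a rotated cone $\BC'$ of the same type with
$$\hat E_{V,\BC'}(\theta)\leq \tfrac12 \hat E_{V,\BC}\quad\text{and}\quad \dist_\H(\BC'\cap B_1,\BC\cap B_1)\leq C\hat E_{V,\BC}.$$
Iterating this at every point of $\{\Theta_V\geq Q\}\cap B_{1/2}$ gives unique tangent cones with H\"older dependence. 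A standard covering argument then yields (b), namely that $V\res B_{1/2}$ is a $C^{1,\alpha}$ perturbation of $\BC$.

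To prove the decay lemma we argue by contradiction and blow up. Away from an $\tau$-neighbourhood of the axis $S(\BC)$, every point has density $<Q$ near the regular part of $\BC$. There the induction hypothesis, together with $(\S3\mathrm{b})$ (Theorem \ref{thm:bellettini}) applied near multiplicity hyperplanes and the inductive conical theorem near lower-density classical cones, shows that $V$ decomposes into ordered graphs over the half-hyperplanes of $\BC$. Here $(\S2)_Q$ supplies two-sidedness and stability once the lower-dimensional singular set is controlled. We then derive Simon-type $L^2$ estimates near the axis, namely the estimate on $\int \dist^2(x,\BC)/|x|^{n+2}$ and the non-concentration estimate, using points of density $\geq Q$. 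Normalising by the excess, the graphs converge to blow-ups $w$, which are harmonic on each half-hyperplane and satisfy appropriate boundary matching along the axis. Using homogeneity and regularity of $w$, with $C^{1,\alpha}$ boundary regularity up to the axis, we show that $w$ is approximated at scale $\theta$ by a linear function coming from a rotation of $\BC$. This gives the contradiction.

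The main obstacle is the density gap case, where $V$ is close to $\BC$ but has no points of density $\geq Q$ near part of the axis; for instance, three planes through a line perturbed to meet only at a point. There the estimates near the axis fail. My plan is to show, using induction, that in a gap region every point has density $<Q$. Hence $(\S3\mathrm{a})$ and the inductive theorems imply $\sing_*(V)$ has dimension $\leq n-7$ there, and by $(\S2)_Q$ the varifold $V$ is a stable two-sided immersion there. That lets us apply Hong--Li--Wang \cite{HLW24} to obtain graphical decomposition over the full hyperplanes, and thus a different cone $\BC''$ which is a sum of hyperplanes or of lower-density classical cones. Following \cite{BK17, BKMW25}, we would replace $\BC$ by a coarser or finer cone adapted to the gap. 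This requires a hierarchy of decay lemmas of successive fineness as in \cite{MW24, Min21a}, with the blow-up class enlarged to include gap limits. Proving that this enlarged class of blow-ups still has the required $C^{1,\alpha}$ regularity is the step I expect to be hardest.
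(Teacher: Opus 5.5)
Your outline has the right outer shell: induction on $Q$, Lemma~\ref{lemma:gaps} plus Theorem~\ref{thm:HLW} in density gaps, and some hierarchy of decay statements. However, the two places where the real work lies are either misformulated or left open.

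\textbf{The decay lemma.} You state decay to ``a rotated cone $\BC'$ of the same type'', with blow-ups approximated by ``a linear function coming from a rotation of $\BC$''. This is false as soon as $\BC$ carries multiplicity, with or without gaps. Suppose $\BC$ has a half-hyperplane of multiplicity two, and let $V$ be a nearby stationary cone in which that half-hyperplane has split into two distinct half-hyperplanes with the same boundary. Then there is no density gap, and the excess of $V$ relative to any cone is scale-invariant. The blow-up consists of two different linear functions over that half-hyperplane, which no type-preserving rotation of $\BC$ can absorb, so no decay by a factor $\frac12$ occurs.

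This is why the paper works with the fine excess $F_{V,\BC}$ under the hypothesis $(\dagger)$, $F_{V,\BC}<\beta\mathcal{F}_{V,\BC}$. Decay is proved as a dichotomy: either to a cone of the same level (with the ratio $F/\mathcal{F}$ also improving), or to a \emph{finer} cone, where Theorem~\ref{thm:fine-reg} applies by downward induction on the level.

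Correspondingly, the Simon-type estimates have to be proved relative to such finer cones. Their spine may be lower-dimensional than $S(\BC)$, and their planes may meet away from the spine. The paper handles this by an upward induction through the hierarchy using replacements and good representatives (Lemmas~\ref{lemma:replacement} and~\ref{lemma:good-representative}). That is how it shows the points of density $\geq Q$ cluster near the possibly smaller spine, which is exactly what your estimates ``near the axis'' need in a gap.

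A smaller point: Hausdorff closeness of supports plus the mass bounds does not give varifold closeness to $\BC$. Multiplicities are not determined by the support; for example, $\BC$ and $-\BC$ have the same support when $\BC$ is a triple junction plus the three planes extending it. This is why Theorem~\ref{thm:main-reg} assumes varifold closeness for type (II), and why Theorem~\ref{thm:main-L^2-estimate} allows replacing $\BC$ by a cone with the same support.

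\textbf{Regularity of gap blow-ups.} The step you flag as hardest is where the missing idea sits. In a gap, the blow-ups live on full hyperplanes. They are harmonic only off the closed set $\D$ of limits of density-$\geq Q$ points in the limiting spine $S$, and are a priori only H\"older. When $\dim S=n-1$, $\D\subsetneq S$ and $\H^{n-2}(\D)=\infty$, neither removability nor boundary regularity as in \cite{Sim93, Wic14} is available.

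The paper gets around this by classifying homogeneous degree-one blow-ups (Theorem~\ref{thm:classification}):
\begin{enumerate}
\item It argues by downward induction on the dimension of the translation-invariance subspace $S(v)$, using a reverse Hardt--Simon dichotomy at points of $\D\setminus S(v)$.
\item The alternative to the reverse Hardt--Simon inequality is that $v$ is close to a linear blow-up generated by \emph{finer} cones. That case is disposed of by Lemma~\ref{lemma:blow-up-eps-reg}, i.e.\ by the inductively known fine $\eps$-regularity theorem.
\item This yields $C^1$ regularity up to $S$ from each side.
\item Comparing the two sides via $v(r,y)-v(-r,y)$, and using a point of $S\setminus\D$ where $v$ is harmonic, matches the normal derivatives.
\item After subtracting the linear function from Lemma~\ref{lemma:kappa}, a $C^1$ function harmonic off its zero set is harmonic. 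This gives harmonicity off $S(v)$, hence on the whole plane since $\dim S(v)\leq n-2$, hence linearity.
\end{enumerate}
The same dichotomy then gives Theorem~\ref{thm:blow-up-reg}.

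Finally, you may mean by ``enlarging the blow-up class'' the device of \cite{BK17, BKMW25}. That device blows up relative to auxiliary finer cones without a $(\dagger)$-type hypothesis, so as to integrate out degree-one parts directly. This is precisely what multiplicity obstructs: without $(\dagger)$ one cannot graph $V$ over a cone in which a multiplicity (half-)hyperplane has been split. The paper instead only subtracts linear functions coming from cones of the same level (Theorem~\ref{thm:blow-up-properties}(4)), and handles finer linear parts through the fine $\eps$-regularity induction.
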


\begin{remark}
	Technically from now on all constants in our theorems depend on the class $\V_Q$, or more precisely on the constant $\eps_*$ in $(\S3\text{b})$. However, we will suppress this dependence in our notation. Note that in our application to Theorem \ref{thm:main-compactness} we have $\eps_* = \eps_*(n,Q)$ (cf.~Remark \ref{remark:bellettini}).
\end{remark}

Before discussing the proof of Theorem \ref{thm:main-cone}, let us first show it implies Theorem \ref{thm:main-reg-basic} and Theorem \ref{thm:minimum-distance}.

\begin{proof}[Proof of Theorem \ref{thm:main-reg-basic}]
	Suppose for contradiction that Theorem \ref{thm:main-reg-basic} failed. Then, we could find a sequence $(M_j)_j$ of two-sided stable minimal hypersurfaces properly smoothly immersed in $B^{n+1}_1(0)$ with $\H^n(M_j)\leq\Lambda<\infty$ for all $j$ and a cone $\BC = \sum^N_{i=1}|P_i|$ with $P_1,\dotsc,P_N$ distinct hyperplanes in $\R^{n+1}$ passing through $0$, such that
	$$\dist_\H(M_j\cap B_1^{n+1}(0),\BC\cap B_1^{n+1}(0)) \to 0.$$
	Let $V_j$ be the natural stationary integral varifold associated to $M_j$, so that $\|V_j\|(B^{n+1}_2(0))\leq \Lambda<\infty$. By Allard's compactness theorem \cite{All72}, we can pass to a subsequence to ensure that $V_j$ converge as varifolds locally in $B^{n+1}_2(0)$ to a stationary integral varifold $V$. By our assumption on the Hausdorff distance, we need $\spt\|V\|\cap B_1(0) = P_1\cup\cdots\cup P_N$. If $\BC$ is supported on a single hyperplane then we can readily check the hypotheses of Theorem \ref{thm:bellettini} for $V_j$ for all $j$ sufficiently large to deduce that $V_j$ satisfies the desired conclusions of Theorem \ref{thm:main-reg-basic}, giving a contradiction. If instead $\BC$ is not supported on a single hyperplane, then we can make an analogous argument using Theorem \ref{thm:main-cone}, with the cone $\widetilde{\BC}$ satisfying $\widetilde{\BC}\res B_1(0) = V$ in place of $\BC$ therein. Indeed, recalling that $V_j\in \V_Q$ for all $Q$ by Remark \ref{remark:bellettini}, clearly Theorem \ref{thm:main-cone}(a) must fail from the convergence to $\BC$, and so Theorem \ref{thm:main-cone}(b) must hold for all $j$ sufficiently large. Since $V_j$ contains no non-immersed classical singularities (as $\H^{n-2}(\sing_*(V_j))=0$) we see that $\widetilde{\BC}$ must be a sum of full hyperplanes (rather than just half-hyperplanes), and that $V_j$ is a sum of minimal graphs over those hyperplanes (with at least one over each hyperplane from the convergence in Hausdorff distance). The estimate on the constituent minimal graphs $u_i^j$ in terms of $E_{V_j,\BC_j}$ follows because for each such graph, on a large set we have $\dist((x,u_i^j(x)),\widetilde{\BC}) = |u_i^j(x)|$, and so one may apply $L^2$ and doubling estimates for each minimal graph. This completes the proof of Theorem \ref{thm:main-reg-basic}.
\end{proof}

\begin{proof}[Proof of Theorem \ref{thm:minimum-distance}]
	As explained in Remark \ref{remark:slicing}, one way of proving this result directly is adapting the slicing argument of Schoen--Simon \cite{SS81} to the present setting. One can also prove it from Theorem \ref{thm:main-cone} as follows.

	We argue by contradiction in an analogous manner to the proof of Theorem \ref{thm:main-reg-basic} above. Indeed, if Theorem \ref{thm:minimum-distance} were false, then we could find a sequence $(V_j)_j$, where each $V_j$ is the natural stationary integral varifold associated to a two-sided properly immersed stable minimal hypersurface $M_j$ with $\H^{n-2}(\sing(M_j))=0$, and a (fixed) cone $\BC$ as in the statement of Theorem \ref{thm:minimum-distance} for which
	$$d(V_j\res B^{n+1}_1(0),\BC\res B^{n+1}_1(0))\to 0.$$
	Again, using Allard's compactness theorem \cite{All72} we can pass to a subsequence to ensure that $V_j\res B^{n+1}_1(0)$ converge locally in $B^{n+1}_1(0)$ to a varifold $V$ in $B^{n+1}_1(0)$, which from the above must satisfy $V = \BC\res B^{n+1}_1(0)$. Then, as $V_j\in \mathcal{V}_Q$, where $Q = \Theta_{\BC}(0)$, for all $j$ sufficiently large we can readily check that the assumptions of Theorem \ref{thm:main-cone} hold for $V_j$ (for this $\BC$), and so we can apply Theorem \ref{thm:main-cone} to see that $V_j$ must be a small $C^{1,\alpha}$ perturbation of $\BC$ (as Theorem \ref{thm:main-cone}(a) must fail). In particular, as by assumption $\BC$ is not the sum of hyperplanes, this means that for all $j$ sufficiently large $V_j$ contains a non-immersed classical singularity, giving that $\H^{n-1}(\sing_*(V_j))>0$ and thus contradicting that $\H^{n-2}(\sing_*(V_j)) = 0$. This gives the desired contradiction and so proves Theorem \ref{thm:minimum-distance}.
\end{proof}
Thus, we just need to prove Theorem \ref{thm:main-cone}. Our proof of this will be by induction on $Q\in \{\frac{3}{2},2,\frac{5}{2},3,\dotsc\}$. In fact, the case when $Q=\frac{3}{2}$, where the cone therein must be the standard multiplicity one triple junction, is already known to hold in much greater generality by the foundational work of Simon \cite{Sim93}. Hence for a fixed $Q\in \{2,\frac{5}{2},3,\frac{7}{2},4,\frac{9}{2},\dotsc\}$, we may inductively assume the validity of Theorem \ref{thm:main-cone} for all $Q^\prime<Q$; we will make this inductive hypothesis henceforth. 

The main way in which we will use this inductive hypothesis is through the following observation:

\begin{lemma}\label{lemma:gaps}
	Fix $Q\in \{2,\frac{5}{2},3,\dotsc\}$ and suppose the validity of Theorem \ref{thm:main-cone} for all $Q^\prime<Q$. Then, if $V\in\mathcal{V}_Q$ we have that 
	$$\dim_\H(\sing_*(V)\cap \{\Theta_V<Q\})\leq n-7.$$
\end{lemma}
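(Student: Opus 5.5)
We will show that every point $x\in\sing_*(V)\cap\{\Theta_V<Q\}$ has all of its tangent cones with spine dimension at most $n-7$. The bound $\dim_\H\leq n-7$ then follows from the standard Almgren--Federer stratification, since $\sing_*(V)\cap\{\Theta_V<Q\}\subseteq \S_{n-7}$. The argument mirrors the proof of Theorem \ref{thm:main-compactness}, with the induction hypothesis (Theorem \ref{thm:main-cone} for $Q'<Q$) and $(\S3\text{b})$ replacing Theorems \ref{thm:main-reg-basic} and \ref{thm:minimum-distance}. Note that $\{\Theta_V<Q\}$ is open by upper semicontinuity of density. Also, every $x$ in it has $\Theta_V(x)\leq Q-\frac12$ if the density is a half-integer, or in any case some $Q'<Q$ with $Q'\in\{1,\frac32,2,\dots\}$ and $\Theta_V(x)\leq Q'$; by Remark \ref{remark:inclusion}, $V\in\V_{Q'}$ for the relevant $Q'$.

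Fix such an $x$ and a tangent cone $\BC$ at $x$, with $\Theta_\BC(0)=\Theta_V(x)=:Q'<Q$, and argue by downward induction on $\dim S(\BC)$.

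\textbf{Case $\dim S(\BC)=n$.} Here $\BC$ is a hyperplane with multiplicity $Q'$. Rescaling around $x$, the hypotheses of $(\S3\text{b})$ hold at small scales, so $V$ is a sum of smooth minimal graphs near $x$ and $x\in\reg_*(V)$.

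\textbf{Case $\dim S(\BC)=n-1$.} Then $\BC$ is a sum of half-hyperplanes with common boundary. Either it is a sum of hyperplanes, or it is a sum of classical cones and hyperplanes with $\Theta_\BC(0)=Q'$. In both cases (in the first, after grouping the hyperplanes), rescalings $(\eta_{x,\rho})_\#V\in\V_{Q'}$ converge to $\BC$ along a sequence $\rho\to0$. Hence the mass hypotheses (i), (ii) of Theorem \ref{thm:main-cone} hold and alternative (a) fails for small $\rho$. The inductive hypothesis therefore gives (b): $V$ is a $C^{1,\alpha}$ perturbation of $\BC$ near $x$, so $x$ is a classical singularity or an immersed point. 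By $(\S3\text{a})$ (as $\Theta_V(x)<Q$), $x\in\reg_*(V)$.

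The only delicate point in this case is when $\BC$ is a sum of hyperplanes which is not supported on a single hyperplane. Theorem \ref{thm:main-cone} as stated requires at least one classical cone, so we need either a version allowing pure sums of hyperplanes, or an argument of the following kind. Split $\BC$ near its spine into a sum of half-hyperplanes. Any two of these half-hyperplanes from different hyperplanes, together with the other pieces, form a sum containing classical cones, and Theorem \ref{thm:main-cone} is applied in that form. I expect this to go through because a hyperplane $P$ equals $|H|+|{-H}|$, so any sum of at least two distinct hyperplanes is a sum of classical cones. Indeed, four half-hyperplanes $H_1,-H_1,H_2,-H_2$ give a stationary, non-planar cone, which is a single classical cone.

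\textbf{Cases $\dim S(\BC)=n-\ell$, $\ell=2,\dots,6$.} For $y\in\spt\|\BC\|\setminus\{0\}$ we have $\Theta_\BC(y)\leq\Theta_\BC(0)<Q$, and $\BC\in\V_Q$ by compactness of the class. Tangent cones to $\BC$ at $y$ have strictly larger spine, so by the previous steps (applied to $\BC$) we get $y\in\reg_*(\BC)$. Thus $\BC$ is immersed away from $0$, so $\sing_*(\BC)\cap(\R^{n+1}\setminus\{0\})=\emptyset$. Applying $(\S2)_Q$ with $\Omega=\R^{n+1}\setminus\{0\}$ (restricted to a ball), $\BC$ is represented by a two-sided stable immersion away from $0$. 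Restricting to the link, $\BC$ is a cone over a compact immersed minimal surface in dimension $n-\ell+\ell$, and Theorem \ref{thm:main-Simons} applied to the factor $\R^{\ell}$ cross-section (with $\ell\leq6$) shows $\BC$ is a sum of hyperplanes. Then $\dim S(\BC)\geq n-1$, contradicting $\ell\geq2$ unless the previous cases apply, whence $x\in\reg_*(V)$.

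The main obstacle is exactly the sum-of-hyperplanes subcase of $\dim S(\BC)=n-1$, where we must verify that the inductive Theorem \ref{thm:main-cone} (whose precise form is Theorem \ref{thm:main-reg}) covers such cones and yields immersedness rather than just a $C^{1,\alpha}$ perturbation.
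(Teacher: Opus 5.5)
Your overall route is the paper's. You classify tangent cones at points of $\{\Theta_V<Q\}$ by spine dimension as in the proof of Theorem \ref{thm:main-compactness}. In that scheme $(\S3\text{b})$ replaces Theorem \ref{thm:bellettini}, and the inductive Theorem \ref{thm:main-cone} (at $Q'=\Theta_V(x)<Q$, using $\V_Q\subset\V_{Q'}$) replaces Theorems \ref{thm:main-reg-basic} and \ref{thm:minimum-distance}. $(\S3\text{a})$ rules out non-immersed classical singularities, and $(\S2)_Q$ with Theorem \ref{thm:main-Simons} handles the low spine dimensions. Your observation that two distinct hyperplanes meeting along an $(n-1)$-dimensional subspace already form a classical cone is correct. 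So the spine-$(n-1)$ case is fine at the paper's level of detail.

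The case $\dim S(\BC)=n-\ell$, $2\leq\ell\leq 6$, does not close as written. After Theorem \ref{thm:main-Simons} you know $\BC$ is a sum of hyperplanes, and you then assert $\dim S(\BC)\geq n-1$ to get a contradiction. That is false. Three hyperplanes with linearly independent normals meet only in an $(n-2)$-dimensional subspace, and more generally sums of hyperplanes can have any spine dimension $\leq n-2$. These are exactly the type (I) cones of the paper, so there is no contradiction.

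What is needed instead is one more use of the inductive hypothesis. Since $\BC$ contains two distinct hyperplanes, it is a sum of a classical cone and hyperplanes with $\Theta_{\BC}(0)=Q'<Q$. So Theorem \ref{thm:main-cone} applies to rescalings $(\eta_{x,\rho})_\#V\in\V_{Q'}$ converging to $\BC$, and alternative (a) fails for small $\rho$. In its precise form (Case A of Theorem \ref{thm:main-reg}), alternative (b) says $V$ is near $x$ a finite sum of smooth minimal graphs, so $x\in\reg_*(V)$. This is the paper's final step, where Theorem \ref{thm:main-reg-basic} is invoked once again.

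There is also a smaller inaccuracy in the same case. For $y\in S(\BC)\setminus\{0\}$ the tangent cone to $\BC$ at $y$ is $\BC$ itself, not a cone with larger spine. So your downward induction only shows $\BC$ is immersed away from $S(\BC)$, not away from $0$. Accordingly, $(\S2)_Q$ should be applied with $\Omega=B_2\setminus S(\BC)$. Theorem \ref{thm:main-Simons}, in dimension $\ell$, should be applied to the cross-section $\BC_0\subset\R^{\ell+1}$ with $\BC=\BC_0\times S(\BC)$. Before doing so you should note that two-sidedness and stability descend from $\BC$ to $\BC_0$.
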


\begin{proof}
	The proof of this is by analysing the types of tangent cone which can occur in $V$ at points in $\{\Theta_V<Q\}$ in a manner analogous to that seen in the proof of Theorem \ref{thm:main-compactness}, ultimately showing that $\sing_*(V)\cap \{\Theta_V<Q\}\subseteq \S_{n-7}$, from which the result follows. The key point is that because these singular points have density $<Q$, we are able to invoke $(\S3\text{b}$) (cf.~Theorem \ref{thm:bellettini}) as well as inductively Theorem \ref{thm:main-cone} (recall Remark \ref{remark:bellettini}).
\end{proof}
Lemma \ref{lemma:gaps} understands the size of the non-immersed singular set of $V$ in $\{\Theta_V<Q\}$ (which is an open subset of $B^{n+1}_2(0)$, by upper semi-continuity of $\Theta_V$). Thus in this region we have the desired upper bound on the size of the singular set needed to invoke the regularity theorem in \cite{HLW24} (as two-sidedness and stability as an immersion is given by $(\S2)_Q$). We recall this special case of the result:

\begin{theorem}[{\cite[Theorem 3.1 \& Proposition 4.3]{HLW24}}]\label{thm:HLW}
	Let $n\geq 2$ and $\Lambda\in (0,\infty)$. Let $\BC$ be a (stationary) non-planar cone which is the finite sum of classical cones and hyperplanes. Then, there exists $\eps = \eps(\Lambda,\BC)\in (0,1)$ such that the following is true. Suppose $V$ is a stationary integral varifold in $B^{n+1}_2(0)$ obeying $\|V\|(B^{n+1}_2(0))\leq\Lambda$, and:
	\begin{itemize}
		\item $\dim_\H(\sing_*(V))\leq n-7$;
		\item $V\res \reg_*(V)$ is represented by a two-sided and stable immersion;
		\item $\dist_\H(\spt\|V\|\cap B^{n+1}_1(0),\spt\|\BC\|\cap B^{n+1}_1(0))<\eps$;
	\end{itemize}
	then
	\begin{enumerate}
		\item [\textnormal{(i)}] $\spt\|\BC\| = \bigcup^N_{i=1}P_i$ is the union of hyperplanes $P_i$ for some $N<\infty$;
		\item [\textnormal{(ii)}] for some $q\in \{1,2,\dotsc\}$ we have
		$$V\res B^{n+1}_{1/2}(0) = \sum^q_{i=1}\mathbf{v}(u_i)\res B^{n+1}_{1/2}(0),$$
		where for each $i\in \{1,\dotsc,q\}$ there is some $j_i\in\{1,\dotsc,N\}$ with $u_i:B^{n+1}_{1/2}(0)\cap P_{j_i}\to P_{j_i}^\perp$ is a smooth (single-valued) function solving the minimal surface equation, with the estimate $\|u_i\|_{C^4}\leq CE_{V,\BC}$.
		\item [\textnormal{(iii)}] for each $k\in \{1,\dotsc,N\}$ we have $\{i:j_i=k\}\neq\emptyset$.
	\end{enumerate}
	Furthermore, there is a constant $q_* = q_*(n,\Lambda)\in \{1,2,\dotsc\}$ with $q\leq q_*$. Here, $C = C(n)\in (0,\infty)$.
\end{theorem}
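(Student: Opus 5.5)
This is a special case of the cited results of Hong--Li--Wang, so I would only reconstruct their argument, phrased for our setting. The plan is a contradiction and compactness argument combined with an a priori curvature estimate.

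Suppose the conclusion fails. Then there are $V_j$ satisfying the hypotheses with $\dist_\H(\spt\|V_j\|\cap B_1,\spt\|\BC\|\cap B_1)\to 0$ for which (i)--(iii) fail. By Allard compactness, a subsequence converges to a stationary integral $V$ with $\spt\|V\|\cap B_1=\spt\|\BC\|\cap B_1$.

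\emph{Step 1: sheeting away from the axis.} Away from the common axis $L$ of the half-hyperplanes of $\BC$, each $V_j$ is $L^2$-close to a single hyperplane with multiplicity. The hypotheses $\dim_\H\sing_*(V_j)\le n-7$ (hence $\H^{n-2}$-null) and two-sided stability are exactly those of Theorem~\ref{thm:bellettini}. So on any compact set $K\subset B_{3/4}\setminus L$, $V_j$ is a sum of minimal graphs over the half-hyperplanes, with $C^4$ bounds controlled by $E_{V_j,\BC}$.

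\emph{Step 2: curvature estimate near the axis.} This is the main obstacle. I would prove a uniform bound $\sup_{B_{5/8}}|A_{V_j}|\le C$. Starting from the stability inequality for the two-sided immersion, I would derive Schoen--Simon--Yau type $L^p$ estimates for $|A|$, tested against $|A|^{1+q}\zeta^2$. The smooth immersion is only defined off $\sing_*(V_j)$, so $\zeta$ must vanish near the singular set. Extending these identities across $\sing_*(V_j)$ requires the cutoffs to have small $W^{1,r}$ energy for the relevant exponent $r$. This works because $\dim_\H\sing_*\le n-7<n-4+\frac4n$, which is precisely the threshold in \cite{HLW24}. Combining the resulting integral bound on $|A|^{p}$ (with $p$ above $n$) with the smallness of $E_{V_j,\BC}$ would give, via an $\eps$-regularity/Moser iteration or a point-picking blow-up argument, uniform curvature bounds in a neighbourhood of $L$. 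In the blow-up alternative, a point of large curvature rescales to a complete stable immersed hypersurface with flat limit, and such a limit has $A\equiv 0$ by the same integral estimate.

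\emph{Step 3: conclusions.} With curvature bounded, $V_j\res B_{1/2}$ is a smooth immersion with bounded $|A|$ whose sheets converge smoothly and locally graphically, including across $L$. A sheet approaching a half-hyperplane $H$ must therefore continue smoothly across $L$. Since it stays close to $\spt\|\BC\|$, it continues into the half-hyperplane $H'$ with $H\cup H'$ a hyperplane, and $H'$ must lie in $\spt\|\BC\|$. This gives (i): $\spt\|\BC\|$ is a union of hyperplanes $P_i$. It also gives (ii): each sheet is a single-valued graph $u_i$ over some $P_{j_i}$ on $B_{1/2}$. Hausdorff closeness to all of $\spt\|\BC\|$ gives (iii). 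The number of sheets is bounded by the monotonicity mass bound, giving $q\le q_*(n,\Lambda)$. Finally, $\|u_i\|_{C^4}\le CE_{V,\BC}$ follows from interior Schauder estimates for the minimal surface equation together with the $L^2$ bound: on most of the domain $\dist((x,u_i(x)),\BC)=|u_i(x)|$. This contradicts the choice of $V_j$ for large $j$.
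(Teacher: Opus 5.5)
Your Steps 1 and 3 are fine. They are exactly the part the paper means when it says the theorem ``follows directly once one has control on the second fundamental form as in \cite{HLW24}''. One adjustment: replace the ``common axis $L$'' by $\sing\BC$, which is a finite union of $(n-1)$-dimensional subspaces. $\BC$ may be a sum of classical cones with different axes together with hyperplanes meeting along a lower-dimensional spine, so your curvature bound must also cover the origin, where $\BC$ is not cylindrical.

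For $2\le n\le 6$ your plan also matches the paper's footnote: $\dim_\H\sing_*(V)\le n-7$ forces $\sing_*(V)=\emptyset$, and the curvature estimates of \cite{SSY75} ($n\le 5$) and \cite{Bel23} ($n=6$) apply directly. Your Moser route from an $L^p$ bound with $p>n$ only reaches $n\le 5$, because the SSY integrability range is $p<4+\sqrt{8/n}$.

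The gap is Step 2 when $n\ge 7$. That is the only case in which the statement goes beyond SSY/Bellettini, and there none of your proposed mechanisms produces a pointwise bound on $|A|$.

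\emph{No $L^p$ bound with $p>n$.} An $L^p$ bound with $p>n$ is simply unavailable in these dimensions. Extending the SSY-type identities across $\sing_*(V_j)$ gives integral bounds on $|A|$. It does not make $\int|A|^2$ small near the places where several sheets cross, which is where you need it.

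\emph{Point-picking fails for two reasons.} First, $|A_{V_j}|$ need not be locally bounded near $\sing_*(V_j)$. That set may a priori be non-empty; excluding it is precisely what the theorem asserts. So you cannot select points with $|A|\le 2|A(x_j)|$ on large rescaled balls. Blowing up at singular points instead yields stable tangent cones that can genuinely be singular, e.g.\ the Simons cone over $S^3\times S^3$ times $\R^{n-7}$, so this gives no contradiction. Second, even at regular points the rescaled limit is a complete stable minimal immersion, and Bernstein fails for $n\ge 7$: there are smooth area-minimizing leaves asymptotic to the Simons cone. Nothing forces the tangent cone at infinity of your limit to be a hyperplane. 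Closeness to $\BC$ is only known at unit scale, not at the intermediate scales between $|A(x_j)|^{-1}$ and $1$, and near $\sing\BC$ the cone is not a hyperplane anyway. So the claim that ``such a limit has $A\equiv 0$ by the same integral estimate'' is unjustified.

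\emph{What is missing.} You need a genuinely quantitative $\eps$-regularity mechanism. It must turn smallness of the excess relative to a union of several (half-)hyperplanes at unit scale into curvature control at all smaller scales, including near $\sing\BC$. In \cite{HLW24} this is done by adapting Bellettini's De Giorgi-type iteration, which was designed for a single hyperplane, to unions of (half-)hyperplanes. This requires extending identities involving powers of $|A|$ across $\sing_*$, and that is where the threshold $n-4+\frac{4}{n}$ enters. Without such an input, Step 3, and in particular your derivation of (i) near a non-immersed classical cone, has nothing to run on.
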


\begin{remark}
	The form of Theorem \ref{thm:HLW} stated above is not identical to the version given in \cite{HLW24}\footnote{In particular, in \cite{HLW24} the relevant results are only stated for $n\geq 3$. However, when $n=2$ (or even $2\leq n\leq 6$) the assumptions of Theorem \ref{thm:HLW} guarantee that $V$ is smoothly immersed in $B^{n+1}_1(0)$, meaning that we have uniform curvature estimates from \cite{SSY75, Bel23}. This is enough to deduce Theorem \ref{thm:HLW} in these cases. However, the $n=2$ case of Theorem \ref{thm:main-compactness} is already covered by Theorem \ref{thm:HLW}, as in this case the assumption in both cases is $\sing_*(V)=\emptyset$, so there is no harm in assuming $n\geq 3$ in our arguments.}, but it follows directly once one has control on the second fundamental form as in \cite{HLW24}. Note again that just because $\spt\|\BC\|$ can be written as the union of hyperplanes this does not imply that $\BC$ can be written as the sum of hyperplanes. A particular consequence of Theorem \ref{thm:HLW} is that if $\spt\|\BC\|$ cannot be written as the union of full hyperplanes, then $\spt\|V\|$ as above cannot be within $\eps$ of $\spt\|\BC\|$ in Hausdorff distance in $B^{n+1}_1(0)$.
\end{remark}

\medskip

Note now the following consequence of Lemma \ref{lemma:gaps} and Theorem \ref{thm:HLW} for Theorem \ref{thm:main-cone}. Assuming the set-up of Theorem \ref{thm:main-cone}, and writing $S(\BC)$ for the spine of $\BC$, if $\dim(S(\BC))\leq n-2$ and conclusion (a) therein fails for sufficiently small $\eps = \eps(\BC)\in (0,1)$, then Theorem \ref{thm:HLW} can be applied away from a neighbourhood of $S(\BC)$\footnote{Indeed, it is applicable to $V$ restricted to the complement of a tubular neighbourhood of $S(\BC)$ since by upper semi-continuity of the density function the density of $V$ must be $<Q$ here, provided $\eps$ is sufficiently small.} to deduce that $\BC$ must be smoothly immersed away from $S(\BC)$. In particular, $\BC$ must be equal to a sum of hyperplanes (rather than just half-hyperplanes). Hence we see that to prove Theorem \ref{thm:main-cone} it suffices to prove conclusion (b) holds when either:
\begin{enumerate}
	\item [(i)] $\dim(S(\BC))\leq n-2$ and $\BC$ is equal to the sum of finitely many hyperplanes;
	\item [(ii)] $\BC$ is a classical cone (i.e.~$\dim(S(\BC))=n-1$);
\end{enumerate}
Indeed, in all other cases one can verify that conclusion (a) of Theorem \ref{thm:main-cone} must hold for suitable $\eps = \eps(\BC)\in (0,1)$. Put differently, cones obeying (i) or (ii) are the only cones $\BC$ of the form in Theorem \ref{thm:main-cone} which are in the class $\V_Q$, and we are thus saying if $\BC\not\in \V_Q$, then Theorem \ref{thm:main-cone}(a) holds for suitable $\eps = \eps(\BC)$, whilst if $\BC\in \V_Q$ then $\BC$ must take one of the forms in (i), (ii), and we will prove an $\eps$-regularity theorem in this case.

Notice in particular that when $Q$ is a half-integer rather than an integer, the only case of Theorem \ref{thm:main-cone} which needs to be verified is when $\dim(S(\BC))=n-1$ (and this case turns out to be much simpler, cf.~Remark \ref{remark:cone-2}(3)).

The above observation means that we can reduce Theorem \ref{thm:main-cone} to proving an $\eps$-regularity theorem. Here, we will use the following notation (cf.~\cite[Theorem C]{MW24}). Write $\BC$ for a stationary cone with $\Theta_{\BC}(0) = Q$ which takes one of the two forms above, i.e., up to performing an ambient rotation to ensure $S(\BC)\subseteq\{0\}^2\times\R^{n-1}$,
\begin{enumerate}
	\item [(I)] $\BC = \sum^N_{i=1}q_i|P_i|$, where $N\geq 2$, $q_i\in \{1,2,\dotsc\}$, and the $P_i$ are distinct hyperplanes in $\R^{n+1}$ passing through the origin such that $\bigcap_{i=1}^N P_i \equiv S(\BC)$ is a subspace of dimension $\in \{0,1,\dotsc,n-2\}$;
	\item [(II)] $\BC = \sum^N_{i=1}q_i|H_i|$, where $N\geq 3$, $q_i\in\{1,2,\dotsc\}$, and the $H_i$ are distinct half-hyperplanes with $\del H_i = \{0\}^2\times\R^{n-1}$ for each $i=1,\dotsc,N$.
\end{enumerate}
Note that our assumptions imply in each case that $q_i<Q$ for all $i$; this is crucial for being able to apply our inductive assumptions (namely Theorem \ref{thm:HLW}), via Lemma \ref{lemma:gaps} and upper semi-continuity of density, away from $S(\BC)$. In (II), we may write $H_i = R_i\times\R^{n-1}$, where $R_i = \{t\mathbf{w}_i:t>0\}$ for (distinct) unit vectors $\mathbf{w}_1,\dotsc,\mathbf{w}_N$ in $\R^2$. In this case also write
$$\sigma_0:=\max\{\mathbf{w}_i\cdot\mathbf{w}_k: i\neq k\}$$
and let $N(H_i)$ denote the conical neighbourhood of $H_i$ defined by
$$N(H_i) := \left\{(x,y)\in \R^2\times\R^{n-1}: x\cdot\mathbf{w}_i>|x|\sqrt{\frac{1+\sigma_0}{2}}\right\}.$$
Write also $\widetilde{H}_i$ for the hyperplane containing $H_i$. In general for a cone $\BC$ we write $r_{\BC}(x):=\dist(x,S(\BC))$. We also note that case (II) can be divided into two subcases:
\begin{enumerate}
	\item [(II-a)] $\BC$ is as in (II), yet $\BC$ is not the sum of hyperplanes;
	\item [(II-b)] $\BC$ is as in (II) and is the sum of hyperplanes.
\end{enumerate}

The reduction of Theorem \ref{thm:main-cone} is the following. This is the main $\eps$-regularity theorem of the paper.

\begin{thmx}[Conical $\eps$-Regularity Theorem: Reduced Statement]\label{thm:main-reg}
	Fix $Q\in \{\frac{3}{2},2,\frac{5}{2},3,\dotsc\}$. Let $\BC^{(0)}$ be a cone taking one of the two types described in \textnormal{(I)} or \textnormal{(II)} above. Then, there is a constant $\eps = \eps(\BC^{(0)})\in (0,1)$ such that the following holds. Suppose $V\in\mathcal{V}_Q$ satisfies:
	\begin{enumerate}
		\item [\textnormal{(i)}] $(\w_n 2^n)^{-1}\|V\|(B^{n+1}_2(0))<Q+\frac{1}{4}$;
		\item [\textnormal{(ii)}] $Q-\frac{1}{4}\leq \w_n^{-1}\|V\|(B^{n+1}_1(0))<Q+\frac{1}{4}$;
		\item [\textnormal{(iii)}] Depending on the form of $\BC^{(0)}$, assume:
		\begin{enumerate}
			\item [\textnormal{(iiia)}] If $\BC^{(0)}$ is as in \textnormal{(I)}, assume that
			$$\dist_\H(\spt\|V\|\cap B^{n+1}_1(0),\spt\|\BC^{(0)}\|\cap B^{n+1}_1(0))<\eps;$$
			\item [\textnormal{(iiib)}] If $\BC^{(0)}$ is as in \textnormal{(II)}, assume that
			$$d(V\res B_1^{n+1}(0), \BC^{(0)}\res B_1^{n+1}(0)) < \eps;$$
			or, equivalently, that $E_{V,\BC}<\eps$ and, for each $i\in \{1,\dotsc,N\}$,
			$$\|V\|\left(\big(B^{n+1}_{1/2}(0)\setminus\{r_{\BC^{(0)}}<1/8\}\big)\cap N(H_i)\right) \geq \left(q_i-\tfrac{1}{4}\right)\H^n\left(\big(B^{n+1}_{1/2}(0)\setminus\{r_{\BC^{(0)}}<1/8\}\big)\cap H_i\right).$$
		\end{enumerate}
	\end{enumerate}
	Then, depending on the form of $\BC^{(0)}$, the corresponding conclusion below holds.
	
	\textnormal{\textbf{\underline{Case A:}}} $\BC^{(0)}$ is as in \textnormal{(I)}. Then,
	$$V\res B^{n+1}_{1/2}(0) = \sum^Q_{i=1}\mathbf{v}(u_i)\res B^{n+1}_{1/2}(0),$$
	where for each $i\in \{1,\dotsc,Q\}$ there is $j_i\in \{1,\dotsc,N\}$ with $u_i:B^{n+1}_{1/2}(0)\cap P_{j_i}\to P_{j_i}^\perp$ a smooth single-valued function solving the minimal surface equation which obeys $\|u_i\|_{C^4}\leq CE_{V,\BC^{(0)}}$, where $C = C(n)\in (0,\infty)$. Furthermore:
	\begin{enumerate}
		\item [\textnormal{(A1)}] For each $k\in \{1,\dotsc,N\}$ we have $\{i:j_i = k\}\neq\emptyset$;
		\item [\textnormal{(A2)}] $\bigcap^Q_{i=1}\graph(u_i)$ is contained within an $(n-2)$-dimensional smooth submanifold.
	\end{enumerate}
	\textnormal{\textbf{\underline{Case B:}}} $\BC^{(0)}$ is as in \textnormal{(II-a)}. Then, for each $i\in \{1,\dotsc,N\}$ there is a function
	$$\gamma_i\in C^{1,\alpha}\left(\overline{S(\BC^{(0)})\cap B^{n+1}_{1/2}(0)};\,\widetilde{H}_i\cap \{r_{\BC^{(0)}}<\tfrac{1}{16}\}\right)$$
	and functions $u_{i,j}:\Omega_i\to \widetilde{H}_i^\perp$ for $j=1,\dotsc,q_i$, where $\Omega_i$ is the connected component of $\widetilde{H}_i\cap B^{n+1}_{1/2}(0)\setminus\{x+\gamma_i(x):x\in S(\BC^{(0)})\cap B^{n+1}_{1/2}(0)\}$ with $\Omega_i\supset (H_i\setminus \{r_{\BC^{(0)}}<\tfrac{1}{16}\})\cap B^{n+1}_{1/2}(0)$, such that:
	\begin{enumerate}
		\item [\textnormal{(B1)}] $u_{i,j}\in C^{1,\alpha}(\overline{\Omega}_i;\widetilde{H}^\perp_i)$ and $u_{i,j}\cdot \nu_i$ solves the minimal surface equation on $\Omega_i$, where $\nu_i$ is (a choice of) the unit normal to $\widetilde{H}_i$;
		\item [\textnormal{(B2)}] Writing $\widetilde{u}_{i,j}(x):= x + u_{i,j}(x)$ for $x\in\overline{\Omega}_i$, we have for all $y\in \overline{S(\BC^{(0)})\cap B^{n+1}_{1/2}(0)}$, 
		$$\widetilde{u}_{i_1,j_1}(y+\gamma_{i_1}(y)) = \widetilde{u}_{i_2,j_2}(y+\gamma_{i_2}(y))$$
		for all values of indices $i_1,i_2\in \{1,\dotsc,N\}$ and $j_k\in \{1,\dotsc,q_{i_k}\}$ for $k=1,2$;
		\item [\textnormal{(B3)}] $V\res B^{n+1}_{1/2}(0) = \sum^N_{i=1}\sum^{q_i}_{j=1}\mathbf{v}(u_{i,j})\res B^{n+1}_{1/2}(0)$;
		\item [\textnormal{(B4)}] For each $i\in \{1,\dotsc,N\}$ and $j\in \{1,\dotsc,q_i\}$ we have $|u_{i,j}|_{1,\alpha;\Omega_i}\leq CE_{V,\BC^{(0)}}$, where $C = C(\BC^{(0)})\in (0,\infty)$ and $\alpha = \alpha(\BC^{(0)})\in (0,1)$, and
		$$\{z:\Theta_V(z)\geq Q\}\cap B^{n+1}_{1/2}(0) = \{z:\Theta_V(z) = Q\}\cap B^{n+1}_{1/2}(0) = \widetilde{u}_{i,j}(B^{n+1}_{1/2}(0)\cap\del\Omega_i).$$
	\end{enumerate}
	\textnormal{\textbf{\underline{Case C:}}} $\BC^{(0)}$ is as in \textnormal{(II-b)}. Then, the conclusions of Case B hold with the following modifications:
	\begin{enumerate}
		\item [\textnormal{(C1)}] \textnormal{(B2)} is no longer valid and is removed;
		\item [\textnormal{(C2)}] The final claim in \textnormal{(B4)} is replaced with: for each $i\in \{1,\dotsc,N\}$ and $j\in \{1,\dotsc,q_i\}$,
		$$\{z:\Theta_V(z)\geq Q\}\cap B^{n+1}_{1/2}(0) = \{z:\Theta_V(z) = Q\}\cap B^{n+1}_{1/2}(0) \subset \widetilde{u}_{i,j}(B^{n+1}_{1/2}(0)\cap\del\Omega_i).$$
	\end{enumerate}
\end{thmx}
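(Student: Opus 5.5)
The proof will go by induction on $Q$, following the excess-decay blueprint of Simon \cite{Sim93}, Wickramasekera \cite{Wic14} and \cite{MW24, Min21a}, combined with the density-gap analysis of \cite{BK17, BKMW25}. We fix $Q$ and assume Theorem \ref{thm:main-cone} (hence Lemma \ref{lemma:gaps}) for all $Q'<Q$.

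\textbf{Step 1: regularity away from the spine.} Since every $q_i<Q$, upper semi-continuity of density gives $\Theta_V<Q$ on $B_{1}\setminus\{r_{\BC^{(0)}}<\tau\}$ once $\eps=\eps(\tau)$ is small. By Lemma \ref{lemma:gaps}, $\dim_\H\sing_*(V)\le n-7$ there, and by $(\S2)_Q$ the varifold is two-sided and stable. Hence Theorem \ref{thm:HLW} applies on balls centred away from $S(\BC^{(0)})$. Together with $(\S3\text{b})$ near single sheets, this shows that $V$ is a union of smooth minimal graphs over the planes or half-planes of $\BC^{(0)}$ in $\{r_{\BC^{(0)}}\ge\tau\}$, with $C^2$ bounds by $E_{V,\BC^{(0)}}$.

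\textbf{Step 2: estimates and blow-ups near the spine.} Next we establish Simon-type non-concentration estimates, namely $L^2$ bounds for $\int r^{-n-7/4}\dist^2(x,\BC)$ and for $|\nabla^\perp(x/r)|$-type quantities. We use the monotonicity formula at points of density $\ge Q$, in the style of Simon's estimates. The key issue is the possible absence of such points, i.e. \emph{density gaps}.

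To handle this we split according to whether $\{\Theta_V\ge Q\}$ comes within distance $\ll E$ of every point of $S(\BC^{(0)})\cap B_{1/2}$:
\begin{itemize}
\item If it does (the ``no gaps'' case), Simon's estimates go through.
\item If there is a gap region, we follow \cite{BK17, BKMW25}. In the gap $\Theta_V<Q$ holds, so Step 1 applies on a larger region. Using the finer $\eps$-regularity theorems already available inductively, namely those for cones $\BC'$ close to $\BC^{(0)}$ with coarser structure, we show the gap region is described by graphs over cones of lower density vertex.
\end{itemize}
As in \cite{MW24, Min21a}, this requires a hierarchy of intermediate $\eps$-regularity statements, proved in order of increasing ``fineness'' relative to the best approximating cone. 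Blowing up sequences with $E\to 0$ normalized by excess then produces a class of multivalued harmonic blow-ups over the half-planes or planes. We must prove that this class is compact and that each member has a $C^{1,\alpha}$ boundary (or intersection) regularity with a decay estimate. For case (I) this is essentially harmonic-function linear algebra. For (II) the mechanism is the matching condition at the spine, which follows from stationarity and the estimates.

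\textbf{Step 3: excess decay and conclusion.} The decay of blow-ups gives, by contradiction, an excess-improvement lemma: at scale $\theta$ there is a new cone $\BC'$ of the same type with $E_{V_\theta,\BC'}\le\frac12 E_{V,\BC}$. Here we must also allow $\BC'$ with $\dim S(\BC')$ larger, handled by the induction. Iterating at every point of $\{\Theta_V\ge Q\}$ gives unique tangent cones with Hölder decay, and hence the $C^{1,\alpha}$ curves $\gamma_i$ and the graphs $u_{i,j}$ in Cases B and C.

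In Case A, the decay together with Step 1 at all scales yields that $V$ is a sum of smooth graphs. Combined with Theorem \ref{thm:HLW} and the a posteriori absence of classical singularities, this gives the $C^4$ estimates. (A2) follows because the intersection of distinct smooth transverse sheets is a codimension-2 submanifold.

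\textbf{Main obstacle.} The hardest part is Step 2 in the presence of density gaps with multiplicity greater than 2. There the approximating cone can degenerate, with a higher-dimensional spine or nearby lower-density cones, so that the blow-up class is not closed without the inductive fineness hierarchy. I would first attempt to prove the non-concentration estimate directly in gap regions using the graphical structure from Theorem \ref{thm:HLW}, falling back to the \cite{MW24} fineness induction if that fails.
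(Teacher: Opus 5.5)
Your Step 1 is essentially Lemma \ref{lemma:coarse-rep}, and the overall blow-up and excess-decay blueprint is the right one. However, the inductive structure you propose cannot close.

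Induction on $Q$ (through Lemma \ref{lemma:gaps} and Theorem \ref{thm:HLW}) only controls regions where $\Theta_V<Q$. Every cone arising in your excess-decay step has vertex density exactly $Q$, so that induction says nothing about them. Moreover, small perturbations of $\BC$ are always \emph{finer}, never of larger spine dimension as you write: the spine can shrink, a (half-)hyperplane with multiplicity can split into several, or hyperplanes can open into half-hyperplanes. When the degree-one part of a blow-up separates sheets lying over the same (half-)hyperplane, it cannot be absorbed by a cone of the same class, and none of your inductive hypotheses covers the finer cone.

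The missing idea is a second induction, at fixed $Q$, over the fineness order of the classes $\mathfrak{C}_i$. One introduces the fine excess $F_{V,\BC}$ and the condition $(\dagger)$, $F_{V,\BC}<\beta_0\mathcal{F}_{V,\BC}$, with Lemma \ref{lemma:replacement} producing cones that satisfy it. Theorem \ref{thm:main-L^2-estimate} is then proved from coarse to fine, by blowing up off good representatives of coarser cones. Here $(\dagger)$ is what decides which sheets to graph over a split cone, and what forces the density-$Q$ points to concentrate at $S(\BC)$. Theorem \ref{thm:fine-reg} is proved in the opposite direction, from the finest classes (all multiplicities one) downwards. Excess decay then becomes the dichotomy ``decay to a cone of the same level, or apply the already established fine theorem to a finer cone''.

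The second, and main, gap is the $C^{1,\alpha}$ regularity of blow-ups in Case C when density gaps occur. Your ``matching condition at the spine'' is the argument of \cite[Section 16]{Wic14}. That argument requires $\D=S$, so that each blow-up component lives on a half-hyperplane with linear boundary data (Lemma \ref{lemma:kappa}) and boundary regularity applies. When $\D\subsetneq S$ with $\H^{n-2}(\D)=\infty$, the blow-ups are continuous on full hyperplanes, harmonic only off the closed set $\D$, with values prescribed only on $\D$. This is a mixed problem with possibly singular behaviour along the relative boundary of $\D$ in $S$, and nothing in your plan rules out non-linear homogeneous degree-one blow-ups.

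The paper handles this in Theorem \ref{thm:classification}, by downward induction on $\dim S(v)$:
\begin{enumerate}
\item A reverse Hardt--Simon dichotomy, whose second alternative is Lemma \ref{lemma:blow-up-eps-reg} (itself deduced from the inductive fine theorem for finer cones), gives $C^1$ regularity up to $S$ on each half of $P_*$.
\item The odd part $v(r,y)-v(-r,y)=ar$ is forced to vanish by a point of $S\setminus\D$.
\item $\D$ is then removed as part of the zero set of the $C^1$ function $v-l$.
\end{enumerate}
Your fallback of proving non-concentration in gaps via Theorem \ref{thm:HLW} is roughly what the paper does for the $L^2$ estimates there. It does not address this classification, which is where the real difficulty lies.
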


\begin{remark}\label{remark:cone-2}
	The reader should note the following points regarding Theorem \ref{thm:main-reg}:
	\begin{enumerate}
		\item [(1)] Assumption (iiib) of course implies assumption (iiia), but not vice versa (as (iiib) ensures that $V$ is close to $\BC^{(0)}$ with the correct multiplicities on the half-hyperplanes in $\BC^{(0)}$). The reason we state Theorem \ref{thm:main-reg} with this stronger hypothesis when $\BC^{(0)}$ is of the form in (II) is again due to the possibility that it is possible for $\spt\|\BC^{(0)}\|$ to be written as a union of hyperplanes without $\BC^{(0)}$ being written as a sum of hyperplanes.
		\item [(2)] The conclusion in Case C is a mixture of those in Case A and Case B. Qualitatively, the conclusions in Case C say that the density $Q$ (or equivalently $\geq Q$) set is only \emph{contained} within a $(n-1)$-dimensional $C^{1,\alpha}$ submanifold which is graphical over part of $S(\BC^{(0)})$, and at each density $Q$ singular point there is a unique tangent cone equal to a cone which could be of either form described in (I) or (II). In Case B, the density $Q$ set is \emph{equal} to a $(n-1)$-dimensional $C^{1,\alpha}$ submanifold.
		\item [(3)] When $Q$ is a half-integer, then $\BC^{(0)}$ must be of the form in (II-a). In this case, the proof of Theorem \ref{thm:main-reg} is significantly simpler, because so-called ``density gaps'' cannot occur (this follows in an identical manner to that seen in, for instance, \cite[Lemma 16.5(a)]{Wic14}, which in turn is an adaptation of \cite[(6.12)]{SS81}, or alternatively by applying Lemma \ref{lemma:gaps} and Theorem \ref{thm:HLW}). Once density gaps have been ruled out, one can reduce our proof to that seen in \cite[Section 16]{Wic14} (cf.~\cite[Theorem C]{MW24}), using Theorem \ref{thm:bellettini} in place of \cite[Theorem 15.2]{Wic14} therein. In fact, the same is true even when $Q$ is an integer if $\BC^{(0)}$ is as in (II-a), and thus this establishes Case B of Theorem \ref{thm:main-reg}, leaving only Case A and Case C. However, to prove Case A and Case C (which we note forces $Q$ to be an integer), one needs to analyse Case B when $Q$ is an integer as well (the intuitive reason being that a cone in the form (II-a) can arise as a small perturbation of a cone in the form (II-b) and thus such cones can arise in excess decay arguments). The crucial difference with the proof in Case A and Case C is that density gaps \emph{can occur}, and dealing with these requires a significantly more refined argument. Nonetheless, we now only need to prove Theorem \ref{thm:main-reg} when $Q$ is an integer.
		\item [(4)] The hardest case of Theorem \ref{thm:main-reg} is Case C, i.e.~when the cone is of the form in (II-b). This is because, if one fixes a cone of the form in (I) or (II) at looks at all nearby cones which are of the form in either (I) or (II), if the initial cone is of the form in (II-b) then all other forms of cone are nearby to it. This is unlike when the cone is of the form in (I), as nearby cones must also be of the form in (I). The same is also true for cones of the form in (II-a). This will be made precise in Section \ref{sec:set-up}.
		\item [(5)] In the context of Theorem \ref{thm:main-compactness} (namely, Theorem \ref{thm:main-reg-basic}), as the associated $V$ has no non-immersed classical singularities at all, the cone $\BC^{(0)}$ can never be as in (II-a), so Case B never happens. This drastically reduces the complexity of Case C, giving that seen in Theorem \ref{thm:main-reg-basic}. If one only cares about proving Theorem \ref{thm:main-compactness}, one can then additionally assume that varifolds in $\V_Q$ have no non-immersed classical singularities of density $Q$ as well, leading to this simpler version. Nonetheless, one would still need to analyse the behaviour near cones of the form in (II-a) within the proof for cones of the form (II-b) (one one would need to prove a ``fine minimal distance theorem'', like in \cite[Lemma 14.1]{Wic14}).
	\end{enumerate}
\end{remark}
Thus, by Remark \ref{remark:cone-2}(3) we now only need to consider the case when $Q$ is an integer. From now on we therefore fix $Q\in \{2,3,\dotsc\}$. We note that in fact the $Q=2$ case of Theorem \ref{thm:main-reg} follows from \cite[Theorem C]{MW24}.

\subsection{Notation and set-up for Theorem \ref{thm:main-reg}}\label{sec:set-up}

Recall that we have now fixed $Q\in \{2,3,\dotsc\}$ and we want to prove Theorem \ref{thm:main-reg}. We start by setting up the relevant notation.

We divide the different types of cones we will be interested in into different classes. These depend on two parameters:
\begin{itemize}
	\item $\sfrak\in \{0,1,2,\dotsc,n-1\}$, the dimension of the spine of the cone;
	\item $\pfrak\in \{2,3,\dotsc,Q\}$, the number of distinct (half-)hyperplanes in the cone.
\end{itemize}
Here and throughout, the words ``hyperplane'' and ``plane'' will always refer to a subspace (of dimension $n$) rather than an affine plane, and similarly ``half-hyperplane'' and ``half-plane'' refer to the intersection of a hyperplane with a transverse half-space of $\R^{n+1}$ whose boundary contains $0$.

Now we define classes of cones as follows. To start with, define
\begin{align*}
	\mathcal{P}_{\sfrak}(\pfrak):= & \left\{\BC = \sum^{\pfrak}_{i=1}q_i|P^{(i)}|: P^{(i)}\text{ are distinct hyperplanes, }\dim(S(\BC))=\sfrak,\right.\\
	& \hspace{20em} \left.\text{and } q_i\in \Z_{\geq 1}\text{ obey }\sum^\pfrak_{i=1}q_i=Q\right\}.
\end{align*}
For a cone $\BC$ we write $S(\BC)$ for its \emph{spine}, referring to the maximal subspace under which $\BC$ is translation invariant. Thus for $\BC = \sum^{\pfrak}_{i=1}q_i|P^{(i)}|\in \mathcal{P}_{\sfrak}(\pfrak)$ we have $S(\BC) = \bigcap_i P^{(i)}$.
\begin{remark}
Since we are in codimension one, if $\sfrak+\pfrak\leq n$ then $\mathcal{P}_{\sfrak}(\pfrak) = \emptyset$. For instance, it is impossible for $\pfrak=2$ hyperplanes to intersect only along a subspace of dimension $\sfrak \leq n-2$.
\end{remark}
We are also interested in classical cones, i.e.~cones which are sums of half-hyperplanes, provided that all their boundaries coincide (in particular, the spine dimension of the resulting cone is $n-1$). For this, define:
\begin{align*}
    \H_{n-1}(\pfrak):= & \left\{\sum^{\pfrak}_{i=1}q_i|H^{(i)}|: H^{(i)}\text{ are distinct half-hyperplanes with }\right.\\
    & \hspace{5em}\left. \del H^{(i)} = \del H^{(j)}\text{ for all $i\neq j$, and }q_i\in\Z_{\geq 1}\text{ obey }\sum^{\pfrak}_{i=1}q_i=2Q\right\}\mathbin{\big\backslash} \mathcal{P}_{n-1}(\pfrak).
\end{align*}
All of these cones have origin density $Q$. We stress that the multiplicity on each (half-)hyperplane is important to keep track of, even though in several definitions it is only the support of the cone which ultimately matters (such as in notions of excess).

We also group certain families of cones into more convenient families:
\begin{itemize}
	\item $\mathcal{P}_\sfrak:= \bigcup^Q_{\pfrak=2}\mathcal{P}_{\sfrak}(\pfrak)$ is the set of sums of $Q$ planes with spine dimension exactly $\sfrak$;
	\item $\mathcal{P}_{\leq n-2}:= \bigcup^{n-2}_{\sfrak=0}\mathcal{P}_{\sfrak}$ is the set of sums of $Q$ planes with spine dimension at most $n-2$;
	\item $\mathcal{P}:= \bigcup^{n-1}_{\sfrak=0}\mathcal{P}_{\sfrak}$ is the set of all sums of $Q$ planes with at least two being distinct;
	\item $\H_{n-1}:=\cup_{\pfrak=3}^{2Q}\H_{n-1}(\pfrak)$ is the set of all possible sums of $2Q$ half-hyperplanes (which are not sums of $Q$ full hyperplanes) with a common boundary;
	 \item For $\hfrak\in \{3,4,\dotsc,2Q\}$ set
    $$\CC_{n-1}(\hfrak) := \begin{cases}
        \H_{n-1}(\hfrak) & \text{if }\hfrak\text{ is odd;}\\
        \H_{n-1}(\hfrak)\cup \mathcal{P}_{n-1}(\tfrac{\hfrak}{2}) & \text{if $\hfrak$ is even;}
    \end{cases}$$
     \item $\CC_{n-1}:= \mathcal{H}_{n-1}\cup\mathcal{P}_{n-1}$ is the set of all cones of interest which have spine dimension $n-1$;
    \item $\CC := \mathcal{P}\cup\mathcal{H}_{n-1}$ is the set of all cones of interest in this work.
\end{itemize}
These families of cones fall into a hierarchy which will be important for us.
\begin{defn}
	Suppose $\mathfrak{C}_1$ and $\mathfrak{C}_2$ are both one of the collections of cones from the list of $\mathcal{P}_{\sfrak}(\pfrak)$ and $\H_{n-1}(\hfrak)$. We say $\mathfrak{C}_2$ is \emph{finer} than $\mathfrak{C}_1$ if $\overline{\mathfrak{C}_1}\subsetneq \overline{\mathfrak{C}_2}$. In this case we also say $\mathfrak{C}_1$ is \emph{coarser} than $\mathfrak{C}_2$.
\end{defn}

Here, the closure is taken in the varifold topology, which here just means keeping track of the multiplicity of the (half-)hyperplanes. Intuitively, finer cones are ones with (strictly) more degrees of freedom when compared to another class of cones. Also, small perturbations of a cone cannot result in a coarser cone.
\begin{defn}
	For cones $\BC_1,\BC_2\in \CC$, we say that $\BC_2$ is \emph{finer} than $\BC_1$, and write $\BC_1\prec\BC_2$, if the collection of cones which $\BC_1$ belongs to is finer than that of $\BC_2$. We also say $\BC_1$ is \emph{coarser} than $\BC_2$ in this case.
	
	We also write $\BC_1\preceq \BC_2$ if either $\BC_1\prec \BC_2$ or if $\BC_1$ and $\BC_2$ belong to the same family of cones. The notations $\succ$ and $\succeq$ then have the obvious meanings.
\end{defn}

The notion of being finer induces a strict \emph{partial order} on the set $\CC$, which can be visualised using the diagram shown in Figure \ref{fig:0}.

\begin{figure}[h]
\centering
\begin{tikzpicture}
        \node at (3.5,1.6) {$\cdot$};
        \node at (3.6,1.7) {$\cdot$};
        \node at (3.7,1.8) {$\cdot$};
        \node at (3.8,1.9) {$\cdot$};
        \node at (3.9,2.0) {$\cdot$};
        \node at (4.0,2.1) {$\cdot$};

        \node at (1.3,0) {$\mathcal{P}_{\max\{0,n+1-Q\}}(Q)$};
        \draw [<-] (2.9,0) -- (3.5,0);
        \node at (3.85,0) {$\cdots$};
        \draw [<-] (4.2,0) -- (4.7,0);
        \node at (5.55,0) {$\mathcal{P}_{n-2}(Q)$};
        \draw [<-] (6.4,0) -- (6.9,0);
        \node at (7.75,0) {$\mathcal{P}_{n-1}(Q)$};
        \draw [->] (8.6,0) -- (10,0);
        \node at (11,0) {$\H_{n-1}(2Q)$};

        \draw [<-] (11,0.35) -- (11,0.85);
        \node at (11.35,1.2) {$\H_{n-1}(2Q-1)$};
        \draw [<-] (11,1.55) -- (11,2.05);
        \node at (11,2.45) {$\vdots$};
        \draw [<-] (11,2.7) -- (11,3.2);
        \node at (11,3.55) {$\H_{n-1}(6)$};
        \draw [<-] (11,3.9) -- (11,4.4);
        \node at (11,4.75) {$\H_{n-1}(5)$};
        \draw [<-] (11,5.1) -- (11,5.6);
        \node at (11,5.95) {$\H_{n-1}(4)$};
        \draw [<-] (11,6.2) -- (11,6.7);
        \node at (11,7.05) {$\H_{n-1}(3)$};

        \node at (5.55,3.55) {$\mathcal{P}_{n-2}(3)$};
        \draw [<-] (6.4,3.55) -- (6.9,3.55);
        \node at (7.75,3.55) {$\mathcal{P}_{n-1}(3)$};
        \draw [->] (8.6,3.55) -- (10,3.55);

        \node at (7.75,5.95) {$\mathcal{P}_{n-1}(2)$};
        \draw [->] (8.6,5.95) -- (10,5.95);
        

        \draw [->] (7.75, 5.6) -- (7.75, 3.9);

        \draw [->] (7.75, 3.2) -- (7.75, 2.1);
        \draw [->] (5.55, 3.2) -- (5.55, 2.1);

        \node at (7.75, 1.85) {$\vdots$};
        \node at (5.55, 1.85) {$\vdots$};

        \draw [->] (7.75, 1.4) -- (7.75, 0.3);
        \draw [->] (5.55, 1.4) -- (5.55, 0.3);
\end{tikzpicture}
\caption{\footnotesize Visualisation of which families of cones are finer than one another. An arrow indicates a finer family from a given family. Notice that there are two maximal elements, namely $\mathcal{P}_{\max\{0,n+1-Q\}}(Q)$ and $\H_{n-1}(2Q)$.}
\label{fig:0}
\end{figure}

To elaborate on Figure \ref{fig:0} in relation to the present and other related works, the key point is that when one tries to prove regularity via a blow-up and excess decay argument, one hopes to prove an excess decay lemma in which the excess decays with respect to a new cone which is close to the one originally started at, but a suitable perturbation of it. One must therefore understand all the possible relevant nearby perturbations of a given cone (which is more challenging when the cone has multiplicity). Here, the relevant perturbations are those where:
\begin{itemize}
	\item if the cone has spine dimension $n-1$ (so is in $\CC_{n-1}$), then one is allowed to perturb the individual half-hyperplanes independently of one another, including those which coincide;
	\item if the cone has spine dimension $\leq n-2$ (so is in $\P_{\leq n-2}$) then one is allowed to perturb the individual hyperplanes independently of one another, including those which coincide.
\end{itemize}
In particular, one is not guaranteed to remain in the same family of cones as one starts in during the excess decay, as one can perturb to a finer cone, which corresponds to moving along the arrows in the (strict) partial order $\prec$ as shown in Figure \ref{fig:0}. Since in the present setting the process must eventually terminate (as $\mathcal{P}_{\max\{0,n+1-Q\}}(Q)$ and $\H_{n-1}(2Q)$ are the only maximal elements under $\prec$) one can ensure that the excess decay argument closes up. The hardest classes of cones to deal with are the minimal elements in the partial ordering, namely $\mathcal{P}_{n-1}(2)$ or $\H_{n-1}(3)$, as then there are the largest number of cases to analyse.

The reader may find it useful to cast other works in this light, viewing them as instances where smaller version of the diagram in Figure \ref{fig:0} needed to be analysed. For instance:
\begin{itemize}
    \item One could view the work of Allard \cite{All72} as only requiring a diagram with a single class of cones, namely planes with multiplicity one, i.e.~the diagram would simply be
    $$\mathcal{P}_n(1).$$
    Similarly, the $\eps$-regularity theorem of Simon \cite{Sim93} for the triple junction only requires a diagram with a single class of cones, namely cones formed of 3 half-planes with multiplicity one, i.e.
    $$\H_{n-1}(3).$$
    \item The work of Wickramasekera \cite{Wic14}, which first introduced the idea of using \emph{fine blow-ups} to understand what happens when multiplicity causes finer cones to arise in blow-up procedures, needed to analyse the chain
    $$\mathcal{P}_n(1) \to \H_{n-1}(4)\cup\mathcal{P}_{n-1}(2) \to \mathcal{H}_{n-1}(5)\to\cdots\to\mathcal{H}_{n-1}(2Q)\cup\mathcal{P}_{n-1}(Q)$$
    (in the setting of \cite{Wic14} it is more natural to group $\mathcal{H}_{n-1}(2\pfrak)$ and $\mathcal{P}_{n-1}(\pfrak)$ together). Similar chains to this were also needed in \cite{MW24, Min21a}. Notice that in these situations $\prec$ is actually a total order.
    \item The work of Becker--Kahn \cite{BK17} was the first work which investigated, in the context of $2$-valued stationary Lipschitz graphs (of arbitrary dimension and codimension), regularity and excess decay in a situation when the partial order took of the form
    $$\mathcal{P}_0(2)\leftarrow\cdots\leftarrow\mathcal{P}_{n-3}(2)\leftarrow\mathcal{P}_{n-2}(2)\leftarrow \mathcal{P}_{n-1}(2) \to \H_{n-1}(4).$$
    Indeed, in \cite{BK17} whenever the cone is formed of two planes, a perturbation of the cone could result in two planes intersecting along a smaller spine or, if the spine dimension is $n-1$, a cone formed of four half-planes with a common boundary (both of which are finer cones). Notice that a cone with spine dimension $n-1$ which is formed of $4$ distinct half-planes, but which is not two full planes, is far from two planes intersecting along an axis of spine dimension $\leq n-2$ (as one cannot be perturbed into the other). This is the cause for these classes of cones to be unrelated under $\prec$.
    \item The recent work of Becker--Kahn, the first author, and Wickramasekera \cite{BKMW25} (which is also in arbitrary dimension and codimension, and does not use stability or any a priori graph structure) was the first work to carry out this analysis where the multiplicity could split, the spine dimension of the cone could decrease, and two planes could perturb to $4$ half-planes, leading to the associated partial order diagram being
    \begin{align*}
        &\ \mathcal{P}_{n}(1)\\
        &\ \ \ \downarrow\\
        \mathcal{P}_0(2)\leftarrow\cdots\leftarrow\mathcal{P}_{n-3}(2)\leftarrow\mathcal{P}_{n-2}(2)\leftarrow\ & \mathcal{P}_{n-1}(2) \to \H_{n-1}(4).
    \end{align*}
\end{itemize}
The key difficulty and technical contribution in the present work can therefore be viewed as establishing and working with an excess decay lemma when the diagram is the more complicated one shown in Figure \ref{fig:0}. Here, the multiplicity can split, the spine dimension can decrease, and hyperplanes can split into half-hyperplanes.\footnote{As a comment to the reader familiar with this body of work: the main technical difficulty here in comparison to other works comes in the classification of homogeneous degree one blow-ups. Here, it seems that one needs to inductively establish fine $\eps$-regularity theorems in order to achieve this, which is reminiscent of the proofs of the main regularity theorems in \cite{MW24, Min21a, BKMW25}. The presence of multiplicity prevents one from arguing as in \cite{BK17} or \cite[Theorem D]{BKMW25}, and the presence of density gaps prevents one from arguing as in \cite[Section 16]{Wic14}.}

We use various notions of \emph{excess} throughout this work. We use the following shorthand notations for varifolds $W,W_1,W_2$ in $\R^{n+1}$:
\begin{itemize}
    \item $\dist(x,\spt\|W\|)\equiv \dist(x,W)$ for $x\in\R^{n+1}$;
    \item $\dist_\H(\spt\|W_1\|\cap B,\spt\|W_2\|\cap B) \equiv \dist_\H(W_1\cap B, W_2\cap B)$ for $B\subset \R^{n+1}$.
\end{itemize}

For the rest of the paper, we fix once and for all a cone $\BC^{(0)}\in \CC$. We will only need to consider cones $\BC\succeq\BC^{(0)}$.

\begin{defn}
    For $V\in\V_Q$ and $\BC\in \CC$, the (\emph{one-sided}) \emph{height excess} of $V$ relative to $\BC$ is:
    $$E_{V,\BC}:=\left(\int_{B_1^{n+1}(0)}\dist^2(x,\BC)\, \ext V(x)\right)^{1/2}.$$
    The \emph{fine excess} of $V$ relative to $\BC$ is:
    $$F_{V,\BC}:= \left(\int_{B_1^{n+1}(0)}\dist^2(x,\BC)\, \ext\|V\|(x) + \int_{B_{1/2}^{n+1}(0)\setminus B_{1/16}(S(\BC^{(0)}))}\dist^2(x,V)\, \ext\|\BC\|(x)\right)^{1/2}.$$
\end{defn}

\begin{remark}
Simple contradiction arguments show that for any $\eps>0$ and $\sigma\in (0,1)$, there exists $\delta = \delta(\eps,\sigma,n,Q)\in (0,1)$ such that:
\begin{enumerate}
    \item [(a)] If $F_{V,\BC}<\delta$ then $\dist_\H(V\cap B_\sigma(0),\BC\cap B_\sigma(0))<\eps$;
    \item [(b)] If $\dist_\H(V\cap B_{1+\sigma}(0),\BC\cap B_{1+\sigma}(0))<\delta$ then $F_{V,\BC}<\eps$.
\end{enumerate}
It is useful to note that therefore smallness of $F_{V,\BC}$ does not imply that $V$ and $\BC$ are close as varifolds in $B_1$, since $\BC$ might not have the correct multiplicities on its (half-)hyperplanes.
\end{remark}

\medskip

Often we will restrict ourselves to cones whose spines are related in the following manner:
\begin{defn}
	We say cones $\BC_1,\BC_2\in \CC$ are \emph{aligned} if either $S(\BC_1)\subseteq S(\BC_2)$ or $S(\BC_2)\subseteq S(\BC_1)$.
\end{defn}
\begin{remark}
If $\BC_1\preceq \BC_2$ are aligned then one must have $S(\BC_2)\subseteq S(\BC_1)$.
\end{remark}

\subsection{Overview of the proof}

To prove Theorem \ref{thm:main-reg} we use blow-up and excess decay argument. This requires two delicate inductive procedures based on the partial order $\preceq$ for cones $\BC\in \CC$. These are:
\begin{enumerate}
	\item [(1)] First, we prove key $L^2$ estimates for $V$ relative to \emph{finer cones} $\BC$ to $\BC^{(0)}$ by working inductively \emph{down} the partial ordering $\preceq$. Thus, the base case is when $\BC$ is coarsest, namely in the same class as $\BC^{(0)}$ itself.
	\item [(2)] Second, we prove \emph{fine} $\eps$-regularity theorems by working inductively \emph{up} the partial ordering $\preceq$. Thus, the base case is when $\BC$ is finest, namely in $\mathcal{P}_{\max\{0,n+1-Q\}}(Q)$ or $\H_{n-1}(2Q)$.
\end{enumerate}
Intuitively, it is easier to represent $V\in \V_Q$ nearby $\BC^{(0)}$ by graphs over coarser cones, with the relevant estimates, as it is easier to identify the ``correct'' plane to graph over. This is why the induction for the $L^2$ estimates works down the partial ordering. On the other hand, when proving $\eps$-regularity theorems the easiest case is when the cone is finest, as then perturbations of the cone have to remain in the same class (as they cannot become coarser). When proving an $\eps$-regularity theorem for a certain class of cones it is therefore useful to have already proved $\eps$-regularity theorems near finer cones, as if one were to perturb to a finer cone in the excess decay argument, intuitively one has already proved the theorem in that case. (In fact, there is also a third induction happening, as we will also prove Theorem \ref{thm:main-reg} inductively over the \emph{level} of $\BC^{(0)}$ -- see below.)

It is worth pointing out that a key part of (2) above is proving a $C^{1,\alpha}$ regularity estimate for blow-ups. When the blow-up arises from blowing-up a sequence of varifolds in $\V_Q$ relative to cones in $\P_{\leq n-2}$ or $\H_{n-1}$, this step is much easier to prove. Indeed, in the case of $\P_{\leq n-2}$ this follows from the fact that a continuous function on a plane which is harmonic away from a subspace of codimension $\geq 2$ must be harmonic on the plane. In the case of $\H_{n-1}$, this is reduced to $C^{1,\alpha}$ boundary regularity of harmonic functions in an analogous manner to that seen in \cite[Section 16]{Wic14} (cf.~\cite{Sim93}). In particular, less inductive information is needed for (2) in these cases. However, in both these cases, it seems one cannot avoid working inductively to prove the $L^2$ estimates in (1), as one needs to show $L^2$ convergence along blow-up sequences (morally, this is to rule out both the fundamental solution of the Laplacian occurring in the blow-up, as well as for proving the excess decay statement). However, for $\P_{n-1}$ full inductive information is needed in both (1) and (2).

Due to $\preceq$ being only a partial ordering rather than a total ordering, it will be notationally convenient to collect all the classes $\mathcal{P}_{\sfrak}(\pfrak)$ and $\H_{n-1}(\hfrak)$ into a single ordered list
$$\mathfrak{C}_1,\mathfrak{C}_2,\dotsc,\mathfrak{C}_{N_0(n,Q)}$$
such that every class of cones appears exactly once and that coarser classes of cones always appear before finer ones, i.e.~if $\mathfrak{C}_i\prec\mathfrak{C}_j$ then $i<j$. This can clearly be done (cf.~Figure \ref{fig:0}), although the way of doing this is not unique (e.g.~$\H_{n-1}(3)$ and $\mathcal{P}_{n-1}(2)$ are both valid choices for $\mathfrak{C}_1$ and $\mathfrak{C}_2$ as they are incomparable and no class of cones is finer). Note that it won't be the case that if $i<j$ then $\mathfrak{C}_i\prec\mathfrak{C}_j$. For $\BC\in \CC$, we will refer to the index $i$ for which $\BC\in \mathfrak{C}_i$ as the \emph{level} of $\BC$, with $\mathfrak{C}_i$ being the \emph{class} which $\BC$ belongs to.

The remainder of the paper is split into two parts. In Part \ref{part:estimates}, we give the inductive proof of the aforementioned $L^2$ estimates. In Section \ref{sec:main-estimates-statements} we state the main technical theorem (Theorem \ref{thm:main-L^2-estimate}) which concerns both graphicality of $V\in \V_Q$ over cones $\BC$ nearby to $\BC^{(0)}$ together with the main $L^2$ estimates for $V$. These should be compared to the estimates established in the original work of Simon \cite[Theorem 3.1]{Sim93} and their many manifestations since, and they resemble a combination of those in \cite[Theorem 3.2 \& Corollary 3.3]{BK17} and \cite[Theorem 10.1 \& Theorem 16.2]{Wic14}; to-date no single situation captures that of the present work, as we must deal with non-flat cones which have both multiplicity and in which the spine dimension can decrease. As mentioned above, our proof of these estimates will be by induction on the level $i_0$ of the cone $\BC$, and thus we will be able to assume the result holds for all cones in $\bigcup_{i<i_0}\mathfrak{C}_{i}$.

In order to prove the main $L^2$ estimates for finer cones, we will utilise blow-ups off coarser cones. The ability to do this comes from the inductive validity of the main $L^2$ estimates. In order to perform the blow-up procedure, we need a mechanism for choosing coarser cones which are good representatives for the infimum of the (fine) excess relative to coarser cones. This is done in Section \ref{sec:replacement}, in which we prove a \emph{replacement lemma} which allows us to choose such good representatives.

In Section \ref{sec:blow-up}, we then show how the validity of Theorem \ref{thm:main-L^2-estimate} for $V\in \V_Q$ and $\BC\in \mathfrak{C}_i$ allows us to construct blow-ups. We also prove some of their initial properties. More specifically, given the validity of Theorem \ref{thm:main-L^2-estimate} for $V\in \V_Q$ and $\BC\in \mathfrak{C}_i$, if $(V_j)_j\subset \V_Q$ and $(\BC_j)_j\subset \mathfrak{C}_i$ are sequences which obey the assumptions of Theorem \ref{thm:main-L^2-estimate} with $\eps_j\downarrow 0$ and $\beta_j\downarrow 0$, blow-ups are constructed by a suitable linearisation procedure of $V_j$ relative to $\BC_j$. We remark that, unlike in \cite{BK17} (but similarly to \cite{Wic14, Min21a, BKMW25}), this blow-up procedure is performed under an assumption guaranteeing that the fine excess $F_{V_j,\BC_j}$ is significantly smaller than the fine excess of $V_j$ relative to coarser cones. At this stage we do not need to worry about the regularity of the blow-ups, which is a key focus in Part \ref{part:regularity}.

Finally, in Section \ref{sec:main-estimates} we complete the inductive proof of Theorem \ref{thm:main-L^2-estimate}. This completes the first of the two main inductive arguments in the proof of Theorem \ref{thm:main-reg}.

In Part \ref{part:regularity}, we move to the second inductive argument. The aim here is to prove \emph{fine} $\eps$-regularity theorems inductively from finer cones to coarser cones. By this, we mean $\eps$-regularity theorems again under the assumption that the fine excess $F_{V,\BC}$ is significantly smaller than the infimum of the fine excess relative to coarser cones. A first instance of this appeared in \cite[Theorem 3.1]{MW24}, and similar results have played key roles in \cite{Min21a, BKMW25}. We state the main theorem in Theorem \ref{thm:fine-reg}, which directly implies Theorem \ref{thm:main-reg}. Thus, the proof is reduced to proving Theorem \ref{thm:fine-reg}, which we will prove inductively on the level of the cone $\BC$.

In Section \ref{sec:reg}, we study the regularity properties of the blow-ups constructed in Section \ref{sec:blow-up}, aiming for $C^{1,\alpha}$ regularity. We start by establishing Hölder regularity of blow-ups (this does not require any new information, and could have been done in Section \ref{sec:blow-up}). In order to upgrade this regularity further, one needs to classify all blow-ups which are homogeneous of degree one. We will show that these are necessarily linear functions (possibly only on half-hyperplanes) which are generated by sequences of cones which are the same level or finer than the sequence of cones over which the blow-up was constructed -- this allows us to ``integrate out'' the degree one homogeneity in our blow-ups by suitably modifying the sequence of cones we blew-up relative to. Once we have this classification, combining it with a ``reverse Hardt--Simon'' argument allows us to establish quantitative $C^{1,\alpha}$ regularity of blow-ups. The inductive validity of the fine $\eps$-regularity theorem is used crucially here in order to run these ``reverse Hardt--Simon'' arguments, both in the classification of homogeneous degree one blow-ups and in the $C^{1,\alpha}$ regularity proof. The reason for this is if one were to need to integrate out linear functions arising from a sequence of \emph{finer} cones, one can instead apply the fine regularity theorem inductively to guarantee full regularity, with estimates.

Finally, in Section \ref{sec:excess-decay} we use the $C^{1,\alpha}$ regularity of blow-ups established in Section \ref{sec:reg} to prove an excess decay lemma at the varifold level. We can then iterate this to complete the inductive proof of Theorem \ref{thm:fine-reg}, which completes the proof of Theorem \ref{thm:main-reg} and hence of Theorem \ref{thm:main-compactness}.

The reader will note that our proof utilises ideas from the works \cite{Sim93, Wic14, BK17, MW24, Min21a, BKMW25}. For the sake of length, at numerous points in our presentation we will refer the reader to corresponding arguments in these. Unfortunately, it is not possible to cite exact statements, as the situation studied here is different to those before, but only analogous arguments which also work in the present situation with few (predominantly notational) changes. As such, from this point on we will assume familiarity with these works, and in certain places we will only sketch the details of the argument and cite where complete arguments can be found in corresponding situations.

\bigskip

\part{The Main $L^2$ Estimates}\label{part:estimates}

In this section we state the main collection of estimates, Theorem \ref{thm:main-L^2-estimate} and Corollary \ref{cor:main-L^2-estimate}. These will be proved within the rest of Part \ref{part:estimates}.

Recall that we have fixed a based cone $\BC^{(0)}\in \CC$ once and for all. We then have $\BC^{(0)}\in \mathfrak{C}_{i^{(0)}}$ for some $i^{(0)}$. We typically work with coordinates on $\R^{n+1}$ so that $\{e_1,\dotsc,e_{\sfrak^{(0)}}\}$ is an orthonormal basis of $S(\BC^{(0)})$, where $\{e_1,\dotsc,e_{n+1}\}$ is an orthonormal basis of $\R^{n+1}$; in particular, $\sfrak^{(0)} = \dim S(\BC^{(0)})$.

We consider cones $\BC\in \CC$ which obey:
\begin{itemize}
	\item $S(\BC)\subseteq S(\BC^{(0)})$, i.e.~$\BC$ and $\BC^{(0)}$ are aligned;
	\item $d(\BC\res B_1,\BC^{(0)}\res B_1)<\eps_0$ for appropriately small $\eps_0\in (0,1)$.
\end{itemize}
Again, here $d$ denotes a chosen metric induced by the varifold topology (in particular, it keeps track of both the position and the multiplicity of the (half-)hyperplanes in the cones). Notice that if $\BC\in \CC_{n-1}$ then $S(\BC) = S(\BC^{(0)}) = \R^{n-1}\times\{0\}^2$.

If $\eps_0 = \eps_0(n,Q,\BC^{(0)})$ is chosen sufficiently small, then $\BC^{(0)}\preceq \BC$, and hence we must have $\BC\in\mathfrak{C}_j$ with $j\geq i_0$. For the few possibilities that there are depending on the form of $\BC^{(0)}$, we choose notation as follows:
\begin{enumerate}
	\item [(a)] If $\BC^{(0)}\in \mathcal{P}$, write $\BC^{(0)} = \sum^{\pfrak_0}_{i=1}q_i|P^{(i)}|$, where $P^{(i)}$ are distinct hyperplanes. Then $\BC$ can have one of two forms:
	\begin{itemize}
		\item If $\BC\in \mathcal{P}$, we write $\BC = \sum^{\pfrak_0}_{i=1}\sum^{\pfrak_i}_{j=1}q_{i,j}|P^{(i,j)}|$, where $\sum^{\pfrak_i}_{j=1}q_{i,j}=q_i$ for each $i$ and the $P^{(i,j)}$ are distinct hyperplanes with the closest plane in $\BC^{(0)}$ to $P^{(i,j)}$ is $P^{(i)}$, i.e.
		$$\dist_\H(P^{(i,j)}\cap B_1,P^{(i)}\cap B_1) = \min_k \dist_\H(P^{(i,j)}\cap B_1, P^{(k)}\cap B_1);$$
		\item If $\BC\in \H_{n-1}$, then $\BC^{(0)}\in \mathcal{P}_{n-1}$. In this case, we write $\BC = \sum^{\pfrak_0}_{i=1}\sum^{\pfrak_i}_{k=1}q_{i,j}|H^{(i,j)}|$, where $\sum^{\pfrak_i}_{j=1}q_{i,j}=2q_i$ for each $i$ and $H^{(i,j)}$ are distinct half-hyperplanes such that, writing $\widetilde{H}^{(i,j)}$ for the hyperplane extending $H^{(i,j)}$:
		$$\dist_\H(\widetilde{H}^{(i,j)}\cap B_1,P^{(i)}\cap B_1) = \min_k \dist_\H(\widetilde{H}^{(i,j)}\cap B_1, P^{(k)}\cap B_1).$$
	\end{itemize}
	\item [(b)] If $\BC^{(0)}\in \H_{n-1}$, write $\BC^{(0)} = \sum^{\pfrak_0}_{i=1}q_i|H^{(i)}|$ where each $H^{(i)}$ is a distinct half-hyperplane. Then we must have $\BC\in \H_{n-1}$, and we write $\BC = \sum^{\pfrak_0}_{i=1}\sum^{\pfrak_i}_{j=1}q_{i,j}|H^{(i,j)}|$, where $\sum^{\pfrak_i}_{j=1}q_{i,j}=q_i$ for each $i$ and each $H^{(i,j)}$ is a distinct half-hyperplane with
		$$\dist_\H(H^{(i,j)}\cap B_1,H^{(i)}\cap B_1) = \min_k \dist_\H(H^{(i,j)}\cap B_1,H^{(k)}\cap B_1).$$
\end{enumerate}
We further remark that, by simple linear algebra, in each case we also have comparability of the Hausdorff distance between a (half-)hyperplane in $\BC$ to the closest (half-)hyperplane in $\BC^{(0)}$ by the total Hausdorff distance between $\BC^{(0)}$ and $\BC$. For instance, in case (a) of the first bullet point above we have
\begin{equation}\label{E:P-to-C}\tag{2.1}
\dist_\H(P^{(i,j)}\cap B_1, P^{(i)}\cap B_1)  \leq C(n,Q)\dist_\H(\BC\cap B_1,\BC^{(0)}\cap B_1).
\end{equation}
We will always assume that $\eps_0 = \eps_0(n,Q,\BC^{(0)})\in (0,1)$ is chosen small enough to ensure that the above conclusions hold, and we will adopt this notation throughout. At times it will be convenient to view a cone in $\CC_{n-1}$ as a sum of half-hyperplanes regardless of whether it is in $\H_{n-1}$ or in $\mathcal{P}_{n-1}$, and thus we will often use the notation corresponding to half-hyperplanes in this setting.

We also make the following definitions:
\begin{itemize}
	\item $\mathcal{E}_{V,\BC}:=\inf\{E_{V,\BC^\prime}:\BC^\prime\prec \BC\text{ are aligned}\}$;
	\item $\mathcal{F}_{V,\BC}:= \inf\{F_{V,\BC^\prime}:\BC^\prime \prec \BC\text{ are aligned}\}$.
\end{itemize}
Here, we make the convention that if $\BC\in \mathcal{P}_{n-1}(2)\cup \H_{n-1}(3)$ (where there are no coarser families of cones) the respectively infimum is taken over all hyperplanes $P$ obeying $S(\BC)\subset P$. Notice in this latter case that both quantities will be $\geq c$ for some suitable $c = c(\BC^{(0)})>0$.

Finally, for a subspace $S\subset\R^{n+1}$ we write $\pi_S:\R^{n+1}\to S$ for the corresponding orthogonal projection, as $\pi_{S}^\perp := \pi_{S^\perp} \equiv \id-\pi_S$, where $\id:\R^{n+1}\to \R^{n+1}$ is the identity map.

\section{Statements}\label{sec:main-estimates-statements}

Our main graphicality claim and $L^2$ estimates are the following.

\begin{theorem}\label{thm:main-L^2-estimate}
	Let $\tau\in (0,1)$. Then, there exist $\eps_0,\beta_0\in (0,1)$ depending only on $n,Q,\BC^{(0)},\tau$ such that if $V\in \V_Q$ and $\BC\in \CC$ satisfy:
	\begin{enumerate}
		\item [\textnormal{(i)}] $\Theta_V(0)\geq Q$;
		\item [\textnormal{(ii)}] $d(V\res B^{n+1}_2(0), \BC^{(0)}\res B^{n+1}_2(0)) < \eps_0$;
		\item [\textnormal{(iii)}] $d(\BC\res B^{n+1}_1(0),\BC^{(0)}\res B^{n+1}_1(0))<\eps_0$, and $\BC$ is aligned with $\BC^{(0)}$;
		\item [$(\dagger)$] $F_{V,\BC}<\beta_0\mathcal{F}_{V,\BC}$;
	\end{enumerate}
	then, up to replacing $\BC$ by another cone with the same support (for which we do not change our notation), the following conclusions hold:
	\begin{enumerate}
		\item [(1)] $\{x\in B^{n+1}_{31/32}(0): \Theta_V(x)\geq Q\}\subset B_{\tau}(S(\BC))$. Moreover, depending on the form of $\BC$, we have:
		\begin{itemize}
			\item If $\BC\in \mathcal{P}_{\leq n-2}$, then
			$$V\res (B^{n+1}_{31/32}(0)\setminus B_\tau(S(\BC))) = \sum^{\pfrak_0}_{i=1}\sum^{\pfrak_i}_{j=1}\sum^{q_{i,j}}_{k=1}|\graph(u^{(i,j,k)})|\res B^{n+1}_{31/32}(0),$$
			where $u^{(i,j,k)}:P^{(i,j)}\cap B^{n+1}_{31/32}(0)\setminus B_\tau(S(\BC))\to (P^{(i,j)})^\perp$ is smooth, satisfies the minimal surface equation, and obeys $\|u^{(i,j,k)}\|_{C^4}\leq C_* E_{V,\BC}$;
			\item If $\BC\in \CC_{n-1}$, then
			$$V\res (B^{n+1}_{31/32}(0)\setminus B_\tau(S(\BC))) = \sum^{\pfrak_0}_{i=1}\sum^{\pfrak_i}_{j=1}\sum^{q_{i,j}}_{k=1}|\graph(u^{(i,j,k)})|\res B^{n+1}_{31/32}(0),$$
			where $u^{(i,j,k)}: H^{(i,j)}\cap B^{n+1}_{31/32}(0)\setminus B_\tau(S(\BC)) \to (\widetilde{H}^{(i,j)})^\perp$ is smooth, satisfies the minimal surface equation, and obeys $\|u^{(i,j,k)}\|_{C^4}\leq C_* E_{V,\BC}$;
			\item Furthermore, if $y\in S(\BC)$ has $B^{n+1}_{4\tau}(y)\subset \{\Theta_V<Q\}$, then necessarily $\BC\not\in \mathcal{H}_{n-1}$ and
		$$V\res B^{n+1}_{2\tau}(y) = \sum^{\pfrak_0}_{i=1}\sum^{\pfrak_i}_{j=1}\sum^{q_{i,j}}_{k=1}|\graph(u^{(i,j,k)})|\res B^{n+1}_{2\tau}(y),$$
		where $u^{(i,j,k)}:P^{(i,j)}\cap B^{n+1}_{2\tau}(y)\to (P^{(i,j)})^\perp$ is smooth, satisfies the minimal surface equation, and obeys $\|u^{(i,j,k)}\|_{C^4}\leq C_* E_{V,\BC}$.
		\end{itemize}
		\item [(2)] $\displaystyle \int_{B^{n+1}_{7/8}(0)}\sum^\sfrak_{s=1}|\pi^\perp_{T_xV}(e_s)|^2\, \ext\|V\|(x) \leq CE_{V,\BC}^2$, where $\{e_1,\dotsc,e_\sfrak\}$ is the basis for $S(\BC)$;
		\item [(3)] $\displaystyle \int_{B^{n+1}_{7/8}(0)}\frac{\dist^2(x,\BC)}{|x|^{n+7/4}}\, \ext\|V\|(x) \leq CE_{V,\BC}^2;$
		\item [(4)] $\displaystyle \int_{B^{n+1}_{7/8}(0)}\frac{|\pi^\perp_{T_xV}(x)|^2}{|x|^{n+2}}\, \ext\|V\|(x) \leq CE_{V,\BC}^2;$
		\item [(5)] Writing $U^{(i,j)}$ as a shorthand for the domain of $u^{(i,j,k)}$ and $R(x) = |x|$, we have
		$$\sum_{i,j,k}\int_{U^{(i,j)}}R^{2-n}\left|\frac{\del}{\del R}\left(\frac{u^{(i,j,k)}}{R}\right)\right|^{2}\, \ext x \leq CE_{V,\BC}^2.$$
	\end{enumerate}
	Here, $C = C(n,Q)\in (0,\infty)$ and $C_* = C_*(n,Q,\tau)\in (0,\infty)$.
\end{theorem}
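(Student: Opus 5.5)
The proof will go by induction on the level $i$ of $\BC$ in the ordered list $\mathfrak{C}_1,\dotsc,\mathfrak{C}_{N_0}$, restricted to levels $\geq i^{(0)}$. At a given level we assume the theorem for all aligned cones of strictly smaller level, and therefore the blow-up constructions of Section \ref{sec:blow-up} off coarser cones. The argument has two stages: first the graphical representation (1), then the integral estimates (2)--(5), which follow a Simon-type scheme once (1) is in hand.

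\textbf{Step 1: graphicality away from $S(\BC)$.} I would argue by contradiction with sequences $V_j,\BC_j$ satisfying (i)--$(\dagger)$ with $\eps_0,\beta_0\to 0$. The first task is to show that $\{\Theta_{V_j}\geq Q\}\cap B_{31/32}$ lies in $B_\tau(S(\BC_j))$. By (ii), such points lie near $S(\BC^{(0)})$. A point of density $\geq Q$ that is $\tau$-far from $S(\BC_j)$ but close to $S(\BC^{(0)})$ could occur only if the cone $\BC_j$ has "split" relative to $\BC^{(0)}$ there. In that case $V_j$ would be, at scale $\tau$, relatively close to a coarser cone, with excess comparable to $F_{V_j,\BC_j}$. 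This contradicts $(\dagger)$ once $\beta_0$ is small, via monotonicity and the upper semicontinuity of density.

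Away from $B_\tau(S(\BC_j))$ the density is then $<Q$. By Lemma \ref{lemma:gaps} together with $(\S2)_Q$, the hypotheses of Theorem \ref{thm:HLW} hold there, and also those of $(\S3\text{b})$/Theorem \ref{thm:bellettini} near points where $\BC_j$ is locally a multiplicity-$q_{i,j}$ plane with $q_{i,j}<Q$. Covering $B_{31/32}\setminus B_\tau(S(\BC_j))$ by balls on which $V_j$ is close either to a single plane with multiplicity or to a lower-density cone gives sheets with $C^4$ bounds in terms of the local $L^2$ excess.

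To get the correct multiplicity $q_{i,j}$ on each (half-)hyperplane of $\BC_j$, and the bound by $E_{V,\BC}$ rather than $F$, I would use the fine-excess term in $F_{V,\BC}$, which forces each piece of $\BC$ to be covered. This is where one may need to replace $\BC$ by a cone with the same support but redistributed multiplicities, as the statement allows. The third bullet, concerning density gaps $B_{4\tau}(y)\subset\{\Theta_V<Q\}$, is handled the same way via Theorem \ref{thm:HLW} on $B_{4\tau}(y)$. Theorem \ref{thm:HLW}(i) forces the support there to be a union of full hyperplanes, so $\BC\notin\H_{n-1}$.

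\textbf{Step 2: integral estimates.} For (2)--(4) I would follow Simon \cite[Theorem 3.1]{Sim93} and \cite[Theorem 10.1]{Wic14}. Test the first variation with $\zeta(x)=\phi^2(|x|)\,\pi_{S(\BC)}(x)$ and with $x\phi^2$, and use the monotonicity identity at $0$ with $\Theta_V(0)\geq Q$. This bounds
$$\int |\pi^\perp_{T_xV}x|^2|x|^{-n-2}\,\ext\|V\|$$
by $\int\dist^2(x,\BC)\,\ext\|V\|$ plus a boundary error term. The error term is controlled by the mass deficit, which is in turn estimated using the graph representation from Step 1 outside $B_\tau(S(\BC))$. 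Then (2) follows from the translation-invariance variation. Estimate (3) follows by combining (4) with a radial integration of $\dist(x,\BC)$, using the homogeneity of $\BC$. Estimate (5) is the graph form of (4).

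\textbf{Main obstacle.} The key difficulty is controlling $V$ inside $B_\tau(S(\BC))$, where no graph structure is available. Following \cite{Wic14,BKMW25}, I would apply the estimates at every density-$\geq Q$ point $z$ near $S(\BC)$ to the translate $V_{z}$. This gives $|\pi^\perp_{S(\BC)}z|\lesssim E_{V,\BC}$, i.e. the density-$Q$ points lie close to the spine. Summing over a covering of the tube then shows that $\|V\|(B_\tau(S(\BC)))$ contributes only $O(\tau)E^2$.

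The delicate part is that $(\dagger)$ must transfer to translated and rescaled copies. I expect this to require the replacement lemma of Section \ref{sec:replacement} and the inductive hypothesis at lower levels. Its role is to rule out the possibility that $V$ is much closer to a coarser cone at smaller scales, and this is where I expect most of the work to lie.
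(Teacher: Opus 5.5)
\textbf{There is a genuine gap already in (1).} When $\dim S(\BC)<\dim S(\BC^{(0)})$, monotonicity and upper semicontinuity of density only show that limits of points $Z_j$ with $\Theta_{V_j}(Z_j)\geq Q$ lie on $S(\BC^{(0)})$. A purely local statement near $Z_j$ at scale $\tau$ does not contradict the global hypothesis $(\dagger)$, and you never say how the ``coarser cone with comparable excess'' is actually produced.

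The paper's mechanism is a fine blow-up off a coarser cone, in four steps:
\begin{enumerate}
\item Pick good representatives $\widetilde{\BC}_j\prec\BC_j$ with $F_{V_j,\widetilde{\BC}_j}\leq C(\BC^{(0)})\,\mathcal{F}_{V_j,\BC_j}$.
\item Use the inductive Corollary \ref{cor:main-L^2-estimate}(1) for $\widetilde{\BC}_j$ to rotate $Z_j$ into $S(\widetilde{\BC}_j)$, at cost $O(E_{V_j,\widetilde{\BC}_j})$.
\item Blow up both $V_j$ and $\BC_j$ off $\widetilde{\BC}_j$. Here $(\dagger)$ forces the blow-up of $V_j$ to be exactly the linear functions coming from the planes of $\BC_j$, and these are mutually distinct by a separation argument.
\item The Hardt--Simon inequality at $Z_\infty$ forces these linear functions to vanish on $\mathrm{span}(S_\infty\cup\{Z_\infty\})$. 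This yields $\widehat{\BC}_j\prec\BC_j$ with $F_{V_j,\widehat{\BC}_j}=o(E_{V_j,\widetilde{\BC}_j})=o(\mathcal{F}_{V_j,\BC_j})$, which is the contradiction.
\end{enumerate}

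The same blow-up/separation argument is what gives sheets over the planes of $\BC$ with $\|u\|_{C^4}\leq C E_{V,\BC}$ uniformly in $\BC$. A covering with Theorem \ref{thm:bellettini}/Theorem \ref{thm:HLW} only gives graphs over $\BC^{(0)}$ with bounds in $E_{V,\BC^{(0)}}$. The planes of $\BC$ can be arbitrarily close and meet away from $S(\BC)$, so $\eps(\BC)$ in Theorem \ref{thm:HLW} is not uniform. Knowing that the fine excess ``covers'' each plane does not give $\dist(\cdot,\BC)=|u|$ on most of each sheet; that needs the separation $|\ell^{(s)}-\ell^{(s')}|\geq cE_{V,\widetilde{\BC}}$.

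For the same reason, your argument that $\BC\notin\H_{n-1}$ in a density gap fails. Theorem \ref{thm:HLW} applies uniformly only with a fixed cone such as $\BC^{(0)}$. If $\BC^{(0)}\in\mathcal{P}_{n-1}$ this gives no contradiction, while $\BC\in\H_{n-1}$ can be an arbitrarily small splitting of $\BC^{(0)}$. This is Lemma \ref{lemma:no-gaps}. Its proof needs a blow-up which is $C^1$ across the gap, yet by $(\dagger)$ and separation consists of unpaired linear functions on half-hyperplanes.

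\textbf{Your plan for (2)--(5) has a second gap.} With graphs only on $B_{31/32}\setminus B_\tau(S(\BC))$ for a fixed $\tau$, the first-variation/monotonicity argument does not close. The mass discrepancy between $V$ and $\BC$ in $B_\tau(S(\BC))$ is a priori only $o(1)\tau^{n-\sfrak}$, not $O(E_{V,\BC}^2)$. You need graphical control down to scales comparable to the distance from the spine, i.e.\ in toroidal regions $T^{\BC}_{\rho,r}(\zeta)$ where only the scale-invariant \emph{one-sided} excess is small. There $V$ may be close to just a sub-cone $\widetilde{\BC}^{(0)}$ of $\BC^{(0)}$, and the analogue of $(\dagger)$ may fail at that scale. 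This is exactly what Lemmas \ref{lemma:one-sided-1}--\ref{lemma:one-sided-3} handle, using induction also over the level of the base cone and alternative (E), which passes to a coarser cone. After that, the scheme of \cite[Section 3]{BKMW25} applies.

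Your substitute, applying the estimates at density points $z$ to translates $V_z$ to get $|\pi^\perp_{S(\BC)}z|\lesssim E_{V,\BC}$, is circular. That bound is Corollary \ref{cor:main-L^2-estimate}, which is derived from the theorem at the current level. The inductive hypothesis only gives it for coarser $\widetilde{\BC}$, with bound $E_{V,\widetilde{\BC}}$, and by $(\dagger)$ this is much larger than $E_{V,\BC}$. In addition:
\begin{itemize}
\item Balls centred at density points do not cover density gaps.
\item Such a covering would control $\int_{B_\tau(S(\BC))}\dist^2(x,\BC)\,\ext\|V\|$, which is Lemma \ref{lemma:non-con} and itself a consequence of the theorem. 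What (2)--(4) actually require near the spine is control of the mass of $V$ and of $\sum_s|\pi^\perp_{T_xV}(e_s)|^2$.
\end{itemize}
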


\begin{remark}
	We make the following comments regarding Theorem \ref{thm:main-L^2-estimate}.
	\begin{enumerate}
		\item [(a)] For any $\tilde{\eps}>0$, provided $\eps$ is sufficiently small depending on $n,Q,\BC^{(0)},\tilde{\eps}$, the assumption that $d(V\res B_2,\BC^{(0)}\res B_2)<\eps_0$ implies that $(\w_n 2^n)^{-1}\|V\|(B^{n+1}_2(0))<Q+\frac{1}{4}$ and $F_{V,\BC^{(0)}}< \tilde{\eps}$. In fact, it is much stronger, as it also ensures that the mass of $V$ near each (half-)hyperplane in $\BC^{(0)}$ is the same as its multiplicity in $\BC^{(0)}$. Whilst the same is true for the (half-)hyperplanes in $\BC$ when compared to $\BC^{(0)}$, this is not necessarily true when compared the (half-)hyperplanes in $\BC$ to the mass distribution of $V$. This is reason for potentially needing to change $\BC$ to a different cone with the same support.
		\item [(b)] It is not possible for cones of certain levels, i.e.~in certain classes $\mathfrak{C}_i$, to satisfy the assumptions of Theorem \ref{thm:main-L^2-estimate}, but only those in classes which are $\succeq \mathfrak{C}_{i^{(0)}}$, i.e.~finer than or in the same class as $\BC^{(0)}$. In particular, when the proof is by induction on the level $i$, the proof for certain levels is trivial as the assumptions cannot be satisfied for $\eps_0,\beta_0$ sufficiently small.
		\item [(c)] Several aspects of Theorem \ref{thm:main-L^2-estimate} allow for a degree of flexibility. For instance, the radius of the balls on which the conclusions hold (namely, $B^{n+1}_{31/32}(0)$ in (1) and $B^{n+1}_{7/8}(0)$ in (2)\,--\,(4)) can be replaced with a parameter $\sigma\in (1/2,1)$, provided we allow all the constants to depend on $\sigma$. Moreover, the assumption $\Theta_V(0)\geq Q$ can be removed. Indeed, either there is a point of density $\geq Q$ in $B^{n+1}_{1/4}(0)$ (in which case one can reduce to the case as stated in Theorem \ref{thm:main-L^2-estimate}, after translating and shrinking some radii), or $\Theta_V<Q$ on all of $B^{n+1}_{1/4}(0)$. In this latter case, Lemma \ref{lemma:gaps} tells us that the non-immersed singular set is small enough in order to apply the result in \cite{HLW24} in order to deduce the above conclusions, again on a smaller radius (one needs an additional argument to show that, even with fully regularity, one has the desired estimates, which will follow from our proof). Indeed, this is more or less already included in the final component of conclusion (1) above. A short covering argument applied to translations and rescalings of $V$ then give the same conclusions of Theorem \ref{thm:main-L^2-estimate} without the assumption $\Theta_V(0)\geq Q$.
		\item [(d)] The part of the final bullet point in Theorem \ref{thm:main-L^2-estimate}(1) claiming that $\BC\not\in \H_{n-1}$ will be shown in Lemma \ref{lemma:no-gaps}.
	\end{enumerate}
\end{remark}
Theorem \ref{thm:main-L^2-estimate} has the following important corollary, which in particular is used to show that the $L^2$ norm does not concentrate along $S(\BC)$ when performing a blow-up (see Lemma \ref{lemma:non-con}). To unify notation, if $\BC^{(0)}$ is comprised of half-hyperplanes $H^{(i)}$ let us write $P^{(i)}:= \widetilde{H}^{(i)}$ for the hyperplane containing $H^{(i)}$. We also write $\tau_z(x):= x-z$ for $x,z\in\R^{n+1}$.

\begin{corollary}\label{cor:main-L^2-estimate}
	Let $\rho\in (0,3/4)$. Then, there exist $\eps_0,\beta_0\in (0,1)$ depending only on $n,Q,\BC^{(0)},\rho$, such that the following holds. Suppose $V\in \V_Q$ and $\BC\in \CC$ satisfy the assumptions of Theorem \ref{thm:main-L^2-estimate} with these choices of $\eps_0,\beta_0$. Then, for any $z\in \{\Theta_V\geq Q\}\cap B^{n+1}_{3/4}(0)$, writing $\xi:= \pi_{S(\BC)}^\perp(z)$, we have:
	\begin{enumerate}
		\item [\textnormal{(1)}] $\displaystyle |\pi^\perp_{P^{(i)}}(\xi)| + E_{V,\BC^{(0)}}|\pi_{P^{(i)}}(\xi)| \leq CE_{V,\BC}$ for each $i=1,\dotsc,\pfrak_0$;
		\item [\textnormal{(2)}] $\displaystyle \int_{B^{n+1}_{5\rho/8}(z)}\frac{\dist^2(x,(\tau_z)_\#\BC)}{|x-z|^{n+7/4}}\, \ext\|V\|(x) \leq C\rho^{-n-7/4}\int_{B^{n+1}_\rho(z)}\dist^2(x,(\tau_z)_\#\BC)\, \ext\|V\|(x)$.
	\end{enumerate}
	Here, $C = C(n,Q,\BC^{(0)})\in (0,\infty)$ is independent of $\rho$.
\end{corollary}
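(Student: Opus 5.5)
The plan is to follow Simon's argument from \cite[Corollary 3.2]{Sim93} (cf.~\cite[Corollary 10.2]{Wic14}, \cite[Corollary 3.3]{BK17}). We apply Theorem \ref{thm:main-L^2-estimate} at the point $z$ rather than at $0$, and then compare the excesses relative to $\BC$ and to $(\tau_z)_\#\BC$.

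\textbf{Step 1: recentring.} Fix $z\in\{\Theta_V\geq Q\}\cap B_{3/4}$ and set $\widetilde V:=(\eta_{z,\rho})_\#V$ and $\widetilde\BC:=\BC$. By the Hausdorff part of Theorem \ref{thm:main-L^2-estimate}(1) (with $\tau$ small), $z$ lies within distance $\tau$ of $S(\BC)$. Since $\BC$ is invariant along $S(\BC)$, we may replace $z$ by $\xi=\pi^\perp_{S(\BC)}(z)$, which is small.

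We then check that $\widetilde V$ and $\BC$ satisfy the hypotheses of Theorem \ref{thm:main-L^2-estimate}, with $\eps_0,\beta_0$ of the theorem replaced by larger values, after shrinking the present $\eps_0,\beta_0$ depending on $\rho$:
\begin{itemize}
\item hypotheses (i)--(iii) follow from varifold continuity together with $|\xi|$ small;
\item for $(\dagger)$, we use $F_{\widetilde V,\BC}\leq C_\rho(F_{V,\BC}+|\xi|)$ and a lower bound $\mathcal F_{\widetilde V,\BC}\geq c_\rho\mathcal F_{V,\BC}$.
\end{itemize}
The lower bound holds because any coarser aligned cone fitting $\widetilde V$ well would, after translating back, fit $V$ well on $B_{\rho}(z)$. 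Mass monotonicity and the graphical representation away from $S(\BC)$ then propagate this to all of $B_1$.

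\textbf{Step 2: conclusion (1), the main obstacle.} Conclusion (1) needs $|\xi|$ bounded by $CE_{V,\BC}$ rather than merely small. The plan is to apply Theorem \ref{thm:main-L^2-estimate}(4) at the point $z$, which gives
$$\int_{B_{\rho}(z)}\frac{|\pi^\perp_{T_xV}(x-z)|^2}{|x-z|^{n+2}}\,\ext\|V\|\leq C\rho^{-n-2}\int_{B_{\rho}(z)}\dist^2(x,(\tau_z)_\#\BC)\,\ext\|V\|.$$
We combine this with conclusion (4) at $0$. On the annular region away from $S(\BC)$, $V$ is a union of small graphs $u^{(i,j,k)}$ over $P^{(i,j)}$, and there
$$\pi^\perp_{T_xV}(z)=\pi^\perp_{T_xV}(x)-\pi^\perp_{T_xV}(x-z).$$
Hence
$$\int_{\text{annulus}}|\pi^\perp_{T_xV}(\xi)|^2\,\ext\|V\|\leq C\big(E_{V,\BC}^2+E^2_{V,(\tau_z)_\#\BC}\big).$$
Writing $\nu_{i,j}$ for the unit normal to $P^{(i,j)}$, we have $|\pi^\perp_{T_xV}(\xi)|\approx|\xi\cdot\nu_{i,j}|-|Du||\xi|$. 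This bounds $|\pi^\perp_{P^{(i)}}(\xi)|$ up to an error $CE_{V,\BC^{(0)}}|\xi|$, where we use that the $P^{(i,j)}$ are close to $P^{(i)}$ by \eqref{E:P-to-C}. The tangential part is controlled in the same way via Theorem \ref{thm:main-L^2-estimate}(2) and gives the stated weighting.

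The circularity is that $E_{V,(\tau_z)_\#\BC}$ itself involves $\xi$. The triangle inequality gives
$$E^2_{V,(\tau_z)_\#\BC}\leq 2E^2_{V,\BC}+C\sum_i|\pi^\perp_{P^{(i)}}(\xi)|^2,$$
so we need to absorb the last term. I would use the lower density bound on each sheet given by hypothesis (ii): this forces $\sum_i|\pi^\perp_{P^{(i)}}(\xi)|$ to be small, and $\rho$ is fixed before shrinking $\eps_0$, so the term can be absorbed. This is the delicate point.

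\textbf{Step 3: conclusion (2).} Conclusion (2) is Theorem \ref{thm:main-L^2-estimate}(3) applied to $\widetilde V$ and $(\tau_z)_\#\BC$ at scale $\rho$, rescaled back. Since $\sigma=5/8\cdot\frac{8}{7}$ is in the allowed range and $\rho$-scaling is exact for the homogeneous weight, the constant is independent of $\rho$ as stated.

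Step 1 requires $\Theta_{\widetilde V}(0)\geq Q$, which holds by choice of $z$. The dependence of $\eps_0,\beta_0$ on $\rho$ enters only through the verification of $(\dagger)$ in Step 1.
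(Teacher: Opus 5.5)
There is a genuine gap. It sits in Step 1, and Step 2 inherits it.

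To apply Theorem~\ref{thm:main-L^2-estimate} to $\widetilde V=(\eta_{z,\rho})_\#V$ you must check $(\dagger)$ for the recentred pair, i.e.\ $F_{\widetilde V,\BC}<\beta\,\mathcal F_{\widetilde V,\BC}$. Your bound $F_{\widetilde V,\BC}\le C_\rho(F_{V,\BC}+|\xi|)$ gives this only if $|\xi|$ is $\ll\mathcal F_{V,\BC}$, at least in its components normal to the planes of $\BC$. The same $O(|\xi|)$ error enters your lower bound for $\mathcal F_{\widetilde V,\BC}$ when you translate coarser cones back.

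At that stage you only know $|\xi|<\tau$, and shrinking $\eps_0$ does not help, because $\mathcal F_{V,\BC}$ has no lower bound once $\BC$ is strictly finer than $\BC^{(0)}$. For example, if $\BC^{(0)}=2|P_1|+|P_2|$ and $\BC$ splits $P_1$ into two planes through $S(\BC^{(0)})$ at angle $\delta$, then $\mathcal F_{V,\BC}\lesssim\delta$, with $\delta$ arbitrarily small compared with $\eps_0$ and $\tau$. The control on $\xi$ that Step 1 needs is essentially conclusion (1). Step 2 proves (1) by applying the theorem at $z$, so the argument is circular. It only closes in the base case, where $\mathcal F_{V,\BC}\ge c(\BC^{(0)})$ and $(\dagger)$ is automatic.

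The missing ingredient is the induction on the level of the cone (Hypothesis I), which you never use. The paper obtains the control on $\xi$ by a contradiction argument in which $V_j$ is blown up off good representatives $\widetilde\BC_j$ of $\mathcal F_{V_j,\BC_j}$.
\begin{itemize}
\item These cones are coarser, so Theorem~\ref{thm:main-L^2-estimate}, the corollary and the blow-up construction are already available for them.
\item By $(\dagger)$ the blow-up is linear and coincides with the blow-up of $\BC_j$ off $\widetilde\BC_j$.
\item As in the proof of \eqref{E:concentration-repeated}, the Hardt--Simon inequality for blow-ups at the limit of the good density points forces these linear functions to be compatible with that limit point. This is the required control on $\xi$.
\end{itemize}
Only then is $(\eta_{z,\rho})_\#V$ admissible; after that your Step 3 (rescaling Theorem~\ref{thm:main-L^2-estimate}(3)) is fine.

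Two further problems in Step 2 are independent of the above.
\begin{itemize}
\item The absorption does not work as written. From $|\xi^\perp|^2\le CE_{V,\BC}^2+C'|\xi^\perp|^2$ with $C'\ge 1$, smallness of $|\xi^\perp|$ gives nothing. You need a coefficient below one, e.g.\ by using the Hardt--Simon integral at $z$ only on $B_{\delta\rho}(z)$, which puts a factor $C\delta^2$ in front of $|\xi^\perp|^2$.
\item Theorem~\ref{thm:main-L^2-estimate}(2) concerns directions in $S(\BC)$, to which $\xi$ is orthogonal. It lets you pass from $z$ to $\xi$, but it does not control the weighted term $E_{V,\BC^{(0)}}|\pi_{P^{(i)}}(\xi)|$.
\end{itemize}
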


\begin{remark}
	In the notation of Corollary \ref{cor:main-L^2-estimate}, $\xi$ is a vector with $n+1-\sfrak$ distinct components. The inequality Corollary \ref{cor:main-L^2-estimate}(1) bounds $n+1-\sfrak^{(0)}$ of these components by $CE_{V,\BC}$. In particular, when $\sfrak=\sfrak^{(0)}$ we in fact get $|\xi|\leq CE_{V,\BC}$. Otherwise, we must make do with the weaker bound from Corollary \ref{cor:main-L^2-estimate}(1) in the directions $S(\BC)^\perp\setminus S(\BC^{(0)})^\perp$. We mention that in Part \ref{part:regularity} when establishing the regularity of blow-ups, the hardest case is when $\sfrak = \sfrak^{(0)} = n-1$, and so there we do have $|\xi|\leq CE_{V,\BC}$. Nonetheless, we need the version above in order to prove $L^2$ non-concentration of excess in every situation.
\end{remark}
\medskip

As mentioned already, the proof of Theorem \ref{thm:main-L^2-estimate} and Corollary \ref{cor:main-L^2-estimate} is by induction on the ordering $\{\mathfrak{C}_i\}_i$, namely from coarser cones to finer cones. The base case is therefore when $\BC$ has the same level as $\BC^{(0)}$. As such, when establishing these results for a given $\BC$, we will be able to assume they hold for all cones coarser than $\BC$. This is key, as we need to perform blow-ups off coarser cones in our inductive proof of these results. These coarser cones will be provided by the ``replacement lemma'' in Section \ref{sec:replacement}. In comparison to \cite[Theorem 3.2]{BK17}, the additional assumption $(\dagger)$ in Theorem \ref{thm:main-L^2-estimate} is needed analogously to that seen in \cite[Theorem 10.1 \& Theorem 16.2]{Wic14} or \cite[Theorem 7.3]{BKMW25}, where the possibility of multiplicity in the cones requires finer information in order to determine which planes in $\BC$ one should be graphing over, and to prove the relevant estimate on the graph. It also is what guarantees the ``good density points'' to accumulate around the spine of $\BC$, as stated in Theorem \ref{thm:main-L^2-estimate}(1).

The proof of Corollary \ref{cor:main-L^2-estimate} is similar to the corresponding results in \cite{Wic14, BK17, BKMW25} under the correct inductive assumptions, as one wishes to apply Theorem \ref{thm:main-L^2-estimate} with $(\eta_{z,\rho})_\#V$ in place of $V$. To verify that the assumptions of Theorem \ref{thm:main-L^2-estimate} still hold for $(\eta_{z,\rho})_\#V$, one needs sufficient control on $\dist(z,S(\BC)) \equiv |\xi|$. This will follow again using the inductive assumption to perform a blow-up off a coarser sequence of cones.

We will also prove Theorem \ref{thm:main-L^2-estimate} working inductively over the level of the base cone $\BC^{(0)}$; this is useful in the proof of Lemma \ref{lemma:one-sided-2}.

To end this section, we note the following ``separation property'' for $\BC$ under suitable assumptions.

\begin{lemma}\label{lemma:separation}
	There exist $\eps_0,\beta_0\in (0,1)$ depending only on $n,Q,\BC^{(0)}$ such that if $V\in\V_Q$ and $\BC\in \CC$ satisfy:
	\begin{enumerate}
		\item [(i)] $(\w_n 2^n)^{-1}\|V\|(B^{n+1}_2(0)) <Q+\frac{1}{4};$
		\item [(ii)] $F_{V,\BC^{(0)}} + d(\BC\res B_1,\BC^{(0)}\res B_1)<\eps_0;$
		\item [(iii)] $E_{V,\BC}<\beta_0 \mathcal{E}_{V,\BC};$
	\end{enumerate}
	then there is a constant $c = c(n,Q,\BC^{(0)})>0$ for which the following hold:
	\begin{enumerate}
		\item [\textnormal{(a)}] If $\BC\in \mathcal{P}_{\sfrak}(\pfrak)$, then for any two distinct planes $P_1,P_2$ in $\BC$ we have
		$$\dist_\H(P_1\cap B_1,P_2\cap B_1) \geq c\mathcal{E}_{V,\BC}.$$
		Furthermore, if $\sfrak\leq n-2$ then for any $\widetilde{\BC}\in\mathcal{P}_{\sfrak+1}(\pfrak)$ aligned with $\BC$ we have:
		$$\dist_\H(\BC\cap B_1,\widetilde{\BC}\cap B_1)\geq c\mathcal{E}_{V,\BC}.$$
		\item [\textnormal{(b)}] If $\BC\in \H_{n-1}(\pfrak)$, then for any two distinct half-hyperplanes $H_1,H_2$ in $\BC$ we have
		$$\dist_\H(H_1\cap B_1,H_2\cap B_1)\geq c\mathcal{E}_{V,\BC}.$$
		Furthermore, if $\pfrak$ is even, then for any $\widetilde{\BC}\in\mathcal{P}_{n-1}(\pfrak/2)$ aligned with $\BC$ we have
		$$\dist_\H(\BC\cap B_1,\widetilde{\BC}\cap B_1)\geq c\mathcal{E}_{V,\BC}.$$
	\end{enumerate}
\end{lemma}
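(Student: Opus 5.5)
The plan is to argue by direct comparison rather than by contradiction. In every case the goal is to construct an aligned cone $\BC'\prec\BC$ (or, when $\BC\in\mathcal{P}_{n-1}(2)\cup\H_{n-1}(3)$, a hyperplane containing $S(\BC)$) with
$$E_{V,\BC'}\leq E_{V,\BC}+C\delta,$$
where $\delta$ is the Hausdorff distance that we want to bound from below. Since $E_{V,\BC'}\geq \mathcal{E}_{V,\BC}$ by definition and (iii) gives $E_{V,\BC}<\beta_0\mathcal{E}_{V,\BC}$, this yields
$$(1-\beta_0)\,\mathcal{E}_{V,\BC}\leq C\delta,$$
which is the claim with $c=(1-\beta_0)/C$ once $\beta_0\leq 1/2$.

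The key pointwise estimate is elementary. If $\BC'$ is obtained from $\BC$ by moving some (half-)hyperplanes a Hausdorff distance at most $C\delta$ within $B_1$, then $\dist(x,\BC')\leq \dist(x,\BC)+C\delta$ for all $x\in B_1$. Squaring, integrating against $\|V\|$, and using the mass bound (i) gives $E_{V,\BC'}\leq E_{V,\BC}+C(n,Q)\delta$. It therefore suffices, in each case, to exhibit a coarser aligned cone $\BC'$ within Hausdorff distance $C\delta$ of $\BC$ and lying in $\CC$. Hypothesis (ii) is used so that $\BC$ and $\BC'$ are close to $\BC^{(0)}$; this keeps the notation and linear-algebra comparisons such as \eqref{E:P-to-C} valid, and ensures $\BC'$ is still within the $\eps_0$-neighbourhood, so that it is admissible in the infimum.

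The individual cases are as follows.
\begin{enumerate}
\item[(a), first claim.] If $P_1,P_2$ are distinct planes of $\BC$ at distance $\delta$, replace $P_2$ by $P_1$ and merge the multiplicities. The resulting cone has fewer distinct planes, so it lies in a coarser family $\mathcal{P}_{\sfrak'}(\pfrak-1)$ with $S(\BC)\subseteq S(\BC')$, and it is aligned. If $\pfrak=2$, the result is a single plane with multiplicity, which is exactly the convention for the infimum.
\item[(a), second claim.] Here $\widetilde{\BC}\in\mathcal{P}_{\sfrak+1}(\pfrak)$ is already aligned, with larger spine. It is coarser than $\BC$ by the arrows $\mathcal{P}_{\sfrak+1}(\pfrak)\to\mathcal{P}_{\sfrak}(\pfrak)$ of Figure~\ref{fig:0}, so take $\BC'=\widetilde{\BC}$. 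Its multiplicities are assigned by matching each plane of $\widetilde{\BC}$ to the nearest plane of $\BC$. This matching is well defined once $\delta$ is smaller than the separation from the first claim; if instead $\delta$ exceeds a fixed multiple of that separation, the bound already follows from the first claim.
\item[(b), first claim.] Merge two nearby half-hyperplanes $H_1,H_2$ in the same way. The merged cone may fail to be stationary, so its multiplicity-weighted unit directions $\sum q_i\mathbf{w}_i$ must be rebalanced. Rebalancing costs only an $O(\delta)$ perturbation of the half-hyperplanes, because $\BC$ is stationary and the defect introduced by merging is $O(\delta)$. I would do this by an implicit function argument near $\BC^{(0)}$, rotating one remaining half-hyperplane, using that the balancing map is a submersion at nondegenerate configurations. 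The rebalanced cone lies in $\H_{n-1}(\pfrak-1)$, or in $\mathcal{P}_{n-1}$ if it happens to pair into full planes; either way it is coarser than $\BC$.
\item[(b), second claim.] Take $\BC'=\widetilde{\BC}\in\mathcal{P}_{n-1}(\pfrak/2)$, which is coarser by the horizontal arrows of Figure~\ref{fig:0}, with multiplicities again assigned by nearest matching.
\end{enumerate}

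The main obstacle is making the coarser cone a genuine member of $\CC$ with total multiplicity $Q$ (respectively $2Q$) while keeping the Hausdorff perturbation $O(\delta)$. The delicate point is the stationarity rebalancing in (b). For the matching of multiplicities there is a threshold to handle: when $\delta$ is not small relative to the plane separations, the matching can be ambiguous. I would resolve this by first proving the first claims and then using them to make the matching unique.
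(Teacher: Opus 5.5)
Your proposal is correct and is essentially the paper's argument: compare $V$ with a coarser aligned cone $\BC'$ at Hausdorff distance $O(\delta)$ from $\BC$, so that (i) gives $E_{V,\BC'}\leq E_{V,\BC}+C\delta$ and (iii) absorbs $E_{V,\BC}$; the paper does this with the squared triangle inequality, takes $\BC'=\widetilde{\BC}$ directly for the second claims, and for the first claims replaces $P_1,P_2$ by their average rather than merging one into the other.

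Your multiplicity-matching and admissibility worries are unnecessary, since excesses see only supports and the infimum defining $\mathcal{E}_{V,\BC}$ has no closeness constraint. If you do want stationary rebalancing in (b), one rotated half-hyperplane gives only one parameter against the two-dimensional balancing condition, so rotate two non-collinear ones. The case $\BC\in\H_{n-1}(3)$ needs no merging at all: there $\BC^{(0)}\in\H_{n-1}(3)$ forces $\dist_\H(H_1\cap B_1,H_2\cap B_1)\geq c$ and $\mathcal{E}_{V,\BC}\leq C$.
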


\begin{proof}
	The claimed properties are simple to verify and have similar proofs, so we only sketch the proof of (a). The key observation lies in that for any aligned $\widetilde{\BC}\prec\BC$, by the triangle inequality we have
	\begin{align*}
		\mathcal{E}_{V,\BC}^2 \leq E_{V,\widetilde{\BC}}^2 & = \int_{B_1}\dist^2(x,\widetilde{\BC})\, \ext\|V\|(x)\\
		& \leq 2\int_{B_1}\dist^2(x,\BC)\, \ext\|V\|(x) + 2\|V\|(B_1)\cdot\dist_\H^2(\BC\cap B_1,\widetilde{\BC}_1\cap B_1)\\
		& \leq 2E_{V,\BC}^2 + C(n,Q)\cdot\dist^2_\H(\BC\cap B_1,\widetilde{\BC}\cap B_1)\\
		& \leq 2\beta_0^2\mathcal{E}_{V,\BC}^2 + C(n,Q)\cdot\dist_\H^2(\BC\cap B_1,\widetilde{\BC}\cap B_1).
	\end{align*}
	So, if $\beta_0\in (0,1/2)$, we get
	$$c(n,Q)\mathcal{E}_{V,\BC} \leq \dist_\H(\BC\cap B_1,\widetilde{\BC}\cap B_1).$$
	This proves the second claim in both cases. To prove the first, one simply chooses $\widetilde{\BC}$ appropriately in the above. For instance, in (a) one takes $\widetilde{\BC}$ to be the cone formed when replacing both $P_1$ and $P_2$ in $\BC$ with their average (i.e.~as this are hyperplanes, this simply means taking the hyperplane corresponding to the midpoint of the geodesic connecting the corresponding elements of $\R P^n$).
\end{proof}

\section{The Replacement Lemma}\label{sec:replacement}

The purpose of the following section is to outline a general procedure which we will use to find ``good'' choices of cones which almost achieve the infimum $\mathcal{F}_{V,\BC}$. The results we prove will illustrate this idea, although often in applications we need to utilise the construction itself rather than the results to guarantee certain constants are fixed, depending only on $n,Q,\BC^{(0)}$.

The following hypothesis will be recurrent for $V\in \V_Q$ and $\BC\in \CC$. Here, $\eps_1>0$ is a small parameter.
\begin{itemize}
	\item \textbf{Hypothesis $H(\eps_1)$:} $V$ and $\BC$ satisfy:
	\begin{enumerate}
		\item [(i)] $(\w_n 2^n)^{-1}\|V\|(B^{n+1}_2(0))<Q+\frac{1}{4}$ and $\Theta_V(0)\geq Q$;
		\item [(ii)] $F_{V,\BC^{(0)}} + d(\BC\res B_1,\BC^{(0)}\res B_1)<\eps_1$.
	\end{enumerate}
\end{itemize}
Let us suppose $V,\BC$ satisfy Hypothesis $H(\eps_1)$ for suitably small $\eps_1\in (0,1)$. We then define:
\begin{defn}\label{defn:replacement}
	For $\eps,\beta\in (0,1)$, we say $\widetilde{\BC}$ is a $(\eps,\beta)$\emph{-replacement} for $\BC$ if:
	\begin{enumerate}
		\item [(a)] $\widetilde{\BC}\preceq\BC$ are aligned;
		\item [(b)] $V,\widetilde{\BC}$ satisfy Hypothesis $H(\eps$);
		\item [(c)] $F_{V,\widetilde{\BC}}\leq\beta\mathcal{F}_{V,\widetilde{\BC}}$;
		\item [(d)] $F_{V,\widetilde{\BC}}\leq CF_{V,\BC}$.
	\end{enumerate}
	Here, $C = C(n,Q,\BC^{(0)},\beta)\in (0,\infty)$ is a suitable constant. We will colloquially refer to a $(\eps,\beta)$-replacement simply as a \emph{replacement}.
\end{defn}
When using replacements, the constant $\beta$ should ultimately be chosen to only depend only on $n,Q,\BC^{(0)}$, which fixes the constant $C$ in the definition. We also note that if $\widetilde{\BC}$ were in the same class as $\BC^{(0)}$, then $\mathcal{F}_{V,\widetilde{\BC}}\geq c$ for some fixed $c = c(n,Q,\BC^{(0)})>0$, and so condition (c) in Definition \ref{defn:replacement} is trivially satisfied as soon as $\eps_1$ is sufficiently small.

The following lemma illustrates a general selection principle for replacements.

\begin{lemma}[Replacement Lemma]\label{lemma:replacement}
	Let $\tilde{\eps},\tilde{\beta}\in (0,1)$. Then, there exists $\eps = \eps(n,Q,\BC^{(0)},\tilde{\eps},\tilde{\beta})\in (0,1)$ such that if $V,\BC$ satisfy Hypothesis $H(\eps)$, then there exists a $(\tilde{\eps},\tilde{\beta})$-replacement for $\BC$.
\end{lemma}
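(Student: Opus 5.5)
The proof is a finite descending selection along the partial order $\preceq$, i.e.\ iterating the dichotomy ``either (c) holds, or there is a strictly coarser aligned cone with comparable fine excess''. Since every chain in $\CC$ starting at $\BC$ and going strictly down to $\BC^{(0)}$ has length at most $N_0 = N_0(n,Q)$, the iteration terminates after boundedly many steps. The parameters are fixed backwards from the last step: set $\eps_{N_0} := \tilde{\eps}$, and choose $\eps_{k-1} \ll \eps_k$ so that each step preserves Hypothesis $H$ with the next, larger parameter. Finally put $\eps := \eps_0$.

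\textbf{Step 1 (the iteration).} Set $\BC_0 := \BC$. Suppose $\BC_k$ has been constructed with $\BC_k \preceq \BC$ aligned and $V,\BC_k$ satisfying $H(\eps_k)$.
\begin{itemize}
\item If $F_{V,\BC_k} \leq \tilde{\beta}\,\mathcal{F}_{V,\BC_k}$, stop and output $\widetilde{\BC} := \BC_k$.
\item Otherwise $\mathcal{F}_{V,\BC_k} < \tilde{\beta}^{-1} F_{V,\BC_k}$. By definition of the infimum, pick an aligned $\BC_{k+1} \prec \BC_k$ with $F_{V,\BC_{k+1}} \leq 2\tilde{\beta}^{-1} F_{V,\BC_k}$.
\end{itemize}
If $\BC_k$ is in the class of $\BC^{(0)}$, or in $\P_{n-1}(2)\cup\H_{n-1}(3)$, the remark after Definition~\ref{defn:replacement} gives $\mathcal{F}_{V,\BC_k} \geq c(n,Q,\BC^{(0)})$. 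Hence (c) holds automatically once $\eps_k$ is small relative to $\tilde{\beta} c$, and the process must stop there.

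\textbf{Step 2 (verifying (a), (b), (d)).} Property (a) holds by construction, using transitivity of $\prec$ and of alignment along the chain. Aligned coarser cones satisfy $S(\BC_k) \subseteq S(\BC_{k+1})$, so alignment is inherited.

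Property (d) follows by multiplying the inequalities along the chain: $F_{V,\widetilde{\BC}} \leq (2\tilde{\beta}^{-1})^{N_0} F_{V,\BC}$. This gives $C = C(n,Q,\tilde{\beta})$.

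For (b), note first that $H(\cdot)$(i) does not depend on the cone. For (ii), we have $F_{V,\BC_{k+1}} \leq C\eps_k$. I then need to convert small $F_{V,\BC_{k+1}}$ plus small $F_{V,\BC^{(0)}}$ into small $d(\BC_{k+1}\res B_1, \BC^{(0)}\res B_1)$. By the remark on $F$ and Hausdorff distance, both cones are $\dist_\H$-close to $\spt\|V\|$ in $B_\sigma$, hence to each other. Homogeneity upgrades this to $B_1$.

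\textbf{Step 3 (multiplicities; main obstacle).} Step 2 controls supports only. The metric $d$, however, records multiplicity, and the candidate $\BC_{k+1}$ chosen from the infimum might have multiplicities inconsistent with $\BC^{(0)}$; compare the footnote example, where supports coincide but the cones differ. I would handle this with a compactness argument. Suppose, for contradiction, there are sequences with $\eps \to 0$ and $\dist_\H(\BC_{k+1},\BC^{(0)}) \to 0$ but $d \not\to 0$. Then the $\BC_{k+1}$ subconverge as varifolds to a cone with the same support as $\BC^{(0)}$ and total density $Q$. Here total density $Q$ is built into $\CC$ via $\sum q_i = Q$, respectively $2Q$ for half-hyperplanes.

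Two cases arise in the limit. If the support is planes with spine of dimension at most $n-2$, or half-hyperplanes with distinct supports, then the multiplicities are forced only up to redistribution, and the infimum only sees supports since $F$ depends on $\spt\|\cdot\|$ alone. So I will simply \emph{replace} $\BC_{k+1}$ by the cone with the same support and multiplicities inherited from $\BC^{(0)}$ (nearest-plane assignment as in (2.1)). This leaves $F$ unchanged and preserves the class ordering $\prec$ relative to $\BC_k$. The delicate check is that this reassignment keeps $\BC_{k+1} \prec \BC_k$ and still yields a member of $\CC$; this is the step I expect to require care. I would verify it family by family using Figure~\ref{fig:0}, since coarsening only merges (half-)hyperplanes, and merging is compatible with $\BC^{(0)}$'s multiplicities.
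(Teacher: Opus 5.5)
Your proposal is essentially the paper's proof. You descend through aligned, strictly coarser near-minimisers for $\mathcal{F}$; your choice $F_{V,\BC_{k+1}}\leq 2\tilde{\beta}^{-1}F_{V,\BC_k}$ is equivalent to the paper's $F_{V,\BD_p}\leq\frac{3}{2}\mathcal{F}_{V,\BD_{p-1}}$ once (c) fails. You then stop when (c) holds or the class of $\BC^{(0)}$ is reached (where $\mathcal{F}\geq c$ forces (c)), use the bound $N_0(n,Q)$ on chain length, and multiply the inequalities to get (d).

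Your Step 3 expands the paper's parenthetical that $H(\tilde{\eps})$ holds ``up to possibly changing multiplicities on the (half-)hyperplanes in $\widetilde{\BC}$''. That is legitimate because $F$, and in your first case also the class, depend only on supports. However, you announce two cases and only treat the first. The second is spine dimension $n-1$ with support a union of hyperplanes, where reassigning multiplicities can move a cone between $\P_{n-1}$ and $\H_{n-1}$ and so change $\mathcal{F}$ and the ordering. The paper does not address this case either.
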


\begin{remark}
The type of situation one should bear in mind for why we need replacements is when $V$ itself is in $\CC$. Indeed, say $V\in \mathcal{P}_{n-1}(2)$ and $\BC$ is an arbitrarily small perturbation of $V$ such that $\BC\in\mathcal{P}_{\sfrak}(\pfrak)$ for some $s<n-1$. Then a condition of the form $F_{V,\BC}\leq \beta\mathcal{E}_{V,\BC}$ will always fail (as the right-hand side is zero and the left-hand side is non-zero), but one can find a replacement for which the corresponding inequality holds (e.g.~$\widetilde{\BC} = V$). Furthermore, condition (d) in Definition \ref{defn:replacement} guarantees that the height excess for the new cone has increased by at most a constant factor.
\end{remark}

\begin{proof}
	If $F_{V,\BC}\leq \tilde{\beta} \mathcal{F}_{V,\BC}$, then we can simply take $\eps = \tilde{\eps}$ and then $\widetilde{\BC} := \BC$ is the desired replacement (note that this always happens in the ``base case'' when $\BC$ has the same level, provided $\tilde{\eps}$ is sufficiently small, as then $\mathcal{F}_{V,\BC}\geq c = c(\BC^{(0)})>0$).
	So we may assume
	\begin{equation}\label{E:replacement-a}
		F_{V,\BC}\geq \tilde{\beta}\mathcal{F}_{V,\BC}.
	\end{equation}
	We now set up the following inductive procedure. Set $\BD_0 := \BC$. Then, given $\BD_{p-1}$, choose $\BD_p$ inductively to satisfy:
	\begin{itemize}
		\item $\BD_p\prec \BD_{p-1}$ are aligned;
		\item $\BC^{(0)}\preceq \BD_p$;
		\item $F_{V,\BD_p} \leq\frac{3}{2}\mathcal{F}_{V,\BD_{p-1}}$.
	\end{itemize}
	At each step, if either $\BC^{(0)}$ and $\BD_p$ are in the same class of cones, or $F_{V,\BD_p} \leq \tilde{\beta}\mathcal{F}_{V,\BD_p}$, then we stop the process and set $\widetilde{\BC}:= \BD_p$. Notice that the process must terminate in at most a finite number $\leq N_0 = N_0(n,Q)$ of steps, as the level of a strictly decreasing sequence under $\prec$ is bounded.
	
	We claim that $\widetilde{\BC}$ is the desired replacement. Indeed, by construction we definitely have $\widetilde{\BC}\preceq \BC$ are aligned. Clearly as $V,\BC$ converge to $\BC^{(0)}$ as $\eps\to 0$ (namely, their supports converge in Hausdorff distance), the same must be true for $\widetilde{\BC}$, which guarantees that $V,\widetilde{\BC}$ will satisfy Hypothesis $H(\tilde{\eps})$ if $\eps$ is chosen sufficiently small (up to possibly changing multiplicities on the (half-)hyperplanes in $\widetilde{\BC}$). Now, if the process terminates before $\BD_p$ and $\BC^{(0)}$ are in the same class of cones, then $F_{V,\widetilde{\BC}} \leq \tilde{\beta} \mathcal{F}_{V,\widetilde{\BC}}$ holds. If instead $\BD_p$ and $\BC^{(0)}$ are in the same class, then using the universal lower bound $\mathcal{F}_{V,\widetilde{\BC}}\geq c(n,Q,\BC^{(0)})$, we have $F_{V,\widetilde{\BC}} \leq c\tilde{\beta} \leq \tilde{\beta}\mathcal{F}_{V,\widetilde{\BC}}$ automatically holds for $\eps = \eps(n,Q,\BC^{(0)},\tilde{\beta})$ sufficiently small, as the left-hand side $\to 0$ as $\eps\to 0$.
	
	Finally, by construction and \eqref{E:replacement-a} we have
	\begin{align*}
	F_{V,\BC} \geq \tilde{\beta}\mathcal{F}_{V,\BD_0} \geq \frac{2}{3}\tilde{\beta} F_{V,\BD_1} \geq \frac{2}{3}(\tilde{\beta})^2\mathcal{F}_{V,\BD_1} \geq \cdots & \geq \left(\frac{2}{3}\tilde{\beta}\right)^{N_0(n,Q)}F_{V,\BD_p} = C(n,Q,\BC^{(0)},\tilde{\beta})F_{V,\widetilde{\BC}}.
	\end{align*}
	This shows $V,\widetilde{\BC}$ satisfy all the required properties, completing the proof.
\end{proof}

Next we prove the existence of cones which are ``good'' representatives for $\mathcal{F}_{V,\BC}$. This notion will be key in our arguments when proving Theorem \ref{thm:main-L^2-estimate}.

\begin{lemma}\label{lemma:good-representative}
	Let $\tilde{\eps},\tilde{\beta}\in (0,1)$. Then, there exists $\eps = \eps(n,Q,\BC^{(0)},\tilde{\eps},\tilde{\beta})\in (0,1)$ such that if $V,\BC$ satisfy Hypothesis $H(\eps)$ and $\BC^{(0)}\prec \BC$, then there exists an aligned cone $\widetilde{\BC}\prec\BC$ satisfying:
	\begin{enumerate}
		\item [\textnormal{(i)}] $V,\widetilde{\BC}$ satisfy Hypothesis $H(\tilde{\eps})$;
		\item [\textnormal{(ii)}] $F_{V,\widetilde{\BC}} \leq \tilde{\beta}\mathcal{F}_{V,\widetilde{\BC}}$;
		\item [\textnormal{(iii)}] $F_{V,\widetilde{\BC}}\leq C\mathcal{F}_{V,\BC}$, for some $C = C(n,Q,\BC^{(0)},\tilde{\beta})\in (0,\infty)$.
	\end{enumerate}
\end{lemma}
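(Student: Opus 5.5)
The plan is to run the iterative procedure from the proof of Lemma \ref{lemma:replacement}, but to start one step further down the ordering: we never use $\BC$ itself as a candidate. Since $\BC^{(0)}\prec\BC$, the class of $\BC^{(0)}$ is strictly coarser than that of $\BC$. So there are aligned cones $\BD\prec\BC$ with $\BC^{(0)}\preceq\BD$, and $\mathcal{F}_{V,\BC}$ is a genuine infimum over a nonempty family containing cones close to $\BC^{(0)}$.

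\textbf{First step.} Choose an aligned $\BD_1\prec\BC$ with $\BC^{(0)}\preceq\BD_1$ and $F_{V,\BD_1}\leq\frac{3}{2}\mathcal{F}_{V,\BC}$. We need a near-minimiser that is also close to $\BC^{(0)}$. Comparing with $\BC^{(0)}$ itself, suitably aligned, gives $\mathcal{F}_{V,\BC}\lesssim F_{V,\BC^{(0)}}<\eps$. Any cone with fine excess this small has support Hausdorff close to $\BC^{(0)}$ in $B_1$, by the remark on $F$ and $\dist_\H$. We can therefore restrict the infimum to cones $\eps_0$-close to $\BC^{(0)}$, possibly after relabelling multiplicities so that $d$ is small, as done in the proof of Lemma \ref{lemma:replacement}. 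This closeness also forces $\BC^{(0)}\preceq\BD_1$.

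\textbf{Iteration.} If $F_{V,\BD_1}\leq\tilde\beta\mathcal{F}_{V,\BD_1}$, or $\BD_1$ lies in the class of $\BC^{(0)}$, stop and set $\widetilde{\BC}=\BD_1$. Otherwise, exactly as before, choose aligned $\BD_{p}\prec\BD_{p-1}$ with $\BC^{(0)}\preceq\BD_p$ and $F_{V,\BD_p}\leq\frac32\mathcal{F}_{V,\BD_{p-1}}$. Stop at the first $p$ with $F_{V,\BD_p}\leq\tilde\beta\mathcal{F}_{V,\BD_p}$, or with $\BD_p$ in the class of $\BC^{(0)}$. The levels strictly decrease, so the process terminates in at most $N_0(n,Q)$ steps. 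Each $\BD_p\prec\BC$ is aligned by transitivity: the spines are nested, and $\prec$ is a strict partial order. This gives (ii) in the non-terminal-class case. If the terminal $\BD_p$ is in the class of $\BC^{(0)}$, then $\mathcal{F}_{V,\BD_p}\geq c(n,Q,\BC^{(0)})>0$ while $F_{V,\BD_p}\to0$ as $\eps\to0$, so (ii) holds for $\eps$ small depending on $\tilde\beta$.

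\textbf{Estimates.} At each non-terminal step we have $F_{V,\BD_{p-1}}>\tilde\beta\mathcal{F}_{V,\BD_{p-1}}$, and hence $F_{V,\BD_p}\leq\frac32\tilde\beta^{-1}F_{V,\BD_{p-1}}$. Chaining these from $F_{V,\BD_1}\leq\frac32\mathcal{F}_{V,\BC}$ gives
$$F_{V,\widetilde{\BC}}\leq\left(\tfrac{3}{2}\tilde\beta^{-1}\right)^{N_0}\mathcal{F}_{V,\BC},$$
which is (iii). For (i), observe that $F_{V,\widetilde{\BC}}\leq C\mathcal{F}_{V,\BC}\leq C\eps$, so $\spt\|\widetilde{\BC}\|$ is close to $\spt\|V\|$ and hence to $\BC^{(0)}$. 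After reassigning multiplicities on the (half-)hyperplanes to match the masses of $V$, as in Lemma \ref{lemma:replacement}, we get $d(\widetilde{\BC}\res B_1,\BC^{(0)}\res B_1)<\tilde\eps/2$ for $\eps$ small. Together with $F_{V,\BC^{(0)}}<\eps$, this verifies $H(\tilde\eps)$.

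\textbf{Main obstacle.} I expect the delicate step to be the first one: ensuring that near-minimisers of $\mathcal{F}_{V,\BC}$, and of the later $\mathcal{F}_{V,\BD_p}$, can be taken close to $\BC^{(0)}$ in $d$ and not just in support. In particular, reassigning multiplicities must not change the class or break the ordering. I would handle this by a compactness argument: cones of bounded density with small fine excess relative to $V$ converge in support to $\spt\|\BC^{(0)}\|$. Alignment and the multiplicity constraints then leave only finitely many admissible multiplicity assignments, so we can pick the one compatible with $\BC^{(0)}$.
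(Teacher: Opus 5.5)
Your route is the same as the paper's. The paper picks an aligned $\BC'\prec\BC$ with $F_{V,\BC'}\leq\frac{3}{2}\mathcal{F}_{V,\BC}$, changes multiplicities so that $V,\BC'$ satisfy $H(\eps')$, and applies Lemma~\ref{lemma:replacement} to $\BC'$. You simply unroll that iteration, and your chaining for (iii) is fine. But the step you flag as the main obstacle is a genuine gap. Neither your proposed fix (choosing the multiplicity assignment compatible with $\BC^{(0)}$) nor the paper's ``up to changing multiplicities'' closes it. Since $F$ only sees supports, $\mathcal{F}_{V,\BC}$ may be (nearly) attained only by sums of hyperplanes. Relabelling such a cone to make it $d$-close to $\BC^{(0)}$ can put it in the class of $\BC$ itself, so $\BD_1\prec\BC$ is lost and no admissible $\widetilde{\BC}$ need exist. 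The same blindness to multiplicity also breaks the bound $\mathcal{F}_{V,\BD_p}\geq c$ that you use when the iteration ends in the class of $\BC^{(0)}$.

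Here is a concrete example. Let $\BC_0$ be a multiplicity-one triple junction with half-hyperplanes $H_1,H_2,H_3$, and let $P_i$ be the hyperplane containing $H_i$. Set $\BC^{(0)}=2\BC_0+2\sum_{i=1}^3|P_i|\in\H_{n-1}(6)$, so $Q=9$, with multiplicity $4$ on $H_1$ and $2$ on $-H_1$. For a hyperplane $P_1''\supset S(\BC^{(0)})$ close to $P_1$, set $V=\BC=2\BC_0+|P_1|+|P_1''|+2|P_2|+2|P_3|\in\H_{n-1}(8)$. Then $V\in\V_9$, since its only non-immersed points lie on the spine, where $\Theta_V=9$. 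Moreover $H(\eps)$ holds with $\eps\to0$ as $P_1''\to P_1$, and $\BC^{(0)}\prec\BC$. However, $\BD=3|P_1|+2|P_1''|+2|P_2|+2|P_3|\in\P_{n-1}(4)$ is aligned, satisfies $\BD\prec\BC$, and has $\spt\|\BD\|=\spt\|V\|$, so $\mathcal{F}_{V,\BC}=0$.

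Now (iii) forces $F_{V,\widetilde{\BC}}=0$, i.e.\ $\spt\|\widetilde{\BC}\|=\spt\|V\|$, and then $\widetilde{\BC}\prec\BC$ forces $\widetilde{\BC}\in\P_{n-1}(4)$. Such a cone carries equal mass near $H_1$ and near $-H_1$, so it is never $d$-close to $\BC^{(0)}$, and (i) fails. Relabelling it to match $\BC^{(0)}$ lands back in $\H_{n-1}(8)$, the class of $\BC$. Likewise, a $\P_{n-1}(3)$ cone supported on $P_1\cup P_2\cup P_3$ is $\prec\BC^{(0)}$, so $\mathcal{F}_{V,\BC^{(0)}}\leq F_{V,\BC^{(0)}}$ is small rather than bounded below.

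So the statement itself needs an extra restriction. One option is to take the infimum defining $\mathcal{F}$ only over aligned $\BC'$ with $\BC^{(0)}\preceq\BC'\prec\BC$ and $d(\BC'\res B_1,\BC^{(0)}\res B_1)$ below a fixed small threshold. With that convention your first step holds by definition, (ii) is automatic in the terminal case (empty infimum), and the rest of your argument goes through.
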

We call a cone $\widetilde{\BC}$ satisfying the conclusions of Lemma \ref{lemma:good-representative} a $(\tilde{\eps},\tilde{\beta})$\emph{-good representative} for $\mathcal{F}_{V,\BC}$.

The only reason one assumes additionally that $\BC^{(0)}\prec \BC$ in Lemma \ref{lemma:good-representative} is because if $\BC^{(0)}$ and $\BC$ were the same level, one could not choose a cone $\widetilde{\BC}\in \BC$ which was coarser than $\BC$ and was still close to $V$.

\begin{proof}
	First choose aligned $\BC^\prime\prec\BC$ with $F_{V,\BC^\prime}\leq \frac{3}{2}\mathcal{F}_{V,\BC}$. Again, up to possibly changing the multiplicity on some of the (half)-planes in $\BC^{\prime}$, we have that $V,\BC^\prime$ necessarily satisfy Hypothesis $H(\eps^\prime)$ for any suitably small $\eps^\prime$, provided we choose $\eps$ sufficiently small. We may therefore apply Lemma \ref{lemma:replacement} to find an $(\tilde{\eps},\tilde{\beta})$-replacement $\widetilde{\BC}$ for $\BC^\prime$, which one sees satisfies the conclusions of the present lemma.
\end{proof}

\begin{remark}\label{remark:replacements}
	The reader will readily note that one can prove similar versions of Lemma \ref{lemma:replacement} and Lemma \ref{lemma:good-representative} for one-sided excess quantities rather than the two-sided excess. Hypothesis $H(\eps)$ is unchanged.
\end{remark}

\begin{remark}\label{remark:aligned}
	Note that there is no assumption that the cone $\widetilde{\BC}$ provided by Lemma \ref{lemma:replacement} or Lemma \ref{lemma:good-representative} is aligned with $\BC^{(0)}$. This was, however, an assumption on the cone in Theorem \ref{thm:main-L^2-estimate}. However, if $V$ and $\BC$ satisfy all the conditions of Theorem \ref{thm:main-L^2-estimate} except the alignment of $\BC$ and $\BC^{(0)}$, and $S(\BC)$ is close to $S(\BC^{(0)})$, one can apply a small rotation to $V$ and $\BC$ to ensure the alignment, then apply Theorem \ref{thm:main-L^2-estimate}, and rotate back, in order to get the conclusions of Theorem \ref{thm:main-L^2-estimate}. When needed, we will do this without further comment.
\end{remark}

\section{The Blow-Up Class}\label{sec:blow-up}

In this section, under the assumption of the validity of the results in Section \ref{sec:main-estimates-statements}, we will construct classes of blow-ups of suitable sequences in $\V_Q$ relative to cones in $\CC$ and study their basic properties. Thus, throughout this section we assume the following inductive hypothesis for some fixed $i_0\geq 2$:

\hspace{2em}\textbf{Hypothesis I:} Theorem \ref{thm:main-L^2-estimate} holds for any $V \in \V_Q$ and $\BC \in \cup_{i=1}^{i_0-1} \FC_i$.

For sequences $\eps_j \downarrow 0$ and $\beta_j\downarrow 0$, the first goal of this section is to construct a (\emph{fine}) \emph{blow-up} $v_\infty= (v_\infty^{(i,k,s)})_{i,k,s}$ from sequences $(V_j)_j\subset\V_Q$ and $(\BC_j)_j\subset \cup_{i=1}^{i_0-1}\FC_i$ which satisfy the following assumptions:
\begin{itemize}
	\item Hypothesis $H(\eps_j)$ (see Section \ref{sec:replacement});
	\item Hypothesis $(\dagger)$: $F_{V_j,\BC_j}<\beta_j\mathcal{F}_{V_j,\BC_j}$.
\end{itemize}
By passing to a subsequence, we may assume that $\BC_j\in \mathfrak{C}_{i_*}$ for a fixed $i_*\leq i_0-1$ and $S(\BC_j)\to S$ for a subspace $S$. In particular, the corresponding parameters $\sfrak,\pfrak$ are independent of $j$.

We will define a blow-up class $\FB_{i_*}$ which depends on the level the sequence $\BC_j$. This will consist of tuples of functions defined on the (half-)hyperplanes of $\BC^{(0)}$ which, up to rotating, we may therefore assume are defined on a fixed (half-)hyperplane. Nonetheless, it will be convenient to keep track of the (half-)hyperplanes within the sequence $\BC_j$ which the functions come from; we describe this momentarily.

We define the set of \emph{good density points} of $V_j$ by:
$$\mathcal{D}_j:= \{z\in \spt\|V_j\|\cap B_1^{n+1}(0): \Theta_{V_j}(z)\geq Q\}.$$
We call $z\in \mathcal{D}_j$ a \emph{good density point} of $V_j$. Notice that $\mathcal{D}_j$ is a relatively closed subset of $B_1$ by upper semi-continuity of density. We refer to open subsets of $B^{n+1}_1(0)\setminus \mathcal{D}_j$ as \emph{density gaps} of $V_j$.

In the blow-up process it is important to keep track of the sets $\mathcal{D}_j$. By passing to a subsequence, we can assume that there exists a relatively closed set $\mathcal{D}\subset B_1$ for which $\mathcal{D}_j\to \mathcal{D}$ locally in Hausdorff distance in $B_1$. We have the characterisation
$$\mathcal{D} = \{z\in B_1: z = \lim_{j\to\infty}z_j\text{ for some }z_j\in\mathcal{D}_j\}.$$
Notice that we always have $\mathcal{D}\subseteq S(\BC^{(0)})\cap B_1^{n+1}(0)$ by upper semi-continuity of the density. In fact, by Theorem \ref{thm:main-L^2-estimate} we know $\D\subseteq S\cap B^{n+1}_1(0)$.
\begin{remark}
	Notice that $\mathcal{D}$ depends on the choice of sequence $(V_j)_j$, even though the notation does not reflect this. In fact, it will just depend on the blow-up formed.
\end{remark}

Our first lemma says that if $\BC_j\in\H_{n-1}$, then there can be no density gaps of ``significant size''. We will use this to reduce to two separate situations for constructing blow-ups.

\begin{lemma}\label{lemma:no-gaps}
	Fix $\delta_0>0$ and suppose $\BC_j\in \H_{n-1}$. Then each $y\in S(\BC^{(0)})\cap B_1$ we must have
	$$B_{\delta_0}(y) \cap \D_j\neq \emptyset \qquad \text{for all $j$ sufficiently large.}$$
\end{lemma}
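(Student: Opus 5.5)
We argue by contradiction. If the conclusion fails, then after passing to a subsequence there are $y\in S(\BC^{(0)})\cap B_1$ and $\delta_0>0$ with $B_{\delta_0}(y)\cap\D_j=\emptyset$ for all $j$. Then $B_{\delta_0}(y)\subset\{\Theta_{V_j}<Q\}$, so Lemma~\ref{lemma:gaps} (which we may invoke by the inductive validity of Theorem~\ref{thm:main-cone} for $Q'<Q$) gives $\dim_\H(\sing_*(V_j)\cap B_{\delta_0}(y))\leq n-7$. Property $(\S2)_Q$ applied with $\Omega=B_{\delta_0}(y)$ shows that $V_j\res B_{\delta_0}(y)$ is represented by a two-sided stable immersion away from this small singular set. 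Thus, on the rescaled ball, $W_j:=(\eta_{y,\delta_0/2})_\#V_j$ satisfies the structural hypotheses of Theorem~\ref{thm:HLW}, with mass bounds coming from Hypothesis $H(\eps_j)$.

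The next step is to find the cone to feed into Theorem~\ref{thm:HLW}. Since $F_{V_j,\BC^{(0)}}\to0$ and $\BC^{(0)}$ is invariant under translations along $S(\BC^{(0)})\ni y$ and under dilations, we have
\[
\dist_\H\bigl(\spt\|W_j\|\cap B_1,\ \spt\|\BC^{(0)}\|\cap B_1\bigr)\to0 .
\]
Theorem~\ref{thm:HLW} applied with $\BC^{(0)}$ therefore shows that, for large $j$, $\spt\|\BC^{(0)}\|$ is a union of hyperplanes and $W_j\res B_{1/2}$ is a sum of smooth single-valued minimal graphs over these hyperplanes. In particular, $V_j$ is smoothly immersed near $y$, and each sheet is a graph over a \emph{full} hyperplane, crossing $S(\BC^{(0)})$.

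It remains to contradict $\BC_j\in\H_{n-1}$. Each full graph contributes equal mass on the two half-hyperplanes of its plane, so the mass of $V_j$ near each half-hyperplane $H$ of $\BC^{(0)}$ (away from the spine) equals the mass near the opposite half-hyperplane $-H$. The varifold limit of $V_j$ is $\BC^{(0)}$, so if $\BC^{(0)}\in\H_{n-1}$ this already gives a contradiction: $\BC^{(0)}$ would then be a sum of hyperplanes, contrary to its definition. If instead $\BC^{(0)}\in\mathcal{P}_{n-1}$ while $\BC_j\in\H_{n-1}$, the graphs give a cone in $\mathcal{P}_{n-1}$ coarser than $\BC_j$, and we compare excesses. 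Take the tangent planes of the sheets at a point near $y$, and form the aligned cone $\widetilde{\BC}_j\prec\BC_j$ obtained by pairing each half-hyperplane of $\BC_j$ with the half-hyperplane of $\BC_j$ nearest the opposite side of the same graph. Using the $C^4$ graph estimates from Theorem~\ref{thm:HLW}, and the graphical representation away from the spine from Hypothesis~I / Theorem~\ref{thm:main-L^2-estimate}(1) for $\BC_j$, we bound $F_{V_j,\widetilde{\BC}_j}\leq CF_{V_j,\BC_j}$, using translation invariance along the spine to pass from $B_{\delta_0}(y)$ to $B_1$. Then
\[
\mathcal{F}_{V_j,\BC_j}\leq CF_{V_j,\BC_j}<C\beta_j\mathcal{F}_{V_j,\BC_j},
\]
which is impossible once $\beta_j\downarrow 0$.

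The main obstacle is this last step: showing that smooth immersed sheets crossing the spine in a ball $B_{\delta_0}(y)$, possibly far from $0$, force closeness to a coarser cone on all of $B_1$ with the right scaling. We would handle it with the doubling and $L^2$ estimates for the graphs together with the separation property, Lemma~\ref{lemma:separation}. Lemma~\ref{lemma:separation}(b) says that $\BC_j$ stays at distance $\geq c\,\mathcal{E}$ from every cone in $\mathcal{P}_{n-1}$, whereas the graphs make $V_j$, and hence $\BC_j$, within $CE_{V_j,\BC_j}$ of such a cone.
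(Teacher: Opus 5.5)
Your set-up and the case $\BC^{(0)}\in\H_{n-1}$ are fine. The mass-balance argument, using varifold convergence $V_j\to\BC^{(0)}$, is exactly what turns Theorem~\ref{thm:HLW} into ``$\BC^{(0)}$ is a sum of hyperplanes''. The gap is the step you flag yourself, in the case $\BC^{(0)}\in\mathcal{P}_{n-1}$. You need that the two half-hyperplanes $H_j^{+},H_j^{-}$ of $\BC_j$ joined by a single sheet $u$ meet at a kink of size $O(E_{V_j,\BC_j})$; without this, $F_{V_j,\widetilde{\BC}_j}\leq CF_{V_j,\BC_j}$ does not follow.

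None of the tools you cite gives this bound. Theorem~\ref{thm:HLW} is applied with the fixed cone $\BC^{(0)}$, so it only yields $\|u\|_{C^4}\leq CE_{V_j,\BC^{(0)}}$. That is the wrong scale: $(\dagger)$ says precisely that $V_j$ is much closer to $\BC_j$ than to any coarser cone, and $\BC^{(0)}$ is one of those. Theorem~\ref{thm:main-L^2-estimate}(1) tells you that $u$ is $C_*(\tau)E_{V_j,\BC_j}$-close to $H_j^{\pm}$ on $\{r_{\BC^{(0)}}>\tau\}$. This still lets the slope of $u$ change by order $\tau E_{V_j,\BC^{(0)}}$ across the slab $\{r_{\BC^{(0)}}<\tau\}$, and shrinking $\tau$ does not help because $C_*(\tau)$ degenerates. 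Doubling and unique-continuation estimates only interpolate between the two scales. Lemma~\ref{lemma:separation} gives a \emph{lower} bound $c\,\mathcal{E}_{V_j,\BC_j}$ on the kink, not the upper bound you need.

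The paper closes this with a blow-up at scale $\mathcal{E}_{V_j,\BC_j}$. It blows $V_j$ up off an aligned, near-optimal coarser cone: one of the level of $\BC^{(0)}$ in the base case, a good representative in general. Since $E_{V_j,\BC_j}\ll\mathcal{E}_{V_j,\BC_j}$, the blow-up coincides with that of $\BC_j$. By separation it is a family of linear functions on half-hyperplanes that cannot be glued into linear functions on full hyperplanes. But near $y$ the sheets are graphs over full hyperplanes with estimates at this scale, so the blow-up is $C^1$ across $S(\BC^{(0)})\cap B_\delta(y)$, which is a contradiction. When the good representative itself lies in $\H_{n-1}$, the lemma is applied inductively on the level of $\BC_j$.

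Your direct route can also be completed. Apply Theorem~\ref{thm:main-L^2-estimate}(2) to $V_j,\BC_j$, which is available by Hypothesis I. Writing $D_S$ for derivatives along $S(\BC^{(0)})$, it gives $\int|D_Su|^2\leq CE_{V_j,\BC_j}^2$ for each sheet near $y$. Since $D_Su$ solves the linearised minimal surface equation, interior estimates give $|DD_Su|\leq CE_{V_j,\BC_j}$. Now solve the minimal surface equation for the second derivative normal to the spine; its coefficient is $(1+|D_Su|^2)/(1+|Du|^2)\geq\frac12$. This gives $|D^2u|\leq CE_{V_j,\BC_j}$ on a smaller ball about $y$. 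Comparing $Du$ on the two sides of the spine then bounds the kink by $C(\delta_0)E_{V_j,\BC_j}$, which is what your pairing needs. Separation then also rules out one half-hyperplane being joined to two distinct ones on the opposite side.
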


\begin{remark}
	In fact, instead of working with sequences $V_j,\BC_j$ with parameters $\eps_j,\beta_j\downarrow 0$, the lemma only requires $V,\BC$ to satisfy Hypothesis $H(\eps)$ and Hypothesis $(\dagger)$ with $\eps,\beta$ therein sufficiently small depending on $n,Q,\BC^{(0)}$.
\end{remark}

\begin{proof}
	The proof is similar to \cite[Lemma 7.1]{BK17}. We argue by contradiction. Thus, we may assume that there exists $\delta>0$ and $y\in S(\BC^{(0)})\cap B_1$ for which
	$$B_{\delta}(y)\cap \D_j = \emptyset \qquad \text{for all $j$ sufficiently large.}$$
	Notice that as $\BC_j\in \H_{n-1}$ and $\BC^{(0)}\preceq \BC_j$ are aligned, we have $S(\BC^{(0)}) = S(\BC_j)$.
	
	If $\BC^{(0)}\in \H_{n-1}$, then the lemma trivially follows from Lemma \ref{lemma:gaps} and Theorem \ref{thm:HLW}. Indeed, by assumption we know $\Theta_{V_j}<Q$ in $B_{\delta}(y)$ for all $j$ sufficiently large, and thus by Lemma \ref{lemma:gaps} we know $\dim_\H(\sing_*(V_j)\cap B_{\delta}(y))\leq n-7$. Thus, since $V_j\to \BC^{(0)}$ as varifolds in $B_1$, by Theorem \ref{thm:HLW} we would need $\BC^{(0)}\in \mathcal{P}$, a contradiction. (Notice that the fact that $\beta_j\downarrow 0$ is actually irrelevant in this case, as all one needs is $\eps_j\downarrow 0$.)
	
	So now assume $\BC^{(0)}\in \mathcal{P}$; as $\sfrak=n-1$ and $\BC^{(0)}\preceq \BC_j$ this implies $\BC^{(0)}\in \mathcal{P}_{n-1}$. In particular, as $\BC_j\in \H_{n-1}$ we know $\BC^{(0)}\prec \BC_j$. Notice that similar to the above case, we always have $\Theta_{V_j}<Q$ in $B_{\delta}(y)$ for all $j$ sufficiently large, so again by Lemma \ref{lemma:gaps} and Theorem \ref{thm:HLW}, we can assume that in $V_j\res B_{\delta/2}(y)$ is expressible as a sum of smooth minimal graphs over suitable regions of the hyperplanes $P^{(i)}$ (cf.~Theorem \ref{thm:main-L^2-estimate}(1)). In particular, these regions include $S(\BC^{(0)})$.
	
	We now work inductively over the class $\mathfrak{C}_{i_*}$ which the $\BC_j$ belong to. In the coarsest situation relative to $\BC^{(0)}$ (which is the base case), one can perform a blow-up (cf.~Section \ref{sec:first-blow-up}) of $V_j$ relative to $\widetilde{\BC}_j$, where $\widetilde{\BC}_j\prec \BC_j$ is aligned and satisfies $E_{V_j,\widetilde{\BC}_j}\leq \frac{3}{2}\mathcal{E}_{V_j,\BC_j}$ (in particular, $\widetilde{\BC}_j$ must belong to the same level of cones as $\BC^{(0)}$). Using Lemma \ref{lemma:separation}, the fact that two-sided and one-sided excesses of $V_j$ relative to $\BC_j$ are comparable by Theorem \ref{thm:main-L^2-estimate}, and $F_{V_j,\BC_j}<\beta_j\mathcal{F}_{V_j,\BC_j}$, we see that the resulting blow-up $v$ must consist of linear functions defined on the half-hyperplanes $H^{(i)}$, yet it can \emph{not} be written as a collection of linear functions defined on the full hyperplanes $P^{(i)}$. However, for each $i$ the blow-up in the region $B_{\delta}(y)\cap P^{(i)}$ must be $C^1$ from the above application of Theorem \ref{thm:HLW}. These two properties of $v$ are in direct contradiction to one another, providing the desired contradiction.
	
	For the inductive step, we wish to derive the contradiction in the analogous manner, except now by blowing-up off a sequence of $(\tilde{\eps}_j,\tilde{\beta}_j)$-good representatives $\widetilde{\BC}_j$ of $\mathcal{F}_{V_j,\BC_j}$, for suitable sequences $\tilde{\eps}_j,\tilde{\beta}_j\downarrow 0$; these are found in an identical manner to that seen in the proof of \eqref{E:concentration-repeated}. The one caveat is whether $\widetilde{\BC}_j\in \P$ or not. If $\widetilde{\BC}_j\in \P$ for infinitely many $j$, the contradiction is reached in the same manner as above. If $\widetilde{\BC}_j\in \H_{n-1}$, then one can simply apply the present lemma inductively with $V_j,\widetilde{\BC}_j$ in place of $V_j,\BC_j$. This completes the proof.
\end{proof}

\subsection{Constructing blow-ups}\label{sec:first-blow-up}

Fix $i_*\leq i_0-1$. We construct blow-ups assuming:
\begin{itemize}
	\item Hypothesis I;
	\item $V_j, \BC_j$ satisfy Hypothesis $H(\eps_j)$ with $\eps_j \downarrow 0$;
	\item $V_j, \BC_j$ satisfy Hypothesis ($\dagger$) with $\beta_j\downarrow 0$, namely $F_{V_j,\BC_j}<\beta_j\mathcal{F}_{V_j,\BC_j}$;
	\item $\BC_j\in \mathfrak{C}_{i_*}$ for all $j$.
\end{itemize}
Depending on the exact form of the sequences, we construct blow-ups with domains being half-hyperplanes or full hyperplanes as follows.

\textbf{Case 1:} Suppose that either:
\begin{itemize}
	\item $\dim S(\BC^{(0)}) \leq n-2$; or
	\item $\BC^{(0)}\in \P_{n-1}$ and $\D\subsetneq S(\BC^{(0)})\cap B_1(0)$.
\end{itemize}
In either case $\BC^{(0)}\in \P$ and furthermore Lemma \ref{lemma:no-gaps} implies that $\BC_j\in \P$.

Thus, $\BC^{(0)} = \sum^{\pfrak_0}_{i=1}q_i|P^{(i)}|\in \P_{\sfrak_0}(\pfrak_0)$ and $\BC_j = \sum^{\pfrak_0}_{i=1}\sum^{\pfrak_i}_{k=1}q_{i,k}|P_j^{(i,k)}|\in \P_{\sfrak}(\pfrak)$ (we can assume that $q_{i,k}$ are independent of $j$ by passing to a subsequence). Taking any sequence $(\tau_j)_j$ with $\tau_j\downarrow 0$ sufficiently slowly, by Theorem \ref{thm:main-L^2-estimate} (cf.~Remark \ref{remark:aligned}) we know that (up to changing the $q_{i,k}$)
\begin{equation}\label{E:graphing-blow-up-1}
V_j \res \big(B_{1-\tau_j}^{n+1}(0)\setminus B_{\tau_j}(\mathcal{D})\big) = \sum_{i=1}^{\pfrak_0}\sum^{\pfrak_i}_{k=1}\sum^{q_{i,k}}_{s=1}|\graph(u_j^{(i,k,s)})|
\end{equation}
where $u_j^{(i,k,s)}:P_j^{(i,k)}\cap (B_{1-\tau_j}(0)\setminus B_{\tau_j}(\D))\to (P_j^{(i,k)})^\perp$ is smooth, minimal, and obeys  for any $\tau>2\tau_j$,
\begin{equation}\label{E:blow-up-estimate-1}
	\|u_j^{(i,k,s)}\|_{C^4(P_j^{(i,k)}\cap (B_{1-\tau}(0)\setminus B_{\tau}(\D)))}\leq C(n,Q,\BC^{(0)},\tau)E_{V_j,\BC_j}.
\end{equation}
Up to identifying $P^{(i,k)}_j$ with a fixed hyperplane $P_*$ (which we may do by rotating, keeping the spine fixed) we can view each $u_j^{(i,k,s)}$ as a function $P_*\cap (B_{1-\tau_j}(0)\setminus B_{\tau_j}(\D))\to P_*^\perp$. We may extend $u_j^{(i,k,s)}$ by $0$ to $P_*\cap B_1$. Then, by the $L^2$ non-concentration (Lemma \ref{lemma:non-con}) and Arzelà--Ascoli, by \eqref{E:blow-up-estimate-1} we get that, up to passing to a subsequence, for each $i,k,s$ there exists a function $v_\infty^{(i,k,s)}:P_*\cap B_1\to P_*^\perp$ with
$$\lim_{j\to\infty}E_{V_j,\BC_j}^{-1} u_j^{(i,k,s)} =: v_\infty^{(i,k,s)}$$
where the convergence is in $C^3_{\text{loc}}(P_*\cap B_1\setminus \D;P_*^\perp)$ and in $L^2_{\text{loc}}(P_*\cap B_1)$. Notice also that, as $\mathcal{D}$ is closed, our assumptions give that $P_*\setminus \D$ is connected.

\textbf{Case 2:} If the assumptions of Case 1 fail, then either:
\begin{itemize}
	\item $\BC^{(0)}\in \H_{n-1}$; or
	\item $\BC^{(0)} \in \P_{n-1}$ and $\D = S(\BC^{(0)})\cap B_1(0)$.
\end{itemize}
This is equivalent to saying that $\BC^{(0)}\in \CC_{n-1}$ and $\D = S(\BC^{(0)})\cap B_1(0)$ (the latter is guaranteed if $\BC^{(0)}\in \H_{n-1}$ by Lemma \ref{lemma:no-gaps}). In this situation, we know $\BC_j\in \CC_{n-1}$.

To unify the two cases in this situation, and because the best we can do is graph over half-hyperplanes, we will always use half-hyperplane notation. Thus, we write $\BC^{(0)} = \sum^{\pfrak_0}_{i=1}q_i|H^{(i)}|$ and $\BC_j = \sum^{\pfrak_0}_{i=1}\sum^{\pfrak_i}_{k=1}q_{i,k}|H_j^{(i,k)}|$, where as usual $q_i = \sum^{\pfrak_i}_{k=1}q_{i,k}$ and for each $j,k$, $H^{(i)}$ is the closest half-hyperplane in $\BC^{(0)}$ to $H^{(i,k)}_j$. As in Case 1, taking any sequence $(\tau_j)_j$ with $\tau_j\downarrow 0$ sufficiently slowly, by Theorem \ref{thm:main-L^2-estimate} we have (up to changing $q_{i,k}$)
$$V_j \res \big(B_{1-\tau_j}^{n+1}(0)\setminus B_{\tau_j}(\mathcal{D})\big) = \sum_{i=1}^{\pfrak_0}\sum^{\pfrak_i}_{k=1}\sum^{q_{i,k}}_{s=1}|\graph(u_j^{(i,k,s)})|,$$
where $u_j^{(i,k,s)}:H_j^{(i,k)}\cap (B_{1-\tau_j}(0)\setminus B_{\tau_j}(\D))\to (P_j^{(i,k)})^\perp$ is smooth, minimal, and obeys for any $\tau>2\tau_j$,
\begin{equation}\label{E:blow-up-estimate-2}
	\|u_j^{(i,k,s)}\|_{C^4(H_j^{(i,k)}\cap (B_{1-\tau}(0)\setminus B_{\tau}(\D)))}\leq C(n,Q,\BC^{(0)},\tau)E_{V_j,\BC_j}.
\end{equation}
Here, $P_j^{(i,k)}$ is the hyperplane extending $H_j^{(i,k)}$. Similarly to Case 1, we can remove the dependence of the domain on $j$. Thus, rotating each half-hyperplane $H^{(i,k)}_j$ to a fixed half-hyperplane $H_*$ and extending the functions $u_j^{(i,k,s)}$ by $0$ to $H_*\cap B_1$, we may view all the functions as functions $H_*\cap B_1\to P_*^\perp$, for $P_*$ the full hyperplane extending $H_*$. Then again by Lemma \ref{lemma:non-con} and Arzelà--Ascoli (using \eqref{E:blow-up-estimate-2}), up to passing to a subsequence we have
$$\lim_{j\to\infty}E_{V_j,\BC_j}^{-1}u_j^{(i,k,s)}=: v_\infty^{(i,k,s)}$$
exists for each $(i,k,s)$, where the convergence is in $C^3_{\text{loc}}(H_* \cap B_1(0)\setminus S(\BC^{(0)}))$ and in $L^2(\overline{H}_*\cap B_1)$.
\begin{defn}
	In both Case 1 \& Case 2, we say that $v_\infty:= (v_\infty^{(i,k,s)})_{i,k,s}$ is the \emph{blow-up} of $V_j$ relative to $\BC_j$. We write $\mathfrak{B}_{i_*}$ for the set of all such blow-ups.
\end{defn}
To unify notation between the two cases, we denote the domain of $v_\infty^{(i,k,s)}$ in each case, before rotating, by $\Omega^{(i,k)}$. In Case 1 we therefore have $\Omega^{(i,k)} = P^{(i)}\cap B_1(0)\setminus \D$ whilst in Case 2 we have $\Omega^{(i,k)} = H^{(i)}\cap B_1(0)\setminus S(\BC^{(0)})$. We also write $\Omega_*$ for $P_*$ or $H_*$ depending on the case.

We stress that it will be important to keep track of the tuples of functions which arise from a given (half-)hyperplane, namely the families $(v^{(i,k,s)}_\infty)_{s}$ for fixed $i,k$.

An important subclass of blow-ups which we will need to consider are those generated by blowing-up a sequence of \emph{cones}. These necessarily have to be at least as fine as $\BC_j$, so will belong to some class $\mathfrak{C}_i$ with $i\geq i_*$.

\begin{defn}\label{defn:frak-L}
	Denote by $\mathfrak{L}_{i}\subset\FB_{i_*}$ the set of all blow-ups for which $V_j\in \mathfrak{C}_i$ for all $j$. Also write $\mathfrak{L} := \cup_{i\geq i_*}\mathfrak{L}_i$.
\end{defn}

We introduce two useful pieces of notation for later.
\begin{itemize}
	\item Let $\ell_j^{(i,k)}:P^{(i)}\to (P^{(i)})^\perp$ be the linear function whose graph is $P_j^{(i,k)}$. We define $m^{(i,k)}:= \lim_{j\to\infty} E_{V_j,\BC^{(0)}}^{-1}D\ell_j^{(i,k)}$ for the limiting gradient of the hyperplanes $P_j^{(i,k)}$ relative to $P^{(i)}$; this limit exists by \eqref{E:P-to-C}, as by Theorem \ref{thm:main-L^2-estimate} we know $q_{i,k}\geq 1$ for each $i,k$, and so a simple triangle inequality argument gives $\dist_\H(\BC_j\cap B_1,\BC^{(0)}\cap B_1)\leq CE_{V_j,\BC^{(0)}}$. Notice that because $P^{(i)}$ and $P_j^{(i,k)}$ intersect along an $(n-1)$-dimensional subspace, we may identify $m^{(i,k)}$ with a real number. Similarly, one can do the same in the half-hyperplane situation in Case 2.
	\item For each $z\in \D\cap B_1$, we can choose a sequence $z_j\in \D_j$ with $z_j\to z$. For each $i,k$, define $S^{(i,k)}_j:= P^{(i)}\cap P_j^{(i,k)}$, and set $\xi_j^{(i,k)}:= \pi^\perp_{S_j^{(i,k)}}(z_j)$. Then, Corollary \ref{cor:main-L^2-estimate}(1) gives that
	$$E_{V_j,\BC_j}^{-1}\pi^\perp_{P^{(i)}}(\xi_j^{(i,k)})\to \lambda^{(i,k)}(z)^{\perp_{P^{(i)}}}\qquad \text{and}\qquad E_{V_j,\BC_j}^{-1}E_{V_j,\BC^{(0)}}\pi_{P^{(i)}}(\xi_j^{(i,k)}) \to \lambda^{(i,k)}(z)^{\top_{P^{(i)}}}$$
	for some constant $\lambda^{(i,k)}(z) \in (S^{(i,k)})^\perp$, where $S^{(i,k)} = \lim_{j\to\infty} S^{(i,k)}_j$. In particular,
	\begin{equation}\label{E:tilde-lambda}
	E_{V_j,\BC_j}^{-1}\pi^\perp_{P^{(i,k)}_j}(\xi_j^{(i,k)}) \to \lambda^{(i,k)}(z)^{\perp_{P^{(i)}}} - m^{(i,k)}\lambda^{(i,k)}(z)^{\top_{P^{(i)}}} =: \widetilde{\lambda}^{(i,k)}.
	\end{equation}
	Notice that since $\pi^\perp_{P^{(i)}}\circ \pi^\perp_{S^{(i,k)}_j}\equiv \pi^{\perp}_{P^{(i)}}$, in fact $\lambda^{(i,k)}(z)^{\perp_{P^{(i)}}}$ is independent of $k$ as it is simply $\lim_{j\to\infty}E^{-1}_{V_j,\BC_j}\pi^\perp_{P^{(i)}}(z_j)$. Furthermore, when $\sfrak=n-1$, then $S_j^{(i,k)} \equiv S(\BC_j)$ is independent of $i,k$, and in Corollary \ref{cor:main-L^2-estimate} we simply get $|\xi_j|\leq CE_{V_j,\BC_j}$, where $\xi_j:= \pi_{S(\BC_j)}^\perp(z_j)$.
\end{itemize}

\begin{remark}[Comparison with \cite{BK17} and \cite{BKMW25}]
	In \cite{BK17} and \cite[Theorem D]{BKMW25}, one enlarges the blow-up class by allowing an additional flexibility, namely that there is a \emph{second} sequence of cones $(\widetilde{\BC}_j)_j$ where each $\widetilde{\BC}_j$ is a small perturbation of the cone $\BC_j$ by an amount which is controlled by $E_{V_j,\BC_j}$, \emph{and thus could be finer} (in \cite{BK17} and \cite[Theorem D]{BKMW25}, this would mean the spine dimension decreases), yet one does \emph{not} assume that $V_j,\widetilde{\BC}_j$ obey the analogue of Hypothesis $(\dagger)$ with some constant $\widetilde{\beta}_j$. This additional flexibility ultimately allows one to directly ``integrate out'' homogeneous degree one parts of the blow-up, just by perturbing the cone, and directly prove an excess decay lemma. One is not able to do the same in the present setting precisely because the cone could have multiplicity on some (half-)hyperplanes, and in order to ``integrate'' the homogeneous degree one parts of the blow-up, one needs the flexibility to independently rotate the different parts of a (half-)hyperplane with multiplicity, but our assumptions do not allow one to graph over such a `split' cone without some assumption like Hypothesis $(\dagger)$ being satisfied. As such, multiplicity forces us to use an approach based on proving a fine $\eps$-regularity theorem, such as in \cite{Min21a} (and also in the proofs of the main planar $\eps$-regularity theorems in \cite{MW24, BKMW25}). What we are able to do is integrate out the homogeneous degree one parts of the blow-ups which correspond to perturbing the sequences of cones $\BC_j$ \emph{whilst remaining in the same class of cones} (see Theorem \ref{thm:blow-up-properties}(4) below). Since these still satisfy the assumptions used in the present section, one does not need to enlarge the classes $\mathfrak{B}_{i_*}$ any further.
\end{remark}

\subsection{Properties of the blow-up class}\label{sec:blow-up-properties}

In this section we give the basic properties of blow-ups. For simplicity, we denote elements of $\FB_{\ell}$ by $v = (v^{(i,k,s)})_{i,k,s}$, which suppresses the exact index sets, namely the parameters $\pfrak_i,q_{i,k}$, used for the blow-up (we stress these are not constant in $\FB_\ell$). We write $\|v\|_{L^2(\Omega_*)} := \sqrt{\sum_{i,k,s}\|v^{(i,k,s)}\|_{L^2(\Omega_*)}^2}$. Furthermore, for each $(i,k,s)$ recall that $S_j^{(i,k)}:= P^{(i)}\cap P_j^{(i,k)}$ and $S_j^{(i,k)}\to S^{(i,k)}$. Given $\kappa\in S^\perp$ set $\kappa^{(i,k)}:= \pi^\perp_{S^{(i,k)}}(\kappa)$.

The main theorem of this section is the following. Here, we fix $\ell\geq i^{(0)}$.
\begin{theorem}\label{thm:blow-up-properties}
	The class $\FB_\ell$ obeys the following properties.
	\begin{enumerate}
		\item [(1)] If $v \in \mathfrak{B}_\ell$, then $v^{(i,k,s)}\in C^3(\Omega_*;P_*^\perp)$ is harmonic for all $i,k,s$.
		\item [(2)] $\FB_\ell$ is closed under translations parallel to $S$ and rescalings of the domain. More precisely, if $v\in \FB_\ell$, $y\in S\cap B_{1/2}(0)$, and $\rho\in (0,1/4)$, then provided $v\not\equiv 0$ in $B_\rho(y)$, we have $v_{y,\rho}\in \FB_\ell$, where
		$$v_{y,\rho}^{(i,k,s)} := \|v(y+\rho(\cdot))\|_{L^2(B_1)}^{-1}v^{(i,k,s)}(y+\rho(\cdot)).$$
		\item [(3)] $\FB_\ell$ is closed under translations orthogonal to $S$. More precisely, if $v\in \FB_\ell$ and $\kappa\in S^\perp$ with $v^{(i,k,s)} \not\equiv(\kappa^{(i,k)})^{\perp_{P^{(i)}}}-m^{(i,k)}(\kappa^{(i,k)})^{\top_{P^{(i)}}} := \widetilde{\kappa}^{(i,k)}$ for some $i,k,s$, then we have $\hat{v}\in \FB_\ell$, where
		$$\hat{v}^{(i,k,s)} := \frac{v^{(i,k,s)} - \widetilde{\kappa}^{(i,k)}}{\left(\sum_{i^\prime,k^\prime,s^\prime}\|v^{(i^\prime,k^\prime,s^\prime)} - \widetilde{\kappa}^{(i^\prime,k^\prime)}\|_{L^2(\Omega_*)}^2\right)^{1/2}}.$$
		\item [(4)] $\FB_\ell$ is closed under subtraction of linear functions produced by blowing up cones of level $\ell$. More precisely, suppose $v\in \mathfrak{B}_\ell$ is the blow-up of $(V_j)_j$ relative to $(\BC_j)_j\subset\mathfrak{C}_\ell$, and suppose $(\widetilde{\BC}_j)_j$ is another sequences of cones with:
	\begin{itemize}
		\item $\widetilde{\BC}_j\in \mathfrak{C}_\ell$ for all $j$; 
		\item $\dist_\H(\BC_j\res B_1,\widetilde{\BC}_j\res B_1)\leq CE_{V_j,\BC_j}$, for some $C = C(n,Q,\BC^{(0)})\in (0,\infty)$.
	\end{itemize}
	Let $\pi = (\pi^{(i,k,s)})_{i,k,s}\subset\mathfrak{L}_\ell$ be the blow-up of $\widetilde{\BC}_j$ relative to $\BC_j$; note that for each $i,k$ we have $\pi^{(i,k,s)}\equiv \pi^{(i,k)}$ is independent of $s$. Then, provided we have $v^{(i,k,s)}\not\equiv \pi^{(i,k)}$ for some $i,k,s$, we have $\bar{v}\in\mathfrak{B}_\ell$, where
	$$\bar{v}^{(i,k,s)} := \frac{v^{(i,k,s)}-\pi^{(i,k)}}{\left(\sum_{i^\prime,k^\prime,s^\prime}\|v^{(i^\prime,k^\prime,s^\prime)} - \pi^{(i^\prime,k^\prime)}\|^2_{L^2(\Omega_*)}\right)^{1/2}}.$$
	\item [(5)] $\FB_\ell$ is compact in $C^3_{\textnormal{loc}}(B_1(0)\setminus \D)$. More precisely, suppose $(v_j)_j\subset \FB_\ell$, and pass to a (unrelabelled) subsequence to ensure that the parameter spaces where the indices $(i,k,s)$ lie for each $v_j = (v_j^{(i,k,s)})_{i,k,s}$ are constant in $j$. Then, there exists a subsequence $\{v_{j^\prime}\}_{j^\prime}$ and $v\in \FB_\ell$ (with the same parameter spaces for its indices as $v_j$) such that $v_{j^\prime}^{(i,k,s)} \to v^{(i,k,s)}$ in $C^3_{\textnormal{loc}}(\Omega_*)$ and in $L^2_{\text{loc}}(\overline{\Omega}_*\cap B_1)$.
	\item [(6)] \textnormal{(Hardt--Simon Inequality for $z\in \D$.)} Let $v\in \mathfrak{B}_\ell$ and $z\in \D\cap B_{1/2}(0)$ and $\rho \in (0,1/4)$. Then we have
	\begin{align*}
	\sum_{i,k,s}\int_{\Omega_*\cap B_{\rho/2}(z)}&R_z(x)^{2-n}\left|\frac{\del}{\del R_z}\left(\frac{v^{(i,k,s)} - \widetilde{\lambda}^{(i,k)}}{R_z(x)}\right)\right|^2\, \ext x  \leq C\sum_{i,k,s}\int_{\Omega_*\cap B_\rho(z)}|v^{(i,k,s)} - \widetilde{\lambda}^{(i,k)}|^2,
	\end{align*}
	where $R_z(x):= |x-z|$ and $C = C(n,Q,\BC^{(0)})\in (0,\infty)$. Here, $\widetilde{\lambda}^{(i,k)}$ is in \eqref{E:tilde-lambda}.
	\end{enumerate}
\end{theorem}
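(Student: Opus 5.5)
Each of the six properties will be obtained by going back to the defining sequences. The recurring mechanism is this: a suitable modification of $(V_j)_j$ or $(\BC_j)_j$ still satisfies Hypothesis $H(\eps_j')$ and Hypothesis $(\dagger)$ with $\eps_j',\beta_j'\downarrow 0$, and is still of level $\ell$. The blow-up of the modified sequence is then the normalised modification of $v$.

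\textbf{Step 1: properties (1) and (5).} For (1), each $u_j^{(i,k,s)}$ solves the minimal surface equation over a fixed domain with $C^4$ bounds $\leq C(\tau)E_{V_j,\BC_j}$ from \eqref{E:blow-up-estimate-1}/\eqref{E:blow-up-estimate-2}. Dividing by $E_{V_j,\BC_j}$, the equation becomes a linear elliptic equation whose coefficients converge to those of the Laplacian. Passing to the limit in $C^3_{\text{loc}}$ therefore gives harmonicity on $\Omega_*$.

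For (5), I will use a diagonal argument. For each $v_m$, choose an index $j(m)$ in its defining sequence such that $E^{-1}u_{j(m)}$ is $1/m$-close to $v_m$ in $C^3$ on $\{r>1/m\}$, and also in $L^2$. Choose it late enough that the parameters $\eps_{j(m)},\beta_{j(m)}<1/m$. The diagonal sequence is then admissible, and its blow-up is the required limit. The $L^2$ convergence uses the uniform non-concentration of Lemma~\ref{lemma:non-con}.

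\textbf{Step 2: properties (2), (3), (4).}

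For (2), replace $V_j$ by $(\eta_{y_j,\rho})_\#V_j$ with $y_j\in S(\BC_j)$, $y_j\to y$. The cone is unchanged since $y_j\in S(\BC_j)$, and $\Theta\geq Q$ at the new origin is supplied by Corollary~\ref{cor:main-L^2-estimate}/Lemma~\ref{lemma:no-gaps}. I must also check $(\dagger)$ at scale $\rho$. For this I use Corollary~\ref{cor:main-L^2-estimate}(2) and the non-vanishing $v\not\equiv0$ on $B_\rho(y)$: these give $E$ at scale $\rho$ comparable to $E_{V_j,\BC_j}$. Together with the lower bound on $\mathcal{F}$, which cannot decrease under rescaling by more than a constant depending on $\rho$, this keeps $(\dagger)$.

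For (3), translate $V_j$ by a vector $\kappa_j\in S(\BC_j)^\perp$ with $E_{V_j,\BC_j}^{-1}\kappa_j$ suitably converging to $\kappa$. The graphs then shift by exactly $\widetilde{\kappa}^{(i,k)}$ to first order, by the same computation as in \eqref{E:tilde-lambda}.

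For (4), replace $\BC_j$ by $\widetilde{\BC}_j$. The graphs over $\widetilde{\BC}_j$ are $u_j-\ell_j+o(E)$, so the blow-up is $v-\pi$ after normalisation. The required bound $E_{V_j,\widetilde{\BC}_j}\leq CE_{V_j,\BC_j}$ follows from the triangle inequality. Hypothesis $(\dagger)$ is preserved as long as $E_{V_j,\widetilde{\BC}_j}$ is not much smaller than $E_{V_j,\BC_j}$. The normalisation condition $v\not\equiv\pi$ gives exactly this, because the $L^2$ norm of $v-\pi$ then controls $E_{V_j,\widetilde{\BC}_j}/E_{V_j,\BC_j}$ from below.

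\textbf{Step 3: property (6).} Apply Theorem~\ref{thm:main-L^2-estimate}(5), through Corollary~\ref{cor:main-L^2-estimate}, to $(\eta_{z_j,\rho})_\#V_j$ with $z_j\in\D_j$ converging to $z$, relative to the translated cone $(\tau_{z_j})_\#\BC_j$. This gives the Hardt--Simon integral for the translated graphs, bounded by $C\rho^{-n-2}\int_{B_\rho(z_j)}\dist^2(x,(\tau_{z_j})_\#\BC_j)\,\ext\|V_j\|$. The translated graph functions are $u_j-\pi^\perp_{P^{(i,k)}_j}(\xi_j)+o(E)$. Dividing by $E_{V_j,\BC_j}^2$ and using \eqref{E:tilde-lambda}, the left side converges by lower semicontinuity and the $C^3_{\text{loc}}$ convergence away from $\D$. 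The right side converges by the $L^2$ convergence and non-concentration.

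\textbf{Main obstacle.} I expect the delicate point to be verifying Hypothesis $(\dagger)$ after translations, rescalings and cone changes in (2)--(4). It requires $\mathcal{F}$ relative to coarser cones to remain comparable while $F$ changes only by constants. For this I plan to combine Lemma~\ref{lemma:separation}, the comparability of one- and two-sided excess from Theorem~\ref{thm:main-L^2-estimate}(1), and, when a coarser cone becomes better, a good representative from Lemma~\ref{lemma:good-representative} to rule this out by contradiction.
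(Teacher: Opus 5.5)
Your treatment of (1), (5) and (6) follows the paper: linearise the minimal surface equation; diagonalise using the uniform estimates and Lemma~\ref{lemma:non-con}; blow up Theorem~\ref{thm:main-L^2-estimate}(5) for $(\eta_{z_j,\rho})_\#V_j$ with $z_j\in\D_j$.

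\textbf{The gap in (2) and (3).} The problem is the requirement $\Theta_V(0)\geq Q$ in Hypothesis $H(\eps)$, which every generating sequence of an element of $\FB_\ell$ must satisfy. In (2) you translate by $y_j\in S(\BC_j)$ and say $\Theta_{V_j}(y_j)\geq Q$ is supplied by Corollary~\ref{cor:main-L^2-estimate} and Lemma~\ref{lemma:no-gaps}. Neither gives this.
\begin{itemize}
\item Lemma~\ref{lemma:no-gaps} applies only when $\BC_j\in\H_{n-1}$. Even then it only produces good density points within $\delta_0$ of $y$, not points on $S(\BC_j)$.
\item Corollary~\ref{cor:main-L^2-estimate}(1) only bounds $\xi=\pi^\perp_{S(\BC)}(z)$ for good density points $z$ that already exist, and typically $\xi\neq0$.
\item In Case~1 density gaps are allowed, so for $y\in S\setminus\D$ there are no good density points near $y$ at all.
\end{itemize}
In (3), $(\tau_{\kappa_j})_\#V_j$ likewise has no reason to have density $\geq Q$ at $0$.

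In fact (2) and (3) cannot hold in the generality stated. Since $0\in\D_j$, taking $z_j=0$ in \eqref{E:tilde-lambda} gives $\widetilde\lambda(0)=0$. So every element of $\FB_\ell$ has $v(0)=0$; the paper uses exactly this in Section~\ref{sec:excess-decay}. Hence $v_{y,\rho}$ with $v(y)\neq0$, and $\hat v$ with some $\widetilde\kappa^{(i,k)}\neq0$, do not lie in $\FB_\ell$.

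\textbf{What the argument does prove.} The modification argument proves a combined statement, and this is what the paper actually uses later: in Step~2 of the proof of Theorem~\ref{thm:classification}, and in the rescalings about $0$ in Lemma~\ref{lemma:holder-start} and Theorem~\ref{thm:blow-up-reg}. For $z\in\D\cap B_{1/2}$, pick $z_j\in\D_j$ with $z_j\to z$. Blow up $(\eta_{z_j,\rho})_\#V_j$ relative to the \emph{same} cones $\BC_j$.
\begin{itemize}
\item Hypothesis $H$ holds because $\Theta_{V_j}(z_j)\geq Q$.
\item For $(\dagger)$ your scaling argument for $\mathcal{F}$ still works. The only extra input is $\dist_\H(\BC_j\res B_1,(\tau_{z_j})_\#\BC_j\res B_1)\leq CE_{V_j,\BC_j}$, from Corollary~\ref{cor:main-L^2-estimate}(1).
\item The graphs shift by $\pi^\perp_{P^{(i,k)}_j}(\xi^{(i,k)}_j)$. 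So the blow-up is the normalisation of $v(z+\rho\,\cdot)-\widetilde\lambda^{(i,k)}(z)=v(z+\rho\,\cdot)-v(z)$. This is the same computation you already do in Step~3.
\end{itemize}
Pure translations along $S$ off $\D$, or arbitrary orthogonal shifts, would require deleting $\Theta_V(0)\geq Q$ from $H(\eps)$ (cf.\ item (c) of the remark after Theorem~\ref{thm:main-L^2-estimate}).

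\textbf{The argument for (4).} Here you have the roles reversed. Hypothesis $(\dagger)$ for $\widetilde\BC_j$ needs two things:
\begin{itemize}
\item the \emph{upper} bound $F_{V_j,\widetilde\BC_j}\leq CE_{V_j,\BC_j}$, which follows from the triangle inequality, the Hausdorff hypothesis, and the graphical representation of Theorem~\ref{thm:main-L^2-estimate}(1);
\item comparability of $\mathcal{F}_{V_j,\widetilde\BC_j}$ and $\mathcal{F}_{V_j,\BC_j}$, which is where $\dist_\H\leq CE_{V_j,\BC_j}\leq C\beta_j\mathcal{F}_{V_j,\BC_j}$ enters.
\end{itemize}
A lower bound on $E_{V_j,\widetilde\BC_j}$ plays no role in $(\dagger)$. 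The hypothesis $v\not\equiv\pi$ is needed only to identify the limit. By strong $L^2$ convergence, $E_{V_j,\widetilde\BC_j}/E_{V_j,\BC_j}\to\|v-\pi\|_{L^2}>0$, so the blow-up over $\widetilde\BC_j$ is $\bar v$.
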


\begin{remark}
	In Theorem \ref{thm:blow-up-properties}(4), the linear function which is subtracted from the $v^{(i,k,s)}$ must be the same for fixed $i,k$; this corresponds to \emph{not} splitting the multiplicity of the (half-)hyperplanes in the cones along the blow-up sequence.
\end{remark}

\begin{proof}
	Property (1) is immediate from the blow-up procedure, as one is linearising the minimal surface equation. Properties (2) and (3) follow from modifying the blow-up procedure by an appropriate translations and rescalings of the sequence $V_j$ generating it and determining how the blow-up changes, in much the same way as related works (cf.~\cite[Theorem 4.3]{BK17}, \cite[Section 12]{BKMW25}); as such we do not repeat the details. Property (4) follows by blowing $V_j$ up over the sequence $\widetilde{\BC}_j$ instead. Indeed, one can check Hypothesis $H(\tilde{\eps}_j)$ holds for $V_j$ and $\widetilde{\BC}_j$ for suitable $\tilde{\eps}_j\downarrow 0$, and Hypothesis $(\dagger)$ holds for an appropriate sequence $\tilde{\beta}_j\downarrow 0$ by using the assumption on the Hausdorff distance between $\BC_j$ and $\widetilde{\BC}_j$. Indeed, each (half-)hyperplane in $\widetilde{\BC}_j$ can be written as the graph of a linear function $\tilde{\pi}^{(i,k)}_j$ over the corresponding (half-)hyperplane in $\BC_j$ (namely, the closest one to it) and $\|\tilde{\pi}^{(i,k)}\|_{C^1}\leq CE_{V_j,\BC_j}$, and so the blow-up of $\widetilde{\BC}_j$ relative to $\BC_j$ is simply $\pi^{(i,k)}:= \lim_{j\to\infty}E_{V_j,\BC_j}^{-1}\tilde{\pi}^{(i,k)}_j$; in particular, the blow-up is independent of the last index $s$. Property (5) is a simple compactness result based on a diagonal argument and on the uniform bounds one has when blowing-up. Finally, Property (6) follows from blowing-up Theorem \ref{thm:main-L^2-estimate}(5) with $(\eta_{z_j,\rho})_\#V_j$ in place of $V_j$, where $z_j\to z$ obey $z_j\in\D_j$; one needs to verify that the assumptions of Theorem \ref{thm:main-L^2-estimate} hold for this modified sequence, but again this is by now standard (this was also needed in verifying Property (2), for instance), and we refer the reader to similar arguments in other places (e.g.~\cite[Proposition 10.5]{Wic14}).
\end{proof}

We stress that Theorem \ref{thm:blow-up-properties}(4) only allows us to subtract off linear functions arising from blow-ups of cones of the same level as the blow-up class $\mathfrak{B}_\ell$; it does not allow us to freely subtract different linear functions for a given $i,k$, and it also does not allow us to subtract off linear functions which do not arise from the blow-up of a cone. In order to study regularity properties of blow-ups in Section \ref{sec:reg}, we will need to study a situation where an element $v\in \mathfrak{B}_{\ell}$ is sufficiently close to a blow-up which arises from blowing-up a sequence of \emph{finer} cones, i.e.~cones with level $>\ell$. This is where we will use our inductive fine $\eps$-regularity theorem. Notice that for the finest class of cones, where every (half-)hyperplane has multiplicity one, Theorem \ref{thm:blow-up-properties}(4) already gives us all the desired freedom.

\section{Proof of the Main Estimates}\label{sec:main-estimates}

In this section we prove Theorem \ref{thm:main-L^2-estimate} and Corollary \ref{cor:main-L^2-estimate}. We will again assume the following inductive hypothesis:

\hspace{2em}\textbf{Hypothesis I:} Theorem \ref{thm:main-L^2-estimate} (and thus Corollary \ref{cor:main-L^2-estimate}) holds for any $\BC\in \cup^{i_0-1}_{j=1}\mathfrak{C}_j$.

As we saw in Section \ref{sec:blow-up}, Hypothesis I allows us to perform blow-ups over cones in $\cup^{i_0-1}_{j=1}\mathfrak{C}_j$.

\subsection{Concentration of good density points}\label{sec:good-density-points}

We first prove the concentration of good density points to $S(\BC)$, as claimed in Theorem \ref{thm:main-L^2-estimate}(1). Namely, we show
\begin{equation}\label{E:concentration-repeated}
	\{x\in B^{n+1}_{31/32}(0):\Theta_V(x)\geq Q\}\subset B_\tau(S(\BC)).
\end{equation}
\begin{proof}[Proof of \eqref{E:concentration-repeated}]
	Suppose for contradiction that \eqref{E:concentration-repeated} were false. Then, we could find $\tau_0>0$ and sequences $V_j,\BC_j$ which satisfy the assumptions of Theorem \ref{thm:main-L^2-estimate} for some sequences $\eps_j,\beta_j\downarrow 0$, i.e.
	\begin{itemize}
		\item Hypothesis $H(\eps_j)$ holds for $V_j$ and $\BC_j$;
		\item $F_{V_j,\BC_j}\leq\beta_j\mathcal{F}_{V_j,\BC_j}$;
	\end{itemize}
	yet we have
	$$\{x\in B^{n+1}_{31/32}(0):\Theta_{V_j}(x)\geq Q\}\not\subset B_{\tau_0}(S(\BC_j)).$$
	By passing to a subsequence, we may assume $V_j\weakly V_\infty$ for some stationary integral varifold $V_\infty$, $\dim S(\BC_j)\equiv \sfrak$ is independent of $j$, and that $S(\BC_j)$ converges to an $\sfrak$-dimensional subspace, which we denote by $S_\infty$, in Hausdorff distance in $B_1$. As a result of Hypothesis $H(\eps_j)$, we know that $\BC_j\weakly \BC^{(0)}$, and thus $S_\infty\subseteq S(\BC^{(0)})$. Also from Hypothesis $H(\eps_j)$ we know $\spt\|V_\infty\|\cap B_1 = \spt\|\BC^{(0)}\|\cap B_1$.
	
	Our contradiction assumption gives that there exist $Z_j\in \spt\|V_j\|\cap B^{n+1}_{31/32}(0)$ with $\Theta_{V_j}(Z_j)\geq Q$ yet $Z_j\not\in B_{\tau_0}(S(\BC_j))$. By passing to another subsequence, we may assume $Z_j\to Z_\infty\in B^{n+1}_{31/32}(0)\cap (S(\BC^{(0)})\setminus B_{\tau_0/2}(S_\infty))$. The fact that $Z_\infty$ must lie in $S(\BC^{(0)})$ follows from the fact that $V_\infty$ is stationary and has the same support as $\BC^{(0)}$. Indeed, if $Z_\infty\not\in S(\BC^{(0)})$, then $V_\infty$ contains a point of density $\geq Q$ away from $S(\BC^{(0)})$, which from the stationary, mass upper bound, and form of $\BC^{(0)}$ would imply that $V_\infty$ is a multiplicity $Q$ hyperplane, which contradicts the form of $\BC^{(0)}$. 
	
	Coincidentally, notice that if $\sfrak = \dim(S(\BC^{(0)}))$, then necessarily we have $S_\infty = S(\BC^{(0)})$, and so we have already reached the desired contradiction based on looking at where the limit point $Z_\infty$ lies. Thus the proof is complete in this case.
	
	We may therefore assume from now on that $s<\dim(S(\BC^{(0)}))$. In particular, $\sfrak\leq n-2$ and so $\BC_j\in\mathcal{P}$. Hence, as $\BC^{(0)}\preceq \BC_j$ (in fact $\BC^{(0)}\prec \BC_j$) we also have $\BC^{(0)}\in \P$.

	We now claim that, up to passing to a subsequence, there exist sequences $\tilde{\eps}_j,\tilde{\beta}_j\downarrow 0$ and $(\tilde{\eps}_j,\tilde{\beta}_j)$-good representatives $\widetilde{\BC}_j$ for $\mathcal{F}_{V_j,\BC_j}$, where the constant $C$ therein the definition only depends on $n,Q,\BC^{(0)}$ (this is key in \eqref{E:cone-rotation-3} to ensure separation in the blow-up we take, and is why we cannot simply apply Lemma \ref{lemma:good-representative} for an arbitrary sequence $\tilde{\beta}_j$). Indeed, the process for finding such $\widetilde{\BC}_j$ is analogous to that seen in the proof of Lemma \ref{lemma:good-representative}  (cf.~Lemma \ref{lemma:replacement}) but without specifying the constants $\eps,\beta$ therein. First, for each $j$ choose $\Dbf^{(1)}_j\prec \BC_j$ aligned such that
	$$F_{V_j,\Dbf^{(1)}_j} \leq \frac{3}{2}\mathcal{F}_{V_j,\BC_j}.$$
	If $\mathcal{F}_{V_{j^\prime},\Dbf^{(1)}_{j^\prime}}^{-1}F_{V_{j^\prime},\Dbf^{(1)}_{j^\prime}}\to 0$ for some subsequence $(j^\prime)$ of $(j)$, then we can set $\widetilde{\BC}_j:= \Dbf_j^{(1)}$ and pass to the subsequence $(j^\prime)$ and we have found the claimed sequence $(\widetilde{\BC}_j)_j$. Otherwise, for all sufficiently large $j$ we have
	$$F_{V_j,\Dbf_j^{(1)}} \geq C\mathcal{F}_{V_j,\Dbf_j^{(1)}}$$
	for some fixed $C = C(n,Q,\BC^{(0)})$. Now, we repeat: given $\Dbf_j^{(p-1)}$ for some $p\geq 2$, if $\Dbf_j^{(p-1)}$ is not the same level as $\BC^{(0)}$, we choose $\Dbf_j^{(p)}\prec \Dbf_j^{(p-1)}$ aligned such that
	$$F_{V_j,\Dbf_j^{(p)}} \leq \frac{3}{2}\mathcal{F}_{V_j,\Dbf_j^{(p-1)}}.$$
    Then if $\mathcal{F}_{V_{j^\prime},\Dbf_{j^\prime}^{(p)}}^{-1}F_{V_{j^\prime},\Dbf_{j^\prime}^{(p)}}\to 0$ for some subsequence $(j^\prime)$ of $(j)$, then we can pass to the subsequence $(j^\prime)$ and set $\widetilde{\BC}_j:= \Dbf_j^{(p)}$; notice that we must have $F_{V_j,\widetilde{\BC}_j} \leq C\mathcal{F}_{V_j,\BC_j}$ for some constant $C = C(n,Q,\BC^{(0)})$ by construction. Otherwise, we must have for all $j$ sufficiently large,
	$$F_{V_j,\Dbf_j^{(p)}}\geq C\mathcal{F}_{V_j,\Dbf_j^{(p)}}$$
	for some fixed $C = C(n,Q,\BC^{(0)})\in (0,\infty)$. We repeat this process until either $\Dbf_j^{(p)}$ is the same level as $\BC^{(0)}$ or $\mathcal{F}_{V_{j^\prime},\Dbf_{j^\prime}^{(p)}}F_{V_{j^\prime},\Dbf_{j^\prime}^{(p)}}\to 0$ for some subsequence $(j^\prime)$; in the latter case we have already defined $\widetilde{\BC}_j$, and in the form case we can simply set $\widetilde{\BC}_j:= \Dbf_j^{(p)}$. In either case, $\widetilde{\BC}_j$ is the claimed $(\tilde{\eps}_j,\tilde{\beta}_j)$-replacement of $\BC_j$ for $V_j$.

	Passing to another subsequence, we may assume $\widetilde{\BC}_j\in \mathcal{P}_{\tilde{\sfrak}}(\tilde{\pfrak})$ for suitable $\tilde{\sfrak},\tilde{\pfrak}$ independent of $j$. Note that if $\widetilde{\sfrak} = \sfrak$, the proof of this claim is in fact complete by applying its inductive validity to (suitable rotations of) $V_j$ and $\widetilde{\BC}_j$, so we can assume $\tilde{\sfrak}>\sfrak$, and so $S(\BC_j)\subsetneq S(\widetilde{\BC}_j)$. We may also assume that $S(\widetilde{\BC}_j)\to \widetilde{S}$, where $S_\infty\subsetneq\widetilde{S}\subseteq S(\BC^{(0)})$.
	
	Next we modify the sequence of cones $\widetilde{\BC}_j$ by applying rotations to ensure that the $Z_j$ are actually contained with $S(\widetilde{\BC}_j)$. Indeed, for each $j$ choose a rotation $R_j:\R^{n+1}\to \R^{n+1}$ which minimises $\|R-\id\|$ over all such rotations $R$ satisfying
	$$R|_{S(\BC_j)} = \id_{S(\BC_j)} \quad \text{and}\quad R(Z_j)\in S(\widetilde{\BC}_j).$$
	Our aim now is to construct a blow-up of $(R_j)_\#V_j$ relative to $\widetilde{\BC}_j$, as described in Section \ref{sec:blow-up}. Notice that $\widetilde{\BC}_j$ is coarser than $\BC_j$, and so we are in the realm of our inductive assumption (Hypothesis I).
	
	First, note that by elementary geometry (cf.~\cite[(6.12)--(6.16), (3.4)--(3.5)]{BK17}) we have
	\begin{align*}
		\dist_\H(\widetilde{\BC}_j\res B_1, (R_j)_\#\widetilde{\BC}_j \res B_1) & \leq C\dist_\H(\widetilde{\BC}_j\res B_1, (\tau_{Z_j})_\#\widetilde{\BC}_j\res B_1)\\
		& \leq C\max_{1\leq i \leq \pfrak_0 }\big\{|\pi_{P^{(i)}}^\perp(\xi_j)| + E_{V_j,\BC^{(0)}}|\pi_{P^{(i)}}(\xi_j)|\big\},
	\end{align*}
	where $\xi_j:= \pi^\perp_{S(\widetilde{\BC}_j)}(Z_j)$ and $C = C(n,Q,\BC^{(0)})>0$. Note also that $V_j,\widetilde{\BC}_j$ satisfy the assumptions of Theorem \ref{thm:main-L^2-estimate}, and thus Corollary \ref{cor:main-L^2-estimate}, with $\tilde{\eps}_j$ and $\tilde{\beta}_j$ replacing $\eps_0,\beta_0$ therein, and our inductive assumption therefore allows us to apply them. In particular, we may use Corollary \ref{cor:main-L^2-estimate}(1) to get
	\begin{equation}\label{E:cone-rotation-1}
		\dist_\H(\widetilde{\BC}_j\res B_1,(R_j)_\#\widetilde{\BC}_j\res B_1)\leq CE_{V_j,\widetilde{\BC}_j}.
	\end{equation}
	In particular, we know that $(R_j)_\#\widetilde{\BC}_j\weakly \BC^{(0)}$.
	
	We also now choose a second rotation $\Gamma_j$ which aligns $S(\widetilde{\BC}_j)$ with $S(\BC^{(0)})$. Indeed, choose a rotation $\Gamma_j:\R^{n+1}\to \R^{n+1}$ which minimises $\|\Gamma-\id\|$ over all rotations $\Gamma$ satisfying
	$$\Gamma|_{S(\BC_j)}=\id_{S(\BC_j)} \quad \text{and} \quad S(\Gamma_\#\widetilde{\BC}_j) \subset S(\BC^{(0)}).$$
	We now blow-up $(\Gamma_j\circ R_j)_\#V_j$ off $(\Gamma_j)_\#\widetilde{\BC}_j$. Of course, one must first verify that these sequences obey the necessary assumptions from Section \ref{sec:first-blow-up}; this is done analogously to similar situations, using the inductive information provided by Theorem \ref{thm:main-L^2-estimate} and Corollary \ref{cor:main-L^2-estimate} (cf.~\cite[Step 3 in Proof of Theorem 7.3]{BKMW25}). For simplicity of notation (as there is no significant change here in the argument if we rotate $\BC^{(0)}$ by a small amount) let us suppose $\Gamma_j = \id$, and so we drop it from our notation.
	
	To recall the notation from the blow-up procedure in Section \ref{sec:first-blow-up}, write $\widetilde{\BC}_j:= \sum^{\pfrak_0}_{i=1}\sum^{\pfrak_i}_{k=1}q_{i,k}\widetilde{P}_j^{(i,k)}$, where $\widetilde{P}_j^{(i,k)}$ are distinct planes with the closest plane in $\BC^{(0)}$ to $\widetilde{P}^{(i,k)}$ is $P^{(i)}$. As we may apply Theorem \ref{thm:main-L^2-estimate} to $V_j$ and $\widetilde{\BC}_j$, taking $\tau_j\downarrow 0$ sufficiently slowly, up to a slight change of domain we can represent $(R_j)_\#V_j$ as graphs over $\widetilde{\BC}_j$. Thus, for all $j$ sufficiently large, we have
	$$(R_j)_\#V_j\res (B^{n+1}_{63/64}(0)\setminus B_{\tau_j}(S(\widetilde{\BC}_j))) = \sum^{\pfrak_0}_{i=1}\sum^{\pfrak_i}_{k=1}\sum^{q_{i,k}}_{s=1}|\graph(\widetilde{u}_j^{(i,k,s)})|,$$
	where $\widetilde{u}_j^{(i,k,s)}:\widetilde{P}_j^{(i,k)}\cap (B^{n+1}_{63/64}(0)\setminus B_{\tau_j/2}(S(\widetilde{\BC}_j)))\to (\widetilde{P}_j^{(i,k)})^\perp$, and for any $\tau>2\tau_j$ satisfies the estimate
	$$\|\widetilde{u}_j^{(i,k,s)}\|_{C^4(B^{n+1}_{125/128}(0)\setminus B_{\tau}(S(\widetilde{\BC}_j)))}\leq C(n,Q,\BC^{(0)},\tau)E_{V_j,\widetilde{\BC}_j}.$$
	We may therefore construct a blow-up via the limit of $v_j^{(i,k,s)}:= E_{V_j,\widetilde{\BC}_j}^{-1}\widetilde{u}_j^{(i,k,s)}$, namely
	$$v_\infty^{(i,k,s)} := \lim_{j\to\infty}v_j^{(i,k,s)},$$
	where the convergence is in $C^3_{\text{loc}}$ away from $S(\widetilde{\BC}_j)$ and strongly in $L^2$ by Lemma \ref{lemma:non-con}. 
	Now we wish to blow-up $(R_j)_\#\BC_j$ off $\widetilde{\BC}_j$ in the same manner. Indeed, we may write $(R_j)_\#\BC_j = \sum^{\pfrak_0}_{i=1}\sum^{\pfrak_i}_{k=1}\sum^{\pfrak_{i,k}}_{s=1}q_{i,k,s}|R_j(P^{(i,k,s)}_j)|$, where each $P_j^{(i,k,s)}$ is distinct and has its closest plane in $\widetilde{\BC}_j$ being $\widetilde{P}_j^{(i,k)}$; in particular, $\lim_{j\to\infty}R_j(P_j^{(i,k,s)}) = P^{(i)}$. Now write $\ell_j^{(i,k,s)}:\widetilde{P}_j^{(i,k)}\to (\widetilde{P}_j^{(i,k)})^\perp$ for the linear function whose graph coincides with $P_j^{(i,k,s)}$, and define the corresponding rescaled function by
	$$\overline{\ell}_j^{(i,k,s)}:= E_{V_j,\widetilde{\BC}_j}^{-1}\ell_j^{(i,k,s)}.$$
	We would like to now show that $\overline{\ell}_j^{(i,k,s)}$ is bounded, so we may pass it to a limit. This follows because the bound $F_{V_j,\BC_j}\leq \beta_j \mathcal{F}_{V_j,\BC_j}$ implies
	\begin{align}
    \sum^{\pfrak_0}_{i=1} \sum^{\pfrak_i}_{k=1} &\sum^{q_{i,k}}_{s=1}\int_{\widetilde{P}_j^{(i,k)}\cap B_{31/32}\setminus B_{1/16}(S(\BC^{(0)}))} \min_{1\leq s^\prime\leq \pfrak_{i,k}}\inf_{y\in \widetilde{P}_j^{(i,k)}}\big|v_j^{(i,k,s)}(x)-\overline{\ell}_j^{(i,k,s^\prime)}(y)\big|^2\, \ext x\nonumber\\
    & + \sum_{i=1}^{\pfrak_0}\sum_{k=1}^{\pfrak_i}\sum_{s=1}^{\pfrak_{i,k}}\int_{\widetilde{P}_j^{(i,k)}\cap B_{1/2}\setminus B_{1/16}(S(\BC^{(0)}))}\min_{1\leq s^\prime\leq q_{i,k}}\inf_{y\in \widetilde{P}_j^{(i,k)}}\big|\overline{\ell}_j^{(i,k,s)}(x) -v_j^{(i,k,s^\prime)}(y)\big|^2\, \ext x \leq C\beta_j^2.\label{E:cone-rotation-2}
\end{align}
Since each $v_j^{(i,k,s)}$ is bounded in $C^4$ independently of $j$ on $\widetilde{P}_j^{(i,k)}\cap B_{1/2}\setminus B_{1/16}(S(\BC^{(0)}))$, this shows that $\overline{\ell}_j^{(i,k,s)}$ is bounded uniformly in $L^2$ on all $\widetilde{P}_{j}^{(i,k)}\cap B_1$, which as $\overline{\ell}_j^{(i,k,s)}$ is linear is sufficient to show that $\overline{\ell}_j^{(i,k,s)}$ is bounded. Thus, each linear function $\overline{\ell}_j^{(i,k,s)}$ converges to a finite, linear, limit, i.e.~we have
$$\|\ell_j^{(i,k,s)]}\|_{C^1}\leq CE_{V_j,\widetilde{\BC}_j} \qquad \text{and}\qquad \lim_{j\to\infty}\overline{\ell}_j^{(i,k,s)} = \overline{\ell}_\infty^{(i,k,s)}$$
where the convergence is, say, in $C^1$ on $B_1$.
For the limit of the first integral in \eqref{E:cone-rotation-2} to be zero, we must have that for each $x\in P^{(i)}\cap B_{31/32}\setminus B_{1/16}(S(\BC^{(0)}))$ there exists $s^\prime = s^\prime(x,s)$ which satisfies
$$v^{(i,k,s)}_\infty(x) = \overline{\ell}_\infty^{(i,k,s^\prime(x,s))}(x).$$
However, as all the $\overline{\ell}_\infty^{(i,k,s)}$ are linear functions and $v^{(i,k,s)}_\infty$ are $C^1$ away from $\widetilde{S}$, we must have that $s^\prime(x,s)$ can be taken independent of $x$, i.e.~there is $s^\prime = s^\prime(s)$ for which
$$v_\infty^{(i,k,s)} \equiv \overline{\ell}_\infty^{(i,k,s^\prime(s))}.$$
Thus, as the $v_\infty^{(i,k,s)}$ are harmonic, unique continuation tells us that the same holds in all of $P^{(i)}\cap B_1$, and thus the $v_\infty^{(i,k,s)}$ are linear. With this in mind, we now investigate the information contained within the limit of the second integral of \eqref{E:cone-rotation-2} as $j\to\infty$. Indeed, as both $\overline{\ell}_j^{(i,k,s)}$ and $v_j^{(i,k,s^\prime)}$ are converging in (say) $C^3$ to linear limits, it follows that for each $(i,k,s)$ there exists $s^{\prime\prime} = s^{\prime\prime}(s)$ such that
$$\ell_\infty^{(i,k,s)} \equiv v_\infty^{(i,k,s^{\prime\prime}(s))}.$$
In particular, each plane in $\BC_j$ blows up to the same limit as one of the functions defining $V_j$.

Arguing analogously to that in Lemma \ref{lemma:separation}, except for the two-sided excess, and using that both $F_{V_j,\widetilde{\BC}_j}\leq C\mathcal{F}_{V_j,\BC_j}$ by construction and by Theorem \ref{thm:main-L^2-estimate} we know $F_{V_j,\widetilde{\BC}_j} \leq CE_{V_j,\widetilde{\BC}_j}$, we have for all sufficiently large $j$,
\begin{equation}\label{E:cone-rotation-3}
\|\ell_j^{(i,k,s)}-\ell_j^{(i,k,s^\prime)}\|_{C^1(B_1)}\geq cE_{V_j,\widetilde{\BC}_j} \qquad \text{for any }s\neq s^\prime
\end{equation}
and thus we see that $\overline{\ell}_\infty^{(i,k,s)}\not\equiv \overline{\ell}_\infty^{(i,k,s^\prime)}$ for any $s\neq s^\prime$. This means that the number of distinct planes which the blow-up creates is the same as the number of distinct planes in $\BC_j$. Furthermore, by the Hardt--Simon inequality for blow-ups and because $Z_j\in S(\widetilde{\BC}_j)$ for all $j$ (see Theorem \ref{thm:blow-up-properties}), we necessarily have
$$\overline{\ell}^{(i,k,s)}_\infty \equiv 0 \quad \text{on }S_\infty\cup \{Z_\infty\} \quad \text{for all }i,k,s.$$
As $\overline{\ell}^{(i,k,s)}_\infty$ is linear and by construction $Z_\infty\not\in S_\infty$, this implies that $\overline{\ell}^{(i,k,s)}_\infty$ vanishes on a subspace of dimension $>\sfrak$. Now construct a new cone based on this blow-up by defining $\widehat{P}_j^{(i,k,s)}:= \graph(E_{V_j,\widetilde{\BC}_j}(\overline{\ell}^{(i,k,s)}_\infty + \widetilde{L}_j^{(i,k)}))$ and
$$\widehat{\BC}_j := \sum^{\pfrak_0}_{i=1}\sum^{\pfrak_i}_{k=1}\sum^{\pfrak_{i,k}}_{s=1}q_{i,k,s}|\widehat{P}_j^{(i,k,s)}|.$$
Then $\widehat{\BC}_j\prec \BC_j$ are aligned (indeed, being coarser is seen from the increase in spine dimension), and by construction we have
$$F_{V_j,\widehat{\BC}_j}/E_{V_j,\widetilde{\BC}_j} \to 0 \qquad \text{as }j\to\infty$$
(one can argue analogously to \cite[Proof of Theorem 7.3, Step 5]{BKMW25} to get $L^2$ convergence on all of $B_1$ during the blow-up process). Since $E_{V_j,\widetilde{\BC}_j} \leq F_{V_j,\widetilde{\BC}_j} \leq C\mathcal{F}_{V_j,\BC_j} \leq CF_{V_j,\widehat{\BC}_j}$ gives a contradiction. This completes the proof.
\end{proof}

\subsection{Graphical representation}\label{sec:graphical-rep-proof}

Next we prove the second half of Theorem \ref{thm:main-L^2-estimate}(1) which claims the relevant graphical representation with estimates in terms of the excess $E_{V,\BC}$. The additional complication with this step compared to previous works is that $\BC$ need not be cylindrical, i.e.~the planes in $\BC$ can intersect away from $S(\BC)$. This requires the use of both our $\eps$-regularity theorems in gaps (as then the singular set is small), namely near planes with multiplicity and unions of hyperplanes with density $<Q$ (Theorem \ref{thm:bellettini} and Theorem \ref{thm:HLW} respectively).

Here, we write $(x,y)$ for the coordinates in the natural decomposition $S(\BC^{(0)})^\perp\times S(\BC^{(0)})\cong \R^{n+1}$. As a warm-up, we first prove a weaker version of the result by taking functions over $\BC^{(0)}$ to illustrate how one can deal with singularities away from the spine of the cone, i.e.~the specific case of the result when $\BC = \BC^{(0)}$.
\begin{lemma}[Coarse Representation]\label{lemma:coarse-rep}
	Fix $\tau_0\in (0,1)$. Then, there exist constants $\eps_0,\beta_0$ only depending on $n,Q,\BC^{(0)},\tau_0$ such that the following holds. Suppose $V\in \V_Q$ and $\BC\in \CC$ satisfy the assumptions of Theorem \ref{thm:main-L^2-estimate} with $\eps_0,\beta_0$ therein, and that $\BC = \BC^{(0)}$.
	
	Then, for each $1\leq i\leq \pfrak_0$ and $1\leq k\leq q_i$, there exist smooth functions $u^{(i,k)}:\Omega^{(i)}\cap B^{n+1}_{63/64}(0)\setminus B_{\tau_0}(S(\BC^{(0)}))\to (P^{(i)})^\perp$ which solve the minimal surface equation, where $\Omega^{(i)} = P^{(i)}$ if $\BC^{(0)}\in \P$ and $\Omega^{(i)} = H^{(i)}$ if $\BC^{(0)}\in \H_{n-1}$, such that
	\begin{itemize}
		\item $\displaystyle V\res (B^{n+1}_{63/64}(0)\setminus B_{\tau_0}(S(\BC^{(0)}))) = \sum^{\pfrak_0}_{i=1}\sum^{q_i}_{k=1}|\graph(u^{(i,k)})|\res B^{n+1}_{63/64}(0);$
		\item $\|u^{(i,k)}\|_{C^4}\leq CE_{V,\BC^{(0)}}$, where $C = C(n,Q,\BC^{(0)},\tau_0)>0$.
	\end{itemize}
	Furthermore, the analogous conclusion holds on a ball $B_{4\tau_0}(y)\subset \{\Theta_V<Q\}$ with $y\in S(\BC^{(0)})$.
\end{lemma}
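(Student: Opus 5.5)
The plan is a covering argument: away from $S(\BC^{(0)})$, every point of $\spt\|V\|$ sees either a single hyperplane (possibly with multiplicity) or a finite union of hyperplanes meeting transversely. On small balls we produce graphs with estimates from the appropriate $\eps$-regularity theorem, then glue them into global sheets.

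First, choose $\eps_0=\eps_0(n,Q,\BC^{(0)},\tau_0)$ small. Then hypothesis (ii) of Theorem~\ref{thm:main-L^2-estimate} makes $V$ close as varifolds to $\BC^{(0)}$ in $B_2$. By upper semi-continuity of density, $\Theta_V<Q$ on a neighbourhood of $\overline{B_{63/64}}\setminus B_{\tau_0/2}(S(\BC^{(0)}))$; here we use that each $q_i<Q$ and that points of $\spt\|\BC^{(0)}\|$ off the spine have density at most $\max q_i$, or, where distinct $P^{(i)}$ cross off the spine (case (I)), at most $\sum q_i$ over the planes through that point, which is $<Q$ off $S(\BC^{(0)})$. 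Lemma~\ref{lemma:gaps} then gives $\dim_\H(\sing_*(V))\le n-7$ in this region, and $(\S2)_Q$ gives two-sidedness and stability of the immersed part.

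Second, cover the annular region by balls $B_{r}(x)$ with $x\in \spt\|\BC^{(0)}\|\setminus B_{\tau_0/2}(S(\BC^{(0)}))$, with $r=r(\tau_0,\BC^{(0)})$ small. There are two types of ball.

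\begin{itemize}
\item If $x$ lies on a single plane $P^{(i)}$ (and $B_{2r}(x)$ meets no other plane), then $V\res B_{2r}(x)$ is close to $q_i|P^{(i)}|$ with density ratio below $q_i+\tfrac14\le Q+\tfrac14$. Applying $(\S3\text{b})$, i.e.\ Theorem~\ref{thm:bellettini}, after rescaling gives $q_i$ minimal graphs over $P^{(i)}$ with $C^4$ norm controlled by the local height excess.
\item If $x$ lies on an off-spine intersection of several $P^{(i)}$, which is possible only in case (I), then the tangent cone of $\BC^{(0)}$ at $x$ is a sum of hyperplanes with density $<Q$. The rescaled $V$ is Hausdorff-close to it, so Theorem~\ref{thm:HLW} applies. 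It yields graphs over each plane through $x$ with $C^4$ bounds by $C E_{V,\BC^{(0)}}$, and with at least one graph over each plane. The varifold closeness pins the count over each plane to exactly $q_i$.
\end{itemize}

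In both types, the local excess is bounded by $C(\tau_0)E_{V,\BC^{(0)}}$, since $\dist(\cdot,\text{tangent cone at }x)=\dist(\cdot,\BC^{(0)})$ near $x$.

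Third, glue the local sheets. On overlaps, graphs over the same $P^{(i)}$ must agree by uniqueness of the decomposition: smooth sheets, and immersed points determined by the varifold. So we obtain $q_i$ globally defined functions over $\Omega^{(i)}\cap B_{63/64}\setminus B_{\tau_0}(S(\BC^{(0)}))$. Ordering the sheets consistently uses that the domain is connected, or more simply that each sheet is a connected component of the graphical piece. The estimate $\|u^{(i,k)}\|_{C^4}\le CE_{V,\BC^{(0)}}$ follows from the local estimates. The final claim, on $B_{4\tau_0}(y)\subset\{\Theta_V<Q\}$, is the same argument with the HLW step applied directly at $y$, after rescaling.

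The main obstacle is the second type of ball: checking that Theorem~\ref{thm:HLW} applies with uniform constants. This requires that $\eps$ depend only on the finitely many tangent cones of $\BC^{(0)}$ off the spine, handled by compactness of the annular region, and that the multiplicities over each plane come out to exactly $q_i$. For the latter I would use the mass comparison given by the varifold distance $d$.
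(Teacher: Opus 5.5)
Your local analysis matches the paper's proof. You get density $<Q$ off $S(\BC^{(0)})$ by upper semicontinuity; the paper cites \eqref{E:concentration-repeated}, which for $\BC=\BC^{(0)}$ amounts to the same argument. You then use Lemma~\ref{lemma:gaps} and $(\S2)_Q$, followed by Theorem~\ref{thm:bellettini} or Theorem~\ref{thm:HLW} on balls where $\BC^{(0)}$ agrees with its tangent cone at the centre, with local excess at most $C(\tau_0)E_{V,\BC^{(0)}}$. Applying Theorem~\ref{thm:HLW} directly on $B_{4\tau_0}(y)$ for the last claim is fine. One small repair: your two types of ball miss points of a single plane that lie close to an off-spine intersection of planes. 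Use balls $B_{\rho(x)/2}(x)$ instead, where $\rho(x)$ is the largest radius on which $\BC^{(0)}$ equals its tangent cone at $x$, and take a finite subcover of the compact region.

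The genuine gap is the gluing. On a connected overlap the local decompositions agree up to a permutation of sheets, by unique continuation for the minimal surface equation. However, connectedness of the domain does not make the labelling globally consistent, and a single sheet need not be a connected component of the graphical piece. What you actually need is that the $q_i$-sheeted covering formed by the local sheets over $D_i:=\Omega^{(i)}\cap B_{63/64}\setminus B_{\tau_0}(S(\BC^{(0)}))$ is trivial. This holds when $D_i$ is simply connected, i.e.\ when $\dim S(\BC^{(0)})\neq n-2$, or when $q_i=1$. When $\dim S(\BC^{(0)})=n-2$, however, $\pi_1(D_i)=\Z$ is generated by a loop around the spine, and no $\eps$-regularity theorem on small balls controls the monodromy along that loop.

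This really occurs in $\V_Q$. Write $\R^{n+1}=\R^3\times\R^{n-2}$, and let $\Sigma\subset\R^3$ be a minimal surface with an order-one branch point at $0$, so that near $0$ it is a two-valued graph $\approx\mathrm{Re}(w^{3/2})$ over $\{x_3=0\}$. Then $V_r=(\eta_{0,r})_\#|\Sigma\times\R^{n-2}|+|\{x_1=0\}|+|\{x_2=0\}|$ is, for small $r$, close to $\BC^{(0)}=2|\{x_3=0\}|+|\{x_1=0\}|+|\{x_2=0\}|$, with $Q=4$ and $\dim S(\BC^{(0)})=n-2$. Its properties are as follows.
\begin{itemize}
\item Its non-immersed set is exactly $S=\{0\}\times\R^{n-2}$, where $\Theta_{V_r}=4$.
\item Off $S$ it is a two-sided stable immersion.
\item Through every point of $S$ pass two transverse planes, so for small $\eps_*$ no ball on which $(\S3\text{b})$ applies reaches $S$.
\end{itemize}
Thus $V_r$ satisfies all the hypotheses, yet its two sheets over $\{x_3=0\}$ are exchanged on going around $S$, so single-valued $u^{(1,1)},u^{(1,2)}$ on $D_1$ do not exist.

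The paper's ``simple covering argument'' passes over exactly this point, so neither argument closes as written. An extra ingredient is needed to rule out the monodromy. One option is a ball $B_{4\tau_0}(y)\subset\{\Theta_V<Q\}$ with $y\in S(\BC^{(0)})$: there Theorem~\ref{thm:HLW} gives single-valued sheets over a disc spanning the loop. In the application, another option is some argument exploiting $\H^{n-2}(\sing_*(V))=0$ near the spine.
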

For the ``Furthermore'' part of the above lemma, one should refer to the corresponding statement in Theorem \ref{thm:main-L^2-estimate} for the analogous claim.

\begin{proof}
	For notational simplicity, set $U:= B^{n+1}_{127/128}(0)\setminus B_{\tau_0/2}(S(\BC^{(0)}))$. First note that if $\dim(S(\BC^{(0)}))=n-1$ then the result is simple, as then all planes in $\BC^{(0)}$ are disjoint in $U$. The result then follows simply from Theorem \ref{thm:bellettini} and a short compactness argument (or, more precisely, from $(\S3)_Q$, as necessarily $q_i<Q$ for all $i$).
	
	Thus we may assume that $\dim(S(\BC^{(0)}))\leq n-2$, which implies $\BC^{(0)}\in \mathcal{P}$. The additional difficulty here is that the planes in $\BC^{(0)}$ intersect away from $S(\BC^{(0)})$. However, we have already shown \eqref{E:concentration-repeated}, from which it follows that $U\subset\{\Theta_V<Q\}$ if $\eps_0,\beta_0$ are sufficiently small. Thus by Lemma \ref{lemma:gaps}, we have $\dim_\H(\sing_*(V)\cap U)\leq n-7$.
	
	We would now like to apply the $\eps$-regularity theorems (Theorem \ref{thm:bellettini} and Theorem \ref{thm:HLW}) in $U$. For this purpose, it is convenient to define the function $\rho:\spt\|\BC^{(0)}\|\to \R_{> 0}\cup \{+\infty\}$ by
	$$\rho(x):=\sup\{r>0:(\eta_{x,r})_\#\BC^{(0)}\res B_1 = T_x\BC^{(0)}\res B_1\};$$
	i.e.~$\rho(x)$ is the maximum radius $\BC^{(0)}\res B_{\rho(x)}(x)$ is a sum of affine hyperplanes which each contain $x$. Observe that $\rho$ is homogeneous of degree one, $\rho(x) = +\infty$ if and only if $x\in S(\BC^{(0)})$, and, for any $\tau>0$, there is a constant $\rho_0 = \rho_0(\BC^{(0)},\tau)>0$ such that
	$$\rho|_{B_1\setminus B_\tau(S(\BC^{(0)}))} \in [\rho_0, 1].$$
	By containment of sets we have $E_{V,\BC^{(0)}}(B_{\rho_0}(x)) \leq \rho_{0}^{-n-2}E_{V,\BC^{(0)}}$ for any $B_\rho(x)\subset B_1(0)$. Thus, provided $\eps_0$ is small enough, we may apply either Theorem \ref{thm:bellettini} or Theorem \ref{thm:HLW} in each ball $B_\rho(x)$ with $x\in \spt\|\BC^{(0)}\|\cap U\setminus B_{\tau_0}(S(\BC^{(0)}))$, with $\rho = \min\{\rho_0(\BC^{(0)},\tau_0),1/256\}$. Applying a simple covering argument then gives the conclusion of the lemma on $B^{n+1}_{63/64}(0)\setminus B_{\tau_0}(S(\BC^{(0)}))$. Note that since the $u_i$ are minimal, to prove the desired estimate it suffices to prove it on some open ball with a lower bound on its diameter, with the global estimate depending on the diameter lower bound, which here we can ensure is uniformly large (this follows from doubling estimates and unique continuation). Indeed, since $\sing(\BC^{(0)})$ is a codimension set set contained within a finite number of $(n-1)$-dimensional subspaces, the claimed estimate holds away from a tubular neighbourhood of $\sing(\BC^{(0)})$, as there we have $\dist((x,u^{(i,k)}(x)),\BC^{(0)}) = |u^{(i,k)}|(x)$.
	
	Finally, we need to extend the conclusion of the lemma to any $B^{n+1}_{2\tau_0}(y)$ for any $y\in S(\BC^{(0)})$ with $B^{n+1}_{4\tau_0}(y)\subset\{\Theta_V<Q\}$. Notice that we can still apply Theorem \ref{thm:HLW} on such balls, such we still know the non-immersed singular set is codimension $\geq 7$ here by Lemma \ref{lemma:gaps}. To check the estimate, one notes that each $u^{(i,k)}$ obtained here must coincide on a large set with one of the functions defined on $B^{n+1}_{63/64}(0)\setminus B_{\tau_0/2}(S(\BC^{(0)}))$ obtained above, and so must inherit the estimate via doubling estimates for the minimal surface equation. This completes the proof.
\end{proof}

We now complete the proof of Theorem \ref{thm:main-L^2-estimate}(1) by proving the following.

\begin{lemma}\label{lemma:fine-rep}
	Fix $\tau_0\in (0,1)$. Then, there exist constants $\eps_0,\beta_0\in (0,1)$ which depend only on $n,Q,\BC^{(0)},\tau_0$ such that the following holds. Suppose $V\in \V_Q$ and $\BC\in \mathfrak{C}_{i_0}$ satisfy the assumptions of Theorem \ref{thm:main-L^2-estimate} with $\eps_0,\beta_0$. Then, up to changing $\BC$ to a different cone with the same support, the following is true. We may write
	$$\BC = \sum^{\pfrak_0}_{i=1}\sum^{\pfrak_i}_{k=1}q_{i,k}|P^{(i,k)}|,$$
	where for each $i$ we have that $P^{(i)}$ is the closest plane in $\BC^{(0)}$ to $P^{(i,k)}$, and furthermore there exist smooth functions $u^{(i,k,s)}:P^{(i,k)}\cap B^{n+1}_{31/32}(0)\setminus B_{\tau_0}(S(\BC))\to (P^{(i,k)})^\perp$ which are minimal and such that
	\begin{itemize}
		\item $\displaystyle V\res (B^{n+1}_{31/32}(0)\setminus B_{\tau_0}(S(\BC))) = \sum^{\pfrak_0}_{i=1}\sum^{\pfrak_i}_{k=1}\sum^{q_{i,k}}_{s=1}|\graph(u^{(i,k,s)})|\res (B^{n+1}_{31/32}(0)\setminus B_{\tau_0}(S(\BC)))$;
		\item $\|u^{(i,k,s)}\|_{C^4}\leq CE_{V,\BC}$, where $C = C(n,Q,\BC^{(0)},\tau_0)>0$.
	\end{itemize}
	Furthermore, the analogous conclusion holds on a ball $B_{4\tau_0}(y)\subset \{\Theta_V<Q\}$ with $y\in S(\BC)$.
\end{lemma}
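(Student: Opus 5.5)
The plan is to bootstrap from Lemma \ref{lemma:coarse-rep}, then refine the graphs over $\BC^{(0)}$ into graphs over $\BC$, using $(\dagger)$ to decide which sheet sits over which plane of $\BC$.

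\textbf{Step 1 (coarse graphs).} The concentration statement \eqref{E:concentration-repeated} gives $\Theta_V<Q$ on $B_{31/32}\setminus B_{\tau_0/2}(S(\BC))$. Lemma \ref{lemma:gaps} then says $\sing_*(V)$ has dimension $\le n-7$ there. Covering by small balls and applying Theorem \ref{thm:bellettini} or Theorem \ref{thm:HLW} exactly as in the proof of Lemma \ref{lemma:coarse-rep}, with the radius function $\rho$ now defined relative to $\BC$, we get that away from $B_{\tau_0}(S(\BC))$ the varifold $V$ is a sum of $Q$ smooth minimal graphs $w^{(i,t)}$ over the planes $P^{(i)}$. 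Here $t=1,\dots,q_i$, and the bound is $\|w^{(i,t)}\|_{C^4}\le CE_{V,\BC^{(0)}}$. This uses $S(\BC)\subseteq S(\BC^{(0)})$, so that $\BC$ and $\BC^{(0)}$ are uniformly close away from $S(\BC)$. Each plane $P^{(i,k)}$ of $\BC$ is itself a graph over $P^{(i)}$ of a linear function $\ell^{(i,k)}$, so each $w^{(i,t)}-\ell^{(i,k)}$ re-expresses that sheet as a graph over $P^{(i,k)}$ with $C^4$ norm $\lesssim E_{V,\BC^{(0)}}$. What remains is to assign each sheet to one plane $P^{(i,k)}$ and to improve the estimate to $CE_{V,\BC}$.

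\textbf{Step 2 (assignment).} For fixed $i$, I would use Lemma \ref{lemma:separation}, and its two-sided analogue valid under $(\dagger)$. It gives that the distinct planes $P^{(i,k)}$, $k=1,\dots,\pfrak_i$, are pairwise separated by at least $c\mathcal{F}_{V,\BC}$, which is much larger than $\beta_0^{-1}F_{V,\BC}\ge \beta_0^{-1}E_{V,\BC}$. On a fixed annular region $\Omega\subset P^{(i)}\setminus B_{1/16}(S(\BC^{(0)}))$ of definite size, the quantity $\int\dist^2(x,\BC)\,\ext\|V\|\le E_{V,\BC}^2$ forces each sheet $w^{(i,t)}$ to lie within $\beta_0\mathcal{F}_{V,\BC}$ of some $\ell^{(i,k)}$ in $L^2$. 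Interior estimates for the minimal surface equation, applied to the difference $w^{(i,t)}-\ell^{(i,k)}$, upgrade this to a pointwise statement. Since the separation is at least $c\mathcal{F}$, the index $k=k(t)$ is then uniquely determined. It is also constant in $x$ by continuity and connectedness, using that the sheets are ordered and do not cross away from the singular intersections of $\BC^{(0)}$.

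\textbf{Step 3 (multiplicities).} Now use the second term of $F_{V,\BC}$, the one integrating $\dist^2(x,V)$ over $\BC$. It shows that every $P^{(i,k)}$ receives at least one sheet. Counting gives $\sum_k \#k^{-1}(\cdot)=q_i$, and we redefine $q_{i,k}:=\#\{t:k(t)=k\}$. This is the permitted change of $\BC$ to a cone with the same support. The new cone may lie in a different family with the same support, but it keeps the same planes, so the alignment and the separation from Step 2 are unaffected.

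\textbf{Step 4 (the improved estimate).} Set $u^{(i,k,s)}$ to be the sheet assigned to $P^{(i,k)}$, written as a graph over $P^{(i,k)}$. On the region where the nearest point of $\BC$ to a point of that sheet lies on $P^{(i,k)}$, which is a definite region by the separation in Step 2, we have $|u^{(i,k,s)}|\approx \dist(\cdot,\BC)$. Hence the $L^2$ norm of $u^{(i,k,s)}$ on a ball of fixed size is $\le CE_{V,\BC}$. Elliptic estimates, together with doubling and unique continuation for solutions of the minimal surface equation (the same device used in Lemma \ref{lemma:coarse-rep}), extend this to the $C^4$ bound on all of $P^{(i,k)}\cap B_{31/32}\setminus B_{\tau_0}(S(\BC))$.

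The ``Furthermore'' clause follows in the same way, applying Theorem \ref{thm:HLW} on $B_{4\tau_0}(y)\subset\{\Theta_V<Q\}$ and matching the resulting sheets with those already constructed on the overlap. In the case $\BC\in\CC_{n-1}$ the same argument runs with half-hyperplanes, where the sheets are disjoint and Theorem \ref{thm:bellettini} suffices. I expect the main obstacle to be Step 2 near intersections of distinct planes of $\BC$ away from $S(\BC)$. There the nearest-plane assignment degenerates, and sheets belonging to different $P^{(i,k)}$ cross. This is why I would perform the assignment only on the region away from $S(\BC^{(0)})$ and then propagate it to the rest of the domain using the smooth immersed structure provided by Theorem \ref{thm:HLW}.
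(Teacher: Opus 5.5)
There is a genuine gap in Step 2, at the point where you fix the index $k(t)$.

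You justify that $k$ is constant in $x$ by saying the sheets over $P^{(i)}$ are ordered and do not cross away from $\sing(\BC^{(0)})$. This is false as soon as $\dim S(\BC)\le n-2$. Two distinct planes $P^{(i,k)},P^{(i,k')}$ of $\BC$ near the same $P^{(i)}$ meet along an $(n-1)$-dimensional subspace containing $S(\BC)$. Alignment only asks $S(\BC)\subseteq S(\BC^{(0)})$, so this subspace can be transverse to $S(\BC^{(0)})$ and far from $\sing(\BC^{(0)})$. For example, take $Q=3$, $\BC^{(0)}=2|\{x_{n+1}=0\}|+|\{x_1=0\}|$ and $\BC=|\{x_{n+1}=\epsilon x_1\}|+|\{x_{n+1}=\epsilon x_2\}|+|\{x_1=0\}|$. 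The first two planes of $\BC$ cross over $\{x_1=x_2\}\subset\{x_{n+1}=0\}$, and already $V=\BC$ has crossing sheets there, inside your region $\Omega$. Near such a crossing the bound $\dist_\H(P^{(i,k)}\cap B_1,P^{(i,k')}\cap B_1)\ge c\mathcal{F}_{V,\BC}$ says nothing pointwise. So continuity and connectedness cannot exclude a sheet that follows $P^{(i,k)}$ on one side and $P^{(i,k')}$ on the other.

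Put differently, your claim that $\int\dist^2(x,\BC)\,\ext\|V\|\le E_{V,\BC}^2$ puts each sheet within $\beta_0\mathcal{F}_{V,\BC}$ of a \emph{single} $\ell^{(i,k)}$ in $L^2$ is exactly what has to be proved. The bound only controls $\int\min_k|w^{(i,t)}-\ell^{(i,k)}|^2$. Moving the assignment away from $S(\BC^{(0)})$, as in your last paragraph, does not help, because the crossings are not located there.

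Ruling out such switching needs the minimal surface equation, through a blow-up in which a $C^1$ limit whose graph lies in finitely many hyperplanes must equal one of them. Your set-up has no single scale at which to blow up. The a priori bounds on $w^{(i,t)}$ are at scale $E_{V,\BC^{(0)}}$. The planes of $\BC$ over $P^{(i)}$, however, may be clustered hierarchically: the smallest gap is $\sim\mathcal{F}_{V,\BC}$, while other gaps can be anywhere up to $\dist_\H(\BC\cap B_1,\BC^{(0)}\cap B_1)$.

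This is why the paper does not graph over $\BC^{(0)}$ but argues by induction on the level:
\begin{itemize}
\item It takes a good representative $\widetilde{\BC}$ of strictly coarser level with $F_{V,\widetilde{\BC}}\le C\mathcal{F}_{V,\BC}$.
\item Via Hypothesis I it applies Theorem \ref{thm:main-L^2-estimate}(1) to $V,\widetilde{\BC}$, getting graphs over $\widetilde{\BC}$ with bounds $CE_{V,\widetilde{\BC}}$. This settles all coarser scales.
\item From $F_{V,\BC}\le C\beta_0E_{V,\widetilde{\BC}}$ it deduces that each plane of $\BC$ lies within $CE_{V,\widetilde{\BC}}$ of a plane of $\widetilde{\BC}$.
\item It then blows up sheets and planes at the single scale $E_{V,\widetilde{\BC}}$. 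There each sheet's limit is one limiting linear function. Distinct planes of $\BC$ stay distinct in the limit, since otherwise deleting one of them would contradict $(\dagger)$.
\end{itemize}

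Your Steps 1, 3 and 4 are essentially sound. In Step 1 the covering should be done relative to $\BC^{(0)}$, not $\BC$, since the radius function of $\BC$ has no lower bound depending only on $\BC^{(0)},\tau_0$. Step 2, however, has to be replaced by this inductive single-scale blow-up.
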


\begin{remark}
	The conclusions of Lemma \ref{lemma:fine-rep} are not immediate from Theorem \ref{thm:HLW}, as one needs to improve the estimate on the function over the fixed cone $\BC^{(0)}$ under a smallness assumption on $F_{V,\BC^{(0)}}$ to an estimate over a cone $\BC\succeq \BC^{(0)}$ under the assumptions of Theorem \ref{thm:main-L^2-estimate} in which the constants do not depend on $\BC$ but only on $\BC^{(0)}$. 
\end{remark}

\begin{proof}
The replacement of $\BC$ by a cone with the same support but with the multiplicities which match the mass of $V$ around each (half-)hyperplane will follow from the proof. To begin with, one can ensure that the total multiplicities of (half-)hyperplanes in $\BC$ agree with the multiplicity of the closest (half-)hyperplane in $\BC^{(0)}$. 

For the main claim, we argue by contradiction. Thus, we take sequences $(V_j)_j\subset \V_Q$ and $(\BC_j)_j\subset\mathfrak{C}_{i_0}$ which satisfy the assumptions of the lemma with constants $\eps_j,\beta_j\downarrow 0$ and show that the conclusions must hold uniformly for all $j$ sufficiently large. We are therefore assuming the following:
\begin{enumerate}
	\item [(i)] $\Theta_{V_j}(0)\geq Q$;
	\item [(ii)] $d(V_j\res B_2(0),\BC^{(0)}\res B_2) < \eps_j$;
	\item [(iii)] $d(\BC_j\res B_1,\BC^{(0)}\res B_1)<\eps_j$ and $\BC_j$ and $\BC^{(0)}$ are aligned;
	\item [$(\dagger)$] $F_{V_j,\BC_j}<\beta_j\mathcal{F}_{V_j,\BC_j}$.
\end{enumerate}
We may pass to a subsequence to ensure that $\dim(S(\BC_j))\equiv \sfrak$ is constant in $j$. Note that if $\sfrak = n-1$, then the lemma follows analogously to \cite[Section 10]{Wic14} (we don't go into the full details here as this case is strictly simpler than the one we present below, as it uses that $\BC_j$ have no singular points away from their spines); in this case the ``Furthermore'' part of the claim follows from Theorem \ref{thm:HLW} analogously to that seen in the proof of Lemma \ref{lemma:coarse-rep}. Moreover, if $\BC^{(0)}\in \H_{n-1}$, then there can be no density gaps at the spine of fixed size by Lemma \ref{lemma:gaps} (cf.~Lemma \ref{lemma:no-gaps}).

Thus, we may assume that $\BC^{(0)}\in \mathcal{P}_{\sfrak_0}(\pfrak_0)$. In this case, if $\BC_j\in \H_{n-1}$, the ``Furthermore" part of the lemma follows from Lemma \ref{lemma:no-gaps}, as no such gaps are possible for sufficiently small $\eps_0,\beta_0$ (namely, sufficiently large $j$). Thus, by passing to a subsequence, we may also assume that $\BC_j\in \mathcal{P}_{\sfrak}(\pfrak)$. We will actually focus on proving the ``Furthermore'' part of the lemma, as once this has been established the main claim follows in an analogous fashion, using the same covering proof from Lemma \ref{lemma:coarse-rep} (as away from a neighbourhood of $S(\BC)$ we know from \eqref{E:concentration-repeated} that the density is $<Q$ everywhere). Let us therefore fix $y\in S(\BC)$ for which $B_{4\tau_0}(y_0)\subset\{\Theta_V<Q\}$.

\textbf{Base Case:} $\pfrak= \pfrak_0$ and $\sfrak = \sfrak_0$ (i.e.~$\BC_j$ and $\BC^{(0)}$ are in the same class of cones). This case is essentially the same as proved in Lemma \ref{lemma:coarse-rep}, changing the domains to now lie in $\BC_j$. We give a few details below.

Recall $\BC^{(0)} = \sum^{\pfrak_0}_{i=1}q_i|P^{(i)}|$, and write $\BC_j = \sum^{\pfrak_0}_{i=1}q_i|P_j^{(i)}|$ where $P_j^{(i)}\to P^{(i)}$ for each $i$. By Lemma \ref{lemma:coarse-rep}, we have
$$V_j\res B_{2\tau_0}(y) = \sum^{\pfrak_0}_{i=1}\sum^{q_i}_{k=1}|\graph(u_j^{(i,k)})|,$$
where $u_j^{(i,k)}:P^{(i)}\cap B_{2\tau_0}(y)\to (P^{(i)})^\perp$. For sufficiently large $j$, one can then find functions $\widetilde{u}_j^{(i,k)}:P_j^{(i)}\cap B_{2\tau_0}(y)\to (P_j^{(i)})^\perp$ for which $\text{graph}(\widetilde{u}_j^{(i,k)}) = \graph(u_j^{(i,k)})$ and $\dist((x,\widetilde{u}_j^{(i,k)}(x)),\BC_j) = |\widetilde{u}_j^{(i,k)}(x)|$ for $x\not\in B_{\tau_0}(S(\BC_j))$. Hence, elliptic and doubling estimates for the minimal surface equation give
$$\|\widetilde{u}_j^{(i,k)}\|_{C^4(P_j^{(i)}\cap B_{\tau_0}(y))} \leq C\|\widetilde{u}_j^{(i,k)}\|_{L^2(P^{(i)}_j\cap B_{2\tau_0}(y))} \leq C\|\widetilde{u}_j^{(i,k)}\|_{C^4(P_j^{(i)}\cap B_{2\tau_0}(y)\setminus B_{\tau_0}(S(\BC)))} \leq CE_{V_j,\BC_j}.$$
\textbf{Induction Step:} Now we assume $\BC^{(0)}\prec \BC_j$, i.e.~either $\pfrak>\pfrak_0$ or $\sfrak<\sfrak_0$. Arguing as in the proof of \eqref{E:concentration-repeated}, we can find $\tilde{\eps}_j,\tilde{\beta}_j\downarrow 0$ and good $(\tilde{\eps}_j,\tilde{\beta}_j)$-representatives $\widetilde{\BC}_j\in \cup^{i_0-1}_{j=1}\mathfrak{C}_i$ for $\mathcal{F}_{V_j,\BC_j}$ which obey
\begin{equation}\label{E:graph-rep-0-0}
F_{V_j,\widetilde{\BC}_j}\leq C\mathcal{F}_{V_j,\BC_j}
\end{equation}
for some constant $C = C(n,Q,\BC^{(0)})$ independent of $j$. Write $\widetilde{\BC}_j = \sum^{\pfrak_0}_{i=1} \sum^{\pfrak_i}_{k=1} q_{i,k}|\widetilde{P}_j^{(i,k)}|$ for suitable choices of $q_{i,k}$, so in particular $\sum^{\pfrak_0}_{i=1}\sum^{\pfrak_i}_{k=1}q_{i,k}=Q$ and $\lim_{j\to\infty}\widetilde{P}_j^{(i,k)} = P^{(i)}$ for each $i,k$.

For $j$ sufficiently large, by our inductive hypothesis (Hypothesis I), we may apply Theorem \ref{thm:main-L^2-estimate} to $V_j$ and $\widetilde{\BC}_j$, and thus we have
\begin{equation}\label{E:graph-rep-0}
	V_j\res B_{2\tau_0}(y) = \sum^{\pfrak_0}_{i=1}\sum^{\pfrak_i}_{k=1}\sum^{q_{i,k}}_{s=1}|\graph(\widetilde{u}_j^{(i,k,s)})|,
\end{equation}
where
\begin{equation}\label{E:graph-rep-1}
	\|\widetilde{u}_j^{(i,k,s)}\|_{C^4(\widetilde{P}_j^{(i,k)}\cap B_{2\tau_0}(y))}\leq CE_{V_j,\widetilde{\BC}_j}
\end{equation}
for suitable $C = C(n,Q,\BC^{(0)},\tau_0)\in (0,\infty)$. Notice that Theorem \ref{thm:main-L^2-estimate} also gives
\begin{equation}\label{E:graph-rep-1.5}
	F_{V_j,\widetilde{\BC}_j} \leq CE_{V_j,\widetilde{\BC}_j}.
\end{equation}
Write $\BC_j = \sum^{\pfrak_0}_{i=1}\sum^{\pfrak_i}_{k=1}\sum^{\pfrak_{i,k}}_{s=1}q_{i,k,s}|P_j^{(i,k,s)}|$ for appropriate integers $q_{i,k,s}$, where the closest plane in $\widetilde{\BC}_j$ to $P_j^{(i,k,s)}$ is $\widetilde{P}_{j}^{(i,k)}$. From $F_{V_j,\BC_j} \leq \beta_j F_{V_j,\widetilde{\BC}_j} \leq C\beta_j E_{V_j,\widetilde{\BC}_j}$ (the second inequality coming from \eqref{E:graph-rep-1.5}) and \eqref{E:graph-rep-1} it follows that
\begin{equation}\label{E:graph-rep-2}
	\dist_\H(P_j^{(i,k,s)}\cap B_1,\widetilde{P}_j^{(i,k)}\cap B_1)\leq CE_{V_j,\widetilde{\BC}_j}.
\end{equation}
We will need this bound shortly to blow-up $\BC_j$ off $\widetilde{\BC}_j$.
Write $\ell_j^{(i,k,s)}:\widetilde{P}_j^{(i,k)}\to (\widetilde{P}_j^{(i,k)})^\perp$ for the linear function with $P_j^{(i,k,s)} = \graph(\ell_j^{(i,k,s)})$. Now define the rescaled sequences
$$v_j^{(i,k,s)}:= E_{V_j,\widetilde{\BC}_j}^{-1}\widetilde{u}_j^{(i,k,s)} \qquad \text{and} \qquad \overline{\ell}_j^{(i,k,s)}:= E_{V_j,\widetilde{\BC}_j}^{-1}\ell_j^{(i,k,s)}.$$
By \eqref{E:graph-rep-1} and \eqref{E:graph-rep-2} we can pass to a subsequence so that we have ($C^2$) limits
$$v_\infty^{(i,k,s)}:= \lim_{j\to\infty}v_j^{(i,k,s)} \qquad \text{and} \qquad \overline{\ell}_\infty^{(i,k,s)} := \lim_{j\to\infty}\overline{\ell}_j^{(i,k,s)}.$$
Since $F_{V_j,\BC_j}<C\beta_j E_{V_j,\widetilde{\BC}_j}$, we get the following properties:
\begin{itemize}
	\item For each $(i,k,s)$ there exists $s^\prime = s^\prime(s)$ such that $v_\infty^{(i,k,s)} \equiv \overline{\ell}_\infty^{(i,k,s^\prime)}$;
	\item For each $(i,k,s)$ there exists $s^{\prime\prime} = s^{\prime\prime}(s)$ such that $\overline{\ell}_\infty^{(i,k,s)}\equiv v_\infty^{(i,k,s^{\prime\prime})}$.
\end{itemize}
We may therefore relabel the functions $(\widetilde{u}_j^{(i,k,s)})_{1\leq s\leq q_{i,k}}$ by grouping together those which blow-up to the same linear function. Indeed, relabel them by $\widetilde{u}_j^{(i,k,s,t)}$ for $1\leq s\leq \pfrak_{i,k}$ and $1\leq t \leq q_{i,k,s}$, where $v_\infty^{(i,k,s,t)} \equiv \overline{\ell}_\infty^{(i,k,s)}$, for $v_\infty^{(i,k,s,t)}$ the corresponding limit of $\widetilde{u}_j^{(i,k,s,t)}$. We can also reparameterise $\widetilde{u}_j^{(i,k,s,t)}$ from $\widetilde{P}_j^{(i,k)}$ to $P_j^{(i,k,s)}$, and so write $u_j^{(i,k,s,t)}:P_j^{(i,k,s)}\to (P_j^{(i,k,s)})^\perp$ for the function whose graph coincides with that of $\widetilde{u}_j^{(i,k,s,t)}$. From the convergence, we know that $\|u_j^{(i,k,s,t)}\|_{C^4}\leq C\beta_j E_{V_j,\widetilde{\BC}_j}$.

To complete the proof, it suffices to show that $\overline{\ell}_\infty^{(i,k,s)}\not\equiv \overline{\ell}_\infty^{(i,k,s^\prime)}$ for all $s\neq s^\prime$ and $i,k$. Indeed, this coupled with the uniform convergence to the blow-up tells us that $\dist((x,u^{(i,k,s,t)}_j(x)),\BC_j) = |u_j^{(i,k,s,t)}(x)|$ on a set of large measure, which, as we have seen before, tells us that the $L^2$ norm of $u_j^{(i,k,s,t)}$ can be controlled $E_{V_j,\BC_j}$, completing the proof.

It therefore suffices to prove the lower bound
$$|\ell_j^{(i,k,s)}-\ell_j^{(i,k,s^\prime)}|\geq cE_{V_j,\widetilde{\BC}_j}\qquad \text{on }\del B_1$$
for a fixed constant $c$. This is analogous to Lemma \ref{lemma:separation}, except now we only assume control on the two-sided excess. Indeed, this can be argued by contradiction, since if it were fail, then letting $\BC_j^*$ denote the cone obtained from deleting the plane $P_j^{(i,k,s^\prime)}$ from $\BC_j$, we would get for some $\delta_j\downarrow 0$,
$$F_{V_j,\BC_j^*} \leq F_{V_j,\BC_j} + \delta_j E_{V_j,\widetilde{\BC}_j} \leq (C\beta_j + \delta_j)E_{V_j,\widetilde{\BC}_j} \leq C(\beta_j + \delta_j)\mathcal{F}_{V_j,\BC_j} \leq (C\beta_j + \delta_j)F_{V_j,\BC_j^*}$$
which is a contradiction for sufficiently large $j$; here we have used \eqref{E:graph-rep-0-0} and \eqref{E:graph-rep-1.5}. This completes the proof.
\end{proof}

We have therefore now proved Theorem \ref{thm:main-L^2-estimate}(1). Proving the remaining estimates, Theorem \ref{thm:main-L^2-estimate}(2)--(5), is the topic of Section \ref{sec:main-estimate-proof}.

\subsection{Proof of the estimates in Theorem \ref{thm:main-L^2-estimate}}\label{sec:main-estimate-proof}

The proof here follows the general scheme in \cite{BK17, BKMW25}, which is inspired by those seen in the works \cite{Sim93, Wic14}, but with the additional difficulty that the spine dimension of $\BC$ can be strictly less than $\BC^{(0)}$. In fact, we just need to make some observations regarding the structure of the varifold in regions where the \emph{one-sided} height excess to $\BC$ is small, taking into account multiplicity, and the proofs after that point will be the same.

Again, we choose coordinates on $\R^{n+1}$ of the form $(x,y)\in S(\BC)^\perp\times S(\BC)\cong \R^{n+1}$. Given a subspace $A\subset\R^{n+1}$, $\zeta\in A$, $\rho\in (0,1)$, and $r\in (0,\rho)$, we define a corresponding \emph{toroidal region} by
$$T^A_{\rho,r}(\zeta):= \{(x,y)\in A^\perp\times A: (|x|-\rho)^2 + |y-\zeta|^2 < r^2\}.$$
For a cone $\BC\in \CC$ we define $T^{\BC}_{\rho,r}(\zeta) := T^{S(\BC)}_{\rho,r}(\zeta)$. Notice that we always have $T^{\BC}_{\rho,r}(\zeta)\cap S(\BC) = \emptyset$. More generally, for any set $S\subset\R^{n+1}$ we can define $T^{\BC}(S)$ to be the region of revolution formed by rotating $S$ about $S(\BC)$, i.e.~the set of $(x,y)\in S(\BC)^\perp\times S(\BC)$ where there is some $\w\in S(\BC)^\perp\cap S^n$ such that $(|x|\w,y)\in S$. We then define the \emph{one-sided excess} of $V\in \V_Q$ related to $\BC\in \CC$ in $T^{\BC}_{\rho,r}(\zeta)$ by
$$E_{V,\BC}(T^{\BC}_{\rho,r}(\zeta)) := \left(\frac{1}{r^{n+2}}\int_{T^{\BC}_{\rho,r}(\zeta)}\dist^2(x,\BC)\, \ext\|V\|(x)\right)^{1/2}.$$
The first results we need are concerned with the behaviour of varifolds which have small \emph{one-sided} excess in regions $T^{\BC}_{\rho,r}(\zeta)$. To aid the reader, the general structure of our presentation follows that in \cite[Section 6]{BK17} (cf.~\cite[Part 3]{BKMW25}), although with one significant simplification. Since $V$ is close to a cone with density $Q$, we only know that the mass of $V$ itself is close to that of $\BC$, and thus the mass ratio of $V$ on balls can be upper bounded by $Q+\eps$ for a small $\eps>0$. This means that, when assuming that only the one-sided excess of $V$ relative to $\BC$ is small, $V$ is close to a subset of the (half-)hyperplanes in $\BC$, and furthermore the mass of $V$ near each (half-)hyperplane is \emph{at most} $Q+\eps$ times the mass of the (half-)hyperplanes in the region. The important point here is having an upper bound of $Q+\frac{1}{4}$. Notice that one cannot ensure a mass bound of $Q-1$ here\footnote{This would be the ideal assumption for an inductive argument, if one only knows the validity of sheeting theorems, such as Theorem \ref{thm:bellettini}, near hyperplanes of multiplicity $<Q$ at this point of the argument.} as, for instance, $V$ could be close to a single hyperplane in $\BC$ with multiplicity $Q$. Here, precisely because Theorem \ref{thm:bellettini} holds for \emph{any} multiplicity (which is reflected in the form of $(\S3\text{b})$) we always have a sheeting theorem, regardless of the multiplicity of the (half-)hyperplane which $V$ is close to. This avoid the need to use, like in \cite[Section 6]{BK17}, Almgren's Lipschitz approximation, making the situation closer to \cite{Sim93} and \cite[Section 16]{Wic14}, with the only additional caveat arising from the spine dimension possibly being smaller.

We start with the simplest ``base case'' version of the estimates when only the one-sided excess is small, where $\BC$ is in the same class of cones as $\BC^{(0)}$.

\begin{lemma}\label{lemma:one-sided-1}
	There exists $\eps_0 = \eps_0(n,k,\BC^{(0)})\in (0,1)$ such that the following holds. Suppose that $V\in \V_Q$ and $\BC\in \CC$ satisfy:
	\begin{enumerate}
		\item [\textnormal{(i)}] $(\w_n 2^n)^{-1}\|V\|(B^{n+1}_2(0))\leq Q+\frac{1}{4}$;
		\item [\textnormal{(ii)}] $d(\BC\res B_1,\BC^{(0)}\res B_1)<\eps_0$ and $\BC$ lies in the same class $\mathfrak{C}_{i^{(0)}}$ as $\BC^{(0)}$;
		\item [\textnormal{(iii)}] $E_{V,\BC}(T^{\BC}_{1/2,7/16}(0)) < \eps_0$.
	\end{enumerate}
	Then there are two possible conclusions depending on the spine dimension $\sfrak_0$ of $\BC^{(0)}$.
	\begin{enumerate}
		\item [\textnormal{(A)}] $\sfrak_0 = n-1$. Then, working with half-hyperplane notation, so that $\BC = \sum^{\pfrak_0}_{i=1} q_i|H^{(i)}|$, there exists $I\subset \{1,2,\dotsc,Q\}$ (which could be empty) such that $V\res T^{\BC}_{1/2,6/16}(0) = \sum_{i\in I}V_i$, where each $V_i$ is a stationary integral varifold in $T^{\BC}_{1/2,6/16}(0)$ for which there exists a domain $\Omega_i\subset H^{(i^\prime)}\cap T^{\BC}_{1/2,6/16}(0)$ for some $i^\prime = i^\prime(i)$ and $u_i\in C^\infty(\Omega_i; (P^{(i^\prime)})^\perp)$ such that
		\begin{enumerate}
			\item [\textnormal{(i)}] $V_i = |\graph(u_i)|$;
			\item [\textnormal{(ii)}] $\|u_i\|_{C^4} \leq CE_{V,\BC}(T^{\BC}_{1/2,7/16}(0))$.
		\end{enumerate}
		\item [\textnormal{(B)}] $\sfrak_0<n-1$. Then, writing $\BC = \sum^{\pfrak_0}_{i=1}q_i|P^{(i)}|$, there exists $I\subset\{1,2,\dotsc,Q\}$ (which could be empty) such that $V \res T_{1/2,6/16}^{\BC}(0) = \sum_{i\in I}V_i$ where for each $i$ there exists a domain $\Omega_i\subset P^{(i^\prime)}\cap T^{\BC}_{1/2,6/16}(0)$ for some $i^\prime = i^\prime(i)$ and a smooth, minimal, function $u_i:\Omega_i\to (P^{(i^\prime)})^\perp$ such that
		\begin{enumerate}
			\item [\textnormal{(i)}] $V_i = |\graph(u_i)|$;
			\item [\textnormal{(ii)}] $\|u_i\|_{C^4} \leq CE_{V,\BC}(T^{\BC}_{1/2,7/16}(0))$.
		\end{enumerate}
	\end{enumerate}
	Here, $C = C(n,Q,\BC^{(0)})\in (0,\infty)$, and if $I = \emptyset$ we simply mean $V\res T^{\BC}_{1/2,6/16}(0) = 0$.
\end{lemma}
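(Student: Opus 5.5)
The plan is to argue by compactness/contradiction and then apply the sheeting theorem $(\S3\text{b})$ (Theorem \ref{thm:bellettini}) locally, with a covering argument patching the local graphs together. The key observation is that $T^{\BC}_{1/2,7/16}(0)$ stays a definite distance from $S(\BC)$: every point there has $r_{\BC}\geq 1/16$. In case (A), where $\sfrak_0=n-1$, this means the (half-)hyperplanes of $\BC$ are pairwise separated inside this region by a distance $c(\BC^{(0)})>0$. The same holds for $\BC$ itself once $\eps_0$ is small, by (ii).

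First I will prove the qualitative claim by contradiction. Suppose there are sequences $V_j$, $\BC_j$ satisfying (i)--(iii) with $\eps_0=\eps_j\downarrow 0$. By Allard compactness, $V_j\to V_\infty$ with $\spt\|V_\infty\|\cap T^{\BC^{(0)}}_{1/2,7/16}(0)\subset\spt\|\BC^{(0)}\|$. By the constancy theorem, on each connected piece of $\spt\|\BC^{(0)}\|\cap T_{1/2,7/16}$ away from $\sing(\BC^{(0)})$, $V_\infty$ is a constant integer multiple $\theta\geq 0$ of that piece. The mass bound (i) and monotonicity give $\theta\leq Q$.

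The main step is the local sheeting. Fix $x\in\spt\|\BC\|\cap T^{\BC}_{1/2,6/16}(0)$ and a radius $\rho\leq\rho_0(\BC^{(0)})$ small, as in the $\rho(x)$ construction in the proof of Lemma \ref{lemma:coarse-rep}. For $\rho$ this small, $B_{2\rho}(x)$ meets only the (half-)hyperplanes through $x$.

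In case (A), $B_{2\rho}(x)$ meets a single (half-)hyperplane $H^{(i')}$. The mass ratio of $V$ there is at most $Q+\frac14$: this follows from (i), monotonicity and the limit, using that the multiplicity of the cone near $x$ is at most $Q$ for pieces away from the spine. Moreover $E_{V,\widetilde H^{(i')}}(B_{2\rho}(x))\leq C\rho^{-n-2}E_{V,\BC}(T_{1/2,7/16})$ is small. So either $V\res B_\rho(x)=0$ (this is what forces $I$ to be possibly empty), or $(\S3\text{b})$ applies. In the latter case $V\res B_\rho(x)$ is a sum of at most $Q$ minimal graphs over $\widetilde H^{(i')}$, with $C^4$ bounds controlled by the excess via elliptic estimates.

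In case (B), the planes may also cross inside the torus along lower-dimensional sets. Near such a crossing I will instead use that the density of $V$ is $<Q$ there. Here I would assume that points of density $\geq Q$ only occur near $S(\BC)$, which I expect to need from the structure of $\BC^{(0)}\in\P$: a density-$\geq Q$ point away from the spine forces a multiplicity-$Q$ plane in the limit, which contradicts $\BC^{(0)}\in\P$ having at least two planes, exactly as in the proof of \eqref{E:concentration-repeated}. With density $<Q$ in hand, Lemma \ref{lemma:gaps} gives $\dim_\H\sing_*\leq n-7$ and $(\S2)_Q$ gives two-sidedness and stability. Theorem \ref{thm:HLW} then yields graphs over the individual planes through the crossing point, with estimates.

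Finally, I will cover $T^{\BC}_{1/2,6/16}(0)$ by finitely many such balls, with the number of balls depending only on $\BC^{(0)}$. The local graphs over the same (half-)plane agree on overlaps by unique continuation of the sheets, so they patch into global single-valued graphs $u_i$ on domains $\Omega_i$. I expect the decomposition $V=\sum_{i\in I} V_i$ to follow after relabelling sheets via connectedness of $\Omega_i$. The constant $C$ comes from the local estimates and is uniform.

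The main obstacle is in case (B) at plane crossings. There the one-sided excess does not control the mass distribution, so I must make sure the local mass is genuinely below the threshold needed for Theorem \ref{thm:HLW}. I would handle this by a contradiction argument: the limit varifold at such a point must be a sum of the crossing planes with multiplicities summing to less than $Q$, since otherwise the density-$Q$ point would be away from the spine.
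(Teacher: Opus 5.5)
Your case (A) is essentially the paper's argument. The paper also passes to a limit $W$ and uses the constancy theorem to write $W=\sum_{i\in J}\theta_i|H^{(i)}_0\cap T|$. It then splits $V_j$ into stationary pieces in disjoint $\eta$-neighbourhoods of the $H^{(i)}_0$ and applies Theorem \ref{thm:bellettini} (i.e.\ $(\S3\text{b})$). Your small balls plus covering is an equivalent way to organise this.

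The gap is in case (B), at the crossings $P^{(i)}\cap P^{(k)}$. These are $(n-1)$-dimensional subspaces strictly containing $S(\BC)$, so they run through $T^{\BC}_{1/2,6/16}(0)$. To use Lemma \ref{lemma:gaps}, $(\S2)_Q$ and Theorem \ref{thm:HLW} there, you need $\Theta_V<Q$ nearby. You justify this by appeal to the proof of \eqref{E:concentration-repeated}. That argument (and its use in Lemma \ref{lemma:coarse-rep}) relies on the limit varifold \emph{equalling} $\BC^{(0)}$, which came from $d(V_j,\BC^{(0)})\to 0$; off the spine the density is then a proper partial sum of the $q_i$. Hypotheses (i)--(iii) give no such information. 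A small one-sided excess only yields $\spt\|W\|\subset\spt\|\BC^{(0)}\|$, with no control on which planes are charged or with what multiplicity.

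For example, take $Q=3$, $\BC=\BC^{(0)}=|P_1|+|P_2|+|P_3|\in\P_{n-2}(3)$ and $V=2|P_1|+|P_2|$, a two-sided stable immersion and hence in $\V_3$. All of (i)--(iii) hold (mass ratio $3$, zero one-sided excess), yet $\Theta_V=Q$ along $P_1\cap P_2$ inside the torus. Likewise, ``$W$ is a multiplicity-$Q$ plane'' is no contradiction, since the one-sided excess does not see missing planes.

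At a density-$Q$ crossing none of your tools applies:
\begin{itemize}
\item Lemma \ref{lemma:gaps} and $(\S3\text{a})$ only concern $\{\Theta_V<Q\}$.
\item $(\S3\text{b})$ only concerns single planes.
\item In $\V_Q$ the approximating $V_j$ may even carry non-immersed classical singularities of density $Q$ along the crossing, for instance stationary cones of six multiplicity-one half-hyperplanes close to $2|P_1|+|P_2|$.
\end{itemize}
Graphical control with estimates there amounts to an $\eps$-regularity theorem near the density-$Q$ cone $2|P_1|+|P_2|$. That is an instance of Theorem \ref{thm:main-reg} (Case C), which is not available at this stage.

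To close the argument you need one of two things. Either add a hypothesis forcing the crossing densities below $Q$, for instance two-sided (varifold) closeness of $V$ to $\BC^{(0)}$ on the torus, so that $W=\BC^{(0)}\res T$. Or treat density-$Q$ sub-cones of $\BC^{(0)}$ inductively, as is done in the proof of Lemma \ref{lemma:one-sided-2}. For the record, the paper's own proof of (B) is a one-line ``completely analogous'' appeal to (A) and $(\S3\text{a})$. It tacitly makes the same assumption and does not supply this step either.
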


\begin{proof}
	We prove this by contradiction. If it were false, then we could find sequences $V_j\in \V_Q$, $\BC_j\in \CC$, which satisfy assumptions (i) -- (iii) of the lemma with $\eps_j$ in place of $\eps_0$, where $\eps_j\downarrow 0$, yet the conclusions of the lemma fail.
	
	Our assumptions give that $V_j\res T^{\BC_j}_{1/2,7/16}(0)$ converge to some stationary integral varifold $W$, which by (ii) and (iii) must obey $\spt\|W\|\subset \spt\|\BC^{(0)}\|\cap T^{\BC^{(0)}}_{1/2,7/16}(0)$ (notice that this uses $S(\BC_j)\to S(\BC^{(0)})$ so that $T^{\BC_j}_{1/2,7/16}(0)\to T^{\BC^{(0)}}_{1/2,7/16}(0)$). If $W = 0$, then set $I = \emptyset$ and the proof is complete. Thus, we may assume $W\neq 0$.
	
	Suppose now $\sfrak_0 = n-1$. Since $W$ is stationary, the constancy theorem implies that each regular component of $W$ has constant multiplicity. Thus, as $\sfrak_0 = n-1$, we must have, writing $\BC^{(0)} = \sum^{\pfrak_0}_{i=1}q^{(0)}_i|H_{0}^{(i)}|$, where $H^{(i)}_j\to H^{(i)}_0$,
	$$W = \sum_{i\in J} \theta_i |H_{0}^{(i)}\cap T^{\BC^{(0)}}_{1/2,7/16}(0)|,$$
	for some subset $J\subseteq\{1,\dotsc,\pfrak_0\}$, where each $\theta_i$ is a positive integer. Now fix a small $\eta>0$ so that the regions $B_\eta(H^{(i)}_0)\cap T_{1/2,7/16}^{\BC^{(0)}}(0)$ are disjoint. Then, as $V_j\res T^{\BC_j}_{1/2,7/16}(0)$ converge to $W$ locally in Hausdorff distance in $T^{\BC^{(0)}}_{1/2,7/16}(0)$, we know that for all sufficiently large $j$,
	$$\spt\|V_j\|\cap T^{\BC^{(0)}}_{1/2,13/32}(0)\subset \bigcup_{i\in I}B_{\eta}(H_0^{(i)})\cap T^{\BC^{(0)}}_{1/2,7/16}(0),$$
	and hence if $V_j^{(i)}:= V^j\res B_{\eta}(H_0^{(i)})\cap T^{\BC^{(0)}}_{1/2,13/32}(0)$, each $V_j^{(i)}$ is a stationary integral varifold. Our mass hypothesis (i) combined with monotonicity of the mass ratio guarantees that $\theta_i\in \{1,2,\dotsc,Q\}$ for each $i$. We may therefore apply Theorem \ref{thm:bellettini} to deduce that (A) holds, completing the proof in the case $\sfrak_0 = n-1$. When instead $\sfrak_0 < n-1$, the argument is completely analogous, just using the slightly different form of the cones and that varifolds in $\V_Q$ do not contain any non-immersed classical singularities of density $<Q$.
\end{proof}

Next, we prove the inductive version of Lemma \ref{lemma:one-sided-1}, namely when $\BC^{(0)}\prec \BC$. Again, as we saw in Theorem \ref{thm:main-L^2-estimate}, an important point is to show that good density points must accumulate around the spine of the finer cone under a suitable hypothesis guaranteeing that the finer cone has comparatively smaller excess to all coarser cones. As such, we define
$$\mathcal{E}_{V,\BC}(T^{\BC}_{\rho,r}(\zeta)):= \inf\{E_{V,\BC^\prime}(T^{\BC}_{\rho,r}(\zeta)): \BC^\prime\prec \BC\text{ are aligned}\}.$$

\begin{lemma}\label{lemma:one-sided-2}
	There exist $\eps,\beta\in (0,1)$ depending only on $n,Q,\BC^{(0)}$ such that the following is true. Suppose that $V\in \V_Q$ and $\BC\in \mathfrak{C}_{i_0}$ satisfy:
	\begin{enumerate}
		\item [\textnormal{(1)}] $(\w_n 2^n)^{-1}\|V\|(B^{n+1}_2(0))\leq Q+\frac{1}{4}$;
		\item [\textnormal{(2)}] $\BC^{(0)}\prec \BC$ are aligned;
		\item [\textnormal{(3)}] Hypothesis I holds;
		\item [\textnormal{(4)}] $d(\BC\res B_1,\BC^{(0)}\res B_1) < \eps$;
		\item [\textnormal{(5)}] $E_{V,\BC}(T_{1/2,7/16}^{\BC}(0)) < \eps$;
		\item [\textnormal{(6)}] $E_{V,\BC}(T_{1/2,7/16}^{\BC}(0)) < \beta \mathcal{E}_{V,\BC}(T_{1/2,7/16}^{\BC}(0))$.
	\end{enumerate}
	Then, one of the following conclusions holds depending on the value of $\sfrak \equiv \dim S(\BC)$.
	\begin{enumerate}
		\item [\textnormal{(C)}] $\sfrak = n-1$. Then, working with half-hyperplane notation, write $\BC^{(0)} = \sum^{\pfrak_0}_{i=1}q_i^{(0)}|H^{(i)}_0|$ and $\BC = \sum^{\pfrak_0}_{i=1}\sum_{k=1}^{\pfrak_i}q_{i,k}|H^{(i,k)}|$, where $H^{(i)}_0$ is the closest half-hyperplane in $\BC^{(0)}$ to $H^{(i,k)}$. Then there exists $I\subset\{1,2,\dotsc,Q\}$ such that $V\res T^{\BC}_{1/2,6/16}(0) = \sum_{i\in I}V_i$, where each $V_i$ is a stationary integral varifold in $T^{\BC}_{1/2,6/16}(0)$ for which there exists a domain $\Omega_i\subset H^{(i^\prime,k^\prime)}\cap T^{\BC}_{1/2,6/16}(0)$ for some $(i^\prime,k^\prime)$ depending on $i$ and $u_i\in C^\infty(\Omega_i; (P^{(i^\prime,k^\prime)})^\perp)$ which is minimal such that
		\begin{enumerate}
			\item [\textnormal{(i)}] $V_i = |\graph(u_i)|$;
			\item [\textnormal{(ii)}] $\|u_i\|_{C^4}\leq CE_{V,\BC}(T^{\BC}_{1/2,7/16}(0))$.
		\end{enumerate}
		\item [\textnormal{(D)}] $\sfrak<n-1$. Then we work with hyperplane notation, so $\BC^{(0)} = \sum_{i=1}^{\pfrak_0}q_i^{(0)}|P_0^{(i)}|$ and $\BC = \sum^{\pfrak_0}_{i=1}\sum^{\pfrak_i}_{k=1}q_{i,k}|P^{(i,k)}|$, with $P_0^{(i)}$ the closest plane in $\BC^{(0)}$ to $P^{(i,k)}$. Then there exists $I\subset \{1,2,\dotsc,Q\}$ such that $V\res T^{\BC}_{1/2,6/16}(0) = \sum_{i\in I}V_i$, where for each $i$ there exists a domain $\Omega_i\subset P^{(i^\prime,k^\prime)}\cap T^{\BC}_{1/2,6/16}(0)$ for some $(i^\prime,k^\prime)$ depending on $i$ and $u_i\in C^\infty(\Omega_i;(P^{(i^\prime,k^\prime)})^\perp)$ which is minimal such that
		\begin{enumerate}
			\item [\textnormal{(i)}] $V_i = |\graph(u_i)|$;
			\item [\textnormal{(ii)}] $\|u_i\|_{C^4}\leq CE_{V,\BC}(T^{\BC}_{1/2,7/16}(0))$;
		\end{enumerate}
	\end{enumerate}
	Here, $C = C(n,Q,\BC^{(0)})\in (0,\infty)$. Again, if $I = \emptyset$, then the sum over $I$ is taken to be $0$.
\end{lemma}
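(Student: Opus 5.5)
We argue by contradiction and by induction on the level $i_0$ of $\BC$ relative to $\BC^{(0)}$, the base case being Lemma \ref{lemma:one-sided-1}. Suppose sequences $V_j\in\V_Q$, $\BC_j\in\mathfrak{C}_{i_0}$ satisfy (1)--(6) with $\eps_j,\beta_j\downarrow 0$, yet the conclusion fails for every fixed $C$. Pass to a subsequence so that $\sfrak$, $\pfrak$ and the multiplicity data are constant, and $S(\BC_j)\to S_\infty\subseteq S(\BC^{(0)})$. As in the proof of Lemma \ref{lemma:one-sided-1}, $V_j\res T^{\BC_j}_{1/2,7/16}(0)$ converges to a stationary integral $W$ with $\spt\|W\|\subset\spt\|\BC^{(0)}\|$, and each constancy multiplicity lies in $\{1,\dotsc,Q\}$ by (1) and monotonicity. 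If $W=0$ we take $I=\emptyset$. Otherwise $W$ is a sum of pieces of (half-)hyperplanes of $\BC^{(0)}$ with integer multiplicities.

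Away from a fixed neighbourhood of $\sing(\BC^{(0)})$ (intersected with the torus), $V_j$ is close to a multiplicity $\le Q$ plane. So Theorem \ref{thm:bellettini}, via $(\S3\text{b})$, yields sheets over $\BC^{(0)}$. Near the intersection set of distinct planes (only relevant when $\sfrak_0\le n-2$), the torus avoids $S(\BC)$. We aim to show $\Theta_{V_j}<Q$ there, using the argument of \eqref{E:concentration-repeated}: a good density point accumulating away from $S_\infty$ would, after a blow-up off a coarser good representative, contradict (6). Given that, Lemma \ref{lemma:gaps} and Theorem \ref{thm:HLW} give graphs over $\BC^{(0)}$ with $C^4$ bounds by the one-sided excess relative to $\BC^{(0)}$. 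This reproduces the coarse statement (the analogue of Lemma \ref{lemma:coarse-rep} for one-sided excess).

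To upgrade to graphs over the finer cone $\BC_j$ with estimates in terms of $E_{V_j,\BC_j}(T)$, we follow Lemma \ref{lemma:fine-rep}. Choose aligned coarser cones $\widetilde{\BC}_j\prec\BC_j$ that are good representatives for $\mathcal{E}_{V_j,\BC_j}(T)$, using the one-sided version of Lemma \ref{lemma:good-representative} (Remark \ref{remark:replacements}) and the iterative selection from \eqref{E:concentration-repeated}. The constants are chosen to depend only on $n,Q,\BC^{(0)}$, and (6) passes to $\widetilde{\BC}_j$ with some $\tilde\beta_j\downarrow0$ or else reaches the level of $\BC^{(0)}$. The inductive hypothesis (the present lemma at lower level, or Lemma \ref{lemma:one-sided-1}) then represents $V_j$ in the torus as graphs $\widetilde u_j$ over a subset of the planes of $\widetilde{\BC}_j$, with bounds by $E_{V_j,\widetilde{\BC}_j}(T)$. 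Hypothesis I is needed precisely so that these lower-level statements hold with the torus regions rotated to match $S(\widetilde{\BC}_j)\supseteq S(\BC_j)$.

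Next, blow up both $\widetilde u_j$ and the linear functions $\ell_j$ describing planes of $\BC_j$ over $\widetilde{\BC}_j$, normalising by $E_{V_j,\widetilde{\BC}_j}(T)$. Since $E_{V_j,\BC_j}(T)\le\beta_j E_{V_j,\widetilde{\BC}_j}(T)$, each limit of $\widetilde u_j$ coincides with the limit of some $\bar\ell_j$; this uses only one-sided information. Planes of $\BC_j$ need not be attained, which matches the subset $I$ in the conclusion. The $\bar\ell_j$ are bounded by an argument like \eqref{E:graph-rep-2}, restricted to planes actually charged by $V_j$. Uncharged planes cause no trouble: deleting them from $\BC_j$ does not change the one-sided excess. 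Finally, a separation estimate analogous to Lemma \ref{lemma:separation} gives $|\ell_j^{(s)}-\ell_j^{(s')}|\ge cE_{V_j,\widetilde{\BC}_j}(T)$ for distinct planes over the same $\widetilde P_j$. Otherwise, merging two planes would produce a coarser aligned cone violating (6).

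With separation in hand, each sheet is graphical over its matching plane of $\BC_j$. On a set of large measure the distance to $\BC_j$ equals $|u_j|$, so the $L^2$ bound followed by elliptic and doubling estimates for the minimal surface equation gives $\|u_j\|_{C^4}\le CE_{V_j,\BC_j}(T)$, a contradiction. The main obstacle is the density-gap and intersection step when $\sfrak<\sfrak_0$. There we must rule out points of density $\ge Q$ inside the torus away from $S(\BC_j)$, where $\BC^{(0)}$ has non-spine intersections. We plan to handle it by the rotation and blow-up argument of \eqref{E:concentration-repeated}: the blow-up lines vanish on a larger subspace, which produces a coarser cone and contradicts (6).
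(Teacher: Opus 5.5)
There is a genuine gap at exactly the step you flag as the main obstacle. In this lemma only the one-sided excess in the torus is small. So $V_j\res\mathcal{T}_j$ converges to a $W$ which may be the restriction of a proper sub-cone $\widetilde{\BC}^{(0)}$ of $\BC^{(0)}$, and the support of that sub-cone need not be close to $\spt\|\BC^{(0)}\|$.

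Write $A=\lim_j S(\BC_j)$ (your $S_\infty$). The possible points of density $\geq Q$ in the torus sit near $S(\widetilde{\BC}^{(0)})\cap T^A_{1/2,7/16}(0)\supseteq S(\BC^{(0)})\cap T^A_{1/2,7/16}(0)$. These are \emph{spine} points of the limit cone. Points where only some of the planes of $W$ meet have density $<Q$ and are handled by upper semi-continuity, so your phrase ``where $\BC^{(0)}$ has non-spine intersections'' mislocates the issue. The issue arises whenever $\sfrak<\sfrak_0$, including $\sfrak_0=n-1$.

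To run the argument for \eqref{E:concentration-repeated} near those points you need, for a coarser cone, all of the following:
graphs with estimates in a full neighbourhood of those points;
Corollary \ref{cor:main-L^2-estimate}(1) to control the rotation $R_j$;
the Hardt--Simon inequality at good density points;
$L^2$ non-concentration.
All of these come from Theorem \ref{thm:main-L^2-estimate}. But Hypothesis I is that theorem with the fixed base cone $\BC^{(0)}$, and its hypotheses are not available here: $d(V\res B_2,\BC^{(0)}\res B_2)<\eps_0$, $\Theta_V(0)\geq Q$, and the fine-excess ratio $(\dagger)$.

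Your substitute, the present lemma at lower level, does not supply these. Applied to a coarser $\widetilde{\BC}_j$, it only gives graphs on $T^{\widetilde{\BC}_j}_{1/2,6/16}(0)$, which avoids $S(\widetilde{\BC}_j)$. But if $\dim S(\widetilde{\BC}_j)>\sfrak$, the torus $T^{\BC_j}_{1/2,6/16}(0)$ contains points of $S(\widetilde{\BC}_j)$. The lower-level lemma also gives no Hardt--Simon or spine-displacement estimate. Finally, the ordering is circular. You want $\Theta_{V_j}<Q$ first in order to obtain graphs, but the blow-up that proves concentration already needs graphs over a coarser cone near the offending points.

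The missing idea is to re-base at $W$. Define $\widetilde{\BC}^{(0)}$ by $\widetilde{\BC}^{(0)}\res T^A_{1/2,7/16}(0)=W$ and split into cases.
If $\Theta_{\widetilde{\BC}^{(0)}}(0)<Q$, then $\Theta_{V_j}<Q$ on $\mathcal{T}_j$. Lemma \ref{lemma:gaps} together with Theorem \ref{thm:bellettini} or Theorem \ref{thm:HLW} then concludes, with the estimate upgraded to $E_{V_j,\BC_j}(\mathcal{T}_j)$ via assumption (6).
If some half-hyperplane of $W$ carries multiplicity $Q$, then either $\sfrak=n-1$ (the torus disconnects the half-hyperplanes) or $W$ is a multiplicity-$Q$ hyperplane. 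In both subcases Theorem \ref{thm:bellettini} concludes.
In the remaining cases $\widetilde{\BC}^{(0)}\in\CC$ with $\widetilde{\BC}^{(0)}\preceq\BC^{(0)}$. Here one applies Theorem \ref{thm:main-L^2-estimate}, localized to the torus, to $V_j$ and to \emph{fine} (two-sided) good representatives $\Dbf_j$ for $\mathcal{F}_{V_j,\BC_j}(\mathcal{T}_j)$, with $\widetilde{\BC}^{(0)}$ as the base cone.

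This last step needs an induction on the level of the base cone on top of Hypothesis I; that is why the paper proves Theorem \ref{thm:main-L^2-estimate} inductively over the base level as well. The representatives must be chosen for the fine excess rather than the one-sided excess you propose, because $(\dagger)$ is what that theorem needs.

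Once this two-sided information is in hand, the rest of your plan goes through essentially as you sketch. That is: each half-hyperplane of $\Dbf_j$ is $CE_{V_j,\Dbf_j}(\mathcal{T}_j)$-close to one of $\BC_j$, as in \eqref{E:one-sided-1}; uncharged planes are discarded; the blow-up limits are matched; good density points concentrate at $S(\BC_j)$; and separation follows by merging.
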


\begin{proof}
	We argue by contradiction. Thus, we suppose that there exist sequences $(V_j)_j\subset\V_Q$, $(\BC_j)_j\subset \mathfrak{C}_{i_0}$ obeying the assumptions of the lemma with parameters $\eps_j,\beta_j\downarrow 0$ in place of $\eps,\beta$. In particular, setting $\mathcal{T}_j:= T^{\BC_j}_{1/2,7/16}(0)$, we have
	$$E_{V_j,\BC_j}(\mathcal{T}_j) <\beta_j\mathcal{E}_{V_j,\BC_j}(\mathcal{T}_j) < \beta_j\eps_j.$$
	We want to prove that the conclusions hold along some subsequence.
	
	Note that by assumption $\sfrak := \dim(S(\BC_j))$ is fixed. By passing to a subsequence, we may assume that
	\begin{itemize}
		\item $S(\BC_j)\to A$ for some subspace $A\subseteq S(\BC^{(0)})$;
		\item $V_j\res \mathcal{T}_j\weakly W$ locally as varifolds in $T^A_{1/2,7/16}(0)$, where $W$ is a stationary integral varifold in $T^A_{1/2,7/16}(0)$.
	\end{itemize}
	Assumptions (4) and (5) ensure that $\spt\|W\|\subseteq \spt\|\BC^{(0)}\|\cap T^A_{1/2,7/16}(0)$, and thus by the constancy theorem $W$ must be formed of some collection of half-hyperplanes in $\BC^{(0)}$ with certain multiplicities. If $W=0$, then one can take $I=\emptyset$ and the proof is complete, so we may assume $W\neq 0$.
	
	If $\sfrak = n-1$, then necessarily $W$ is supported within half-hyperplanes. Furthermore, as $S(\BC_j) \equiv S(\BC^{(0)})$ for all $j$, we have $\mathcal{T}_j\cap S(\BC^{(0)}) = \emptyset$, and consequently that, for each fixed $j$, $\mathcal{T}_j\cap H^{(i)}_0$ are pairwise disjoint for distinct $i$. Thus, we have
	$$W = \sum_{i \in J}\theta_i |H_0^{(i)}\cap T^{\BC^{(0)}}_{1/2,7/16}(0)|$$
	for some non-empty $J\subseteq\{1,\dotsc,\pfrak_0\}$ and where $\theta_i\in\{1,2,\dotsc,Q\}$ from our mass bounds.
	
	If instead $\sfrak<n-1$, then $W$ is stationary and supported on $\bigcup_i P_0^{(i)}$. We first claim that $W$ must be a sum of full hyperplanes. Indeed, if a half-hyperplane in $W$ has multiplicity $Q$, since $A\subsetneq S(\BC^{(0)})$, stationarity of $W$ in $T^{\BC}_{1/2,7/16}(0)$ combined with the mass upper bound forces $W$ to coincide with the corresponding full hyperplane with multiplicity $Q$. If no half-hyperplane in $W$ has multiplicity $Q$, then by upper semi-continuity of the mass ratio we must have $\Theta_{V_j}<Q$ in $T^{\BC}_{1/2,6/16}(0)$ for all sufficiently large $j$, which combined with Lemma \ref{lemma:gaps} and Theorem \ref{thm:HLW}, implies that $W$ must be a sum of full hyperplanes.
	
	In either case, we need not have $\spt\|W\| = \spt\|\BC^{(0)}\|\cap T^A_{1/2,7/16}(0)$, and as such $F_{V_j,\BC^{(0)}}(\mathcal{T}_j)$ need not be small. Instead, there is a sub-cone $\widetilde{\BC}^{(0)}$ of $\BC^{(0)}$, namely a cone comprised of a certain subset of the (half-)hyperplanes in $\BC^{(0)}$, for which $F_{V_j,\widetilde{\BC}^{(0)}}(\mathcal{T}_j)\to 0$. We must have that $\widetilde{\BC}^{(0)}$ is ``simpler'' than $\BC^{(0)}$; indeed, one can take $\widetilde{\BC}^{(0)}$ such that $\widetilde{\BC}^{(0)}\res T^A_{1/2,7/16}(0) = W$. We then know that $\Theta_{\widetilde{\BC}^{(0)}}(0) \leq Q = \Theta_{\BC^{(0)}}$, and we also know that $S(\BC^{(0)})\subseteq S(\widetilde{\BC}^{(0)})$, and furthermore than the number of distinct half-hyperplanes in $\widetilde{\BC}^{(0)}$ must be at most that of $\BC^{(0)}$. This suggests an inductive argument based on these three measures of the complexity of $\widetilde{\BC}^{(0)}$.
	
	Indeed, firstly if $\Theta_{\widetilde{\BC}^{(0)}}(0)<Q$, then $\Theta_{\widetilde{\BC}^{(0)}}<Q$ everywhere, and so by upper semi-continuity of the density we must have $\Theta_{V_j}<Q$ on $\mathcal{T}_j$ for all $j$ sufficiently large. In this case, by Lemma \ref{lemma:gaps}, we can then apply one of Theorem \ref{thm:bellettini} (if $\widetilde{\BC}^{(0)}$ is supported on a single hyperplane or has spine dimension $n-1$) or Theorem \ref{thm:HLW} (if $\widetilde{\BC}^{(0)}$ is supported on at least two hyperplanes and has spine dimension $\leq n-1$) in order to prove the desired conclusion. We note that the estimate on the functions, which would initially be in terms of $E_{V_j,\widetilde{\BC}^{(0)}}$, can be improved to $E_{V_j,\BC_j}$ using the assumption (6) in the same manner as seen in Section \ref{sec:graphical-rep-proof}.
	
	Thus we may assume that $\Theta_{\widetilde{\BC}^{(0)}}(0) = Q$. Again, if $\widetilde{\BC}^{(0)}$ is supported on a single hyperplane, one can apply Theorem \ref{thm:bellettini} (cf.~$(\S3\text{b})_Q$) to conclude. We also claim that, if $\widetilde{\BC}^{(0)}$ has spine dimension $n-1$ and has one half-hyperplane occurring with multiplicity $Q$, then either
	\begin{enumerate}
		\item [(a)] $\sfrak = n-1$; or
		\item [(b)] $\widetilde{\BC}^{(0)}$ is a single hyperplane with multiplicity $Q$.
	\end{enumerate}
	Indeed, if $\sfrak\leq n-2$, then $W$ needs to be stationary in $\mathcal{T} := T^A_{1/2,7/16}(0)$. As $\dim(A)\leq n-2$ and $\dim(S(\widetilde{\BC}^{(0)}))=n-1$, $W$ must be stationary on a region containing its spine. As one half-hyperplane in $W$ has multiplicity $Q$, the only way for $W$ to be stationary across its spine, as $\Theta_W(0) = Q$, is for it to be a single hyperplane with multiplicity $Q$, i.e.~(b) must hold. Thus, we have proven this dichotomy. The claim in either case here follows in the same manner as the inductive case above; indeed, in case (b) one can again apply Theorem \ref{thm:bellettini} to the $V_j$, and in case (a) we know that the regions $\mathcal{T}_j$ disconnect the half-hyperplanes in the cone, and so again one can apply Theorem \ref{thm:bellettini} to conclude, even if a half-hyperplane has multiplicity $Q$.
	
	Hence, the only cases left to analyse are when $\Theta_{\widetilde{\BC}^{(0)}}(0) = Q$ and:
	\begin{enumerate}
		\item [(I)] $\widetilde{\BC}^{(0)}$ has spine dimension $\leq n-2$ (and so is a sum of hyperplanes);
		\item [(II)] $\widetilde{\BC}^{(0)}$ has spine dimension $n-1$ and all half-hyperplanes have multiplicity $<Q$.
	\end{enumerate}
	In particular, we must have $\widetilde{\BC}^{(0)}\in \CC$; in particular, our conditions give $\widetilde{\BC}^{(0)}\preceq \BC^{(0)}$. As $\widetilde{\BC}^{(0)}$ is never finer than $\BC^{(0)}$, we are therefore able to argue in an analogous manner to that we saw in Section \ref{sec:good-density-points} and Section \ref{sec:graphical-rep-proof}, but with $\widetilde{\BC}^{(0)}$ in place of $\BC^{(0)}$ in order to get the desired conclusions.
	
	Indeed, just as in the proof of \eqref{E:concentration-repeated} (now treating $\widetilde{\BC}^{(0)}$ as the base cone), we can find $\tilde{\eps}_j,\tilde{\beta}_j\downarrow 0$ and $(\tilde{\eps}_j,\tilde{\beta}_j)$-good representatives $\Dbf_j$ for $\mathcal{F}_{V_j,\BC_j}(\mathcal{T}_j)$; we therefore have
	\begin{itemize}
		\item $F_{V_j,\Dbf_j}\leq \tilde{\beta}_j\mathcal{F}_{V_j,\Dbf_j}(\mathcal{T}_j)$;
		\item $F_{V_j,\Dbf_j}(\mathcal{T}_j) \leq C\mathcal{F}_{V_j,\BC_j}(\mathcal{T}_j)$, where $C$ is a constant independent of $j$.
	\end{itemize}
	In particular $\Dbf_j$ must converge to $W$ and
	$$E_{V_j,\BC_j}(\mathcal{T}_j)<\beta_j\mathcal{E}_{V_j,\BC_j}(\mathcal{T}_j) \leq \beta_jE_{V_j,\Dbf_j},$$
	so that $E_{V_j,\Dbf_j}(\mathcal{T}_j)^{-1}E_{V_j,\BC_j}(\mathcal{T}_j)\to 0$.
	
	For any choice of $\tau>0$ we can now apply Theorem \ref{thm:main-L^2-estimate} to $V_j,\Dbf_j$ and $\widetilde{\BC}^{(0)}$ in place of $V,\BC$, and $\BC^{(0)}$, respectively, provided $j$ is sufficiently large. Notice that this requires our inductive assumption on its validity not only through Hypothesis I, but also inductively on the level of the base cone $\BC^{(0)}$ as well (as the support of $\widetilde{\BC}^{(0)}$ need not be close to that of $\BC^{(0)}$.)
	
	We claim that, for each half-hyperplane $\widehat{H}_i$ in $\Dbf_j$, there is a half-hyperplane $H^\prime_i$ in $\BC_j$ for which
	\begin{equation}\label{E:one-sided-1}
		\dist_\H(\widehat{H}_i\res B_1,H^\prime_i\res B_1) \leq CE_{V_j,\Dbf_j}(\mathcal{T}_j).
	\end{equation}
	To see this, note that for any $\sigma\in (6/16,7/16)$ we have for all $j$ sufficiently large, writing $\ell^\prime_i$ for the linear functions representing the half-hyperplanes in $\BC_j$ over the corresponding half-hyperplane in $\Dbf_j$ (if such a half-hyperplane exists, which is not clear at present as only the one-sided excess is small),
	\begin{align*}
		E^2_{V_j,\BC_j}(\mathcal{T}_j) & \geq \int_{T^{\widetilde{\BC}^{(0)}}_{1/2,\sigma}(0)\cap \{r_{\Dbf_j}>1/8\}}\dist^2(x,\BC_j)\, \ext\|V_j\|(x)\\
		& \geq C^{-1}\sum_i\int_{T^{\widetilde{\BC}^{(0)}}_{1/2,\sigma}(0)\cap \{r_{\Dbf_j}>1/8\}\cap \widehat{H}_i}\min_{i^\prime}|u_i(x)-\ell^\prime_{i^\prime}(x)|^2\, \ext x\\
		& \geq C^{-1}\min_{i^\prime}\dist^2_\H(\widehat{H}_i\res B_1,H^\prime_{i^\prime}\res B_1) - CE_{V_j,\Dbf_j}^2(\mathcal{T}_j)
	\end{align*}
	which since $E_{V_j,\BC_j}(\mathcal{T}_j)\leq \beta_j E_{V_j,\Dbf_j}(\mathcal{T}_j)$ gives \eqref{E:one-sided-1}.
	
	Let $\widetilde{\BC}_j$ denote the cone formed from only taking those half-hyperplanes in $\BC_j$ which satisfy \eqref{E:one-sided-1} for some half-hyperplane in $\Dbf_j$. We now proceed as in the proof of \eqref{E:concentration-repeated}. We perform a fine blow-up of $V_j$ and $\widetilde{\BC}_j$ relative to $\Dbf_j$ in the region $\mathcal{T}_j$ (this is done in exactly the same way but now just locally in the region $T^A_{1/2,7/16}(0)$). Our convergence combined with $E_{V_j,\Dbf_j}(\mathcal{T}_j)^{-1}E_{V_j,\BC_j}(\mathcal{T}_j)\to 0$ guarantees that the fine blow-up of $V_j$ is consists of linear functions, each of which agrees with one of the linear functions formed when blowing-up $\widetilde{\BC}_j$ relative to $\Dbf_j$. In fact, every linear function formed when blowing-up $\widetilde{\BC}_j$ relative to $\Dbf_j$ must be realised by a linear function in the blow-up of $V_j$ off $\Dbf_j$, else one could remove a plane from $\widetilde{\BC}_j$ without significantly changing the one-sided excess, which would contradict $E_{V_j,\BC_j}(\mathcal{T}_j)<\beta_j\mathcal{E}_{V_j,\BC_j}(\mathcal{T}_j)$ (this uses that we can verify strong $L^2$ convergence globally in $\mathcal{T}_j$, rather than just locally, cf.~the proof of \eqref{E:concentration-repeated}).
	
	This convergence can then be used to again verify, just as in Section \ref{sec:good-density-points}, that the set of good density points must accumulate around $S(\BC_j)$, else one gets a contradiction to $\mathcal{E}_{V_j,\BC_j}(\mathcal{T}_j)^{-1}E_{V_j,\BC_j}(\mathcal{T}_j)\to 0$. Once one has the accumulation of good density points to $S(\BC_j)$, the proof is completed as in Lemma \ref{lemma:fine-rep}, leading to the conclusions of the lemma depending on the form of $\BC_j$.
\end{proof}

The final lemma needed is a small extension of Lemma \ref{lemma:one-sided-2} which removes hypothesis (6) therein.

\begin{lemma}\label{lemma:one-sided-3}
	Fix $\tilde{\eps}\in (0,1)$. Then, there exists $\eps = \eps(n,Q,\BC^{(0)},\tilde{\eps})\in (0,1)$ such that the following holds. Suppose that $V\in \V_Q$ and $\BC\in \mathfrak{C}_{i_0}$ satisfy:
	\begin{enumerate}
		\item [(1)] $(\w_n 2^n)^{-1}\|V\|(B^{n+1}_2(0))\leq Q+\frac{1}{4}$;
		\item [(2)] $\BC^{(0)}\prec \BC$ are aligned;
		\item [(3)] Hypothesis I holds;
		\item [(4)] $d(\BC\res B_1,\BC^{(0)}\res B_1) < \eps$;
		\item [(5)] $E_{V,\BC}(T^{\BC}_{1/2,7/16}(0))<\eps$.
	\end{enumerate}
	Then, either the conclusion of Lemma \ref{lemma:one-sided-2} holds, or we have:
	\begin{enumerate}
		\item [(E)] $V$ is equally well approximated by a coarser cone, i.e.~there exists $\BC^{(0)}\preceq\Dbf\prec \BC$, which are all aligned, with
		\begin{enumerate}
			\item [(i)] $E_{V,\Dbf}(T^{\BC}_{1/2,7/16}(0))\leq CE_{V,\BC}(T^{\BC}_{1/2,7/16}(0))$, for constant $C = C(n,k,\BC^{(0)})\in (0,\infty)$;
			\item [(ii)] $E_{V,\Dbf}(T^{\BC}_{1/2,7/16}(0))<\tilde{\eps}$ and $\dist_\H(\Dbf\res B_1,\BC^{(0)}\res B_1)<\tilde{\eps}$.
		\end{enumerate}
	\end{enumerate}
\end{lemma}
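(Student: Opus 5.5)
The plan is to reduce Lemma \ref{lemma:one-sided-3} to Lemma \ref{lemma:one-sided-2} by a finite descent down the partial order $\prec$, in the same way as the Replacement Lemma (Lemma \ref{lemma:replacement}) but for the one-sided toroidal excess (cf.~Remark \ref{remark:replacements}). Let $\beta=\beta(n,Q,\BC^{(0)})$ and $\eps(\cdot)$ denote the constants given by Lemma \ref{lemma:one-sided-2}. Write $\mathcal{T}:=T^{\BC}_{1/2,7/16}(0)$. If hypothesis (6) of Lemma \ref{lemma:one-sided-2} holds, i.e.~$E_{V,\BC}(\mathcal{T})<\beta\,\mathcal{E}_{V,\BC}(\mathcal{T})$, then, provided $\eps$ is no larger than the $\eps$ of Lemma \ref{lemma:one-sided-2}, that lemma's conclusion holds and we are done. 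So assume $E_{V,\BC}(\mathcal{T})\geq\beta\,\mathcal{E}_{V,\BC}(\mathcal{T})$.

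In this case pick an aligned $\Dbf_1\prec\BC$ with $E_{V,\Dbf_1}(\mathcal{T})\leq 2\mathcal{E}_{V,\BC}(\mathcal{T})\leq 2\beta^{-1}E_{V,\BC}(\mathcal{T})$. If $\Dbf_1\succeq\BC^{(0)}$ and $\Dbf_1$ is close to $\BC^{(0)}$, then (E) holds with $\Dbf=\Dbf_1$ and $C=2\beta^{-1}$. The smallness $E_{V,\Dbf_1}(\mathcal{T})<\tilde\eps$ is immediate once $\eps<\beta\tilde\eps/2$.

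The key point to verify is closeness in Hausdorff distance, $\dist_\H(\Dbf_1\res B_1,\BC^{(0)}\res B_1)<\tilde\eps$, together with $\BC^{(0)}\preceq\Dbf_1$. The danger is that the infimum defining $\mathcal{E}_{V,\BC}(\mathcal{T})$ is almost attained by cones with (half-)hyperplanes far from $\BC^{(0)}$. These are cones whose extra planes only slightly meet $\spt\|V\|$, since the one-sided excess does not see planes on which $V$ has no mass. I would handle this by a compactness/contradiction argument. Take sequences with $\eps_j\downarrow0$ and pass to the varifold limit $W$ of $V_j\res\mathcal{T}_j$, as in the proof of Lemma \ref{lemma:one-sided-2}. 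The limit $W$ is supported in $\spt\|\BC^{(0)}\|$. Then restrict the minimisation to cones whose every (half-)hyperplane lies within $\tilde\eps/2$ of a (half-)hyperplane of $\BC^{(0)}$. This restriction costs at most a bounded factor: any (half-)hyperplane of $\Dbf_1$ far from $\BC^{(0)}$ can be rotated onto the nearest one in $\BC^{(0)}$, or onto the corresponding (half-)hyperplane in $\BC$. This does not increase $\dist(\cdot,\Dbf_1)$ on $\spt\|V\|\cap\mathcal{T}$ by more than $CE_{V,\BC}(\mathcal{T})$, since $V$ is $E_{V,\BC}$-close to $\BC$ and $\BC$ is $\eps$-close to $\BC^{(0)}$. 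The multiplicity structure is kept consistent with the class ordering, so that $\BC^{(0)}\preceq\Dbf_1\prec\BC$ remains true. Since $\BC^{(0)}\prec\BC$, alignment and $d(\BC,\BC^{(0)})<\eps$ ensure that the coarser aligned cones near $\BC^{(0)}$ lying in classes between $\BC^{(0)}$ and $\BC$ form a nonempty admissible family. This is where I expect the main obstacle: making sure the modified near-minimiser stays in a class between $\BC^{(0)}$ and $\BC$, rather than merely being coarser than $\BC$.

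If $\Dbf_1$ happens to be in the same class as $\BC^{(0)}$, we stop. Otherwise we could either stop immediately with (E), or, for a cleaner statement, iterate: apply the dichotomy again with $\Dbf_1$ in place of $\BC$. Since (E) only asks for some coarser $\Dbf$, no iteration is actually required; a single step suffices. The constants then depend only on $n,Q,\BC^{(0)}$ via $\beta$. Choosing $\eps\le\min\{\eps_{\ref{lemma:one-sided-2}},\beta\tilde\eps/(2C)\}$ completes the argument.
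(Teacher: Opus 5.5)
Your first two paragraphs are the paper's proof, which consists only of this dichotomy. If (6) of Lemma~\ref{lemma:one-sided-2} holds, apply that lemma. Otherwise pick $\Dbf\prec\BC$ with $E_{V,\Dbf}(\mathcal{T})\leq\frac{3}{2}\mathcal{E}_{V,\BC}(\mathcal{T})\leq\frac{3}{2}\beta^{-1}E_{V,\BC}(\mathcal{T})$ (you use the factor $2$); this gives (E)(i), and the excess bound in (E)(ii) once $\eps$ is small relative to $\beta\tilde{\eps}$. The paper then simply asserts that (E) follows for $\eps$ small. It gives no argument for $\dist_\H(\Dbf\res B_1,\BC^{(0)}\res B_1)<\tilde{\eps}$ or for $\BC^{(0)}\preceq\Dbf$. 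You are right that neither is automatic for an arbitrary near-minimiser. Take $\BC^{(0)}=|P_1|+|P_2|$, $\BC\in\H_{n-1}(4)$ close to it, and $V=|\widetilde{P}_1|$ for a hyperplane $\widetilde{P}_1\supseteq S(\BC)$ close to $P_1$. Then (6) fails, and $|\widetilde{P}_1|+|P'|$ has zero one-sided excess for every hyperplane $P'\neq\widetilde{P}_1$ containing $S(\BC)$, including ones far from $P_2$. So $\Dbf$ must be chosen with care, and you have correctly located a point the paper passes over.

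Your repair, however, does not establish this point, for four reasons.

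\emph{Two-sidedness.} You only require each (half-)hyperplane of $\Dbf$ to be near one of $\BC^{(0)}$. But (E)(ii) is a Hausdorff distance, so every (half-)hyperplane of $\BC^{(0)}$ must also be near one of $\Dbf$. Rotating a far (half-)hyperplane onto the nearest one of $\BC^{(0)}$ may land on one that is already covered: in the example, if $P'$ is nearer to $P_1$ than to $P_2$, this gives $|\widetilde{P}_1|+|P_1|$, still far from $P_2$.

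\emph{The cost bound.} Your own reasoning ($V$ is $E_{V,\BC}$-close to $\BC$, and $\BC$ is $\eps$-close to $\BC^{(0)}$) only yields an error of order $E_{V,\BC}(\mathcal{T})+\eps$. But (E)(i) needs $CE_{V,\BC}(\mathcal{T})$ with $C$ independent of $\eps$, and $E_{V,\BC}(\mathcal{T})$ can be much smaller than $\eps$. If you instead rotate onto a (half-)hyperplane of $\BC$, there is no ``corresponding'' one for a far (half-)hyperplane. The points of $V$ that were closest to it sit near its intersection with $\spt\|\BC\|$, possibly near a different (half-)hyperplane of $\BC$ than your target. In any case, a pointwise bound by the $L^2$ quantity $E_{V,\BC}(\mathcal{T})$ is not available without further argument.

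\emph{The class constraint.} Rotations do not change the class of a cone, so they cannot repair $\BC^{(0)}\not\preceq\Dbf_1$. You do not say how to change multiplicities or the number of distinct (half-)hyperplanes while keeping $\Dbf\prec\BC$.

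\emph{Scale.} The varifold limit $W$ carries only scale-one information, so it cannot produce an estimate linear in $E_{V,\BC}(\mathcal{T})$. An actual argument would have to work at that scale, normalising by $E_{V,\BC}(\mathcal{T})$ as in the blow-ups in the proof of Lemma~\ref{lemma:one-sided-2}. It would need to show two things: that the (half-)hyperplanes of the near-minimiser carrying mass of $V$ are $CE_{V,\BC}(\mathcal{T})$-close to those of $\BC$, and that the remaining ones can be repositioned to cover $\BC^{(0)}$ while staying in a class between $\BC^{(0)}$ and $\BC$.
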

\begin{proof}
	If one has $E_{V,\BC}(T^{\BC}_{1/2,7/16}(0))<\beta\mathcal{E}_{V,\BC}(T^{\BC}_{1/2,7/16}(0))$ for sufficiently small $\beta = \beta(n,Q,\BC^{(0)})\in (0,1)$, the result follows from Lemma \ref{lemma:one-sided-2}. If this were to fail, then one can simply choose $\Dbf\prec \BC$ with $E_{V,\Dbf}(T^{\BC}_{1/2,7/16}(0)) \leq \frac{3}{2}\mathcal{E}_{V,\BC}(T^{\BC}_{1/2,7/16}(0))$, from which (E) follows, provided $\eps$ is sufficiently small depending also on $\tilde{\eps}$.
\end{proof}

Armed with Lemma \ref{lemma:one-sided-1}, Lemma \ref{lemma:one-sided-2}, and Lemma \ref{lemma:one-sided-3}, the proofs of Theorem \ref{thm:main-L^2-estimate} and Corollary \ref{cor:main-L^2-estimate} are now extremely similar to the arguments seen in \cite[Section 3]{BKMW25} (cf.~\cite[Section 6]{BK17}). As such, in the interest of length and avoiding unnecessary repetition, we omit the details and refer the reader to \cite[Section 3]{BKMW25}.

As such, by induction, we have now proved Theorem \ref{thm:main-L^2-estimate} and Corollary \ref{cor:main-L^2-estimate} by induction, and so can assume their validity for any $\BC\in \CC$.

\bigskip

\part{The Fine Regularity Theorem}\label{part:regularity}

In this final part of the paper we will prove the following \emph{fine} $\eps$-regularity theorem, which directly implies Theorem \ref{thm:main-reg}.
\begin{thmx}\label{thm:fine-reg}
	There exist $\eps_0,\beta_0\in (0,1)$ depending only on $n,Q,\BC^{(0)}$ such that if $V\in \V_Q$ and $\BC\in \CC$ satisfy:
	\begin{itemize}
		\item [(i)] $\Theta_V(0)\geq Q$;
		\item [(ii)] $d(V\res B^{n+1}_2(0), \BC^{(0)}\res B^{n+1}_2(0))<\eps_0$;
		\item [(iii)] $d(\BC\res B^{n+1}_1(0), \BC^{(0)}\res B^{n+1}_1(0))<\eps_0$, and $\BC$ is aligned with $\BC^{(0)}$;
		\item [$(\dagger)$] $F_{V,\BC}<\beta_0\mathcal{F}_{V,\BC}$;
	\end{itemize}
	then, there is a cone $\widetilde{\BC} \succeq \BC$ aligned and a rotation $\Gamma:\R^{n+1}\to \R^{n+1}$ with
	$$d(\widetilde{\BC}\res B_1,\BC\res B_1) \leq CF_{V,\BC} \qquad \text{and}\qquad \|\Gamma-\id\|\leq C\mathcal{F}_{V,\BC}^{-1}F_{V,\BC},$$
	such that $\widetilde{\BC}$ is the unique tangent cone to $\Gamma_\#^{-1}V$ at $0$, and
	$$\sigma^{-n-2}\int_{B_{\sigma}^{n+1}(0)}\dist^2(x,\widetilde{\BC})\, \ext\|\Gamma_\#^{-1}V\|(x) \leq C\sigma^{2\alpha}F_{V,\BC}^2 \qquad \text{for all }\sigma\in (0,1/4).$$
	Furthermore, the corresponding conclusions to those in Theorem \ref{thm:main-reg} hold, depending on the form of $\BC$, now with the estimates in terms of $F_{V,\BC}$ rather than $E_{V,\BC^{(0)}}$. Here, $C = C(n,Q,\BC^{(0)})\in (0,\infty)$ and $\alpha = \alpha(n,Q,\BC^{(0)})\in (0,1)$.
\end{thmx}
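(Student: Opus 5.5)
The proof will be by downward induction on the level $i$ of $\BC$ in the ordering $\mathfrak{C}_1,\dotsc,\mathfrak{C}_{N_0}$. The base case is the finest classes $\mathcal{P}_{\max\{0,n+1-Q\}}(Q)$ and $\H_{n-1}(2Q)$. At a fixed level $i$ we may assume Theorem \ref{thm:fine-reg} for all cones strictly finer than $\BC$, while Theorem \ref{thm:main-L^2-estimate}, Corollary \ref{cor:main-L^2-estimate} and the blow-up class $\FB_i$ of Section \ref{sec:blow-up} are already available for every level.

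The core is an \emph{excess decay lemma}. We aim to show there are $\theta=\theta(n,Q,\BC^{(0)})\in(0,1/8)$ and $\eps,\beta$ with the following property: if $V,\BC$ satisfy the hypotheses, then one of two alternatives holds.
\begin{itemize}
\item Some aligned $\BC'$ in the same class as $\BC$ satisfies $d(\BC'\res B_1,\BC\res B_1)\leq CF_{V,\BC}$ and $\theta^{-n-2}\int_{B_\theta}\dist^2(x,\BC')\,\ext\|V\|\leq \tfrac12\theta^{2\alpha}F^2_{V,\BC}$, together with the corresponding two-sided bound.
\item Some finer cone $\BC''\succ\BC$ satisfies the hypotheses of Theorem \ref{thm:fine-reg} at a comparable scale. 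In this case the inductive hypothesis finishes the proof directly.
\end{itemize}
We prove the lemma by contradiction, blowing up a failing sequence relative to $\BC_j$. This produces $v\in\FB_i$, with strong $L^2$ convergence by Lemma \ref{lemma:non-con}.

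The decay then reduces to a $C^{1,\alpha}$ estimate for blow-ups. Concretely, we want $v$ to be approximable at small scales by a linear blow-up $\pi\in\mathfrak{L}_{i'}$ with $i'\geq i$, with error $\leq C\rho^{1+\alpha}$. We establish this in the following order:
\begin{enumerate}
\item[(a)] Hölder continuity of $v$ up to $\D$. This uses the Hardt--Simon inequality, Theorem \ref{thm:blow-up-properties}(6), together with Corollary \ref{cor:main-L^2-estimate}(1).
\item[(b)] Classification of homogeneous degree one elements of $\FB_i$: they are linear functions, possibly only on half-hyperplanes, arising as blow-ups of cones of level $\geq i$.
\item[(c)] A ``reverse Hardt--Simon'' argument, combining (b) with the closure properties (2)--(5) of Theorem \ref{thm:blow-up-properties} and compactness. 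This yields a uniform decay $\rho^{-n-2}\int_{B_\rho(z)}|v-\widetilde\lambda(z)-\pi_z|^2\leq C\rho^{2\alpha}$ at every $z\in\D$.
\end{enumerate}
Away from $\D$, harmonicity (Theorem \ref{thm:blow-up-properties}(1)) gives interior estimates. In Case 1, where $\D\subsetneq S(\BC^{(0)})$ or $\sfrak\leq n-2$, $v$ is continuous and harmonic away from a closed set. Here we argue removability: in codimension $\geq2$ directly, and in the $\mathcal{P}_{n-1}$ gap case by $C^1$ matching across gaps, which we obtain from Theorem \ref{thm:HLW} via Lemma \ref{lemma:gaps}. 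In Case 2 we use boundary regularity for harmonic functions on half-planes with the matching conditions at $S$, as in \cite[Section 16]{Wic14}.

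\textbf{Main obstacle: step (b), in the hardest case $\sfrak=\sfrak^{(0)}=n-1$ with $\BC^{(0)}\in\mathcal{P}_{n-1}$.} A degree one homogeneous $v$ is linear on each half-hyperplane, but it may assign different linear functions to sheets that share a multiplicity plane. We cannot integrate these out using Theorem \ref{thm:blow-up-properties}(4), because they correspond to a \emph{finer} cone. The plan is as follows. Build cones $\widehat{\BC}_j$ from the blow-up, as in the proof of \eqref{E:concentration-repeated}. If they are finer, pass to $(\tilde\eps_j,\tilde\beta_j)$-replacements (Lemma \ref{lemma:replacement}) and verify $(\dagger)$ for them. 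Then apply the inductive fine $\eps$-regularity theorem to $V_j$ relative to these finer cones. This gives $V_j$ as a $C^{1,\alpha}$ perturbation with decay, and that structure contradicts the assumption that $\BC_j$ satisfied $(\dagger)$ with $\beta_j\downarrow0$ — equivalently, it shows that the homogeneous component of $v$ must come from a same-level cone.

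Density gaps threaten to leave the linear pieces unmatched across parts of $S$. We handle them with Lemma \ref{lemma:no-gaps} and the $C^1$ information from Theorem \ref{thm:HLW}.

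Once the decay lemma holds, we iterate it at scales $\theta^k$, re-choosing cones in the same class. The distances between successive cones sum geometrically. This gives a unique tangent cone $\widetilde\BC$ and the rotation $\Gamma$ aligning spines, with $\|\Gamma-\id\|\leq C\mathcal{F}^{-1}_{V,\BC}F_{V,\BC}$ from Corollary \ref{cor:main-L^2-estimate}(1). The same iteration at every $z$ with $\Theta_V(z)\geq Q$ yields the $C^{1,\alpha}$ graphical conclusions of Cases A--C, via a standard patching as in \cite[Section 16]{Wic14}.
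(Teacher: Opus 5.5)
Your overall architecture matches the paper: downward induction on the level, a same-class/finer-class excess decay dichotomy, blow-ups, H\"older continuity, classification of degree-one homogeneous blow-ups, and a reverse Hardt--Simon argument. However, you use Hypothesis F in the wrong way, and this is where the argument breaks.

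\textbf{Finer-level blow-ups cannot be ruled out.} In your ``main obstacle'' paragraph you apply the inductive Theorem \ref{thm:fine-reg} relative to finer cones and aim for a contradiction with $(\dagger)$ for $\BC_j$, concluding that the homogeneous part of $v$ comes from a same-level cone. No such contradiction exists. Take $Q=3$, $\BC_j=\BC^{(0)}=2|P_1|+|P_2|\in\mathcal{P}_{n-1}(2)$ and $V_j=|P_1^+|+|P_1^-|+|P_2|$, where $P_1^{\pm}$ is $P_1$ rotated about $S(\BC^{(0)})$ by $\pm\delta_j\downarrow 0$. All hypotheses hold (here $\mathcal{F}_{V_j,\BC_j}\geq c$). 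Yet the blow-up is a pair of distinct linear functions on $P_1$, i.e.\ an element of $\mathfrak{L}_\ell$ with $\ell>i_0$. So Theorem \ref{thm:classification} can only give $\ell\geq i_0$, as your own item (b) says.

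Consequently your step (c) does not close. In the reverse Hardt--Simon compactness argument, suppose the degree-one limit $w_*$ lies in $\mathfrak{L}_\ell$ with $\ell>i_0$. Theorem \ref{thm:blow-up-properties}(4) does not let you subtract it, since it only removes blow-ups of same-level cones, with one linear function for all sheets of a given $(i,k)$. So ``dehomogenised by $0$'' yields no contradiction, and you get no decay.

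The missing ingredient is Lemma \ref{lemma:blow-up-eps-reg}: Hypothesis F enters as a \emph{regularity} input, not a contradiction. It says a normalised blow-up that is dehomogenised by $0$ and $L^2$-close to some $\psi\in\mathfrak{L}_\ell$, $\ell>i_0$, is $C^{1,\alpha}$ with decay to a finer linear $\widetilde\psi$. It is proved by a replacement-type selection of a finer cone satisfying $(\dagger)$ along the generating sequence, applying Theorem \ref{thm:fine-reg} to it, and blowing up the conclusion. This lemma supplies the second branch of the reverse Hardt--Simon dichotomy. That branch yields, at each $z\in\D$, decay either to a same-level or to a finer linear function, from which Theorem \ref{thm:blow-up-reg} follows.

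\textbf{The hard case of the classification.} The same omission affects the case $\sfrak=n-1$, $\D\subsetneq S$, $\H^{n-2}(\D)=\infty$. There $v$ is defined on all of $P_*$ and is only known to be harmonic on $P_*\setminus\D$, and $\D$ may have positive $\H^{n-1}$-measure. So you cannot assume a degree-one homogeneous $v$ is linear on each half-hyperplane and then merely match pieces across gaps. The information from Theorem \ref{thm:HLW} only gives harmonicity across $S\setminus\D$; it does not remove $\D$.

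The paper handles this as follows:
\begin{itemize}
\item It argues by downward induction on $\dim S(v)$, where $S(v)$ is the translation-invariance subspace of $v$.
\item At $z\in\D\setminus S(v)$ it runs the reverse Hardt--Simon dichotomy, with Lemma \ref{lemma:blow-up-eps-reg} as the alternative; the limiting blow-up has strictly larger invariance subspace. This gives $C^1$ regularity up to $S$ on each side, away from $S(v)$.
\item Then $v(r,y)-v(-r,y)=ar$, and $a=0$ is forced by a point of $S\setminus\D$.
\item Since $v-\kappa_*$ is $C^1$ and vanishes on $\D$ (Lemma \ref{lemma:kappa}), $v$ is harmonic across $\D\setminus S(v)$.
\item Finally $S(v)$, of codimension $\geq 2$, is removed.
\end{itemize}

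\textbf{Smaller points.} The H\"older estimate comes from blowing up the weighted estimate Corollary \ref{cor:main-L^2-estimate}(2), not from the radial-derivative inequality (6). Your same-class decay alternative must also halve the ratio $F/\mathcal{F}$, as in the paper, so that $(\dagger)$ survives the iteration.
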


\textbf{Note:} The case of Theorem \ref{thm:fine-reg} where $\BC$ and $\BC^{(0)}$ have the same level directly implies Theorem \ref{thm:main-reg}, since then one can take $\BC = \BC^{(0)}$; indeed, since $\mathcal{F}_{V,\BC^{(0)}}\geq c(\BC^{(0)})>0$, assumption $(\dagger)$ is automatically true if $\eps_0 = \eps_0(n,Q,\BC^{(0)})$ is sufficiently small. In particular, it suffices to prove Theorem \ref{thm:fine-reg} to complete the proof of Theorem \ref{thm:main-compactness}. Moreover, under the assumptions of Theorem \ref{thm:fine-reg} recall that from Theorem \ref{thm:main-L^2-estimate} we know that $F_{V,\BC}\leq CE_{V,\BC}$, and so one can alternatively replace instances of $F_{V,\BC}$ in the conclusions of Theorem \ref{thm:fine-reg} by $E_{V,\BC}$.

\medskip

Our proof of Theorem \ref{thm:fine-reg} will be by \emph{downwards} induction on the level $i$ of $\BC$; as such, the finest cones are the base case. Recalling Figure \ref{fig:0}, it is perhaps useful to bear in mind that the finest cones will always be in either $\P_{\max\{0,n+1-Q\}}$ or $\H_{n-1}(2Q)$, and so in particular are comprised of sums of \emph{multiplicity one} (half-)hyperplanes.

As such, the following inductive hypothesis will be used:
\begin{itemize}
	\item \textbf{Hypothesis F:} Theorem \ref{thm:fine-reg} holds for all $\BC\in\mathfrak{C}_{i}$ with $i>i_0$.
\end{itemize}
In the base case of Theorem \ref{thm:fine-reg}, when $\BC$ is finest, as all (half-)hyperplanes in $\BC$ have multiplicity one, we will be able to prove Theorem \ref{thm:fine-reg} directly using the properties of blow-ups established in Section \ref{sec:blow-up}. This is consistent with Hypothesis F being vacuous in this case.

\section{$C^{1,\alpha}$ Regularity of Blow-Ups}\label{sec:reg}

In this section, we will use Theorem \ref{thm:main-L^2-estimate}, Corollary \ref{cor:main-L^2-estimate}, and Hypothesis F to prove $C^{1,\alpha}$ regularity of the blow-ups constructed in Section \ref{sec:blow-up}, namely those in the class $\mathfrak{B}_{i_0}$. As such, our blow-up functions arise from sequences $(V_j)_j\subset \V_Q$ and $(\BC_j)_j\subset\mathfrak{C}_{i_0}$ which satisfy, for some sequences $\eps_j,\beta_j\downarrow 0$,
\begin{itemize}
	\item Hypothesis $H(\eps_j)$;
	\item Hypothesis $(\dagger)$: $F_{V_j,\BC_j} \leq \beta_j\mathcal{F}_{V_j,\BC_j}$.
\end{itemize}

\begin{remark}
	We stress that one does not need the regularity results of this section in order to conclude the inductive proofs of Theorem \ref{thm:main-L^2-estimate} and Corollary \ref{cor:main-L^2-estimate}. In fact, if desired one could verify the $C^{0,\alpha}$ regularity of blow-ups within the inductive proof of Theorem \ref{thm:main-L^2-estimate} and Corollary \ref{cor:main-L^2-estimate}.
\end{remark}

\smallskip

Recall that $\D$ denoted the Hausdorff limit of the sets $\D_j := \{z\in B_1:\Theta_{V_j}(z)\geq Q\}$ of good density points. Depending on the form of the cones $\BC_j$ and the set $\D$, as described in the two cases for constructing blow-ups in Section \ref{sec:blow-up}, regularity of blow-ups takes a different meaning in each case:
\begin{itemize}
	\item If $\D$ is a proper subset of an $(n-1)$-dimensional subspace in $B_1$, our blow-ups are comprised of functions defined on subsets of the plane $P_*$. Initially, there are only known to be $C^2$ away from $\D$. We will in fact show that are in $C^{1,\alpha}(P_*\cap B_{1/2})$.
	\item If $\D$ is equal to an $(n-1)$-dimensional subspace (namely, $S(\BC^{(0)})$) in $B_1$, then our blow-ups are comprised of functions defined on the half-hyperplane $H_*$. In this case, we will show that they are in fact in $C^{1,\alpha}(\overline{H_*}\cap B_{1/2})$.
\end{itemize}

\subsection{$L^2$ convergence along the blow-up sequence}

Here we justify that one has strong convergence in $L^2$ along the blow-up sequence, namely all the way up to the spine. We prove this for $v\in \FB_{i_0}$. Let $(V_j)_j\subset\V$, $(\BC_j)_j\subseteq \mathfrak{C}_{i_0}$ be the sequences generating $v$, and let us write
$$v_j^{(i,k,s)} := E_{V_j,\BC_j}^{-1}u_j^{(i,k,s)}$$
for the sequence obeying $v_j^{(i,k,s)}\to v^{(i,k,s)}$, where $u_j^{(i,k,s)}$ is as in Section \ref{sec:blow-up}.

To prove that the convergence is in $L^2_{\text{loc}}(B_1)$, by which we mean in $L^2(U)$ for any open set $U$ which is contained within a compact subset of $\overline{\Omega_*}\cap B_1$, notice that Theorem \ref{thm:main-L^2-estimate} gives strong $L^2$ convergence on open subsets of $\Omega_*\setminus \D$. Thus, in fact all we need to show is the following $L^2$ non-concentration estimate about $\D$.

\begin{lemma}\label{lemma:non-con-prelim}
	Fix $\tau\in (0,1/2056)$. Then, there exist $\eps_0,\beta_0$ depending only on $n,Q,\BC^{(0)},\tau$ such that the following is true. If $V,\BC$ satisfy the assumptions of Theorem \ref{thm:main-L^2-estimate} with $\eps_0,\beta_0$ therein, then
	$$\int_{B_\tau(\D_V)\cap B_{15/16}^{n+1}(0)}\dist^2(x,\BC)\, \ext\|V\|(x) \leq C\tau^{n-\sfrak-1/4}E^2_{V,\BC}.$$
	Here, $\D_V = \{z\in \spt\|V\|\cap B_1:\Theta_V(z)\geq Q\}$, $\sfrak = \dim S(\BC)$, and $C = C(n,Q,\BC^{(0)})\in (0,\infty)$ is independent of $\tau$.
\end{lemma}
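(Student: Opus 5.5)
The plan is to deduce the non-concentration estimate from Corollary \ref{cor:main-L^2-estimate}(2), applied at every good density point, together with a covering argument. The work splits into two steps: a pointwise estimate at each $z\in\D_V\cap B_{15/16}$, and then a count of how many balls of radius $\tau$ are needed to cover $B_\tau(\D_V)\cap B_{15/16}$. The case $\D_V=\emptyset$ is trivial. By Theorem \ref{thm:main-L^2-estimate}(1) we have $\D_V\subset B_{\tau'}(S(\BC))$ for any fixed $\tau'$, provided $\eps_0,\beta_0$ are small enough. Hence $\D_V\cap B_{15/16}$ lies in a thin neighbourhood of an $\sfrak$-dimensional disc, and it can be covered by $N\le C(n)\tau^{-\sfrak}$ balls $B_{2\tau}(z_\ell)$ with $z_\ell\in\D_V$. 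Since $B_\tau(\D_V)\subset\bigcup_\ell B_{4\tau}(z_\ell)$, it suffices to prove, uniformly in $z\in\D_V\cap B_{15/16}$,
$$\int_{B_{4\tau}(z)}\dist^2(x,\BC)\,\ext\|V\|(x)\le C\tau^{n-1/4}E_{V,\BC}^2 .$$
Summing over $\ell$ then gives the bound $C\tau^{n-\sfrak-1/4}E^2_{V,\BC}$.

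For the pointwise bound, fix $z\in\D_V$ and a radius $\rho\in(0,3/4)$ of order one, chosen so that $B_\rho(z)\subset B_1$; for $z$ near $\del B_{15/16}$ one rescales slightly, as in Remark (c) after Theorem \ref{thm:main-L^2-estimate}. Corollary \ref{cor:main-L^2-estimate}(2) gives
$$\int_{B_{5\rho/8}(z)}\frac{\dist^2(x,(\tau_z)_\#\BC)}{|x-z|^{n+7/4}}\,\ext\|V\|\le C\int_{B_\rho(z)}\dist^2(x,(\tau_z)_\#\BC)\,\ext\|V\|.$$
On $B_{4\tau}(z)$ the weight $|x-z|^{-n-7/4}$ is at least $(4\tau)^{-n-7/4}$. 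This yields $\int_{B_{4\tau}(z)}\dist^2(x,\tau_z\BC)\le C\tau^{n+7/4}\int_{B_\rho(z)}\dist^2(x,\tau_z\BC)$, which is even better than needed.

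It remains to exchange the translated cone for $\BC$ itself. The pointwise inequality is $\dist(x,(\tau_z)_\#\BC)\le\dist(x,\BC)+C\,\dist_\H(\BC\cap B_2,(\tau_z)_\#\BC\cap B_2)$, and the Hausdorff term is controlled by $|\xi|$-type quantities with $\xi=\pi^\perp_{S(\BC)}(z)$. When $\sfrak=\sfrak^{(0)}$, Corollary \ref{cor:main-L^2-estimate}(1) gives $|\xi|\le CE_{V,\BC}$ directly. In general it gives $|\pi^\perp_{P^{(i)}}\xi|+E_{V,\BC^{(0)}}|\pi_{P^{(i)}}\xi|\le CE_{V,\BC}$, which is precisely the combination bounding $\dist_\H(\BC,\tau_z\BC)$ in $B_2$, as in the elementary geometry used in the proof of \eqref{E:concentration-repeated}: the tilt of each plane of $\BC$ relative to $P^{(i)}$ is $O(E_{V,\BC^{(0)}})$. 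Consequently the right-hand side is at most $C(E_{V,\BC}^2+\|V\|(B_1)E^2_{V,\BC})$. On the left-hand side the same substitution costs $\|V\|(B_{4\tau}(z))\cdot CE^2_{V,\BC}\le C\tau^nE^2_{V,\BC}$, using the monotonicity mass bound. Combining the two gives $\int_{B_{4\tau}(z)}\dist^2(x,\BC)\le C(\tau^{n+7/4}+\tau^n)E^2_{V,\BC}\le C\tau^{n-1/4}E^2_{V,\BC}$.

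The main obstacle is the cone-exchange step: one has to verify that Corollary \ref{cor:main-L^2-estimate}(1) really controls $\dist_\H(\BC,\tau_z\BC)$ in the directions lying in $S(\BC^{(0)})\setminus S(\BC)$, where only the weaker weighted bound is available. The covering count is the secondary issue. If $\D_V$ were not concentrated near $S(\BC)$, the number of balls would be $\tau^{-(n-1)}$ rather than $\tau^{-\sfrak}$, so the concentration statement in Theorem \ref{thm:main-L^2-estimate}(1) must be applied with $\tau'$ comparable to $\tau$. This is why $\eps_0,\beta_0$ depend on $\tau$.
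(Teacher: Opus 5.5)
Your proposal is correct and follows essentially the same route as the paper: you use the concentration in Theorem \ref{thm:main-L^2-estimate}(1) to cover by $O(\tau^{-\sfrak})$ balls, Corollary \ref{cor:main-L^2-estimate}(2) at each good density point, and Corollary \ref{cor:main-L^2-estimate}(1) (via the elementary geometry of \cite{BK17}) to exchange $(\tau_z)_\#\BC$ for $\BC$. The only cosmetic difference is that the paper makes the exchange inside a weighted integral with weight $|x-z|^{-(n-1/4)}$ over $B_{1/512}(z)$ and only then restricts to $B_{4\tau}(z)$, whereas you make it directly on $B_{4\tau}(z)$ using $\|V\|(B_{4\tau}(z))\le C\tau^n$, which in fact gives the slightly better power $\tau^{n-\sfrak}$.
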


\begin{proof}
	For any $\tau>0$, provided $\eps_0,\beta_0$ are sufficiently small depending on $n,Q,\BC^{(0)},\tau$, by Theorem \ref{thm:main-L^2-estimate}(1) we know that $\D_V\subset B_{\tau}(S(\BC))$. Since $S(\BC)$ is a subspace of dimension $\sfrak$, we may choose a finite cover $\{B_{2\tau}(y_j)\}_{j=1}^N$ where $y_j\in S(\BC)$ for all $j$ such that
	\begin{itemize}
		\item $B_{\tau}(S(\BC))\subset \bigcup_{i=1}^N B_{2\tau}(y_j)$; in particular, $\D_V\subset \bigcup_{i=1}^N B_{2\tau}(y_j)$;
		\item $N \leq C\cdot\frac{\tau^{n+1-\sfrak}}{\tau^{n+1}} = C\tau^{-\sfrak}$, where $C = C(n)$.
	\end{itemize}
	For each $j$ such that $\D_V\cap B_{3\tau}(y_j)\neq \emptyset$, choose $z_j \in \D_V\cap B_{3\tau}(y_j)$; write $\mathcal{J}$ for the set of such $j$. Then we have $B_{\tau}(\D_V)\subset \bigcup_{j\in \mathcal{J}}B_{4\tau}(z_j)$, and $|\mathcal{J}|\leq N \leq C\tau^{-\sfrak}$.
	
	For any $z\in \D(V)\cap B_{125/126}^{n+1}(0)$, if one takes $\eps_0,\beta_0$ sufficiently small in Corollary \ref{cor:main-L^2-estimate}(2) with $\rho = 1/256$ therein, we get
	$$\int_{B_{1/512}(z)}\frac{\dist^2(x,(\tau_z)_\#\BC)}{|x-z|^{n+7/4}}\, \ext\|V\|(x) \leq CE^2_{V,\BC}$$
	where $C = C(n,Q,\BC^{(0)})$. Thus, we have
	\begin{align*}
	\int_{B_{1/512}(z)}\frac{\dist^2(x,\BC)}{|x-z|^{n-1/4}}&\, \ext\|V\|(x)\\
	& \leq \int_{B_{1/512}(z)}\frac{\dist^2(x,(\tau_z)_\#\BC)}{|x-z|^{n-1/4}}\, \ext\|V\|(x) + C\dist^2_\H(\BC\res B_1,(\tau_z)_\#\BC\res B_1)\\
	& \leq C\int_{B_{1/512}}\frac{\dist^2(x,(\tau_z)_\#\BC)}{|x-z|^{n+7/4}}\, \ext\|V\|(x) + C\dist^2_\H(\BC\res B_1,(\tau_z)_\#\BC\res B_1)\\
	& \leq CE_{V,\BC}^2 + C\dist^2_\H(\BC\res B_1, (\tau_z)_\#\BC\res B_1).
	\end{align*}
	Writing $\xi:= \pi^\perp_{S(\BC)}(z)$, recalling that (cf.~\cite[(3.4)--(3.5)]{BK17})
	$$\dist_\H(\BC\res B_1,(\tau_z)_\#\BC\res B_1) \leq C\max_i\{|\pi^\perp_{P^{(i)}}(\xi)| + E_{V,\BC^{(0)}}|\pi_{P^{(i)}}(\xi)|\}$$
	and so by Corollary \ref{cor:main-L^2-estimate}(1) we have
	$$\dist_\H(\BC\res B_1, (\tau_z)_\#\BC\res B_1) \leq CE_{V,\BC},$$
	and so combining we have
	$$\int_{B_{1/512}(z)}\frac{\dist^2(x,\BC)}{|x-z|^{n-1/4}}\, \ext\|V\|(x) \leq CE_{V,\BC}^2.$$
	In particular, for any $j\in \mathcal{J}$, we have
	$$\int_{B_{4\tau}(z_j)}\dist^2(x,\BC)\, \ext\|V\|(x) \leq C\tau^{n-1/4}E_{V,\BC}^2.$$
	If we sum this up over $j$, we get
	\begin{align*}
	\int_{B_\tau(\D_V)\cap B^{n+1}_{15/16}(0)}\dist^2(x,\BC)\, \ext\|V\|(x) & \leq \sum_{j\in \mathcal{J}}\int_{B_{4\tau}^{n+1}(z_j)}\dist^2(x,\BC)\, \ext\|V\|(x)\\
	& \leq N\cdot C\tau^{n-1/4}E_{V,\BC}^2\\
	& \leq C\tau^{n-\sfrak-1/4}E_{V,\BC}^2,
	\end{align*}
	which is the claimed result.
\end{proof}	

As already explained, as a direct consequence of Lemma \ref{lemma:non-con-prelim} and Theorem \ref{thm:main-L^2-estimate}(1) we get the full $L^2$ non-concentration result about $S(\BC)$.

\begin{lemma}\label{lemma:non-con}
	Fix $\tau\in (0,1/2056)$. Then, there exist $\eps_0,\beta_0$ depending only on $n,Q,\BC^{(0)},\tau$ such that the following is true. If $V,\BC$ satisfy the assumptions of Theorem \ref{thm:main-L^2-estimate} with $\eps_0,\beta_0$ therein, then
	$$\int_{B_\tau(S(\BC))\cap B_{15/16}^{n+1}(0)}\dist^2(x,\BC)\, \ext\|V\|(x) \leq C\tau^{n-\sfrak-1/4}E^2_{V,\BC}.$$
	Here, $\sfrak = \dim S(\BC) \leq n-1$ and $C = C(n,Q,\BC^{(0)})\in (0,\infty)$ is independent of $\tau$.
\end{lemma}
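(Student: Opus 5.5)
The plan is to split $B_\tau(S(\BC))\cap B_{15/16}^{n+1}(0)$ into the part near good density points and the part inside density gaps. Lemma \ref{lemma:non-con-prelim} already handles the first piece, $B_\tau(\D_V)\cap B_{15/16}$, with the bound $C\tau^{n-\sfrak-1/4}E_{V,\BC}^2$. So it remains to control
$$\int_{(B_\tau(S(\BC))\setminus B_\tau(\D_V))\cap B_{15/16}}\dist^2(x,\BC)\,\ext\|V\|(x).$$

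First I would cover $B_\tau(S(\BC))\cap B_{15/16}$ by balls $B_{2\tau}(y_j)$ with $y_j\in S(\BC)$, where $j=1,\dotsc,N$ and $N\leq C\tau^{-\sfrak}$, exactly as in the proof of Lemma \ref{lemma:non-con-prelim}. I would then sort the balls into two types. If $B_{8\tau}(y_j)\cap\D_V\neq\emptyset$, pick $z_j\in \D_V\cap B_{8\tau}(y_j)$; then $B_{2\tau}(y_j)\subset B_{10\tau}(z_j)$. The weighted estimate derived inside that proof,
$$\int_{B_{1/512}(z)}\frac{\dist^2(x,\BC)}{|x-z|^{n-1/4}}\,\ext\|V\|\leq CE_{V,\BC}^2,$$
which comes from Corollary \ref{cor:main-L^2-estimate}, then gives a contribution of at most $C\tau^{n-1/4}E_{V,\BC}^2$ from each such ball. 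Otherwise $B_{8\tau}(y_j)\subset\{\Theta_V<Q\}$. In that case the final bullet of Theorem \ref{thm:main-L^2-estimate}(1), applied with $2\tau$ in place of $\tau$ (and with constants now depending on $\tau$), tells us two things: $\BC\notin\H_{n-1}$, and $V\res B_{4\tau}(y_j)$ is a sum of graphs $u^{(i,j,k)}$ over the full hyperplanes $P^{(i,j)}$.

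The main obstacle is the gap balls. The $C^4$ bound from Theorem \ref{thm:main-L^2-estimate}(1) carries a constant $C_*(\tau)$, which is not good enough, because we need the $\tau$-independent gain $\tau^{n-1/4}$. To get around this I would use that each $u^{(i,j,k)}$ solves the minimal surface equation with small gradient on $P^{(i,j)}\cap B_{4\tau}(y_j)$. Then $|u|^2$ is (approximately) subharmonic, so the mean value inequality bounds $\sup_{B_{2\tau}(y_j)}|u|^2$ by $C\tau^{-n}\int_{B_{4\tau}(y_j)}|u|^2$ with a $\tau$-independent $C$. Since $\dist(x,\BC)\leq|u^{(i,j,k)}|$ on these graphs, this yields
$$\int_{B_{2\tau}(y_j)}\dist^2(x,\BC)\,\ext\|V\|\leq C\tau^{n}\sup_{B_{2\tau}(y_j)}|u|^2\leq C\int_{B_{4\tau}(y_j)\setminus B_{\tau/2}(S(\BC))}|u|^2 .$$
The reduction to the region away from $S(\BC)$ needs justification; I would use the fact that the harmonic-type function $u$ cannot concentrate its $L^2$ mass on a subspace of codimension at least $1$ in the plane, which follows from the mean value inequality again on slightly shifted balls. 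On that region I would control $|u|^2\leq C\dist^2(x,\BC)$ pointwise, via a transversality argument valid once $\eps_0$ is small.

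To sum the gap-ball contributions without losing a factor of $N$, I would rescale the weighted estimate Theorem \ref{thm:main-L^2-estimate}(3), which bounds $\int\dist^2(x,\BC)\,|x|^{-n-7/4}\,\ext\|V\|$ by $CE_{V,\BC}^2$. Translating it along $S(\BC)$ to points $y_j$ is harmless, since $\BC$ is invariant under such translations. This gives
$$\int_{B_{4\tau}(y_j)}\dist^2(x,\BC)\,\ext\|V\|\leq C\tau^{n+7/4}E_{V,\BC}^2\leq C\tau^{n-1/4}E_{V,\BC}^2 .$$
If recentring Theorem \ref{thm:main-L^2-estimate}(3) at $y_j$ is not legitimate, because $\Theta_V(y_j)<Q$ there, I would fall back on a single global integral: the annular bound near $S(\BC)$ obtained by integrating Theorem \ref{thm:main-L^2-estimate}(5) radially. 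Finally, summing over at most $C\tau^{-\sfrak}$ balls of each type gives $C\tau^{n-\sfrak-1/4}E_{V,\BC}^2$, which is the claimed bound.
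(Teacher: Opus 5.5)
Your decomposition is the one the paper intends: it states this lemma as a direct consequence of Lemma~\ref{lemma:non-con-prelim} and Theorem~\ref{thm:main-L^2-estimate}(1). Your treatment of balls within $8\tau$ of $\D_V$ is fine, and you are right that the $\tau$-dependent constant $C_*$ in the last bullet of Theorem~\ref{thm:main-L^2-estimate}(1) is useless here. The gap is in the step meant to produce the gain on gap balls.

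\textbf{Recentring (3) fails.} Theorem~\ref{thm:main-L^2-estimate}(3) cannot be recentred at $y_j$: it rests on hypothesis (i), $\Theta_V\geq Q$ at the centre, and the bound you want from it is false at gap points. Take the paper's own density-gap example. Let $\BC=\BC^{(0)}=2|P_1|+|P_2|$ and $V=|P_1^a|+|P_1^b|+|P_2|$, where $P_1^a,P_1^b$ are generic rotations of $P_1$ by angle $\sim\theta$ fixing an $(n-2)$-plane $L\subset S(\BC)$. Then $\D_V=L\cap B_1$, $E_{V,\BC}\sim\theta$, and all hypotheses hold with $\eps_0,\beta_0$ as small as you like. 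For $y\in S(\BC)$ with $\dist(y,L)=1/2$, the ball $B_{1/4}(y)$ is a gap. In it the sheets $P_1^a,P_1^b$ lie at distance $\sim E_{V,\BC}$ from $\BC$, except within distance $\sim E_{V,\BC}$ of $S(\BC)$. So for $E_{V,\BC}\ll\tau$ you get $\int_{B_{4\tau}(y)}\dist^2(x,\BC)\,\ext\|V\|\sim\tau^nE_{V,\BC}^2$, not $\lesssim\tau^{n+7/4}E_{V,\BC}^2$.

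\textbf{The fallback via (5) fails too.} Theorem~\ref{thm:main-L^2-estimate}(5) is centred at $0$ and only sees the graphs outside $B_\tau(S(\BC))$. Since $\dist(\cdot,S(\BC))$ is homogeneous of degree one, a ray from $0$ to a point of the $\tau$-tube stays in the tube. So radial integration never links the gap to a region where $u$ is controlled.

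\textbf{The comparison step also needs repair.} The bound $|u|\leq C\dist(x,\BC)$ fails near every pairwise intersection $P^{(a)}\cap P^{(b)}$, which for $\sfrak\leq n-2$ is not contained in $B_{\tau/2}(S(\BC))$. Transversality is the wrong mechanism, since planes of $\BC$ near a common plane of $\BC^{(0)}$ are nearly parallel. What is true: under $(\dagger)$, distinct planes of $\BC$ separate at a scale much larger than $E_{V,\BC}$ (cf.\ Lemma~\ref{lemma:separation} and the proof of Lemma~\ref{lemma:fine-rep}). So the comparison fails only in a $\delta$-neighbourhood of these intersections, with $\delta\to0$ as $\eps_0,\beta_0\to0$ for fixed $\tau$, and that thin set is absorbed by taking $\eps_0,\beta_0$ small depending on $\tau$.

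\textbf{The missing idea: work at the scale of the gap.} The true gain on a gap ball is only $\tau^nr_j^{-1/4}$, where $r_j:=\dist(y_j,\D_V)\geq 8\tau$ is the actual size of the gap (finite since $0\in\D_V$). So you must work at scale $r_j$, not $\tau$.
\begin{enumerate}
\item Since $B_{r_j}(y_j)\subset\{\Theta_V<Q\}$, the last bullet of Theorem~\ref{thm:main-L^2-estimate}(1), applied at a dyadic scale comparable to $r_j/4\geq2\tau$, writes $V\res B_{r_j/4}(y_j)$ as a sum of small-gradient minimal graphs $u$ over the planes of $\BC$. Finitely many dyadic scales suffice, so $\eps_0,\beta_0$ still depend only on $\tau$.
\item The scale-invariant local maximum principle for such graphs gives $\sup_{B_{2\tau}(y_j)}|u|^2\leq Cr_j^{-n}\int_{B_{r_j/4}(y_j)}|u|^2$ with $C=C(n)$.
\item After the thin-set absorption, the right-hand side is controlled through a nearest good density point $z_j$, with $|z_j-y_j|=r_j$. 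Use the weighted estimate from the proof of Lemma~\ref{lemma:non-con-prelim}: $\int_{B_{2r_j}(z_j)}\dist^2(x,\BC)\,\ext\|V\|\leq Cr_j^{n-1/4}E_{V,\BC}^2$. If $r_j>1/1024$, simply use $E_{V,\BC}^2$.
\end{enumerate}
Each gap ball then contributes at most $C\tau^nr_j^{-1/4}E_{V,\BC}^2\leq C\tau^{n-1/4}E_{V,\BC}^2$. Summing over at most $C\tau^{-\sfrak}$ balls gives the lemma.
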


\subsection{$C^{0,\alpha}$ regularity of blow-ups}

We now establish the first preliminary regularity result for coarse blow-ups, which is their Hölder regularity.

We start with the main $L^2$ estimate. Here, we recall the notation $m^{(i,k)}$, $\lambda^{(i,k)}(z)$, and $\widetilde{\lambda}^{(i,k)}(z)$ for $z\in \D\cap B_1$ from Section \ref{sec:first-blow-up} (which we stress depends on the specific blow-up $v\in \FB_{i_0}$). By construction, $\widetilde{\lambda}^{(i,k)}$ obeys $|\widetilde{\lambda}^{(i,k)}(z)|\leq C(n,Q,\BC^{(0)})$. To simplify the notation, we will write $\kappa^{(i,k)}:= \widetilde{\lambda}^{(i,k)}$.

\begin{lemma}\label{lemma:holder-start}
	Let $v = (v^{(i,k,s)})_{i,k,s}\in \FB_{i_0}$. Then for each $z\in \D\cap B_{1/2}(0)$ and $\rho\in (0,1/8)$ we have
	\begin{align*}
		\sum_{i,k,s}\int_{\Omega_*\cap B_{5\rho/8}(z)}\frac{|v^{(i,k,s)}(x)-\kappa^{(i,k)}(z)|^2}{|x-z|^{n+7/4}}&\, \ext x \leq C\rho^{-n-7/4}\sum_{i,k,s}\int_{\Omega_*\cap B_\rho(z)}|v^{(i,k,s)}(x)-\kappa^{(i,k)}(z)|^2\, \ext x.
	\end{align*}
	Moreover, $\kappa$ is independent of the choice of good density points $z_j$ which give rise to it, and satisfies
	$$|\kappa^{(i,k)}(z)|^2\leq C\sum_{i,k,s}\int_{\Omega_*\cap B_1(0)}|v^{(i,k,s)}|^2.$$
	Here, $C = C(n,Q,\BC^{(0)})$.
\end{lemma}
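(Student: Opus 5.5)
We will obtain Lemma \ref{lemma:holder-start} by blowing up Corollary \ref{cor:main-L^2-estimate}(2), in the manner of \cite[Section 12]{BKMW25} and \cite[Lemma 5.1]{BK17}.

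Let $v$ be generated by $(V_j)_j$ and $(\BC_j)_j$. Fix $z\in\D\cap B_{1/2}(0)$ and choose $z_j\in\D_j$ with $z_j\to z$. For $j$ large we have $z_j\in B_{3/4}(0)$, so Corollary \ref{cor:main-L^2-estimate}(2) applies to $V_j,\BC_j$ at $z_j$ with radius $\rho$. It gives
$$\int_{B_{5\rho/8}(z_j)}\frac{\dist^2(x,(\tau_{z_j})_\#\BC_j)}{|x-z_j|^{n+7/4}}\,\ext\|V_j\|(x)\leq C\rho^{-n-7/4}\int_{B_\rho(z_j)}\dist^2(x,(\tau_{z_j})_\#\BC_j)\,\ext\|V_j\|(x).$$

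Next we express $\dist(x,(\tau_{z_j})_\#\BC_j)$ in terms of the graphs $u_j^{(i,k,s)}$. Away from $S(\BC_j)$, and away from the lower-dimensional set where planes of $\BC_j$ cross, the translated plane $P_j^{(i,k)}+z_j$ is the graph over $P_j^{(i,k)}$ of the constant $\pi^\perp_{P_j^{(i,k)}}(\xi_j^{(i,k)})$. Hence on such points of the graph of $u_j^{(i,k,s)}$ we have, up to errors that are $o(E_{V_j,\BC_j})$,
$$\dist(x,(\tau_{z_j})_\#\BC_j)=\bigl|u_j^{(i,k,s)}-\pi^\perp_{P_j^{(i,k)}}(\xi_j^{(i,k)})\bigr|.$$
Here we use Theorem \ref{thm:main-L^2-estimate}(1) and the separation of distinct planes, which is Lemma \ref{lemma:separation} together with the bound \eqref{E:cone-rotation-3}. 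We then divide by $E^2_{V_j,\BC_j}$ and use \eqref{E:tilde-lambda}, which says $E^{-1}_{V_j,\BC_j}\pi^\perp_{P_j^{(i,k)}}(\xi_j^{(i,k)})\to\kappa^{(i,k)}(z)$.

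\textbf{Right-hand side.} It converges to the $L^2$ quantity in the claim, using the strong $L^2$ convergence up to $\D$ (Lemma \ref{lemma:non-con}).

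\textbf{Left-hand side.} For each $\delta>0$ we have $C^3$ convergence on $\{\dist(\cdot,\D)>\delta\}$. Fatou's lemma on these regions, followed by letting $\delta\to0$, gives the weighted inequality.

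The main obstacle is making the identification of the distance with the graph functions rigorous near where the translated planes cross, and near $S(\BC_j)$. Near $S(\BC_j)$ we will use that these regions have small measure and apply Lemma \ref{lemma:non-con} again. In Case 2 we use half-hyperplanes instead, and the same argument applies.

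\textbf{Independence of $\kappa$.} Suppose $z_j'$ is another admissible sequence with limit $\kappa'$. The weighted inequality holds for both $\kappa$ and $\kappa'$. Finiteness of $\int|x-z|^{-n-7/4}|v-\kappa|^2$ near $z$ forces the average of $v^{(i,k,s)}$ over $B_r(z)\cap\Omega_*$ to tend to $\kappa^{(i,k)}(z)$, since $|x-z|^{-n-7/4}$ is not integrable to order $n$. The same holds for $\kappa'$, so $\kappa=\kappa'$. This step needs a positive-density portion of $\Omega_*$ near $z$, which holds because $\D$ is at most $(n-1)$-dimensional and $\Omega_*$ contains half-balls.

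\textbf{Bound on $|\kappa|$.} Apply the weighted inequality with $\rho=1/16$. Since $|x-z|^{-n-7/4}\geq c$ on $B_{5\rho/8}(z)$, the left-hand side controls $\int_{B_{5\rho/8}(z)}|v-\kappa|^2$. The triangle inequality then gives $|\kappa|^2\,|B_{5\rho/8}|\leq 2\int|v|^2+2\int|v-\kappa|^2$, and each term is bounded by $C\bigl(\int|v|^2+|\kappa|^2\bigr)$. This naive version does not close, so instead we bound $|\kappa^{(i,k)}(z)|$ directly. By Corollary \ref{cor:main-L^2-estimate}(1), $|\kappa|\leq C$ times the normalisation at stage $j$. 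Combined with $\|v\|_{L^2}$, this gives the ratio form of the estimate, where $C$ is chosen from Corollary \ref{cor:main-L^2-estimate}.
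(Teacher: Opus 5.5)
Your derivation of the weighted inequality matches the paper's argument: you write Corollary~\ref{cor:main-L^2-estimate}(2) at $z_j$ in graphical form away from $S(\BC_j)$ and $\sing(\BC_j)$, use Fatou on the left, and use the strong $L^2$ convergence from Lemma~\ref{lemma:non-con} on the right. Your proof that $\kappa$ does not depend on the sequence $z_j$ is also the paper's.

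The gap is in the bound on $|\kappa^{(i,k)}(z)|$. Dividing Corollary~\ref{cor:main-L^2-estimate}(1) by $E_{V_j,\BC_j}$ only gives $|\kappa^{(i,k)}(z)|\leq C$. That is a bound relative to the normalisation $E_{V_j,\BC_j}$, which is the excess over all of $B_1$. You have no lower bound on $\|v\|_{L^2(\Omega_*\cap B_1)}$. The convergence $E_{V_j,\BC_j}^{-1}u_j\to v$ is strong only on compact subsets of $B_1$, since graphicality and non-concentration are only available on $B_{31/32}$ and $B_{15/16}$. Nothing in the hypotheses stops the excess of $V_j$ from concentrating near $\del B_1$. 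So $\|v\|_{L^2}$ can be arbitrarily small, possibly $v\equiv 0$. In that case the lemma requires $\kappa\equiv 0$, and $|\kappa|\leq C$ does not give this.

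The paper repairs this by redoing the blow-up with $(\eta_{0,7/8})_\#V_j$ in place of $V_j$. Corollary~\ref{cor:main-L^2-estimate}(1) then bounds $\xi_j$ by $CE_{(\eta_{0,7/8})_\#V_j,\BC_j}$. The ratio $E_{V_j,\BC_j}^{-1}E_{(\eta_{0,7/8})_\#V_j,\BC_j}$ converges to a constant multiple of $\|v\|_{L^2(\Omega_*\cap B_{7/8})}$, because $B_{7/8}$ lies where the convergence is strong.

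Alternatively, your single-scale attempt does close if you use the decay contained in the weighted inequality at a smaller scale. Set $I(r):=\sum_{i,k,s}\int_{\Omega_*\cap B_r(z)}|v^{(i,k,s)}-\kappa^{(i,k)}(z)|^2$ and $K^2:=\sum_{i,k,s}|\kappa^{(i,k)}(z)|^2$. The weighted inequality gives $I(\theta\rho)\leq C\theta^{n+7/4}I(\rho)$ for $\theta\in(0,5/8]$. Also $|\Omega_*\cap B_r(z)|\geq c_nr^n$, because up to a null set $\Omega_*$ is a hyperplane through $z$ or a half-hyperplane whose boundary contains $z$. Hence
\begin{align*}
c_nK^2(\theta\rho)^n &\leq 2\sum_{i,k,s}\int_{\Omega_*\cap B_\rho(z)}|v^{(i,k,s)}|^2+2C\theta^{n+7/4}I(\rho),\\
I(\rho) &\leq 2\sum_{i,k,s}\int_{\Omega_*\cap B_\rho(z)}|v^{(i,k,s)}|^2+2\w_nK^2\rho^n.
\end{align*}
Choose $\theta$ with $4C\w_n\theta^{7/4}\leq c_n/2$, so the $K^2$ term on the right is absorbed. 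This gives $K^2\leq C\rho^{-n}\sum_{i,k,s}\int_{\Omega_*\cap B_\rho(z)}|v^{(i,k,s)}|^2$, and you can take $\rho=1/16$. This route uses only the inequality you already proved. It avoids re-verifying the hypotheses of Theorem~\ref{thm:main-L^2-estimate} for the rescaled varifolds, and it even gives a local bound.
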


\begin{proof}
	Fix $v\in \FB_{i_0}$ and let $(V_j)_j\subset\V_Q$, $(\BC_j)_j\subset\mathfrak{C}_{i_0}$ be the sequences which give rise to $v$. Fix also $z\in \D\cap B_{1/2}(0)$, and let $z_j\in \D_j$ be the sequence of good density points of $V_j$ with $z_j\to z$. If one takes the estimate in Corollary \ref{cor:main-L^2-estimate}(2) and writes the left-hand side in terms of the graphical representation provided by Theorem \ref{thm:main-L^2-estimate}, for a suitable sequence $(\tau_j)_j$ with $\tau_j\downarrow 0$ sufficiently slowly, we get
	\begin{align*}
		&\sum_{i,k,s}\int_{\Omega^{(i,k)}\cap B_{5\rho/8}(\pi_{P^{(i,k)}}(z_j))\setminus B_{2\tau_j}(\sing(\BC_j))}\frac{|u_j^{(i,k,s)}(x) - (\pi^\perp_{P^{(i)}}(\xi^{(i,k)}_j) - D\ell_j^{(i,k)}\pi_{P^{(i)}}(\xi^{(i,k)}_j))|^2}{|(x,u_j^{(i,k,s)}(x)) - z_j|^{n+7/4}}\, \ext x\\
		& \hspace{35em}  \leq CE_{V_j,\BC_j}^2,
	\end{align*}
	where we have also used Corollary \ref{cor:main-L^2-estimate}(1) in order to control the right-hand side of Corollary \ref{cor:main-L^2-estimate}(2) by $E_{V_j,\BC_j}^2$. Notice that on the left-hand side, in order to know that
	$$\dist((x,u^{(i,k,s)}_j(x)), (\tau_{z_j})_\#\BC_j) = |u_j^{(i,k,s)}(x) - (\pi^\perp_{P^{(i)}}(\xi^{(i,k)}_j) - D\ell_j^{(i,k)}\pi_{P^{(i)}}(\xi^{(i,k)}_j))|,$$
	one must stay away from a neighbourhood of $\sing(\BC_j)$, which is why this is removed on the left-hand side. If we now divide both sides by $E_{V_j,\BC_j}^2$ and take $j\to\infty$ we get, using the strong $L^2$ convergence provided by Lemma \ref{lemma:non-con},
	$$\sum_{i,k,s}\int_{\Omega_*\cap B_{5\rho/8}(z)}\frac{|v^{(i,k,s)}(x) - \kappa^{(i,k)}(z)|^2}{|x-z|^{n+7/4}}\, \ext x \leq C.$$
	This integral shows that the limit $\kappa^{(i,k)}(z)$ is independent of the choice of good density points $z_j$: indeed, if one chose a different sequence $\tilde{z}_j\in \D_j$ with $\tilde{z}_j\to z$, with corresponding limit $\tilde{\kappa}^{(i,k)}(z)$, we would similarly have
	$$\sum_{i,k,s}\int_{\Omega_*\cap B_{5\rho/8}(z)}\frac{|v^{(i,k,s)}(x) - \tilde{\kappa}^{(i,k)}(z)|^2}{|x-z|^{n+7/4}}\, \ext x \leq C$$
	and so combining the two inequalities we would get
	$$\sum_{i,k,s}\int_{\Omega_*\cap B_{5\rho/8}(z)}\frac{|\kappa^{(i,k)}(z) - \tilde{\kappa}^{(i,k)}(z)|^2}{|x-z|^{n+7/4}}\, \ext x \leq C.$$
	Since the numerator of the integrand is a constant and $1/|x|^{n+7/4}$ is not integrable on $B_1(0)$, this implies we need $\kappa^{(i,k)}(z) = \tilde{\kappa}^{(i,k)}(z)$, proving the claimed independence.
	
	To prove the claimed inequality, one instead directly blows-up Corollary \ref{cor:main-L^2-estimate}(2) without bounding the right-hand side by $E_{V_j,\BC_j}^2$. Indeed, we have
	\begin{align*}
		&\sum_{i,k,s}\int_{\Omega^{(i,k)}\cap B_{5\rho/8}(\pi_{P^{(i,k)}}(z_j))\setminus B_{2\tau_j}(\sing(\BC_j))}\frac{|u_j^{(i,k,s)}(x) - (\pi^\perp_{P^{(i)}}(\xi^{(i,k)}_j) - D\ell_j^{(i,k)}\pi_{P^{(i)}}(\xi^{(i,k)}_j))|^2}{|(x,u_j^{(i,k,s)}(x)) - z_j|^{n+7/4}}\, \ext x\\
		& \hspace{20em}  \leq C\rho^{-n-7/4}\int_{B_\rho(z_j)}\dist^2(x,(\tau_{z_j})_\#\BC_j)\, \ext\|V_j\|(x).
	\end{align*}
	Writing the right-hand side in the same manner as the left-hand side (on the appropriate graphical region; the remaining error term will still $\to 0$ as $j\to\infty$, even after dividing by $E_{V_j,\BC_j}^2$), then dividing by $E^2_{V_j,\BC_j}$ and taking $j\to\infty$, we get
	\begin{align*}
		\sum_{i,k,s}\int_{\Omega_*\cap B_{5\rho/8}(z)}\frac{|v^{(i,k,s)}(x)-\kappa^{(i,k)}(z)|^2}{|x-z|^{n+7/4}}&\, \ext x \leq C\rho^{-n-7/4}\sum_{i,k,s}\int_{\Omega_*\cap B_\rho(z)}|v^{(i,k,s)}(x)-\kappa^{(i,k)}(z)|^2\, \ext x.
	\end{align*}
	as claimed. Finally, to see the claimed bound on $\kappa^{(i,k)}$, if we apply the same blow-up procedure but with $\widetilde{V}_j:= (\eta_{0,7/8})_\#V_j$ in place of $V_j$ (which we may do, cf.~Theorem \ref{thm:blow-up-properties})(2)), then Corollary \ref{cor:main-L^2-estimate} would instead give $|\pi^{\perp}_{P^{(i)}}(\xi_j)| + E_{V_j,\BC^{(0)}}|\pi_{P^{(i)}}(\xi_j)| \leq CE_{(\eta_{0,7/8})_\#V_j,\BC_j}$, which if we use in our construction of $\kappa^{(i,k)}$ results in
	$$|\kappa^{(i,k)}(z)|^2 \leq C\sum_{i,k,s}\int_{\Omega_*\cap B_{7/8}(0)}|v^{(i,k,s)}|^2$$
	which gives the result.
\end{proof}

We now use Lemma \ref{lemma:holder-start} to prove the Hölder regularity of blow-ups.

\begin{theorem}\label{thm:holder}
	If $v\in \FB_{i_0}$, then in fact $v^{(i,k,s)}\in C^{0,\alpha}(\overline{\Omega_*}\cap B_1)$ for all $i,k,s$, for some $\alpha = \alpha(n,Q,\BC^{(0)})\in (0,1)$. Furthermore, we have
	$$\|v^{(i,k,s)}\|_{C^{0,\alpha}(\overline{\Omega_*}\cap B_{1/4}(0))} \leq C\|v\|_{L^2(B_{1/2}(0))}.$$
	Here, $\|v\|_{L^2(B_{1/2}(0))}^2:= \sum_{i,k,s}\|v^{(i,k,s)}\|_{L^2(\Omega_*\cap B_{1/2}(0))}^2$ and $C = C(n,Q,\BC^{(0)})\in (0,\infty)$.
\end{theorem}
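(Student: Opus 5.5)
The plan is to combine the decay estimate of Lemma \ref{lemma:holder-start} at points of $\D$ with interior harmonic estimates away from $\D$, in the standard manner of \cite[Section 12]{Wic14} or \cite[Lemma 4.?]{Sim93}. I first treat points $z\in \D\cap B_{1/4}(0)$. Lemma \ref{lemma:holder-start} says $\sum\int_{B_{5\rho/8}(z)}|v-\kappa(z)|^2|x-z|^{-n-7/4}\leq C\rho^{-n-7/4}\sum\int_{B_\rho(z)}|v-\kappa(z)|^2$. Since $|x-z|^{-n-7/4}\geq \sigma^{-n-7/4}$ on $B_\sigma(z)$, this gives
$$\sigma^{-n}\sum_{i,k,s}\int_{\Omega_*\cap B_\sigma(z)}|v^{(i,k,s)}-\kappa^{(i,k)}(z)|^2\leq C\left(\tfrac{\sigma}{\rho}\right)^{7/4}\rho^{-n}\sum_{i,k,s}\int_{\Omega_*\cap B_\rho(z)}|v^{(i,k,s)}-\kappa^{(i,k)}(z)|^2$$
for $\sigma\leq 5\rho/8$. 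Choosing $\theta$ small with $C\theta^{7/4}\leq \theta^{2\alpha}$ and iterating from $\rho=1/8$, I get the Campanato-type decay $\rho^{-n}\int_{B_\rho(z)}|v-\kappa(z)|^2\leq C\rho^{2\alpha}\|v\|^2_{L^2(B_{1/2})}$, using the bound $|\kappa(z)|^2\leq C\|v\|^2_{L^2}$ from Lemma \ref{lemma:holder-start} for the starting scale. Here the translated/rescaled blow-ups of Theorem \ref{thm:blow-up-properties}(2),(3) would be used if needed so that the constant is uniform in $z$.

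Next comes the comparison between two good points $z_1,z_2\in\D$. Averaging $|\kappa(z_1)-\kappa(z_2)|^2$ over $\Omega_*\cap B_{|z_1-z_2|}(z_1)\subset B_{2|z_1-z_2|}(z_2)$, and using the decay at both points, gives $|\kappa(z_1)-\kappa(z_2)|\leq C|z_1-z_2|^\alpha\|v\|_{L^2}$. Here I use that $\Omega_*\cap B_r(z)$ has measure $\geq cr^n$ (true since $z\in \overline{\Omega_*}$). For points $x\in \Omega_*$ away from $\D$, set $d(x)=\dist(x,\D)$. By Theorem \ref{thm:blow-up-properties}(1), $v$ is harmonic on $B_{d(x)}(x)\cap\Omega_*$. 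In Case 1 this is an interior ball; in Case 2 one must also handle $\dist(x,\partial H_*)$, but there $\D=S(\BC^{(0)})=\partial H_*$, so $d(x)$ is the distance to the boundary. Mean value/gradient estimates then give $|Dv(x)|\leq Cd(x)^{-1-n/2}\|v-\kappa(z_x)\|_{L^2(B_{d(x)}(x))}\leq Cd(x)^{\alpha-1}\|v\|$, where $z_x\in\D$ is a nearest point and I apply the decay at $z_x$ on $B_{2d(x)}(z_x)$. Similarly $|v(x)-\kappa(z_x)|\leq Cd(x)^\alpha\|v\|$. When $\D=\emptyset$ near a region, I use interior estimates directly.

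To finish, I combine these into the Hölder bound by the usual case split for $x,y$. If $|x-y|<\frac12\max\{d(x),d(y)\}$, I integrate the gradient bound along the segment. Otherwise I pass through the nearest points $z_x,z_y$ and use the $\kappa$-comparison. In Case 2 I define $v$ on $\partial H_*\cap B_1=\D$ by $\kappa$, which yields $v^{(i,k,s)}\in C^{0,\alpha}(\overline{\Omega_*})$; the $s$-independence of $\kappa^{(i,k)}$ is automatic. The main obstacle I expect is uniformity: Lemma \ref{lemma:holder-start} is stated for $z\in\D\cap B_{1/2}$ and $\rho<1/8$ with $C$ independent of $z$. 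The difficulty is making sure the starting quantity $\rho^{-n}\int_{B_\rho(z)}|v-\kappa(z)|^2$ is controlled by $\|v\|_{L^2(B_{1/2})}$, which needs the $\kappa$ bound. I would obtain that bound by applying the $\kappa$ estimate in Lemma \ref{lemma:holder-start} to suitably rescaled blow-ups. The segment-integration step may also need care when $\Omega_*\setminus\D$ is not convex, but connectedness of $P_*\setminus\D$ in Case 1 suffices locally.
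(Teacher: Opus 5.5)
Your proposal is correct and is essentially the paper's argument: the paper also pairs the $(\sigma/\rho)^{7/4}$ decay at points of $\D$ from Lemma~\ref{lemma:holder-start} with interior harmonic decay off $\D$, and then invokes the degenerate Campanato lemma of \cite[Lemma 12.1 \& Lemma 4.3]{Wic14}, which is exactly the nearest-point and gradient patching you carry out by hand. The uniformity issue you flag is harmless: the only extra input needed is $|\kappa(z)|\leq C\|v\|_{L^2(B_{1/2}(0))}$, and this even follows directly from the decay inequality itself by taking $\sigma=\theta\rho$ with $\theta$ small.
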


\begin{proof}
	From Lemma \ref{lemma:holder-start}, we get that for each $z\in \D\cap B_{1/4}(0)$ and $0<\sigma<5\rho/8<5/64$ we have
	$$\sum_{i,k,s}\sigma^{-n}\int_{\Omega_*\cap B_\sigma(z)}|v^{(i,k,s)}(x)-\kappa^{(i,k)}(z)|^2\, \ext x \leq C\left(\frac{\sigma}{\rho}\right)^{7/4}\sum_{i,k,s}\rho^{-n}\int_{\Omega_*\cap B_\rho(z)}|v^{(i,k,s)}(x)-\kappa^{(i,k)}|^2\, \ext x.$$
	For $z\in B_{1/4}(0)\setminus \D$, we know from Theorem \ref{thm:blow-up-properties} and Theorem \ref{thm:main-L^2-estimate}(1) that each $v^{(i,k,s)}$ is harmonic on $\Omega_*\cap B_\rho(z)$ for $0<\rho<\dist(z,\D)$. Hence, by harmonic estimates, we have for any choice of constants $\bar{\kappa}^{(i,k)}$,
	$$\sum_{i,k,s}\sigma^{-n}\int_{\Omega_*\cap B_\sigma(z)}|v^{(i,k,s)}(x) - v^{(i,k,s)}(z)|^2\, \ext x \leq C\left(\frac{\sigma}{\rho}\right)^2\sum_{i,k,s}\rho^{-n}\int_{\Omega_*\cap B_\rho(z)}|v^{(i,k,s)}(x) - \bar{\kappa}^{(i,k)}|^2\, \ext x$$
	for any $0<\sigma\leq\rho<\dist(z,\D)$. It follows from the above two inequalities by degenerate Campanato estimates (cf.~\cite[Lemma 12.1 \& Lemma 4.3]{Wic14}) that we in fact have, setting $v^{(i,k,s)}(z):= \kappa^{(i,k)}(z)$ for $z\in \D\cap B_{1/4}(0)$,
	$$\sum_{i,k,s}\sigma^{-n}\int_{\Omega_*\cap B_\sigma(z)}|v^{(i,k,s}(x) - v^{(i,k,s)}(z)|^2\, \ext x \leq C\sigma^{2\alpha}\sum_{i,k,s}\int_{\Omega_*\cap B_{1/2}(0)}|v^{(i,k,s)}(x)|^2\, \ext x$$
	for any $\sigma\in (0,r]$, for suitable $r>0$ and $\alpha\in (0,1)$ depending only on $n,Q,\BC^{(0)}$. Here, we have also used that $|\kappa^{(i,k)}(z)| \leq C\|v\|_{B_{1/2}^n(0)}$ for $z\in \D\cap B_{1/4}(0)$ (such an estimate follows analogously to the corresponding estimate to that in Lemma \ref{lemma:holder-start}, making advantage of the smaller ball $\D\cap B_{1/4}(0)$ which the $z$ lie in). It is then standard to conclude from the above estimate (using Campanato theory) that $v^{(i,k,s)}\in C^{0,\alpha}(\overline{\Omega_*}\cap B_{1/4}(0))$ with the claimed estimate.
\end{proof}

\subsection{Further properties of blow-ups}

In this subsection we give two further properties of blow-ups which will be used in the proof of their $C^{1,\alpha}$ regularity.

\begin{remark}\label{remark:reg-simple}
It is worth noticing at this stage that if the spine dimension $\sfrak$ of the sequence of cones $(\BC_j)_j$ along which we created our blow-up $v$ satisfies $\sfrak \leq n-2$, then as by Theorem \ref{thm:holder} each $v^{(i,k,s)}$ is an $C^{0,\alpha}$ function on $P_*\cap B_1$ which is harmonic on $(P_*\setminus S)\cap B_1$, where $S$ is a subspace of dimension $\leq n-2$, standard removable singularity results for harmonic functions imply that $v$ must be harmonic on all of $P_*\cap B_1$, and hence $v$ is automatically smooth (and in fact harmonic). Thus, in such cases the regularity of blow-ups is substantially easier, and so we may primarily focus on the case where $\sfrak = n-1$
\end{remark}

\smallskip

The first additional observation is that, if $v\in\mathfrak{B}_{i_0}$ and $\kappa$ denotes the function $\mathcal{D}\cap B_{1/2}\to S^\perp$ which determines the values of $v$ on $\mathcal{D}\subseteq S$ (cf.~Lemma \ref{lemma:holder-start}), then $\kappa$ is the restriction of a smooth function defined on $S$; furthermore, this function must be linear is $v$ is homogeneous of degree one.

\begin{lemma}\label{lemma:kappa}
	Let $v\in \mathfrak{B}_{i_0}$. Then, $\kappa: \D\cap B_{1/2}(0)\to S^\perp$ is the restriction of a smooth function $\kappa_* :\Omega_*\cap B_{1/2}(0)\to S^\perp$ to $\D$. Furthermore, if $v$ is homogeneous of degree one, $\kappa_*$ must be linear.
\end{lemma}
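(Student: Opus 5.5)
We will follow the analogous arguments in \cite[Section 12]{Wic14} and \cite{BK17, BKMW25}. The main input is the Hardt--Simon inequality, Theorem \ref{thm:blow-up-properties}(6), together with the $C^{0,\alpha}$ estimate of Theorem \ref{thm:holder} and the blow-up closure properties Theorem \ref{thm:blow-up-properties}(2)--(3). By Remark \ref{remark:reg-simple} we may assume $\sfrak = n-1$, so $S$ is a hyperplane in $\Omega_*$. If $\D$ is a proper subset of $S\cap B_1$, Lemma \ref{lemma:holder-start} shows that $v^{(i,k,s)}(z) = \kappa^{(i,k)}(z)$ for $z\in\D$. In that case each $v^{(i,k,s)}$ is a continuous function on $P_*\cap B_1$ that is harmonic off $\D$. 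So the first task is to understand the structure of $\kappa$ on $\D$; the second is to extend it.

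First step: a quantitative gradient/Lipschitz estimate for $\kappa$. Fix $z_1,z_2\in\D\cap B_{1/2}$ with $|z_1-z_2| = \rho$. Apply Lemma \ref{lemma:holder-start} at both points with radius $\sim 2\rho$. Integrating the difference $|\kappa^{(i,k)}(z_1)-\kappa^{(i,k)}(z_2)|^2$ over $B_{\rho/2}(z_1)\cap\Omega_*$ and using the weighted estimates then gives
\[
|\kappa(z_1)-\kappa(z_2)|^2\leq C\rho^{-n}\sum_{i,k,s}\int_{\Omega_*\cap B_{2\rho}(z_1)}|v^{(i,k,s)}-\kappa^{(i,k)}(z_1)|^2 .
\]
Iterating the decay from Lemma \ref{lemma:holder-start}, which has exponent $7/4>1$ in the weight and hence a decay strictly better than linear up to an affine correction, we aim to show the right-hand side is $\leq C\rho^{2}\|v\|_{L^2}^2$. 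This would give that $\kappa$ is Lipschitz on $\D$.

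To go further we will blow up $v$ at points of $\D$. Using Theorem \ref{thm:blow-up-properties}(2),(3),(5), subtracting $\widetilde{\kappa}(z)$ and rescaling at $z\in\D$ produces limits in $\FB_{i_0}$. The Hardt--Simon inequality (6) forces these limits to be homogeneous of degree one. This gives a first-order Taylor expansion of $\kappa$ along $\D$ at each density point, with derivative depending continuously on $z$. Concretely, we expect $\kappa(y)=\kappa(z)+L_z(y-z)+O(|y-z|^{1+\gamma})$ for $y,z\in\D$, with $L_z$ Hölder in $z$.

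Whitney's extension theorem then produces a $C^{1,\gamma}$ extension of $\kappa$ to $S$. To obtain smoothness, note the following. Near points of $\D$ the varifolds have density $Q$ points, and the values $\kappa^{(i,k)}$ are, by \eqref{E:tilde-lambda}, affine images $\lambda^{\perp}-m^{(i,k)}\lambda^{\top}$ of a single vector $\lambda(z)\in S^\perp$ independent of $(i,k)$. Hence $\kappa_*$ can be defined from one $S^\perp$-valued function on $S$. We will then extend it constantly in the $S^\perp$ direction of $\Omega_*$.

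For the homogeneous case: if $v$ is homogeneous of degree one, then by Theorem \ref{thm:blow-up-properties}(2) $v_{0,\rho}$ is a multiple of $v$. By construction $\kappa$ scales accordingly, so $\D$ is a cone and $\kappa(tz)=t\kappa(z)$. Translation invariance along directions in $S$ (obtained from the Hardt--Simon inequality applied at two distinct points of $\D$, as in \cite[Lemma 12.2]{Wic14}) shows the degree-one homogeneous extension is additive. A continuous, positively homogeneous degree-one function whose difference quotients are constant is linear.

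The main obstacle is genuine smoothness, rather than $C^{1,\gamma}$, of $\kappa_*$ when $\D$ is a wild closed subset of $S$. If the above only yields $C^{1,\gamma}$, we will instead exploit harmonicity. On each complementary interval-direction of $S\setminus\D$, $v$ is harmonic across $S$ when $\D\neq S$, by Lemma \ref{lemma:fine-rep}, since density gaps are graphical over full planes. We will then define $\kappa_*$ as the common trace $\lambda$ obtained from the harmonic functions $v^{(i,k,s)}$ restricted to $S$, which is real-analytic.
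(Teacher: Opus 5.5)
There is a genuine gap. The steps meant to upgrade $\kappa$ beyond H\"older continuity are either unjustified or circular, and the one ingredient that actually controls $\kappa$ on $\D$ is missing.

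\textbf{Where your steps fail.}

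\emph{The Lipschitz step.} Lemma~\ref{lemma:holder-start} (as used in Theorem~\ref{thm:holder}) only gives $\sigma^{-n}\int_{\Omega_*\cap B_\sigma(z)}|v-\kappa(z)|^2\le C(\sigma/\rho)^{7/4}\rho^{-n}\int_{\Omega_*\cap B_\rho(z)}|v-\kappa(z)|^2$, with a \emph{constant} subtracted. Iterating it cannot beat the exponent $7/4<2$, and that lemma offers no ``affine correction''. So your first step yields $|\kappa(z_1)-\kappa(z_2)|\le C|z_1-z_2|^{7/8}$, not a Lipschitz bound. Quadratic decay after subtracting a linear function is exactly Theorem~\ref{thm:blow-up-reg}.

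\emph{The Taylor expansion.} Theorem~\ref{thm:blow-up-properties}(6) at best gives subsequential degree-one homogeneous tangent blow-ups at points of $\D$. Whitney's theorem needs uniqueness of these, the rate $O(|y-z|^{1+\gamma})$, and H\"older dependence of $L_z$ on $z$. Those come from the reverse Hardt--Simon dichotomy, which rests on Theorem~\ref{thm:classification}. But that theorem itself uses Lemma~\ref{lemma:kappa}: for the boundary data when $\D=S\cap B_1$, and for the linear $l$ with $v-l=0$ on $\D$ in Step 3. So this step is circular.

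\emph{The homogeneous case.} Hardt--Simon at a second point of $\D$ does not make $v$ translation invariant along $S$. If it did, the induction on $\dim S(v)$ in Theorem~\ref{thm:classification} would be unnecessary.

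\emph{The harmonic-trace fallback.} The individual $v^{(i,k,s)}$ are harmonic across $S\setminus\D$ only, and are merely known to be $C^{0,\alpha}$ at $\D$. When $\D=S\cap B_1$ they live on a half-plane and there is no harmonicity across $S$ at all. So their traces say nothing about $\kappa|_{\D}$, and Whitney would in any case only give $C^{1,\gamma}$, not smoothness.

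\textbf{The missing idea.} You must use the stationarity of the $V_j$ across $\D$. The paper handles $\sfrak=n-1$ by Simon's averaging argument (the case $\sfrak\le n-2$ is removability, as you say), whose mechanism is as follows.

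You correctly note that $\kappa^{(i,k)}(z)=\lambda(z)\cdot n_i$ for a single $\lambda(z)\in S^\perp$. Here $n_i$ is the unit normal, in the cross-section $S^\perp\cong\R^2$, to the $i$-th (half-)hyperplane of $\BC^{(0)}$. That is half of the argument.

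The other half comes from linearising the first variation of $V_j$ along $\zeta e$ with $e\in S^\perp$. This gives a balancing condition on the normal derivatives of the blow-up at $S$. It says precisely that $u_e:=\sum_{i,k,s}(e\cdot n_i)\,v^{(i,k,s)}$ is weakly harmonic across $S$, including across $\D$. When $\Omega_*$ is the half-plane $H_*$, $u_e$ has zero Neumann data and extends by even reflection. Hence $u_e$ is smooth on $\Omega_*\cap B_{1/2}$.

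On $\D$, $u_e$ equals $A\lambda\cdot e$, where $A$ is a multiplicity-weighted sum of the $n_i\otimes n_i$. The matrix $A$ is invertible exactly because $\BC^{(0)}$ is not flat, i.e.\ its cross-sectional rays span $\R^2$. Applying $A^{-1}$ to $(u_{e_1},u_{e_2})$ gives the smooth $\kappa_*$ on $\Omega_*\cap B_{1/2}$; this is why $\kappa_*$ lives on $\Omega_*$ rather than on $S$.

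If $v$ is homogeneous of degree one, each $u_e$ is a degree-one homogeneous harmonic function on a full plane, hence linear, so $\kappa_*$ is linear. No $C^1$ information on $v$ is needed, which is why the lemma can precede the $C^{1,\alpha}$ theory.
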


\begin{proof}
	If $\sfrak\leq n-2$ the claim is trivial, since then each $v^{(i,k,s)}$ is a continuous function on $P_*\cap B_1$ (by Theorem \ref{thm:holder}) which is harmonic on $P_*\cap B_1\setminus \D$ (by Theorem \ref{thm:blow-up-properties}(1)). By standard removability results for harmonic functions, this implies that $v^{(i,k,s)}$ is a harmonic function on $P_*\cap B_1$, and so in particular is smooth. The claims then follow immediately from this.
	
	So, we may suppose $\sfrak=n-1$; in particular, we must have $\BC^{(0)}\in \CC_{n-1}$. In this situation, $\kappa$ is determined by the projection of the good density points to a fixed, $2$-dimensional cross-section. Since $\BC^{(0)}$ is not flat, the rays determining its cross-section span $\R^2$, and as such one can use the averaging argument as seen originally in \cite[Proof of Lemma 1]{Sim93} (cf.~\cite[Section 4.3]{Min21a}) to prove the result.
\end{proof}

The second additional property of blow-ups we need is the following $\eps$-regularity property, which arises from Hypothesis F, namely the inductive assumption of the fine $\eps$-regularity theorem. For this, we introduce the notion of a \emph{dehomogeniser}. For $v\in \FB_{i_0}$, $z\in \D$ with $\kappa(z) = 0$, and $\rho\in (0,1]$, we say that $\pi\in \mathfrak{L}_{i_0}$ \emph{dehomogenises} $v$ in $B_\rho(z)$ if
\begin{align*}
\sum_{i,k,s}\int_{B_\rho(z)\cap \Omega_*}&\left|v^{(i,k,s)}(x) - \pi^{(i,k)}\left(\frac{x-z}{\rho}\right)\right|^2\, \ext x\\
& \hspace{5em} = \inf_{\tilde{\pi}\in \mathfrak{L}_{i_0}}\sum_{i,k,s}\int_{B_\rho(z)\cap \Omega_*}\left|v^{(i,k,s)}(x)-\tilde{\pi}^{(i,k)}\left(\frac{x-z}{\rho}\right)\right|^2\, \ext x.
\end{align*}
Notice that our dehomogenisers only ever come from $\mathfrak{L}_{i_0}$. Dehomogenisers always exist by compactness of $\mathfrak{L}_{i_0}$ (as elements of $\mathfrak{L}_{i_0}$ are linear and always have a uniform $L^2$ bound). We write $\pi^v_{z,\rho}(x):= \pi\left(\frac{x-z}{\rho}\right)$ for the (recentered) dehomogeniser of $v$ in $B_\rho(z)$. Moreover, when $\kappa(z)\neq 0$, we can define the dehomogeniser of $v$ at $z$ to be the dehomogeniser of $\hat{v}$ as in Theorem \ref{thm:blow-up-properties}(3) for $\kappa$ therein as that here (given by Lemma \ref{lemma:holder-start}), since then $\hat{v}(z) = 0$.
\begin{lemma}[$\eps$-Regularity for Blow-Ups]\label{lemma:blow-up-eps-reg}
	There exists a constant $\eps = \eps(n,Q,\BC^{(0)})\in (0,1)$ such that the following is true. Suppose $v\in \mathfrak{B}_{i_0}$ has $v(0) = 0$, $v$ is dehomogenised by $0$ in $B_1(0)$, and $\|v\|_{L^2(\Omega_*\cap B_1)} = 1$. If $\psi\in \mathfrak{L}_\ell$ for some $\ell>i_0$ obeys
	$$\sum_{i,k,s}\int_{B_1\cap \Omega_*}|v^{(i,k,s)}-\psi^{(i,k,s)}|^2 < \eps,$$
	then we have $v|_{B_{1/2}\cap \Omega_*} \in C^{1,\alpha}(B_{1/2}\cap\overline{\Omega_*})$, and there exists $\widetilde{\psi}\in \mathfrak{L}_{\tilde{\ell}}$ for some $\tilde{\ell}>i_0$ with
	$$\sum_{i,k,s}\sigma^{-n-2}\int_{B_\sigma\cap \Omega_*}|v^{(i,k,s)}-\widetilde{\psi}^{(i,k,s)}|^2 \leq C\sigma^{2\alpha}.$$
	Here, $\alpha\in (0,1)$ and $C\in (0,\infty)$ depend only on $n,Q,\BC^{(0)}$.
\end{lemma}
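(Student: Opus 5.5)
The plan is to return to a generating sequence for $v$, build a sequence of finer cones from $\psi$, and apply the inductive fine $\eps$-regularity theorem (Hypothesis F) at the varifold level. The resulting $C^{1,\alpha}$ decay should then pass to the blow-up after dividing by the excess.

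\emph{Setting up.} Let $(V_j)_j\subset\V_Q$ and $(\BC_j)_j\subset\mathfrak{C}_{i_0}$ generate $v$, and write $E_j:=E_{V_j,\BC_j}$. Since $v(0)=0$, we have $\kappa(0)=0$, so by Lemma \ref{lemma:holder-start} there are good density points $z_j\in\D_j$ with $z_j\to 0$ and $E_j^{-1}|\pi^\perp_{S(\BC_j)}(z_j)|\to 0$ (cf.~Corollary \ref{cor:main-L^2-estimate}(1)). I would replace $V_j$ by $(\tau_{z_j})_\#V_j$, up to a harmless rescaling and Theorem \ref{thm:blow-up-properties}(2),(3). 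This gives $\Theta_{V_j}(0)\geq Q$ while the blow-up is still $v$.

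\emph{Building the finer cones.} By Theorem \ref{thm:blow-up-properties}(4)--(5) and the definition of $\mathfrak{L}_\ell$, I may assume that $\psi$ is the blow-up of cones $\Dbf_j\in\mathfrak{C}_\ell$ relative to $\BC_j$, with $\dist_\H(\Dbf_j\cap B_1,\BC_j\cap B_1)\leq CE_j$. Concretely, $\Dbf_j$ is obtained by placing the planes $\graph(E_j\psi^{(i,k,s)})$ over the (half-)hyperplanes of $\BC_j$, possibly after an $o(E_j)$ correction so that $\Dbf_j$ is a genuine cone of level $\ell$. Using the strong $L^2$ convergence from Lemma \ref{lemma:non-con}, this yields $F_{V_j,\Dbf_j}\leq C(\sqrt{\eps}+o(1))E_j$.

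\emph{Verifying $(\dagger)$.} I expect the main obstacle to be checking $(\dagger)$ for $V_j,\Dbf_j$, that is, $F_{V_j,\Dbf_j}<\beta_0\mathcal{F}_{V_j,\Dbf_j}$.
\begin{itemize}
\item For aligned cones $\BC'\prec\Dbf_j$ of level $\leq i_0$ that lie within $CE_j$ of $\BC_j$, I would argue by contradiction. If $F_{V_j,\BC'}=o(E_j)$ along a subsequence, then blowing up $\BC'$ relative to $\BC_j$ (Theorem \ref{thm:blow-up-properties}(4)) produces some $\tilde{\pi}\in\mathfrak{L}_{i_0}$ with $\|v-\tilde{\pi}\|_{L^2}=0$. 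This contradicts the assumption that $v$ is dehomogenised by $0$ with $\|v\|_{L^2}=1$, since that assumption gives $\inf_{\tilde{\pi}\in\mathfrak{L}_{i_0}}\|v-\tilde{\pi}\|=1$.
\item Cones of level $<i_0$, or cones far from $\BC_j$, I would handle similarly, or via $(\dagger)$ for the pair $V_j,\BC_j$.
\end{itemize}
Hence $\mathcal{F}_{V_j,\BC'}\geq cE_j$ over all coarser cones of level $\leq i_0$. The difficulty is intermediate levels strictly between $i_0$ and $\ell$. If $(\dagger)$ fails for $\Dbf_j$, I would run the replacement procedure of Lemma \ref{lemma:replacement} with $\beta=\beta_0$ fixed. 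This produces $\widetilde{\Dbf}_j\preceq\Dbf_j$ satisfying $(\dagger)$ with $F_{V_j,\widetilde{\Dbf}_j}\leq C(\beta_0)F_{V_j,\Dbf_j}\leq C\sqrt{\eps}E_j$. Choosing $\eps\ll\beta_0$, the lower bound above forces $\widetilde{\Dbf}_j$ to have level $\tilde{\ell}>i_0$.

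\emph{Applying Hypothesis F and passing to the limit.} After the small rotation of Remark \ref{remark:aligned}, Hypothesis F applies to $V_j,\widetilde{\Dbf}_j$. It provides aligned cones $\widehat{\BC}_j\succeq\widetilde{\Dbf}_j$ and rotations $\Gamma_j$ with $\|\Gamma_j-\id\|\leq C\beta_0$, together with the decay estimate
$$\sigma^{-n-2}\int_{B_\sigma}\dist^2(x,\widehat{\BC}_j)\,\ext\|(\Gamma_j^{-1})_\#V_j\|\leq C\sigma^{2\alpha}F_{V_j,\widetilde{\Dbf}_j}^2\leq C\sigma^{2\alpha}E_j^2.$$
It also gives $C^{1,\alpha}$ graphical representations (the Case A/B/C conclusions) over $\widetilde{\Dbf}_j$ with bounds $\leq CF_{V_j,\widetilde{\Dbf}_j}$. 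Note that $\|\Gamma_j-\id\|\leq C\mathcal{F}^{-1}_{V_j,\widetilde{\Dbf}_j}F_{V_j,\widetilde{\Dbf}_j}$ is $O(E_j)$ relative to $\BC_j$ because $\mathcal{F}$ is bounded below. Hence $E_j^{-1}$ times the displacement from $\Gamma_j$ and $\widehat{\BC}_j$ converges to a linear blow-up; together with $\widehat{\BC}_j$ this gives $\widetilde{\psi}\in\mathfrak{L}_{\tilde{\ell}'}$ with $\tilde{\ell}'\geq\tilde{\ell}>i_0$. Dividing the graphical bounds by $E_j$ and letting $j\to\infty$ then gives $v\in C^{1,\alpha}(B_{1/2}\cap\overline{\Omega_*})$. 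Similarly, the decay estimate becomes $\sum_{i,k,s}\sigma^{-n-2}\int_{B_\sigma\cap\Omega_*}|v^{(i,k,s)}-\widetilde{\psi}^{(i,k,s)}|^2\leq C\sigma^{2\alpha}$, using the $L^2$ non-concentration from Lemma \ref{lemma:non-con} to pass the integrals to the limit. The same decay at points $z\in\D\cap B_{1/2}$, after translating by Theorem \ref{thm:blow-up-properties}(2),(3), yields the $C^{1,\alpha}$ estimate up to the boundary.
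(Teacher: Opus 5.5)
Your proposal is correct and follows essentially the paper's argument. Both go back to a generating sequence, use that $v$ is dehomogenised by $0$ to keep cones of level $\le i_0$ at fine excess $\gtrsim E_{V_j,\BC_j}$, run a replacement-type iteration from the finer cones built out of $\psi$ until $(\dagger)$ holds at some level $>i_0$, then invoke Hypothesis F and blow up its conclusions; the only difference is that you argue directly with a fixed $\eps\ll\beta_0$, whereas the paper argues by contradiction via a limit $v_*\in\mathfrak{L}_{\ell_*}$. For the direct version, make the lower bound uniform by running your first bullet with $F_{V_j,\BC'}\le cE_{V_j,\BC_j}$ in place of $o(E_{V_j,\BC_j})$, which gives $\tilde\pi\in\mathfrak{L}_{i_0}$ with $\|v-\tilde\pi\|_{L^2}\le Cc<1$ (otherwise $\eps$ would depend on $v$); also, the $O(E_{V_j,\BC_j})$ control needed to blow up is on the tangent cone itself, which follows from the graphical estimates over $\widetilde{\Dbf}_j$, not on $\|\Gamma_j-\id\|$, which is only bounded by $C\beta_0$ since $\mathcal{F}_{V_j,\widetilde{\Dbf}_j}$ need not be bounded below by a multiple of $E_{V_j,\BC_j}$.
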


\begin{proof}
	The proof is by contradiction, and similar to those in \cite[Theorem 3.4]{MW24} and \cite[Corollary 7.3]{Min21a}, so we only sketch the details. If it were to fail, we could take a sequence of blow-ups $v_j\in \mathfrak{B}_{i_0}$ obeying the hypotheses of the lemma for suitable $\psi_j\in \mathfrak{L}_{\ell}$ with $\ell>i_0$ ($\ell$ can be fixed by passing to a subsequence) yet
	$$\sum_{i,k,s}\int_{B_1\cap\Omega_*}|v^{(i,k,s)}_j-\psi^{(i,k,s)}_j|^2 \to 0.$$
	By passing to a subsequence, using Theorem \ref{thm:blow-up-properties}(5), we know $v_j\to v_*$ where $v_* \in\mathfrak{L}_{\ell_*}$ for some $\ell\geq\ell_*>i_0$ ($\ell_*$ cannot equal $i_0$ as $v_*$ is dehomogenised by $0$, yet has $L^2$ norm $1$). It suffices to prove the result for all sufficiently large $j$. For this, let $(V_j^p)_p$ and $(\BC_j^p)_p$ be the sequence of varifolds whose blow-up is $v_j$. It suffices to show that Theorem \ref{thm:fine-reg} is applicable to $V_j^p$ and $\BC_j^p$ for all $p$ and $j$ sufficiently large, as then the result follows from blowing-up the conclusions of Theorem \ref{thm:fine-reg}. Thus, by taking a suitable sequence $\widetilde{V}_j:= V_j^{p_j}$ and $\widetilde{\BC}_j:= \BC_j^{p_j}$, whose blow-up can be assumed to be $v_*$, it suffices to show that for all $j$ sufficiently large Theorem \ref{thm:fine-reg} is applicable to $\widetilde{V}_j$ and $\widetilde{\BC}_j$. One finds that
	$$E_{\widetilde{V}_j,\widetilde{\BC}_j} \leq \frac{3}{2}\inf_{\Dbf}E_{\widetilde{V}_j,\Dbf}$$
	where here the infimum is taken over cones $\Dbf$ which are the same level as $\widetilde{\BC}_j$ and are aligned. In particular, using Theorem \ref{thm:main-L^2-estimate}, we know that
	\begin{equation}\label{E:blow-up-eps-1}
	F_{\widetilde{V}_j,\widetilde{\BC}_j} \leq C\inf_{\Dbf}F_{\widetilde{V}_j,\Dbf},
	\end{equation}
	where the infimum is over the same class of $\Dbf$. One the other hand, since the blow-up of $\widetilde{V}_j$ relative to $\widetilde{\BC}_j$ is $v_*$, this means that there is a sequence of cones $\widehat{\BC}_j$ which are finer than $\widetilde{\BC}_j$ for which
	$$F_{\widetilde{V}_j,\widehat{\BC}_j} < \gamma_j F_{\widetilde{V}_j,\widetilde{\BC}_j}$$
	for some sequence $\gamma_j\downarrow 0$. Combined with \eqref{E:blow-up-eps-1}, this gives
	$$F_{\widetilde{V}_j,\widehat{\BC}_j} < C\gamma_j \inf_{\Dbf}F_{\widetilde{V}_j,\Dbf}$$
	where again the infimum is taken over cones which are finer than $\widetilde{\BC}_j$. This is not enough to apply Theorem \ref{thm:fine-reg}, since we would need the right-hand side to be $\mathcal{F}_{\widetilde{V}_j,\widehat{\BC}_j}$. Of course, if $F_{\widetilde{V}_j,\widehat{\BC}_j}<\beta_0\mathcal{F}_{\widetilde{V}_j,\widehat{\BC}_j}$ for infinitely many $j$, where $\beta_0 = \beta_0(n,Q,\BC^{(0)})\in (0,1)$ is sufficiently small, we can apply Theorem \ref{thm:fine-reg} to conclude. If this were to fail, then we would have
	$$F_{\widetilde{V}_j,\widehat{\BC}_j}\geq \beta_0\mathcal{F}_{\widetilde{V}_j,\widehat{\BC}_j} \qquad \text{for all $j$ sufficiently large}$$
	then choose $\BC^*_j$ coarser than $\widehat{\BC}_j$ with $F_{\widetilde{V}_j,\BC_j^*} \leq \frac{3}{2}\mathcal{F}_{\widetilde{V}_j,\widehat{\BC}_j}$. Then we have
	$$F_{\widetilde{V}_j,\BC_j^*} \leq \frac{3}{2}\beta_0^{-1}\cdot C\gamma_j\inf_{\Dbf}F_{\widetilde{V}_j,\Dbf}$$
	and so in particular $\BC_j^*$ must be finer than $\widetilde{\BC}_j$ for all $j$ sufficiently large. In particular, we may repeat this process finitely many times in order to see that Theorem \ref{thm:fine-reg} is eventually applicable with some finer cone to $\widetilde{\BC}_j$, by Hypothesis F (this process is similar to choosing a replacement). We also know that the cones we choose in this process are always finer than $\widetilde{\BC}_j$ due to \eqref{E:blow-up-eps-1} and by the significant improvement of the fine excess relative to $F_{V_j,\widetilde{\BC}_j}$.
	This completes the proof.
\end{proof}

\subsection{Classification of homogeneous degree one blow-ups}

In this section we will classify all homogeneous degree one elements of $\FB_{i_0}$. Note that we have already done this when the spine dimension of the sequence of cones $\BC_j$ generating the blow-up is $\sfrak\leq n-2$, since then as we saw in Remark \ref{remark:reg-simple}, in this situation $v^{(i,k,s)}$ are harmonic functions on $P_*\cap B_1$, and so if they are homogeneous of degree one then they must be linear. Thus we may focus on the situation where $\sfrak=n-1$. In this situation, $\D$ is a subset of an $(n-1)$-dimensional subspace $S$, which we may assume without loss of generality is $\{0\}^2\times\R^{n-1}$.

For ease of notation, if $v$ is homogeneous of degree one we do not distinguish between $v$ and its homogeneous (degree one) extension to all of $P_*$ or $H_*$ (depending on the form of $\Omega_*$).

\begin{theorem}\label{thm:classification}
	Suppose $v = (v^{(i,k,s)})_{i,k,s}\in \FB_{i_0}$ is homogeneous of degree one, and $S = \{0\}^2\times\R^{n-1}$. Then, $v\in \mathfrak{L}_{\ell}$ for some $\ell\geq i_0$; in particular, $v$ satisfies one of the following depending on the form of $\D$.
	\begin{enumerate}
		\item [\textnormal{(i)}] If $\D$ is a proper subset of $S\cap B_1$, then each $v^{(i,k,s)}$ is given by a linear function on $P_*$;
		\item [\textnormal{(ii)}] If $\dim S(\BC^{(0)}) = n-1$ and $\D = S(\BC^{(0)})\cap B_1$, then each $v^{(i,k,s)}$ is the restriction of a linear function on $P_*$ to $H_*$.
	\end{enumerate}
\end{theorem}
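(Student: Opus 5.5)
We reduce first to the case $\kappa\equiv 0$ on $\D$. By Lemma \ref{lemma:kappa}, homogeneity of degree one forces $\kappa_*$ to be linear. Translating by $\kappa$ via Theorem \ref{thm:blow-up-properties}(3) therefore subtracts a linear function of the form $\widetilde{\kappa}^{(i,k)}$. This subtracted function is itself the blow-up of a translated cone in $\mathfrak{C}_{i_0}$, and so lies in $\mathfrak{L}_{i_0}$. It therefore suffices to classify homogeneous degree one $v$ with $v=0$ on $\D$. By Remark \ref{remark:reg-simple} we may take $\sfrak=n-1$ and $S=\{0\}^2\times\R^{n-1}$. Homogeneity together with Theorem \ref{thm:blow-up-properties}(2) shows that $v$ is invariant under translations along $S$ when restricted to $\D$, in the following sense. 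For $z\in\D\cap B_{1/2}$ the Hardt--Simon inequality (Theorem \ref{thm:blow-up-properties}(6)) applies at $z$ with $\widetilde\lambda=\kappa(z)=0$. Letting $\rho\to\infty$ and combining with homogeneity about $0$, the standard argument (cf.~\cite[Section 16]{Wic14}, \cite{Sim93}) shows that $v$ is homogeneous of degree one about every point of $\D$. Hence $v$ is invariant in directions along the span of $\D$.

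We then split according to the form of $\D$. In case (i), $\D$ is a proper closed subset of $S\cap B_1$. Homogeneity makes $\D$ a cone, and translation invariance makes it invariant along its own span. So $\D$ is contained in a proper subspace $L\subsetneq S$, and $P_*\setminus L$ has codimension-$\ge2$ complement. Theorem \ref{thm:holder} shows each $v^{(i,k,s)}$ is H\"older continuous on $P_*$, and Theorem \ref{thm:blow-up-properties}(1) shows it is harmonic off $L$. By removability each $v^{(i,k,s)}$ is harmonic on $P_*$, and homogeneity of degree one then makes it linear.

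In case (ii), $\D=S\cap B_1$ and $v$ is invariant along $S$. Each $v^{(i,k,s)}$ therefore reduces to a degree one homogeneous harmonic function on the $2$-dimensional half-plane $H_*\cap S^\perp$ vanishing on the boundary ray; using $v=0$ on $\D$ this is $c\,r\sin\theta$, i.e.\ the restriction of a linear function on $P_*$. The remaining step is to show that the resulting tuple of linear functions genuinely lies in some $\mathfrak{L}_\ell$, i.e.\ is the blow-up of a sequence of cones. We do this by rotating each (half-)hyperplane $H_j^{(i,k)}$ of $\BC_j$ about $S$ by angle $E_{V_j,\BC_j}c^{(i,k,s)}$, splitting multiplicity where the $c^{(i,k,s)}$ differ for fixed $(i,k)$; splitting in this way moves us to a finer class, so $\ell\ge i_0$. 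This family of cones is stationary in the limit only if the linear functions satisfy the balancing condition. The balancing condition should follow by blowing up the first variation of $V_j$ tested with vector fields along $S^\perp$ localized near $S$. This is the averaging argument of \cite[Lemma 1]{Sim93}, already used in Lemma \ref{lemma:kappa}, and it relies on the $L^2$ non-concentration of Lemma \ref{lemma:non-con} to control the contribution near the spine.

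The main obstacle is expected to be the translation-invariance step. It has to hold even though the $\D_j$ may have density gaps, and even when $\D$ is only a proper subset of $S$. Near points of $S\setminus\D$, where no Hardt--Simon inequality is available, we instead use harmonicity across gaps, which comes from Theorem \ref{thm:HLW} via Lemma \ref{lemma:gaps}. Verifying that every such gap region is fully graphical over full planes, so that the harmonic continuation is legitimate, is the delicate point.
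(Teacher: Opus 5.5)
\textbf{There is a genuine gap.} It sits in the step everything else rests on: the claim that Theorem \ref{thm:blow-up-properties}(6) at $z\in\D$, combined with homogeneity about $0$, makes $v$ homogeneous about $z$ and hence invariant along $\mathrm{span}\,\D$. This does not follow.

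After your reduction $v=0$ on $\D$, Euler's identity $x\cdot\nabla v=v$ gives $\partial_{R_z}(v/R_z)=-(z\cdot\nabla v)/R_z^2$. So the Hardt--Simon inequality at $z$ says only
\[
\int_{B_{\rho/2}(z)}\frac{|z\cdot\nabla v|^2}{|x-z|^{n+2}}\,\ext x\leq C\rho^{-n-2}\int_{B_\rho(z)}|v|^2 ,
\]
and only for $\rho<1/4$. Moving it to large $\rho$ via homogeneity needs $tz\in\D$ for all small $t>0$, i.e.\ that $\D$ is conical. You assert this without proof, and $\D$ is determined by the generating sequence, not by $v$.

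Even granting it, the argument stalls. As $\rho\to\infty$ the right side tends to $C\int_{B_1}|v|^2>0$. The full integral $\int_{P_*}|z\cdot\nabla v|^2|x-z|^{-n-2}$ is invariant under $z\mapsto\lambda z$. So you get finiteness of this weighted integral, not $z\cdot\nabla v\equiv0$. The inequality has a non-sharp constant and no equality case, unlike the monotonicity identity for stationary cones or Almgren's frequency, so no translation invariance can be extracted from it. Neither \cite{Sim93} nor \cite[Section 16]{Wic14} contains such an argument. There $\D=S$, and one uses linearity of $\kappa$ (Lemma \ref{lemma:kappa}) plus boundary regularity; this is how the paper treats your case (ii), with no invariance needed.

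Two smaller points. Your case (i) conflates invariance of $v$ with invariance of the set $\D$: a closed half-space of $S$ is a proper conical subset whose span is $S$. The ``delicate point'' you flag is not one, since harmonicity on $P_*\cap B_1\setminus\D$ is already Theorem \ref{thm:blow-up-properties}(1).

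\textbf{The missing idea is the paper's induction.} When $\sfrak=n-1$, $\H^{n-2}(\D\cap B_1)=\infty$ and $\D\subsetneq S$, the paper inducts downward on $d=\dim S(v)$, where $S(v)$ is the subspace along which $v$ is actually invariant.

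At $z\in\D$ in a compact set away from $S(v)$, it proves a reverse Hardt--Simon dichotomy by contradiction. Normalised rescalings of $v-(v(z)+\pi_{z,\rho})$ about $z$ lie in $\FB_{i_0}$ by Theorem \ref{thm:blow-up-properties}(2)--(4). They converge to a homogeneous degree one $w_*$ with $S(w_*)\supseteq S(v)+\R z\supsetneq S(v)$, so $w_*$ is linear by the inductive hypothesis. If $w_*\in\mathfrak{L}_{i_0}$, this contradicts $w_*$ being dehomogenised by $0$. If $w_*\in\mathfrak{L}_\ell$ with $\ell>i_0$, Lemma \ref{lemma:blow-up-eps-reg} gives decay at $z$; that lemma is Hypothesis F, the fine $\eps$-regularity for finer cones. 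This second alternative is exactly where multiplicity bites: the local model of $v$ at a point of $\D$ may be a linear blow-up from a finer class, which Hardt--Simon alone cannot detect. That your plan never uses Hypothesis F is a symptom of the gap.

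The paper then finishes in three steps:
\begin{enumerate}
\item Iterating the dichotomy gives $C^{1,\beta}$ regularity up to $S$ from each side, away from $S(v)$.
\item The odd reflection $\phi(r,y)=v(r,y)-v(-r,y)$, together with a point of $S\setminus\D$ where $v$ is smooth, forces continuity of the normal derivative across $S$.
\item Then $v-l$, with $l$ the linear extension of $\kappa$, is $C^1$ and harmonic off its zero set, hence harmonic on $P_*\setminus S(v)$. Since $\dim S(v)\leq n-2$, it is harmonic, hence linear, on all of $P_*$.
\end{enumerate}
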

\begin{remark}
	It is important, especially in Theorem \ref{thm:classification}(ii), to know that along $\D$ the values of the individual $v^{(i,k,s)}$ are compatible with each other in the sense that they arise from $\kappa\in \R^2\times\{0\}^{n-1}$ which is in turn determined by $\lambda^{(i,k)}$ and $m^{(i,k)}$. Notice that in this situation, as $\sfrak=n-1$, Corollary \ref{cor:main-L^2-estimate} actually gives $|\xi|\leq CE_{V,\BC}$ for $\xi = z^{\perp_{S(\BC)}}$, where $z$ is any good density point.
\end{remark}

\begin{proof}
	Throughout, we fix $v = (v^{(i,k,s)})_{i,k,s}\in \FB_{i_0}$ which is homogeneous of degree one. We identify $v$ with its homogeneous degree one extension (namely, $v^{(i,k,s)}$ is extended to all of $P_*$ of $H_*$ depending on the form of its domain). We will therefore use notation not restricted to $B_1$.
	
	The proof is inspired by those seen in \cite{BK17, MW24, Min21b, BKMW25}. Let $(V_j)_j\subset\V_Q$ and $(\BC_j)_j\subset\CC$ be the sequences of varifolds and cones which give rise to $v$. By assumption we know $\sfrak\equiv \dim(S(\BC_j))=n-1$. In fact, if $\H^{n-2}(\D\cap B_1)<\infty$, then $\D$ has vanishing $2$-capacity, and so each $v^{(i,k,s)}$ is a continuous function on $P_*\cap B_1$ which is harmonic on $P_*\cap B_1\setminus \D$, and by standard removability theorems for harmonic functions we would have $v^{(i,k,s)}$ is harmonic on all of $P_*\cap B_1$, and so is linear, completing the proof. So, we may assume that not only $\sfrak=n-1$, but also $\H^{n-2}(\D\cap B_1)=\infty$.
	
	At the opposite extreme, if $\D = S(\BC^{(0)})$ then the proof is also standard. Indeed, as $\BC_j\in \CC_{n-1}$ for all $j$, the proof in this case is identical to that in \cite[Theorem 16.7]{Wic14}. In more detail, by Lemma \ref{lemma:kappa} one knows that the boundary values of each $v^{(i,k,s)}$ along $\D\cap B_1 = S\cap B_1$ are given by a linear function, and so standard boundary regularity for harmonic functions implies that $v^{(i,k,s)}$ are $C^{\infty}$ up to the boundary on each half-hyperplane. But then $Dv^{(i,k,s)}$ is a continuous, homogeneous degree zero function on each half-hyperplane, which implies it must be constant, and thus $v^{(i,k,s)}$ is linear.
	
	So, we are left with the case where $\sfrak=n-1$, $\H^{n-2}(\D\cap B_1)=\infty$, yet $\D\subsetneq S$. Notice then that by Lemma \ref{lemma:no-gaps} we must have $\BC_j\in \P_{n-1}$ for all $j$, and hence $\BC^{(0)}\in \P_{n-1}$. We break the proof down into several steps.
	
	\textbf{Step 1:} \emph{Induction setup.} For any $v$ as above, define
	$$S(v) := \{z\in S:v(x+z) = v(x) \text{ for all }x\},$$
	i.e.~the set of points in $S$ under which each $v^{(i,k,s)}$ is translation invariant. Using homogeneity of $v$ it is easy to verify that $S(v)$ is a linear subspace of $S$, and thus has a dimension $d:= \dim S(v)$ which belongs to $\{0,1,\dotsc,n-1\}$. Clearly if $d=n-1$ then the proof is complete (indeed, since $\D\subsetneq S\cap B_1$, we know that each $v^{(i,k,s)}$ is defined on the full hyperplane $P_*$ and is harmonic on some ball centered on $S$).
	
	So, we may assume that $d\leq n-2$. We will prove the theorem by working inductively \emph{downwards} on the variable $d$. Thus, we may fix $d\in \{n-2,n-3,\dotsc,0\}$ and assume that we have shown the result for all larger values of $d$. In particular, as $S(v)$ is a subspace of dimension at most $n-2$ and $\H^{n-2}(\D\cap B_1)=\infty$, we must have $\D\setminus S(v)\neq\emptyset$.
	
	\textbf{Step 2:} \emph{The reverse Hardt--Simon dichotomy.} For $z\in \D\cap B^n_{1/4}(0)$ and $\rho\in (0,1/4)$, recall that $\pi^v_{z,\rho}\equiv \pi_{z,\rho}\in\mathfrak{L}_{i_0}$ is the dehomogeniser of $v$. If $v\equiv v(z) + \pi_{z,\rho}$ on $B_\rho(z)$, then necessarily $v\in\mathfrak{L}_{i_0}$ (by unique continuation for harmonic functions) and so there is nothing to prove. Hence, we may suppose (looking for a contradiction) that this fails for each $z\in \D\cap B^n_{1/4}(0)$. Fix any compact subset $K$ of $B_{1/4}^n(0)\setminus S(v)$. We then claim the following \emph{reverse Hardt--Simon dichotomy}:
	
	\textbf{Claim:} There exists $\eps = \eps(v,K)\in (0,1)$ such that for any $z\in \D\cap K$ and any $\rho\in (0,\eps]$, we either have:
	\begin{enumerate}
		\item [(a)] the \emph{reverse Hardt--Simon inequality}
	\begin{equation}\label{E:RHS}
		\int_{B_\rho(z)\setminus B_{\rho/2}(z)}R_z^{2-n}\left|\frac{\del}{\del R_z}\left(\frac{v-v(z)}{R_z}\right)\right|^2 \geq \eps\rho^{-n-2}\int_{B_\rho(z)}|v - (v(z)+\pi_{z,\rho})|^2,
	\end{equation}
	where here $R_z\equiv R_z(x):=|x-z|$, and for simplicity we have suppressed notation which involves summing over $i,k,s$;
		\item [(b)] or the conclusions of Lemma \ref{lemma:blow-up-eps-reg} hold on $B_{3\rho/8}(z)$; in particular, there exists $\psi_z \in \mathfrak{L}_{\ell_z}$ for some $\ell_z>i_0$ such that for all $0<\sigma\leq 3\rho/8$,
	\begin{equation}\label{E:RHS-alt}
		\sigma^{-n-2}\int_{B_\sigma(z)}|v-(v(z)+\psi_z)|^2 \leq C\left(\frac{\sigma}{\rho}\right)^{2\alpha}\cdot\rho^{-n-2}\int_{B_\rho(z)}|v-(v(z)+\psi_z)|^2.
	\end{equation}
	Here, $C = C(n,Q,\BC^{(0)})\in (0,\infty)$ and $\alpha = \alpha(n,Q,\BC^{(0)})\in (0,1)$.
	\end{enumerate}
	
	(We remark that, using Lemma \ref{lemma:kappa} and Theorem \ref{thm:blow-up-properties} at the start of the proof, we could without loss of generality assume that $\kappa\equiv 0$, giving that $v\equiv 0$ on $\D$.)
	
	Indeed, if this claim were false, then there would exist a sequence of points $z_j\in \D\cap K$ and radii $\rho_j\downarrow 0$ with $v\not\equiv v(z_j)+\pi_{z_j,\rho_j}$ on $B_{\rho_j}(z_j)$ and a sequence $\eps_j\downarrow 0$ such that, setting $\pi_j := \pi_{z_j,\rho_j}$ (and recalling that $\pi_{z_j,\rho_j}$ is homogeneous of degree one centered at $z_j$),
	$$\int_{B_{\rho_j}(z_j)\setminus B_{\rho_j/2}(z_j)}R_{z_j}^{2-n}\left|\frac{\del}{\del R_{z_j}}\left(\frac{v-(v(z_j)+\pi_j)}{R_{z_j}}\right)\right|^2 < \eps_j\rho_j^{-n-2}\int_{B_{\rho_j}(z_j)}|v-(v(z_j)+\pi_j)|^2.$$
	By Theorem \ref{thm:blow-up-properties}(2)--(4), we have that
	$$w_j := \frac{v(z_j+\rho_j(\cdot))-(v(z_j) + \pi_j(z_j+\rho_j(\cdot)))}{\|v(z_j+\rho_j(\cdot)) - (v(z_j)+\pi_j(z_j+\rho_j(\cdot)))\|_{L^2(B_1)}}$$
	obeys $w_j\in \mathfrak{B}_{i_0}$. Notice here that we are using that $z_j\in \D$ and so $v^{(i,k,s)}(z_j) = \kappa^{(i,k)}(z_j)$ take the desired form for the subtraction in Theorem \ref{thm:blow-up-properties}(3). We may then apply the compactness property, Theorem \ref{thm:blow-up-properties}(5), to see that, up to passing to a subsequence, $w_j\to w_*\in \mathfrak{B}_{i_0}$, where the convergence is locally smoothly on $\Omega_*\setminus \D$ and locally uniformly on $\Omega_*$ (by Theorem \ref{thm:holder}). By construction, we then know that $w_*(0) = 0$, and $w_*$ is dehomogenised by $0$ in $B_1(0)$. Moreover, it follows from the construction of $w_*$ and an 1-dimensional integration-along-rays argument that $w_*\not\equiv 0$ (cf.~\cite[(5.25)]{BK17}, for instance). Moreover, our contradiction assumption reads
	$$\int_{B_1(0)\setminus B_{1/2}(0)}R^{2-n}\left|\frac{\del}{\del R}\left(\frac{w_j}{R}\right)\right|^2 < \eps_j,$$
	where $R \equiv R(x):= |x|$, which implies that $w_*$ must be homogeneous of degree one on $B_1(0)\setminus B_{1/2}(0)$, and so by unique continuation of harmonic functions must be homogeneous of degree one on $B_1(0)$. Thus, if we denote $\sfrak_*$ and $\D_*$ the corresponding quantities for $w_*$, if we have $\sfrak_*\leq n-2$, then we know that $w_*$ consists of linear functions. The same holds if $\sfrak_* = n-1$ and $\H^{n-2}(\D_*\cap B_1)<\infty$ or $\D_* = B_1\cap S$; in all of these cases, we therefore have $w_*\in \mathfrak{L}_{\ell_*}$ for some $\ell_*\geq i_0$. But if $w_*\in \mathfrak{L}_{i_0}$ then $w_*$ would be dehomogenised by itself, which at $w_*$ is dehomogenised by $0$ would imply $w_*\equiv 0$, a contradiction. Thus, we must have $\ell_*>i_0$. But then we may apply Lemma \ref{lemma:blow-up-eps-reg} to $w_j$ for all $j$ sufficiently large to see that \eqref{E:RHS-alt} holds, again giving a contradiction to the failure of (b) along the sequence.
	
	Thus, we must have $\sfrak_*=n-1$, $\H^{n-2}(\D_*\cap B_1)=\infty$, and $\D_*\subsetneq S\cap B_1^n(0)$. But by construction, we know that $S(v)\subseteq S(w_*)$, and furthermore from the blow-up process we must also have $z\in S(w_*)$, giving that $S(v)\subsetneq S(w_*)$. Hence, $\dim(S(w_*))>\dim(S(v))$. By our inductive assumption, we then must have $w_*\in \mathfrak{L}_{\ell_*}$ for some $\ell_*\geq i_0$. We then reach the contradiction in exactly the same manner as in the other cases above. This contradiction therefore proves the reverse Hardt--Simon dichotomy.
	
	\textbf{Step 3:} \emph{$C^1$ regularity away from $S(v)$ and conclusion.} From the reverse Hardt--Simon dichotomy and Theorem \ref{thm:blow-up-properties} we then get the following dichotomy (by ``hole filling''): for each $z\in \D\cap K$ and $\rho\in (0,\eps]$, then either:
	\begin{enumerate}
		\item [(a$^\prime$)] we have \eqref{E:RHS} holds and
		$$\int_{B_{\rho/2}(z)}R_z^{2-n}\left|\frac{\del}{\del R_z}\left(\frac{v-v(z)}{R_z}\right)\right|^2 \leq \theta\int_{B_\rho(z)}R_z^{2-n}\left|\frac{\del}{\del R_z}\left(\frac{v-v(z)}{R_z}\right)\right|^2$$
		for some $\theta = \theta(v,K)\in (0,1)$;
		\item [(b$^\prime$)] conclusion (b) holds.
	\end{enumerate}
	If one iterates this dichotomy analogously to that in \cite[Proof of Theorem 3.11]{MW24}, we end up with the following dichotomy: for each $z\in \D\cap K$, either:
	\begin{enumerate}
		\item [(I)] for each $0<\sigma\leq\rho/2\leq\eps/4$, we have
		$$\sigma^{-n-2}\int_{B_\sigma(z)}|v-(v(z)+\pi_{z,\sigma})|^2 \leq C\left(\frac{\sigma}{\rho}\right)^{2\mu}\cdot\rho^{-n-2}\int_{B_\rho(z)}|v-(v(z)+\pi_{z,\rho})|^2;$$
		\item [(II)] or the conclusions of Lemma \ref{lemma:blow-up-eps-reg} hold on some neighbourhood of $z$, and moreover there exists $\psi_z\in \mathfrak{L}_{\ell_z}$ for some $\ell_z>i_0$ such that for all $0<\rho<3\eps/8]$,
		$$\rho^{-n-2}\int_{B_\rho(z)}|v-(v(z)+\psi_z)|^2 \leq C\rho^{2\mu}\int_{B_\eps(z)}|v|^2.$$
	\end{enumerate}
	Here, $C = C(v,K)\in (0,\infty)$ and $\mu = \mu(v,K)\in (0,1)$. It is straightforward to then check that in (I) one can take $\pi_{z,\sigma}$ to be independent of $\sigma$, i.e.~there exists $\pi_{z}\in \mathfrak{L}_{i_0}$ for which
	\begin{enumerate}
		\item [(I$^\prime$)] for each $0<\sigma\leq\rho/2\leq\eps/4$, we have
			$$\sigma^{-n-2}\int_{B_\sigma(z)}|v-(v(z)+\pi_{z})|^2 \leq C\left(\frac{\sigma}{\rho}\right)^{2\mu}\cdot\rho^{-n-2}\int_{B_\rho(z)}|v-(v(z)+\pi_{z})|^2.$$
	\end{enumerate}
	Thus, we have (I$^\prime$) or (II) at every $z\in \D\cap K$. At every point $y\in K\setminus \D$, we know that $v$ is harmonic. It then follows from the above estimates, harmonic estimates, and degenerate Campanato-style arguments (cf.~\cite[Lemma 4.3]{Wic14} or \cite{Min21b}) then on the interior of $K$ we have that $v$ must be $C^{1,\beta}$ on $\text{int}(K)\cap H_*\setminus S(v)$ for some $\beta = \beta(v,K)\in (0,1)$. In particular, $v$ must be $C^1$ up to the boundary on each connected component of $P_*\setminus S$. (As $S$ is $(n-1)$-dimensional, this is the same as saying on each half-hyperplane of $P_*$ determined by the complement of $S$ in $P_*$ we have $C^1$ regularity up to the boundary on each half-hyperplane. In particular, we only know that the derivatives parallel to $S$ are continuous at present, as some elements of $\mathfrak{L}_{\ell}$, with $\ell\geq i_0$, could consist of distinct linear functions on the components of $P_*\setminus S$.)
	
	Thus, our next aim is to show that $v$ must in fact be $C^1$ on $P_*\setminus S(v)$. To show this, we just need to show continuity along $S\setminus S(v)$ for each normal derivative of $v^{(i,k,s)}$. Fix $v^{(i,k,s)}$, which we know is a continuous function $P_*\to \R$. Write coordinates on $P_*$ as $(x,y)\in S^\perp\times S$. Set $r = |x|$ and define $\phi(r,y):= v^{(i,k,s)}(r,y) - v^{(i,k,s)}(-r,y)$. Then $\phi$ is a $C^1$ harmonic function on $H_*$ (a connected component of $P_*\setminus S$) with zero boundary values, and so must be of the form $\phi(r,y) = ar$ for some constant $a>0$. But then since $\D\subsetneq S\cap B_1$, there must exist a point $y\in S\cap B_1$ about which $v^{(i,k,s)}$ is harmonic, and thus smooth, and which point we would have $\del_r\phi(z)= 0$, and so necessarily $a=0$. But then this implies $\del_r\phi\equiv 0$, proving that $v^{(i,k,s)}$ (and hence $v$) is indeed $C^1$ on all of $P_*\setminus S(v)$.
	
	At present, we only know that $v$ is harmonic away from $\D$. We now need to argue that $v$ is harmonic on $P_*\setminus S(v)$. From Lemma \ref{lemma:kappa}, we know that for each $v^{(i,k,s)}$ there is a linear function $l$ for which $v^{(i,k,s)} - l$ vanishes on $\D$.  But then $v^{(i,k,s)}-l$ is a $C^1$ function on $P_*\setminus S(v)$ which is harmonic on $P_*\setminus (S(v)\cup \D)$. Since $\D\setminus S(v)$ is contained within $\{v^{(i,k,s)}-l =0\}$, and since the zero set of removable for a $C^1$ harmonic function, we see that $v^{(i,k,s)}-l$ must be harmonic on $P_*\setminus S(v)$, implying that $v^{(i,k,s)}$ must be harmonic here (as $l$ is linear). Hence, $v$ is harmonic on $P_*\setminus S(v)$. But then as $v$ is continuous on $P_*$, translation invariant along $S(v)$, and $\dim(S(v))\leq n-2$, it follows that $v$ must be harmonic on all of $P_*$, which implies it must be linear. This completes the proof of the classification.
\end{proof}

\subsection{$C^{1,\alpha}$ regularity of blow-ups}

Now that we have classified the possible homogeneous degree one blow-ups of $\mathfrak{B}_{i_0}$, we are ready to prove the $C^{1,\alpha}$ regularity of general blow-ups in $\mathfrak{B}_{i_0}$.

\begin{theorem}\label{thm:blow-up-reg}
	There exists $\alpha = \alpha(n,Q,\BC^{(0)})\in (0,1)$ such that if $v\in \mathfrak{B}_{i_0}$, then $v^{(i,k,s)}\in C^{1,\alpha}(\overline{\Omega_*}\cap B_{1/2}^n(0))$ for all $i,k,s$. Moreover, if $v\in \mathfrak{B}_{i_0}$ and $0\in \D$, then there exists $\phi\in \cup_{\ell\geq i_0}\mathfrak{L}_{\ell}$ such that for every $0<\sigma\leq \rho/2\leq 3/16$ we have
	$$\sigma^{-n-2}\int_{B_\sigma(0)}|v-(v(0) + \phi)|^2\leq C\left(\frac{\sigma}{\rho}\right)^{2\alpha}\cdot\rho^{-n-2}\int_{B_\rho(0)}|v|^2.$$
	Here, $C = C(n,Q,\BC^{(0)})\in (0,1)$.
\end{theorem}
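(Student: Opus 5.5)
We run the same reverse Hardt--Simon scheme as in the proof of Theorem \ref{thm:classification}, now for general (non-homogeneous) $v\in\mathfrak{B}_{i_0}$, with constants made uniform by compactness. First, by Remark \ref{remark:reg-simple}, if $\sfrak\le n-2$ then every $v^{(i,k,s)}$ is harmonic on $P_*\cap B_1$. In that case the estimate follows directly from interior harmonic estimates: take $\phi:=Dv(0)\cdot x$, which lies in $\mathfrak{L}_{i_0}$ after the usual identification, using Theorem \ref{thm:blow-up-properties}(4). So the main case is $\sfrak=n-1$. Using Lemma \ref{lemma:kappa} and Theorem \ref{thm:blow-up-properties}(3),(4), we may also subtract the boundary data along $\D$ and assume $v(z)$ is normalised at the base point.

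\textbf{Step 1 (uniform dichotomy).} The key claim is: there exist $\eps,\theta,C,\alpha$ depending only on $n,Q,\BC^{(0)}$ such that, for every $v\in\mathfrak{B}_{i_0}$, every $z\in\D\cap B_{1/4}$ and every $\rho\in(0,1/4]$, one of two alternatives holds. Either the reverse Hardt--Simon inequality \eqref{E:RHS} holds with constant $\eps$, or the conclusion (b) of Lemma \ref{lemma:blow-up-eps-reg} holds on $B_{3\rho/8}(z)$ with some $\psi_z\in\mathfrak{L}_\ell$, $\ell>i_0$.

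We prove this by contradiction, exactly as in Step 2 of Theorem \ref{thm:classification}. The difference is that now we take sequences $v_j$, $z_j$, $\rho_j$ as well, and rescale using Theorem \ref{thm:blow-up-properties}(2)--(4). This produces normalised $w_j\in\mathfrak{B}_{i_0}$, dehomogenised by $0$, which by Theorem \ref{thm:blow-up-properties}(5) and Theorem \ref{thm:holder} converge to a $w_*$ that is homogeneous of degree one and nonzero. Theorem \ref{thm:classification} (now available in full) gives $w_*\in\mathfrak{L}_{\ell_*}$. Dehomogenisation by $0$ forces $\ell_*>i_0$. Lemma \ref{lemma:blow-up-eps-reg} then applies to $w_j$ for large $j$, which is a contradiction.

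Because the classification is now complete, no induction on $\dim S(v)$ is needed. This is exactly what makes the constants independent of $v$ and of any compact set $K$, unlike in the proof of Theorem \ref{thm:classification}.

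\textbf{Step 2 (iteration).} Combine \eqref{E:RHS} with the Hardt--Simon inequality, Theorem \ref{thm:blow-up-properties}(6), and hole-filling as in Step 3 of Theorem \ref{thm:classification}. This gives geometric decay $\theta$ of $\int_{B_\rho(z)}R_z^{2-n}|\partial_{R_z}((v-v(z))/R_z)|^2$ on every scale where alternative (a) holds. Iterating as in \cite[Proof of Theorem 3.11]{MW24}, for each $z\in\D\cap B_{1/4}$ we obtain one of two outcomes. The first is power decay $\sigma^{2\alpha}$ of $\sigma^{-n-2}\int_{B_\sigma(z)}|v-(v(z)+\pi_{z,\sigma})|^2$ for all small $\sigma$. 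The second is that at some first scale alternative (b) takes over; from then on Lemma \ref{lemma:blow-up-eps-reg} gives the decay directly with $\psi_z$. In the first outcome, a standard argument comparing $\pi_{z,\sigma}$ with $\pi_{z,\sigma/2}$ (their differences are summable geometrically, and elements of $\mathfrak{L}_{i_0}$ are linear with $L^2$ controlling $C^1$) yields a single limit $\phi_z\in\mathfrak{L}_{i_0}$. In the second we take $\phi_z=\psi_z$. The scale-matching between the two regimes costs only a constant. Together these give the displayed estimate with $z=0$, using Theorem \ref{thm:holder} to bound $v(0)$ and the $L^2$ norm of $\phi$ in terms of $\rho^{-n-2}\int_{B_\rho}|v|^2$.

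\textbf{Step 3 (global $C^{1,\alpha}$).} Away from $\D$, $v$ is harmonic; near $\D$ we have the uniform pointwise decay just established. Standard degenerate Campanato patching (\cite[Lemma 4.3]{Wic14}) then gives $v^{(i,k,s)}\in C^{1,\alpha}(\overline{\Omega_*}\cap B_{1/2})$. Here $C^1$ is understood up to $S$ on each half-hyperplane in Case 2, while in Case 1 the normal derivatives also match across $S\setminus\D$, by the argument with $\phi(r,y)=v(r,y)-v(-r,y)$ from Theorem \ref{thm:classification}.

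The main obstacle is Step 1's uniformity. In the contradiction argument, alternative (b) failing along the sequence must still be contradicted when the limit $w_*$ lies in $\mathfrak{L}_\ell$ with $\ell>i_0$ but with $L^2$-norm $1$ and only approximate closeness. This requires that the smallness threshold of Lemma \ref{lemma:blow-up-eps-reg} is independent of $\ell$ (finitely many levels, so we take the minimum), and that the conclusion of Lemma \ref{lemma:blow-up-eps-reg} for $w_j$ rescales back to \eqref{E:RHS-alt} for $v$ on $B_{3\rho_j/8}(z_j)$.
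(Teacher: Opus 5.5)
This is essentially the paper's proof: a uniform reverse Hardt--Simon dichotomy, obtained by contradiction from the now-complete classification (Theorem \ref{thm:classification}) together with Lemma \ref{lemma:blow-up-eps-reg}, iterated as in Step 3 of the classification proof with constants depending only on $n,Q,\BC^{(0)}$, and then degenerate Campanato patching with harmonic estimates off $\D$. Two minor slips: when $\sfrak\le n-2$ the tuple $Dv(0)\cdot x$ can depend on $s$, so it generally lies in a finer $\mathfrak{L}_\ell$ rather than in $\mathfrak{L}_{i_0}$ (harmless, since the statement allows any $\ell\ge i_0$); and in Step 3 the reflection argument with $v(r,y)-v(-r,y)$ is neither needed for matching across $S\setminus\D$, since harmonicity there already gives it, nor available without the degree-one homogeneity it relied on.
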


\begin{proof}
	In view of Theorem \ref{thm:blow-up-properties} it suffices to prove the estimate when $\rho=3/8$.
	
	First note that one can repeat the argument which lead to the reverse Hardt--Simon dichotomy in Step 2 of the proof of Theorem \ref{thm:classification}, except now using the classification of homogeneous degree one blow-ups provided by Theorem \ref{thm:classification} to get the contradiction (via Lemma \ref{lemma:blow-up-eps-reg}) to prove the following dichotomy: there exists $\eps = \eps(n,Q,\BC^{(0)})\in (0,1)$ such that for each $v\in \mathfrak{B}_{i_0}$, $z\in \D\cap B_{1/4}(0)$, and $\rho\in (0,\eps]$, we have either:
	\begin{enumerate}
		\item [(a)] the reverse Hardt--Simon inequality
		$$\int_{B_\rho(z)\setminus B_{\rho/2}(z)}R_z^{2-n}\left|\frac{\del}{\del R_z}\left(\frac{v-v(z)}{R_z}\right)\right|^2 \geq \eps\rho^{-n-2}\int_{B_\rho(z)}|v - (v(z)+\pi_{z,\rho})|^2;$$
		\item [(b)] or the conclusions of Lemma \ref{lemma:blow-up-eps-reg} hold on $B_{3\rho/8}(z)$; in particular, there exists $\psi_z\in \mathfrak{L}_{\ell_z}$ for some $\ell_z>i_0$ such that for all $0<\sigma\leq3\rho/8$,
		\begin{equation}
		\sigma^{-n-2}\int_{B_\sigma(z)}|v-(v(z)+\psi_z)|^2 \leq C\left(\frac{\sigma}{\rho}\right)^{2\alpha}\cdot\rho^{-n-2}\int_{B_\rho(z)}|v-(v(z)+\psi_z)|^2.
	\end{equation}
	\end{enumerate}
	We may then follow the argument in Step 3 of the proof of Theorem \ref{thm:classification} to get that either (I$^\prime$) or (II) holds at each $z\in \D\cap B_{1/4}(0)$, where now all the constants depend only on $n,Q,\BC^{(0)}$. But then we know $v$ is harmonic on $B_{1/4}(0)\setminus$ by Theorem \ref{thm:blow-up-properties}(1), and so once again we can combine the above estimates centered on $\D\cap B_{1/4}(0)$ with harmonic estimates away from $\D$ in a degenerate Camapanato style in order to prove the claimed regularity statement and estimate.
\end{proof}

\section{Excess Decay and Conclusion}\label{sec:excess-decay}

In this section, we now complete the proof of Theorem \ref{thm:fine-reg} by induction on the level of the cone $\BC$, working down the order $\{\mathfrak{C}_{i}\}_{i=1}^N$. For this, we use the $C^{1,\alpha}$ regularity of blow-ups established in Theorem \ref{thm:blow-up-reg} and, in all situations except the base case (namely, when the cones are finest), Theorem \ref{thm:fine-reg} for finer cones (which is by Hypothesis F).

\begin{proof}[Proof of Theorem \ref{thm:fine-reg}]
	The proof is by downwards induction on $i_0$, where $\BC\in \mathfrak{C}_{i_0}$. As such, we may assume Hypothesis F in our proof. The base case, when $i_0$ is maximal, is strictly simpler than the argument given in the inductive step (as in that situation one only has decay to a cone of the same level), and so we do not write it separately.
	
	Here, for a point $y\in B^{n+1}_1(0)$ and $\rho>0$, we will write $E_{V,\BC}(B_\rho(y))$ for the scale-invariant $L^2$ height-excess on the ball $B_\rho(y)$ of $V$ relative to $\BC$, namely
	$$E^2_{V,\BC}(B_\rho(y)) = \rho^{-n-2}\int_{B_\rho^{n+1}(y)}\dist^2(x,\BC)\, \ext\|V\|(x).$$
	We will also write $F_{V,\BC}(B_\rho(y))$, and so forth for the corresponding quantities.
	
	We first claim the following: there exists $\eps = \eps(n,Q,\BC^{(0)})\in (0,1)$, $\beta = \beta(n,Q,\BC^{(0)})\in (0,1)$, and $\theta = \theta(n,Q,\BC^{(0)})\in (0,1)$ such that if $V\in \V_Q$, $\BC\in \mathfrak{C}_{i_0}$ obey the assumptions of Theorem \ref{thm:fine-reg} with these choices of $\eps,\beta$ therein, then either:
	\begin{enumerate}
		\item [(i)] there is a cone $\widetilde{\BC}\in\mathfrak{C}_{i_0}$ and an orthogonal rotation $R:\R^{n+1}\to \R^{n+1}$ with
		\begin{itemize}
			\item $\displaystyle \dist_\H(\BC\cap B_1, \widetilde{\BC}\cap B_1) \leq CE_{V,\BC}$;
			\item $\displaystyle |R-\id_{\R^{n+1}}| \leq C\mathcal{F}_{V,\BC}^{-1}F_{V,\BC}$;
			\item $E_{R_\#V,\widetilde{\BC}}(B_\theta) \leq \frac{1}{2}E_{V,\BC}$;
			\item $\displaystyle \frac{F_{R_\#V,\widetilde{\BC}}(B_\theta)}{\mathcal{F}_{R_\#V,\widetilde{\BC}}(B_\theta)}\leq \frac{1}{2}\cdot\frac{F_{V,\BC}}{\mathcal{F}_{V,\BC}}$;
		\end{itemize}
		or,
		\item [(ii)] there is a cone $\widehat{\BC}\in \mathfrak{C}_\ell$ for some $\ell>i_0$ and a rotation $\Gamma:\R^{n+1}\to \R^{n+1}$ with
		\begin{itemize}
			\item $\displaystyle \dist_\H(\BC\cap B_1,\widehat{\BC}\cap B_1) \leq CE_{V,\BC}$;
			\item $\displaystyle |R-\id_{\R^{n+1}}| \leq C\mathcal{F}_{V,\BC}^{-1}F_{V,\BC}$;
			\item $\displaystyle E_{R_\#V,\widehat{\BC}}(B_\rho) \leq C\rho^{\mu}E_{V,\BC}$ for all $\rho\in (0,\theta/8]$.
		\end{itemize}
	\end{enumerate}
	Here, $C = C(n,Q,\BC^{(0)})\in (0,\infty)$ and $\mu = \mu(n,Q,\BC^{(0)})\in (0,1)$. We stress that alternative (ii) does not occur in the base case, and in that situation only alternative (i) occurs.
	
	We prove this claim by contradiction. Indeed, if it were false we could find sequences $\eps_j,\beta_j\downarrow 0$ such that there are $V_j\in \V_Q$, and $\BC_j\in \mathfrak{C}_{i_0}$ which obey the assumptions of Theorem \ref{thm:fine-reg} with $\eps_j,\beta_j$ therein yet fail the above dichotomy for all $j$. By passing to a subsequence, we can construct a blow-up $v\in \mathfrak{B}_{i_0}$ of $V_j$ relative to $\BC_j$. As $\Theta_{V_j}(0)\geq Q$ for all $j$ by assumption, we know that $v(0) = 0$. Thus, Theorem \ref{thm:blow-up-reg} gives that there exists $\phi\in \cup_{\ell\geq i_0}\mathfrak{L}_{\ell}$ such that for all $0<\sigma\leq 3/8$ we have
	$$\sigma^{-n-2}\int_{B_{\sigma}(0)}|v-\phi|^2 \leq C\sigma^{2\alpha}\int_{B_{1/2}(0)}|v|^2.$$
	One can now follow the argument in \cite[Proof of Theorem A]{MW24}, except now taking the corresponding average of $\phi$ to be for each $(i,k)$ the average of $(\phi^{(i,k,s)})_{s}$; denote this by $\phi_a = (\phi^{(i,k)}_a)_{i,k}$. The outline of this argument is as follows. One sees that, if $v$ is sufficiently close to $\phi_a$ on $B_{2\theta}(0)$ for sufficiently chosen $\theta = \theta(n,Q,\BC^{(0)})\in (0,1)$, then one can prove for all $j$ sufficiently large that conclusion (i) must hold in the standard excess-decay manner, for a sufficiently chosen scale $\theta = \theta(n,Q,\BC^{(0)})\in (0,1)$; here the new cone $\widetilde{\BC}_j$ is determined by perturbing each (half-)hyperplane $P^{(i,k)}$ in $\BC_j$ by $E_{V_j,\BC_j}\phi^{(i,k)}_a$, with the rotation $R_j$ coming from the need to align this new cone with $\BC^{(0)}$ (as the perturbation may modify the spine of the cone). If instead $v$ is sufficiently far from $\phi_a$ on $B_{2\theta}(0)$ (cf.~\cite[Case 2 in Proof of Theorem A]{MW24}), then one can verify the assumptions of Theorem \ref{thm:fine-reg} hold in $B^{n+1}_{2\theta}(0)$ in an analogous way to the proof of Lemma \ref{lemma:blow-up-eps-reg}, which gives conclusion (ii) must hold. This completes the proof of the above dichotomy.
	
	We now iterate the dichotomy from (i) and (ii): whenever (i) holds, we can reapply it with $(R\circ\eta_{0,\theta})_\#V$ and $\widetilde{\BC}$ in place of $V$ and $\BC$, respectively. The iteration stops whenever (ii) holds. Combining the resulting dichotomy in an analogous manner to that in Theorem \ref{thm:blow-up-reg} (cf.~Theorem \ref{thm:classification}) allows us to prove that
	\begin{enumerate}
		\item [(A)] there is (a unique) $\BC_0 \in \cup_{\ell\geq i_0}\mathfrak{C}_{\ell}$ and rotation $R_0:\R^{n+1}\to \R^{n+1}$ with:
		\begin{itemize}
			\item $\displaystyle \dist_\H(\BC_0\cap B_1, \BC\cap B_1) \leq CE_{V,\BC}$;
			\item $\displaystyle |R_0-\id_{\R^{n+1}}|\leq \mathcal{F}_{V,\BC}^{-1}F_{V,\BC}$;
			\item $\displaystyle E_{(R_0)_\#V,\BC_0}(B_\rho(0)) \leq C\rho^{\beta}E_{V,\BC}$ for all $\rho\in (0,\theta/8]$.
		\end{itemize}
	\end{enumerate}
	Here, $\beta = \beta(n,Q,\BC^{(0)})\in (0,1)$. One can then repeat this at any good density point $z\in B_{1/4}^{n+1}(0)$ by applying (A) to $(\eta_{z,1/2})_\#V$ and $\BC$ (this is possible provided $\eps,\beta$ are sufficiently small, as one must verify the hypotheses of Theorem \ref{thm:fine-reg} for these new choices, and this is done in analogous ways to those in the proofs of Theorem \ref{thm:blow-up-properties}(6)). This gives the uniqueness of tangent cones (with tangent cones in $\cup_{\ell\geq i_0}\mathfrak{C}_{\ell}$) for at every good density point in $B_{1/4}^{n+1}(0)$. Away from the good density set, we know from Lemma \ref{lemma:gaps} that $\dim_\H(\sing_*(V)\cap \{\Theta_V<Q\})\leq n-7$, and so one may instead apply Theorem \ref{thm:HLW} in order to get the desired result. Combining these conclusions then completes the proof.
\end{proof}

\bibliographystyle{alpha} 
\bibliography{references-1}

\end{document}